\documentclass{amsart}
\pdfoutput=1

\usepackage[utf8]{inputenc}
\usepackage{amsfonts}
\usepackage{amsmath,enumerate}
\usepackage[usenames,dvipsnames,svgnames]{xcolor}

\newtheorem{thm}{Theorem}[section]
\newtheorem{lemma}[thm]{Lemma}
\newtheorem{prp}[thm]{Proposition}
\newtheorem{corollary} [thm]{Corollary}

\newtheorem{remark}[thm]{Remark}
\newtheorem{example}[thm]{Example}
\newtheorem{definition}[thm]{Definition}

\newcommand{\vc}[1]{\mathbf{#1}}

\newcommand{\symm}{\mathrm{symm}}

\newcommand{\vca}{\vc{a}}

\newcommand{\xx}{\vc{x}}
\newcommand{\yy}{\vc{y}}

\newcommand{\zz}{\vc{z}}

\newcommand{\vv}{\vc{v}}
\newcommand{\ww}{\vc{w}}

\newcommand{\ws}{\vc{w}_*}
\newcommand{\ys}{\yy_*}
\newcommand{\wS}{\vc{w}^*}
\newcommand{\yS}{\yy^*}

 \newcommand{\ue}{\mathrm{e}}

\newcommand{\ee}{\vc{e}}
\newcommand{\ff}{\vc{f}}

\newcommand{\vcg}{\vc{g}}

\newcommand{\Kki}{K^{(k)}_i}
\newcommand{\Kkj}{K^{(k)}_j}
\newcommand{\ek}{{\ee}^{(k)}}

\newcommand*{\TT}{{\mathbb{T}} }
\newcommand*{\RR}{{\mathbb{R}} }
\newcommand*{\NN}{{\mathbb{N}} }
\newcommand*{\ZZ}{{\mathbb{Z}} }
\newcommand*{\CC}{{\mathbb{C}} }

\newcommand*{\DD}{{\mathcal{D}} }

\newcommand*{\II}{{\mathbb{I}} }

\newcommand*{\downto}{\downarrow}
\newcommand*{\upto}{\uparrow}
\newcommand*{\trp}{\top}
\newcommand*{\Ball}{B}
\newcommand*{\mm}{\underline{m}}
\newcommand*{\MM}{\overline{m}}

\newcommand{\mkx}{\MM^{(k)}({\bf x})}
 \newcommand{\Fk}{F^{(k)}}

 \newcommand{\MMk}{\MM^{(k)}}

\newcommand*{\piros}{}
\newcommand*{\kek}{}

\newcommand*{\lila}{}

\DeclareMathOperator{\inter}{int}
\DeclareMathOperator{\intt}{int}

\DeclareMathOperator{\supp}{supp}

 \newcommand{\id}{\mathrm{id}}

 \newcommand{\Ce}{\mathrm{C}}

 \newcommand{\RRR}{\Bar{\RR}}
\def\qedhere{}
\newenvironment{iiv}{\begin{enumerate}[{\rm (i)}]}{\end{enumerate}}
\newenvironment{abc}{\begin{enumerate}[{\rm (a)}]}{\end{enumerate}}
\newenvironment{ABC}{\begin{enumerate}[{\rm (A)}]}{\end{enumerate}}
\definecolor{red}{rgb}{1,0,0}

%\title[A  minimax problem on the torus]{A  minimax problem for sums of translates on the torus}

\title{A  minimax problem for sums of translates on the torus}

\author{B\'alint Farkas, B\'ela Nagy and Szil\'ard Gy. R\'ev\'esz}

\thanks{This work was supported by the Hungarian Science Foundation, Grant \#'s K-100461, NK-104183, K-109789.}

%\extraline{This work was supported by the Hungarian Science Foundation, Grant \#'s K-100461, NK-104183, K-109789.}

\date{}

\begin{document}
\maketitle
\begin{abstract}
We extend some equilibrium-type results first conjectured by Ambrus,
Ball and Erdélyi, and then proved recently by Hardin,
Kendall and Saff. We work on the
torus $\TT\simeq[0,2\pi)$, but the motivation comes from
an analogous setup on the unit interval, investigated earlier by
Fenton.

The problem is to minimize---with respect to the arbitrary translates
$y_0=0, y_j\in \TT$, $j=1,\dots,n$---the maximum of the sum function
$F:=K_0 + \sum_{j=1}^n K_j(\cdot-y_j)$, where the $K_j$'s are certain
fixed ``kernel functions''. In our setting, the function $F$ has singularities at $y_j$’s, while in between these nodes it still behaves regularly. So one can consider the
maxima $m_i$ on each subinterval between the nodes $y_j$, and minimize $\max F = \max_i m_i$. Also the dual question of maximization of $\min_i m_i$ arises.

Hardin, Kendall and Saff considered one \emph{even} kernel,
$K_j=K$ for $j=0,\dots,n$, and Fenton considered the case of the
interval $[-1,1]$ with \emph{two} fixed kernels $K_0=J$ and $K_j=K$
for $j=1,\dots,n$. Here we build up a systematic treatment when \emph{all the kernel functions can be different}
without assuming them to be even.  As an application we generalize a
result of Bojanov about Chebyshev type polynomials with
prescribed zero order.

MSC2010 Primary: 49J35
Secondary: 26A51,
42A05,
90C47

\end{abstract}
\section{Introduction}
\label{sec:intro}

The present work deals with an ambitious extension of an equilibrium-type result, conjectured by  Ambrus,  Ball and Erd\'elyi \cite{ABE} and recently proved by Hardin, Kendall and Saff \cite{Saff}. To formulate this equilibrium result, it is convenient to identify the unit circle (or one dimensional torus) $\TT$, $\RR/2\pi\ZZ$ and $[0,2\pi)$, and call a function $K:\TT \to \RR\cup\{-\infty,\infty\}$ a \emph{kernel}.
The setup of \cite{ABE} and \cite{Saff}  requires that the kernel function is \emph{convex} and has values in $\RR\cup \{\infty\}$. However, due to historical reasons, {\piros described below,} we shall suppose that the kernels are \emph{concave} and have values in $\RR\cup\{-\infty\}$, the transition between the two settings is
a trivial multiplication by $-1$.
Accordingly, we take the liberty to reformulate the results of \cite{Saff} after a multiplication by $-1$, so in particular for concave kernels, see Theorem \ref{th:ABE-HKS} below.

\medskip The setup of our investigation is therefore that some \emph{concave} function $K:\TT\to \RR\cup\{-\infty\}$ is fixed, meaning that $K$ is concave on $[0,2\pi)$.
Then $K$ is necessarily either finite valued (i.e., $K:\TT\to \RR$) or it satisfies $K(0)=-\infty$ and $K:(0,2\pi)\to \RR$ (the degenerate
situation when $K$ is
constant $-\infty$ is excluded), and $K$ is upper semi-continuous on $[0,2\pi)$,
and continuous on $(0,2\pi)$.

\medskip
The kernel functions are extended periodically to $\RR$ and we consider
the sum of translates function
\[F(y_0,\dots,y_n,t):=\sum_{j=0}^n K(t-y_j).\]
The points $y_0,\dots,y_n$ are called \emph{nodes}.
Then we are interested in solutions of the minimax problem
\begin{align*}
\inf_{y_0,\dots, y_n\in [0,2\pi)}\sup_{t\in [0,2\pi)} \sum_{j=0}^n K(t-y_j)=\inf_{y_0,\dots, y_n\in [0,2\pi)}\sup_{t\in [0,2\pi)} F(y_0,\dots,y_n,t),
\end{align*}
and address questions concerning existence and uniqueness of solutions, as well as the distribution of the points $y_0,\dots, y_n$ (mod $2\pi$) in such extremal situations.

\medskip
In \cite{ABE} it was shown that for $K(t):=-|\ue^{it}-1|^{-2}=-\frac14 \sin^{-2}(t/2)$, 
(which comes from the Euclidean distance $|\ue^{it}-\ue^{is}|=2\sin((t-s)/2)$ 
between points of the unit circle on the complex plane), 
$\max F$ is minimized exactly for the 
regular, 
{\lila in other words,} 
equidistantly spaced, 
configuration of points, i.e., {\lila if we normalize by taking $y_0=0$, then $y_j=2\pi j/(n+1)$ for $j=0,\dots,n$. } 
(The authors in \cite{ABE} mention
that the concrete problem stems from a certain extremal problem, called ``strong polarization constant problem''
by
\cite{AmbrusPhD}.)

Based on this and natural heuristical considerations, Ambrus, Ball and Erd\'elyi conjectured that the same phenomenon should hold also when $K(t):=-|\ue^{it}-1|^{-p}$ ($p>0$), and, moreover, even when $K$ is any concave kernel (in the above sense). Next, this was proved for $p=4$ by Erd\'elyi and Saff \cite{ES}. Finally, in \cite{Saff} the full conjecture of Ambrus, Ball and Erd\'elyi was indeed settled for symmetric (even) kernels.

\begin{thm}[(Hardin, Kendall, Saff)]\label{th:ABE-HKS}
Let $K$ be any concave kernel function.
such that $K(t)=K(-t)$.
For any $ 0=y_0\le y_1\le \ldots \le y_n <2\pi$
write $\yy:=(y_1,\ldots,y_n)$ and
$F(\yy,t):= K(t)+ \sum_{j=1}^n K(t-y_j)$.
Let $\ee:=(\frac{2\pi}{n+1},\ldots,\frac{2\pi n} {n+1})$ (together with $0$ the equidistant node system in $\TT$).
\begin{abc}
\item Then
\[
\inf_{0=y_0\le y_1\le \ldots \le y_n <2\pi} \sup_{t\in\TT}
F(\yy,t) = \sup_{t\in\TT} F(\ee,t),
\]
i.e., the smallest supremum is attained at the equidistant configuration.

\item
Furthermore, if $K$ is strictly concave, then
the smallest supremum is attained at the equidistant configuration \emph{only}.
\end{abc}
\end{thm}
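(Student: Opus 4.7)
My plan is the classical equioscillation paradigm combined with a symmetry argument exploiting evenness of $K$. For existence, the admissible simplex $\{0 = y_0 \leq y_1 \leq \ldots \leq y_n < 2\pi\}$ is compact, and upper semicontinuity of $K$ makes $\yy \mapsto \sup_t F(\yy,t)$ lower semicontinuous. One checks that configurations with colliding nodes are strictly suboptimal (doubling a singularity in one place forces a peak to rise elsewhere), so a minimizer exists with all $y_j$ distinct. On each of the $n+1$ cyclic subintervals between consecutive nodes, $F(\yy,\cdot)$ is concave and tends to $-\infty$ at both endpoints, attaining its maximum at a unique interior point; label these maxima $m_1, \ldots, m_{n+1}$, so $\sup_t F(\yy,t) = \max_i m_i$.

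The heart of the proof is the \emph{equioscillation step}: at any minimizer, $m_1 = \cdots = m_{n+1}$. Suppose otherwise; then some pair of adjacent subintervals $I_i, I_j$ satisfies $m_i < m_j = \max_k m_k$. A local perturbation of the shared node---shifting it slightly toward the peak $t_j^* \in I_j$---shrinks $I_j$ and grows $I_i$. A one-sided derivative calculation using (strict) concavity of $K$ shows that $m_j$ can be made to decrease strictly while $m_i$ rises only continuously, so for sufficiently small shifts both remain below the old $\max_k m_k$. With care for chains of tied maxima and the two-sided effect each shifted node has on its neighbors, iterating produces a configuration with strictly smaller $\max_k m_k$, contradicting minimality.

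Once equioscillation is established, evenness of $K$ forces equidistance. The equidistant $\ee$ equioscillates by its rotational symmetry. For strictly concave $K$ (part (b)), the first-order conditions imposed by equioscillation, combined with the reflection symmetry $F(-\yy, -t) = F(\yy, t)$ coming from evenness, pin the minimizer down uniquely to $\ee$ after normalizing $y_0 = 0$. Part (a) for merely concave $K$ then follows by approximation: replace $K$ with the strictly concave perturbations $K_\varepsilon := K - \varepsilon \sin^2(\cdot/2)$, apply (b) to each $K_\varepsilon$, and pass to the limit $\varepsilon \downto 0$.

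The main obstacle is the equioscillation step. The sign of the derivative of $F$ in a node depends subtly on where the peaks sit relative to the nodes, and $K$ may fail to be differentiable, so one must work with one-sided derivatives and exploit concavity carefully. Moreover, one must handle pathological configurations where a single node shift affects two peaks at once or where many $m_i$'s are tied at the maximum. This bookkeeping is the technical crux, and it is also where strict concavity of $K$ enters crucially for part (b).
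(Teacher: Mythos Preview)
Your high-level strategy---prove equioscillation at any minimizer, identify the equidistant system as the (unique) equioscillating configuration, and handle merely concave $K$ by approximation---matches the paper's route (Corollary~\ref{cor:HKSgen}, which in fact drops the evenness hypothesis entirely). But two steps in your outline have genuine gaps.

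First, you tacitly assume $K(0)=-\infty$: you claim $F(\yy,\cdot)$ tends to $-\infty$ at both endpoints of each subinterval and that colliding nodes create a ``doubled singularity'' forcing a peak to rise elsewhere. The theorem assumes only concavity; $K$ may be finite at $0$, in which case the peak on a subinterval can sit at an endpoint, the local maxima $m_i$ need not be interior, and your suboptimality argument for coinciding nodes breaks down. The paper handles this by first working under the weaker derivative condition \eqref{eq:kernsing'} or under $\Ce^1$-smoothness (Lemmas~\ref{lem:new} and~\ref{lem:interioronndeg}, Corollary~\ref{cor:neighborjump}), and reaches general concave $K$ only after an approximation step (Section~\ref{sec:approx}).

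Second, your uniqueness argument for (b) does not close. ``First-order conditions plus reflection symmetry $F(-\yy,-t)=F(\yy,t)$'' does not by itself rule out a second equioscillating configuration; you would still need to show the equioscillation system has a unique solution, which is exactly the hard part. The paper's mechanism is different and does not use evenness at all: it establishes $M(S)=m(S)$ (Theorem~\ref{thm:mainspecialcase2}), then invokes the fact that two distinct equioscillation points in $\overline S$ would force $M(S)<m(S)$ (Proposition~\ref{prop:whatifequinon-unique}). Since $\ee$ equioscillates by \emph{cyclic} (not reflection) symmetry, it must be the unique minimizer. Evenness is a red herring here---all the work is done by the fact that the $n+1$ translated kernels are identical.
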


We thank the anonymous referee for drawing our attention to a
results of Erdélyi, Hardin and Saff \cite{ErdelyiHardinSaff}.
They reestablished Theorem \ref{th:ABE-HKS} 
with a different method and then they applied it in proving
an inverse Bernstein type inequality.

\medskip
Although this might seem as the end of the story, it is in fact not. The equilibrium phenomenon,
captured by this result, is indeed much more general, when we interpret it from a proper point of view. However, to generalize further, we should first analyze what more general situations we may address and what phenomena
we can expect to hold in the formulated more general situations. Certainly, regularity in the sense of the nodes $y_j$ distributed \emph{equidistantly} is a rather strong property, which is intimately connected to the use of one single and fixed kernel function $K$. 
However, this regularity obviously entails \emph{equality of the ``local maxima''} (suprema) $m_j$ {\piros on the arc between $y_j$ and $y_{j+1}$} for all $j=0,1,\dots,n$, and this is what is usually natural in such equilibrium questions.

We say that the configuration of points $0=y_0\leq y_1\leq \cdots \leq y_n\leq y_{n+1}=2\pi$ \emph{equioscillates}, if
\[
m_j(y_1,\dots, y_n):=\sup_{t\in [y_j,y_{j+1}]}F(y_1\dots,y_n,t)=\sup_{t\in [y_i,y_{i+1}]}F(y_1,\dots,y_n,t)=:m_i(y_1,\dots, y_n)\]
holds for all $i,j\in\{0,\dots,n\}$.
Obviously, with one single and fixed kernel $K$, if the nodes are equidistantly spaced, then the configuration equioscillates.
In the more general setup, this ---as will be seen from this work--- is a good replacement for the property that a point configuration is  equidistant.

\medskip
To give a perhaps enlightening example of what we have in mind, let us recall here a remarkable, but regrettably almost forgotten result of Fenton (see \cite{Fenton}), in the analogous, yet also somewhat different situation, when the underlying set is not the torus $\TT$, but the unit interval $\II:=[0,1]$. In this setting the underlying set is not a group, hence defining translation $K(t-y)$ of a kernel $K$ can only be done if we define the basic kernel function $K$ not only on $\II$ but also on $[-1,1]$. Then for any $y\in\II$ the translated kernel $K(\cdot-y)$ is well-defined on $\II$, moreover, it will have analogous properties to the above situation, provided we assume $K|_{\II}$ and also $K|_{[-1,0]}$ to be concave. Similarly, for any node systems the analogous sum $F$ will have similar properties to the situation on the torus.

From here one might derive that under the proper and analogous conditions, a similar regularity (i.e., equidistant node distribution) conclusion can be drawn also for the case of $\II$. But this is \emph{not the only} result of Fenton, who indeed did dig much deeper.

Observe that there is one rather special role, played by the \emph{fixed} endpoint(s) $y_0=0$ (and perhaps $y_{n+1}=1$), since perturbing a system of nodes the respective kernels are translated---but not the one belonging to $K_0:=K(\cdot-y_0)$, since $y_0$ is fixed. In terms of (linear) potential theory, $K=K(\cdot-y_0)=:K_0$ is a fixed \emph{external field}, while the other translated kernels play the role of a certain ``gravitational field'', as observed when putting (equal) point masses at the nodes. The potential theoretic interpretation is indeed well observed already in \cite{ES}, where it is mentioned that the \emph{Riesz potentials} with exponent $p$ on the circle correspond to the special problem of Ambrus, Ball and Erd\'elyi. From here, it is only a little step further to separate the role of the varying mass points, as generating the corresponding gravitational fields, from the stable one, which may come from a similar mass point and law of gravity---or may come from anywhere else.

Note that this potential theoretic external field consideration is far from being really new. To the contrary, it is the fundamental point of view of studying weighted polynomials (in particular, orthogonal polynomial systems with respect to a weight), which has been introduced by the breakthrough paper of Mhaskar and Saff \cite{MS} and developed into a far-reaching theory in \cite{ST} and several further treatises. So in retrospect we may interpret the factual result of Fenton as an early (in this regard, not spelled out and very probably not thought of) external field generalization of the equilibrium setup considered above.

\begin{thm}[(Fenton)]\label{th:Fenton} Let $K:[-1,1]\to \RR\cup\{-\infty\}$ be a kernel function in $\Ce^2(0,2\pi)$ which is concave and  which is
monotone
both on $(-1,0)$ and $(0,1)$ with $K''<0$ and $D_\pm K(0)=\pm \infty$
that is, the left- and right-hand side derivatives of $K$ at $0$ are $-\infty$ and $+\infty$, respectively.
Let $J:(0,1)\to \RR$ be a
concave
function and put $J(0):=\lim_{t\rightarrow 0}J(t)$, $J(1):=\lim_{t\rightarrow 1}J(t)$
which could be $-\infty$ as well.
For $\yy=(y_1,\ldots,y_n) \in [0,1]^n$ consider
\[
F(\yy,t):=J(t)+\sum_{j=0}^{n+1} K(t-y_j),
\]
where $y_0:=0$, $y_{n+1}:=1$.
Then the following are true:
\begin{abc}
\item There are
$0=w_0\leq w_1\leq \cdots\leq w_n\leq w_{n+1}=1 $
such that with $\ww=(w_1,\dots,w_n)$
\[
\inf_{0\leq y_1\leq\cdots\leq y_n\leq 1}\max_{j=0,\dots,n} \sup_{t\in [y_j,y_{j+1}]}F(\yy,t)=\sup_{t\in [0,1]} F(\ww,t).
\]
\item The
sum of translates function
of  $\ww$ equioscillates, i.e.,
\[
\sup_{t\in [w_j,w_{j+1}]}F(\ww,t)=\sup_{t\in [w_i,w_{i+1}]}F(\ww,t)
\]
for all $i,j\in \{0,\dots, n\}$.
\item We have
\[
\inf_{0\leq y_1\leq\cdots\leq y_n\leq 1} \max_{j=0,\dots,n} \sup_{t\in [y_j,y_{j+1}]}F(\yy,t)=
\sup_{0\leq y_1\leq\cdots\leq y_n\leq 1} \min_{j=0,\dots,n} \sup_{t\in [y_j,y_{j+1}]}F(\yy,t).
\]
\item If $0\leq z_1\leq \cdots\leq z_n\leq 1$ is a configuration
such that the  sum of translates function $F(\zz,\cdot)$
equioscillates, then $\ww=\zz$.
\end{abc}
\end{thm}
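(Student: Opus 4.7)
\emph{Setup.} Let $S:=\{\yy\in[0,1]^n: 0\le y_1\le\cdots\le y_n\le 1\}$ be the compact configuration simplex and $\Phi(\yy):=\max_{0\le j\le n}m_j(\yy)$. Two preliminaries are needed. On each non-degenerate subinterval $[y_j,y_{j+1}]$ the function $F(\yy,\cdot)$ is strictly concave (sum of the concave $J$, the strictly concave singular pair $K(\cdot-y_j)+K(\cdot-y_{j+1})$, and finitely many smooth concave summands) and tends to $-\infty$ at both endpoints, so $m_j(\yy)$ is attained at a unique interior point $t_j(\yy)$ depending continuously on $\yy$. Moreover, the hypothesis $D_\pm K(0)=\pm\infty$ forces $m_j(\yy)\to -\infty$ as $y_{j+1}-y_j\to 0$, so $\Phi$ is lower semi-continuous on $S$.

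\emph{Parts (a) and (b).} By compactness and lower semi-continuity, $\Phi$ attains its infimum at some $\ww\in S$; a perturbation argument using the singular slopes of $K$ at $0$ then shows $\ww$ can be chosen in the interior of $S$. To prove equioscillation, suppose for contradiction that $M:=\Phi(\ww)>m_i(\ww)$ for some $i$, and set $I:=\{j:m_j(\ww)=M\}\subsetneq\{0,\dots,n\}$. Fix a maximal block $\{a,\dots,b\}\subseteq I$ of consecutive active indices. The envelope theorem at the unique interior maximizer gives
\[
\frac{\partial m_j}{\partial w_k}(\ww)=-K'\bigl(t_j(\ww)-w_k\bigr),
\]
and one can prescribe a small coordinated perturbation of $w_a,\dots,w_{b+1}$---an inward contraction of this block---that strictly decreases every $m_j$ with $j\in\{a,\dots,b\}$, while by continuity the remaining $m_j$'s stay strictly below $M$. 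This contradicts the minimality of $\ww$.

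\emph{Part (c).} The inequality $\sup\min\le\inf\max$ is pointwise trivial, and the reverse follows at once by evaluating at the equioscillating minimizer $\ww$ from (b):
\[
\sup_\yy\min_j m_j(\yy)\ge \min_j m_j(\ww)=\max_j m_j(\ww)=\inf_\yy\max_j m_j(\yy).
\]

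\emph{Part (d).} Given another equioscillating configuration $\zz$, by (c) it is also a minimizer with the common value $M$. Assume $\zz\ne\ww$ and consider the difference
\[
G(t):=F(\zz,t)-F(\ww,t)=\sum_{j=1}^n\bigl[K(t-z_j)-K(t-w_j)\bigr].
\]
Equioscillation forces $G(t_j(\ww))\le 0$ and $G(t_j(\zz))\ge 0$ at $n+1$ points each; combined with a suitable interlacing of the critical points (which I would establish in parallel, using the continuous dependence of $t_j$ on $\yy$), this produces more sign changes of $G$ than a sum of $n$ translation-differences of a strictly concave kernel can support. This sign-change count, together with the singular boundary conditions $D_\pm K(0)=\pm\infty$, yields the contradiction. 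This step is the \textbf{main obstacle}: unlike parts (a)--(c), uniqueness cannot be settled by a local perturbation and requires a global comparison, using $K''<0$ in an essential way to rule out exact cancellations in $G$.
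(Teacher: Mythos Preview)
This theorem is not proved in the paper at all: it is quoted from Fenton's article \cite{Fenton} as background and motivation, and the authors explicitly say they ``plan to return to'' the interval case ``in a subsequent paper.'' So there is no ``paper's own proof'' to compare against; what the paper does prove is the torus analogue (Theorem~\ref{thm:0mainspecialcase}/\ref{thm:mainspecialcase}) via the machinery developed in Sections~\ref{sec:continuity}--\ref{sec:equioscillation}.

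That said, your sketch has genuine gaps that would have to be filled regardless of the comparison.

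\medskip
\textbf{Part (b).} The sentence ``one can prescribe a small coordinated perturbation of $w_a,\dots,w_{b+1}$---an inward contraction of this block---that strictly decreases every $m_j$ with $j\in\{a,\dots,b\}$'' is the heart of the matter and is not justified. Decreasing \emph{all} active $m_j$ simultaneously is a non-trivial linear-algebra fact: one needs a direction $\vc a$ in node space along which each $\partial_{\vc a} m_j<0$ for $j$ active. The envelope formula you wrote is correct, but it does not by itself produce such a direction; in the paper's torus setting this is exactly the content of the Perturbation Lemma~\ref{lem:pertnodes00}, which finds $\vc a$ by solving a system of linear inequalities coming from supporting lines. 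An ``inward contraction'' heuristic is not enough---for a block with more than one active arc, contracting the outer nodes affects the interior active arcs with signs that depend on where $t_j(\ww)$ sits relative to each $w_k$.

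\medskip
\textbf{Part (d).} You correctly flag this as the main obstacle, but the proposed sign-change argument does not work as stated. The function $G(t)=\sum_{j=1}^n[K(t-z_j)-K(t-w_j)]$ need not have a bounded number of sign changes merely because $K''<0$: each summand $K(t-z_j)-K(t-w_j)$ is the difference of two strictly concave functions and can itself change sign more than once on $(0,1)$ (nothing prevents it from being, say, positive-negative-positive). So the counting argument ``more sign changes than such a sum can support'' has no basis without substantial extra structure. Neither Fenton nor the present paper proves uniqueness this way. In the torus analogue the paper instead shows that each $m_j$ is \emph{strictly concave} as a function of $\yy$ on the open simplex (Proposition~\ref{l:mistrictconcave}), whence $\mm=\min_j m_j$ is strictly concave with a unique maximizer; then any equioscillation point is shown to coincide with that maximizer (Corollaries~\ref{cor:muniquenomaj}, \ref{l:WeakLocalM}, \ref{cor:Meqm}). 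A parallel argument on the interval is the natural route here, and it uses $K''<0$ cleanly through concavity of $F$ jointly in $(\yy,t)$, not through sign counting.
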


This gave us the first clue and impetus to the further, more general investigations, which, however,
{\piros have been executed for the torus setup here}. As regards Fenton's framework, i.e., similar questions on the interval, we plan to
return to them in a subsequent paper. 
The two setups are rather different in technical details, and we found it difficult to explain them simultaneously---while in principle they should indeed be the same. 
Such an equivalency is at least exemplified also in this paper, when we apply our results to the problem of Bojanov on so-called ``restricted Chebyshev polynomials'': In fact, the original result of Bojanov (and our generalization of it) is formulated on an interval. 
So in order to use our results, valid on the torus, we must work out both some corresponding (new) results on the torus itself, and also a method of transference (working well at least in the concrete Bojanov situation). 
The transference seems to work well in symmetric cases, but becomes intractable for non-symmetric ones. 
Therefore, it seems that to capture full generality, not the transference, but direct, analogous arguments should be used. 
This explains our decision to restrict current considerations to the case of the torus only. 
{\lila Let us also mention here a recent, interesting manuscript by D.~Benko, D.~Coroian, P.~D.~Dragnev and  R.~Orive \cite{BCDOarxiv} where the authors investigate a statistical problem which 
{\lila is a case of the interval setting of the minimax problem here.} 
}

\medskip
Nevertheless, as for generality of the results, the reader will see that we indeed make a further step, too. Namely, we will allow not only an external field (which, for the torus case, would already be an extension of Theorem \ref{th:ABE-HKS}, analogous to Theorem \ref{th:Fenton}), but we will study situations when \emph{all the kernels}, fixed or translated, may as well be different. (Definitely, this makes it worthwhile to work out subsequently the analogous questions also for the interval case.) 

{\piros
The following exemplifies one of the main results of this paper, formulated here without the convenient terminology developed in the later sections.  
It is stated again in Theorem \ref{thm:mainspecialcase} below in a more concise way, and  it is proved in Section \ref{sec:summary} by using the techniques developed in the forthcoming sections. 
\begin{thm}
\label{thm:0mainspecialcase}
	Suppose the $2\pi$-periodic functions $K_0,K_1,\dots,K_n:\RR\to [-\infty,0)$ are strictly concave on $(0,2\pi)$ and either all are continuously differentiable on $(0,2\pi)$ or for each $j=0,1,\dots,n$ 
\[
\lim_{t\upto 2\pi} D_+ K_j(t)=\lim_{t\upto 2\pi} D_- K_j(t)=-\infty,\quad\text{or}\quad \lim_{t\downto 0} D_- K_j(t)=\lim_{t\downto 0} D_+ K_j(t)= \infty,
\]
$D_{\pm}K_j$ denoting the (everywhere existing) one sided derivatives of the function $K_j$.
For any $ 0=y_0\le y_1\le \ldots \le y_n <2\pi$
write $\yy:=(y_1,\ldots,y_n)$ and
$F(\yy,t):= K_0(t)+ \sum_{j=1}^n K_j(t-y_j)$.
Then there are $w_1,\dots,w_n\in (0,2\pi)$ such that
\begin{align*}
M:=\inf_{\yy\in\TT^n}\sup_{t\in \TT}F(\yy,t)=\sup_{t\in \TT} F(\ww,t),
\end{align*}
and the following hold:
\begin{abc}
	\item The points $0,w_1,\dots,w_n$ are pairwise different and hence determine a permutation $\sigma:\{1,\dots,n\}\to \{1,\dots,n\}$ such that $0<w_{\sigma(1)}<w_{\sigma(2)}<\cdots<w_{\sigma(n)}<2\pi$.
 Denote by $S$ the set of points $(y_1,\dots, y_n)\in \TT^n$ with $0<y_{\sigma(1)}<y_{\sigma(2)}<\cdots<y_{\sigma(n)}<2\pi$. A point $\yy\in S$ together with $y_0:=0$ determines $n+1$ arcs on $\TT$, denote by $I_j(\yy)$ the one that starts at $y_j$ and goes in the counterclockwise direction ($j=0,1\dots,n$). We have
 \[
\sup_{t\in I_0(\ww)} F(\ww,t)=\cdots=\sup_{t\in I_n(\ww)} F(\ww,t),
 \]
 for which we say that $\ww$ is an \emph{equioscillation point}.	
 \item  With the set $S$ from {\upshape (a)} we have
\begin{align*}
\inf_{\yy\in S}\max_{j=0,\dots,n}\sup_{t\in I_j(\yy)} F(\yy,t)=M=\sup_{\yy\in S}\min_{j=0,\dots,n}\sup_{t\in I_j(\yy)} F(\yy,t).
\end{align*}
\item  For each $\xx,\yy\in S$
\[
\min_{j=0,\dots,n}\sup_{t\in I_j(\xx)} F(\xx,t)\leq M\leq \max_{j=0,\dots,n}\sup_{t\in I_j(\yy)} F(\yy,t).
\]
This is called the \emph{Sandwich Property}.
\end{abc}
\end{thm}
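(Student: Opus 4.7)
\medskip
\noindent\textbf{Proof sketch.}
The plan is to prove the statement in four stages: existence of a minimizer, distinctness of the optimal nodes, the equioscillation property, and finally the sandwich identities in (b) and (c).

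First, I would show that the infimum $M$ is attained. Each $K_j$ is upper semicontinuous and bounded above (as it takes values in $[-\infty,0)$ and is continuous on $(0,2\pi)$), so $\sup_{t\in\TT} F(\yy,t)$ is a lower semicontinuous function of $\yy$ on the compact space $\TT^n$; hence it attains its infimum at some $\ww$. Moreover $M<+\infty$, since for any $\yy$ with distinct coordinates $F(\yy,\cdot)$ is bounded above by the sum of the finite maxima of the $K_j$ on $(0,2\pi)$.

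Second, I would prove that the coordinates of the optimal $\ww$ together with $0$ are pairwise distinct, which yields the permutation $\sigma$ and the open sector $S$. The boundary hypothesis (either $K_j\in\mathcal{C}^1$ on $(0,2\pi)$ together with strict concavity, or $D_\pm K_j(t)\to\pm\infty$ as $t\downto 0$ or $t\upto 2\pi$) guarantees that merging two nodes creates an arc on which $F\equiv -\infty$ or a fold in $F$ that can be strictly reduced by splitting the nodes; a short perturbation argument using strict concavity excludes coincident nodes at the optimum.

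Third, for the equioscillation property (a) I would use the standard local perturbation method. Denote $m_j(\yy):=\sup_{t\in I_j(\yy)} F(\yy,t)$ and suppose for contradiction that at $\ww$ two such arc maxima differ, say $m_i(\ww)>m_j(\ww)$. By moving a single node $w_k$ that lies on the boundary between an arc with a strictly larger maximum and one with a strictly smaller maximum, one checks, via the one-sided derivatives of the $K_j$ and the envelope-type behaviour of the maxima, that $\max_\ell m_\ell$ strictly decreases to first order; iteration of this equalisation procedure is inconsistent with $\ww$ being a minimizer, so all $m_j(\ww)$ coincide. Part (b) then follows by applying the same perturbation argument to a maximizer of the dual problem $\sup_{\yy\in S}\min_j m_j(\yy)$: any such maximizer must also equioscillate, and uniqueness of the equioscillation point on $S$ (which follows from strict concavity, using the general machinery developed earlier in the paper) forces the two extremal configurations to coincide at $\ww$. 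Finally (c) is immediate, since for every $\xx,\yy\in S$ one has $\min_j m_j(\xx)\le \sup_{\yy'\in S}\min_j m_j(\yy')=M=\inf_{\xx'\in S}\max_j m_j(\xx')\le \max_j m_j(\yy)$.

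The main obstacle is the perturbation step giving equioscillation, and in particular making the first-order analysis of how each $m_j(\yy)$ depends on a small displacement of a node rigorous across the two regimes permitted by the hypotheses. The location of the supremum on $I_j(\yy)$ itself moves with $\yy$, and one must justify an envelope-type formula uniformly, both when the kernels are smooth on $(0,2\pi)$ and when they have infinite one-sided derivatives at the endpoints. The precise dichotomy in the hypothesis is tailored exactly to make this analysis work.
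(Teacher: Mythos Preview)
Your four-stage outline matches the paper's architecture, but the crucial third stage—the perturbation argument for equioscillation—has a genuine gap that the single-node move cannot fill.

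When you displace one node $w_k$, you change $F(\yy,t)=\sum_j K_j(t-y_j)$ globally on $\TT$, so \emph{every} arc maximum $m_j$ moves, with first-order effect $\partial m_j/\partial y_k=-K_k'(z_j(\yy)-y_k)$. There is no reason a single $k$ and a single sign make all the currently maximal $m_j$ decrease simultaneously: the sign of $K_k'(z_j-y_k)$ depends on where $z_j$ sits relative to $y_k$ on the circle, and different active arcs will typically give opposite signs. Your ``iteration of this equalisation procedure'' is not a proof; it could oscillate. The paper's key device is different: its Perturbation Lemma (Lemma~6.1) exploits the count $k\le n$ of active arcs versus $n$ free nodes to find, via linear algebra, a \emph{multi-node} direction $\mathbf a\in\RR^n\setminus\{0\}$ along which all active supporting-line slopes are simultaneously nonnegative, forcing $F(\ww+h\mathbf a,\cdot)$ strictly below $F(\ww,\cdot)$ near each active $z_j$. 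Strict concavity is used to get strict inequality. This is the substantive idea you are missing, and the ``main obstacle'' you flag is precisely here.

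Two smaller points. First, your distinctness argument is off: the hypotheses do \emph{not} force $K_j(0)=-\infty$, so merging two nodes need not make $F\equiv-\infty$ on any arc. In the paper, distinctness is not proved before equioscillation but \emph{after}: once one shows a local minimizer equioscillates (Proposition~6.7), a separate lemma (Corollary~6.5, relying on Lemma~6.3/6.4 and exactly the dichotomy in the hypothesis) shows an equioscillation point cannot sit on a simplex boundary. Second, for part~(b), the paper does not prove that the dual maximizer equioscillates by ``the same perturbation argument''; that direction would need increasing the minimal $m_j$'s, which the Perturbation Lemma does not give. Instead the paper shows each $m_j$ is a strictly concave function of $\yy$ on the simplex $S$ (Proposition~7.2), hence $\mm=\min_j m_j$ has a unique maximizer $\ws$; since $\mm(\ws)\ge\mm(\wS)=m_j(\wS)$ for all $j$, the point $\ws$ majorizes $\wS$, and a short argument (Corollary~8.2) shows no point other than $\wS$ can majorize a local minimizer of $\MM$, forcing $\ws=\wS$ and hence $M(S)=m(S)$. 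Your derivation of~(c) from~(b) is fine.
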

With the help of this result we shall prove a strengthening of Theorem \ref{th:ABE-HKS} in Corollary \ref{cor:HKSgen}.}

\medskip\noindent {\piros A particular connection of this problem with physics is the  field of
Calogero-Moser and the trigonometric Calogero-Moser-Sutherland systems
(of type A and BC).
In those models, there are $n$ particles on the unit circle and the
interaction potential
corresponds to the kernel $1/\sin^2(x)$.
Roughly speaking, if the particles are closer, then the repulsion
force among them is stronger.
 The positions of n particles
depend on time $t$. If one of the particles is fixed, and the others
are in pairs which are symmetric (say, the fixed particle is at $0$,
and the others are at $x$ and $2\pi-x$), then it is of BC type.
The equilibrium state means that the particles 
{\lila do not }
move, in some
sense it is a minimal energy configuration.
Then it is a simple fact that the equilibrium
configuration is the equidistant configuration only (see, e.g.
\cite{CalogeroPerelomov1978}, p. 110).
See also \cite{Calogero1977}, which is on the real line. We thank G\'abor Pusztai for informing us and providing references. In this application the kernels are the same so one can apply the result of Hardin, Kendall, Saff.
}

It is not really easy to interpret the situation {\piros of different kernels }
in terms of physics or potential theory
anymore. However, one may argue that in physics we do encounter some situations, e.g., in sub-atomic scales,
when different forces and laws can be observed simultaneously: strong kernel forces, electrostatic and gravitational forces etc. 
{\piros Also it can be that in the one-dimensional $n$-body problem though the potentials are the same, but the masses of the particles are different. This leads to our formulation with different kernels, more specifically to Theorem \ref{thm:mainspecialcase3} below, where $K_j=r_j K$ 
{\lila with } 
numbers $r_j>0$.} 

\medskip In any case, the reader will see that the generality here is clearly a powerful one: e.g., the above mentioned new solution (and generalization and extension to the torus) of Bojanov's problem of restricted Chebyshev polynomials requires this generality. Hopefully, in other equilibrium type questions the generality of the current investigation will prove to be of use, too.

\medskip
{\piros In this introduction it is not yet possible to formulate all the 
results of this paper,
because we need to discuss a couple of technical
details first, to be settled in Section \ref{sec:basdef}. One such, but not only technical, matter is the loss of symmetry with respect to the \emph{ordering} of the nodes, cf.{} the statement (a) of the previous Theorem \ref{thm:0mainspecialcase}.
Indeed,
in case of a fixed kernel to be translated (even if the external field is different), all permutations of the nodes $y_1,\dots,y_n$ are equivalent,
while
for different kernels $K_1,\dots,K_n$ we of course must distinguish between situations when the ordering of the nodes differ. Also, the original extremal problem can have \emph{different interpretations} according to consideration of \emph{one fixed order} of the kernels (nodes), or \emph{simultaneously all possible orderings} of them. We will treat \emph{both} types of questions, but the \emph{answers will be different}.
This is not a technical matter:
We will see that, e.g., it can well happen that in some prescribed ordering of the nodes (i.e., the kernels) the extremal configuration has equioscillation, while in some other ordering that fails.

\medskip 
We shall progress systematically with the aim of being as self-contained as possible and defining notation, properties and discussing details step by step. 
Our main result will only be proved in Section \ref{sec:summary}. 
In Section \ref{sec:basdef} we will first introduce the setup precisely, most importantly we will discuss the role of the permutation $\sigma$ appearing in 
Theorem \ref{thm:0mainspecialcase}, hoping that the reader will be satisfied with the motivation provided by this introduction. 
In subsequent sections we will discuss various aspects: continuity properties in Section \ref{sec:continuity}, 
other elementary properties motivated by Shi's setup \cite{shi}---like the Sandwich Property in Theorem \ref{thm:0mainspecialcase} (c)---in  Sections  \ref{sec:elemprops} and \ref{sec:equi}, limits and approximations in Section \ref{sec:approx}, concavity, distributions of local extrema in Sections \ref{sec:distrlocmins}, \ref{sec:concavity}, and \ref{sec:localprop}, 
existence and uniqueness of equioscillation points---as in Theorem \ref{thm:0mainspecialcase} (b)---in Section \ref{sec:equioscillation}. 
This systematic treatment is 
not only 
justified by the final proof  of Theorem \ref{thm:0mainspecialcase} and its far reaching consequences (an extension of the Hardin--Kendall--Saff result, see Corollary \ref{cor:HKSgen}, or Theorems \ref{thm:mainspecialcase3} and \ref{thm:bojanovgen}), 
but {\lila also} 
the developed techniques, such as Lemma \ref{lem:pertnodes00} or those in Section \ref{sec:approx}, are interesting in their own right and have the potential to prove themselves to be useful attacking also problems different from the present one. In Section \ref{sec:HKSmini} we sharpen the result, Theorem \ref{th:ABE-HKS}, of Hardin, Kendall, Saff by dropping the condition of the symmetry of the kernel. Finally, in Section \ref{sec:bojanovext} we shall describe, how extensions of  Bojanov's results can be derived via our equilibrium results.
}

\section{The setting of the problem}
\label{sec:basdef}
{\piros In this section we set up the framework and the notation for our investigations.}

For given $2\pi$-periodic kernel functions $K_0,\dots, K_n:\RR\to[-\infty,\infty)$ we are interested in solutions of minimax problems like
\begin{align*}
\inf_{y_0,\dots, y_n\in [0,2\pi)}\sup_{t\in [0,2\pi)} \sum_{j=0}^n K_j(t-y_j),
\end{align*}
and address questions concerning existence and uniqueness of solutions, as well as the distribution of the points $y_0,\dots, y_n$ (mod $2\pi$) in such extremal situations.
In the case when $K_0=\cdots=K_n$ similar problems were studied by Fenton \cite{Fenton} (on intervals), Hardin, Kendall and Saff \cite{Saff} (on the unit circle).
For twice continuously differentiable kernels an abstract framework for handling of such minimax problems was developed by Shi \cite{shi}, which in turn  is  based on the fundamental works of Kilgore \cite{kilgore1}, \cite{kilgore2}, and
de Boor, Pinkus \cite{deboor}  concerning  interpolation theoretic conjectures of Bernstein and Erd\H os. 
{\piros Apart from the fact that we do not generally pose $\Ce^2$-smoothness conditions on the kernels (as required by the setting of Shi), it will turn out that Shi's framework is not applicable in this general setting (cf.{} Example \ref{ex:diffbutnostrong} and Section \ref{sec:equi}).} 
The exact references will be given at the relevant places below, but let us  stress already here that we do not assume the functions $K_j$ to be smooth (in contrast to \cite{shi}), and that they may be different (in contrast to \cite{Fenton} and \cite{Saff}).

\medskip For convenience
we use the identification of
the unit circle (torus) $\TT$ with the interval $[0,2\pi)$ (with addition mod $2\pi$), and consider $2\pi$-periodic functions also as functions on $\TT$; we shall use the terminology of both frameworks, whichever comes more handy. 
So that we may speak about concave functions on $\TT$ (i.e., on $[0,2\pi)$), just as about arcs in $[0,2\pi)$ (i.e., in $\TT$); this shall cause no ambiguity. We also use the notation
\begin{equation}\label{eq:tmetric}
d_{\TT}(x,y)=\min\bigl\{|x-y|,2\pi-|x-y|\bigr\}\quad (x,y\in [0,2\pi]),
\end{equation}
and
\begin{equation}\label{eq:tnmetric}
d_{\TT^m}(\xx,\yy)=\max_{j=1,\dots,m}d_\TT(x_j,y_j) \quad (\xx,\yy\in \TT^m).
\end{equation}

\medskip {\piros
Let 
{\lila $K:(0,2\pi)\rightarrow(-\infty,\infty)$ } 
be a concave  function
which is not identically
$-\infty$, and suppose
\[
K(0):=\lim_{t\downto 0} K(t)=\lim_{t\upto 2\pi} K(t)=:K(2\pi),
\]
i.e., the two limits exist and they are the same. }
Such a function $K$ will
be called a \emph{concave kernel function} and
can be regarded as
a function on the torus $\TT$.

One of the conditions on the kernels that will be considered is the following:
\begin{equation}
\label{eq:kernsing}\tag{\hbox{$\infty$}}
K(0)=K(2\pi)=-\infty.
\end{equation}
{\piros Denote by $D_-f$ and $D_+f$ the left and right derivatives of a function $f$ defined on an interval, respectively. 
A \emph{concave} function $f$, defined on an open interval possesses at each points left and right derivatives $D_-f$, $D_+f$ with
{\lila $D_-f\leq D_+f$,  }
and these are non-increasing functions; moreover, $f$ is 
differentiable 
almost everywhere and (the a.e.~defined) $f'$ is non-increasing. 
Then, under condition \eqref{eq:kernsing} it is obvious that we must also have that
\begin{align}
\label{eq:kernsing'm}\tag{\hbox{$\infty'_-$}} &&\lim_{t\upto 0} D_+ K(t)=\lim_{t\upto 2\pi} D_+ K(t)&=\lim_{t\upto 2\pi} D_- K(t)=\lim_{t\upto 0} D_- K(t)=-\infty, \notag
\\\label{eq:kernsing'p}\tag{\hbox{$\infty'_+$}} \text{and}&&\lim_{t\downto 2\pi} D_- K(t)=\lim_{t\downto 0} D_- K(t)&=\lim_{t\downto 0} D_+ K(t)=\lim_{t\downto 2\pi} D_+ K(t)= \infty\notag.
\end{align}
We can  abbreviate this  by writing  $D_{\pm} K(2\pi)=D_{\pm} K(0)=\pm \infty$. 
These  assumptions then imply  $K'(\pm 0)=\pm \infty$. 
The two conditions \eqref{eq:kernsing'm} and \eqref{eq:kernsing'p} together constitute
\begin{equation}
D_{-} K(2\pi)=D_{-} K(0)=- \infty\quad\text{and}\quad D_{+} K(2\pi)=D_{+} K(0)=\infty. 
\tag{\hbox{$\infty'_{\pm}$}}
\label{eq:kernsing'pm}
\end{equation}}
More often, however, we shall make the following assumption on the kernel $K$:
\begin{equation}
\label{eq:kernsing'}\tag{\hbox{$\infty'$}}
D_{-} K(0)=- \infty\quad\text{or}\quad D_{+} K(0)=\infty.
\end{equation}

\medskip For $n\in \NN$ fixed let $K_0,\dots, K_n$ be concave kernel functions. We take $n+1$ points $y_0,y_1,y_2,\dots,y_n\in [0,2\pi)$, called \emph{nodes}. 
As a matter of fact, for definiteness, we shall always take $y_0=0\equiv 2\pi\mod 2\pi$. Then $\yy=(y_1,\dots,y_n)$ is called a \emph{node system}. For notational convenience we also
set $y_{n+1}=2\pi$.
 For a given node system $\yy$ we consider the function
\begin{equation}
\label{def:sumoftranslates}
F(\yy,t):=\sum_{j=0}^nK_j(t-y_j)=K_0(t)+\sum_{j=1}^nK_j(t-y_j).
\end{equation}
For a permutation $\sigma$ of $\{1,\dots,n\}$ we introduce the notation $\sigma(0)=0$ and $\sigma(n+1)=n+1$, and define the simplex
\[
S_\sigma:=\bigl\{\yy\in \TT^n: 0=y_{\sigma(0)}<y_{\sigma(1)}< \cdots < y_{\sigma(n)}< y_{\sigma(n+1)}=2\pi\bigr\}.
\]
In this paper the term \emph{simplex} is reserved exclusively for domains of this form.
{\piros Then $S_\sigma$ is
an
open subset of $\TT^n$ with
\[
\bigcup_{\sigma}\overline{S}_\sigma=\TT^n
\]
(here and in the future $\overline{A}$ denotes the closure of the set $A$) 
and the complement $\TT^n\setminus X$ of the set $X:=\bigcup_{\sigma}{S}_\sigma$
is the union of less than $n$-dimensional simplexes. Given a permutation $\sigma$ and
$\yy\in \overline{S_\sigma}$,
for  $k=0,\dots, n$ we define the arc $I_{\sigma,\sigma(k)}(\yy)$ (in the counterclockwise direction)
\[
I_{\sigma,\sigma(k)}(\yy):=[y_{\sigma(k)},y_{\sigma(k+1)}].
\]
For $j=0,\dots,  n$ we have $I_{\sigma,j}(\yy)=[y_j,y_{\sigma(\sigma^{-1}(j)+1)}]$. Of course, a priori, nothing prevents that some of these arcs $I_{\sigma,j}(\yy)$ reduce to a singleton, but their lengths sum up to $2\pi$
\[
\sum_{j=0}^n|I_{\sigma,j}(\yy)|=2\pi.
\]
Most of the time we will fix a simplex, hence a permutation $\sigma$. In this case we will leave out the notation of $\sigma$, and write $I_j(\yy)$ instead of $I_{\sigma,j}(\yy)$. 
If $\yy\in X$ the notation of $\sigma$ would be even superfluous, because, in this case, $\yy$  belongs to the interior of some uniquely determined simplex $S_\sigma$. Hence, $j$ and $\yy\in X$ uniquely determine $I_{\sigma,j}(\yy)$. However, for $\sigma\neq{\sigma'}$ and for $\yy\in \overline{S_\sigma}\cap \overline{S}_{\sigma'}$   on the (common) boundary, the system of arcs is still well defined,  
but the 
numbering of the arcs does 
depend on the permutations $\sigma'$ and $\sigma$.

\medskip \noindent
We set
\[
m_{\sigma,j}(\yy):=\sup_{t\in I_{\sigma,j}(\yy)} F(\yy,t),
\]
and as above, if $\sigma$ is unambiguous from the context, or if it is immaterial for the considerations, we leave out its notation, i.e., simply write $m_j(\yy)$. Saying that $S=S_\sigma$ is a simplex implies that the permutation $\sigma$ is fixed and the ordering of $m_j$ is understood accordingly.}

We also introduce the functions
\begin{align*}
\MM: &\TT^n\to [-\infty,\infty),\quad \MM(\yy):=\max_{j=0,\dots,n}m_j(\yy)=\sup_{t\in\TT} F(\yy,t),\\
\mm:&\TT^n\to [-\infty,\infty),\quad \mm(\yy):=\min_{j=0,\dots,n}m_j(\yy).
\end{align*}
{\piros (For example, here it is immaterial which $\sigma$ is chosen for a  particular $\yy$.)} Of interest are then the following two minimax type expressions:
\begin{align}
\label{eq:minmax}M&:=\inf_{\yy\in \TT^n} \MM(\yy)=\inf_{\yy\in \TT^n}\max_{j=0,\dots,n} m_j(\yy)=\inf_{\yy\in \TT^n} \sup_{t\in\TT} F(\yy,t),
\\ \label{eq:maxmin} m&:=\sup_{\yy\in \TT^n} \mm(\yy) = \sup_{\yy\in \TT^n} \min_{j=0,\dots,n}m_j(\yy).
\end{align}
Or, more specifically, for any given simplex $S=S_\sigma$ we may consider the problems:
\begin{align}
\label{eq:Sminmax}M(S)&:=\inf_{\yy\in S} \MM(\yy)=\inf_{\yy\in S}\max_{j=0,\dots,n}m_j(\yy)=\inf_{\yy\in S} \sup_{t\in\TT} F(\yy,t),\\
\label{eq:Smaxmin} m(S)&:=\sup_{\yy\in S} \mm(\yy)=\sup_{\yy\in S}\min_{j=0,\dots,n}m_j(\yy).
\end{align}
For notational convenience for any given set $A\subseteq \TT^{n}$ we also define
\begin{align*}
M(A):&=\inf_{\yy\in A}\MM(\yy)=\inf_{\yy\in A}\max_{j=0,\dots,n}m_j(\yy)=\inf_{\yy\in A} \sup_{t\in\TT} F(\yy,t) ,
\\ m(A):&=\sup_{\yy\in A}\mm(\yy)=\sup_{\yy\in A}\min_{j=0,\dots,n}m_j(\yy).
\end{align*}
It will be proved in Proposition \ref{prop:exists} below that $m(S)=m(\overline{S})$ and $M(S)=M(\overline{S})$.
Observe that then
we can also write
\begin{align}
\label{eq:minmaxplus}M &= \min_{\sigma}\inf_{\yy\in \overline{S_\sigma}} \MM(\yy)= \min_{\sigma} M(\overline{S}_\sigma),
\\ \label{eq:maxminplus} m&= \max_{\sigma}\sup_{\yy\in \overline{S_\sigma}} \mm(\yy) = \max_{\sigma} m(\overline{S}_\sigma).
\end{align}
We are interested in whether the infimum or supremum are always attained, and if so, what can be said about the extremal configurations.

\begin{example}
If the kernels are only concave  and not strictly concave, then the minimax  problem  \eqref{eq:Sminmax} may have many solutions, even
on the boundary $\partial S$ of $S=S_\sigma$.
Let $n$ be fixed, $K_0=K_1=\cdots=K_n=K$ and let $K$ be a symmetric kernel ($K(t)=K(2\pi-t)$)  which is constant $c_0$ on the interval $[\delta, 2\pi-\delta]$, where $\delta<\frac{\pi}{n+1}$. Then for any node system $\yy$ we have $\max_{t\in\TT^n} F(\yy,t)= (n+1) c_0$, because the $2\delta$ long intervals around the nodes cannot cover $[0,2\pi]$.
\end{example}

\begin{prp}\label{prp:estbelow}
For every $\delta>0$ there is $L=L(K_0,\dots,K_n,\delta)\geq 0$ such that for every $\yy\in \TT^{n}$ and for every $j\in \{0,\dots,n\}$ with $|I_j(\yy)|>\delta$ one has $m_j(\yy)\geq -L$.
\end{prp}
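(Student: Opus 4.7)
The plan is to produce, for each $\yy\in \TT^n$ and each index $j$ with $|I_j(\yy)|>\delta$, a single point $t^*\in I_j(\yy)$ at which every kernel $K_i$ is evaluated at an argument bounded away from the singularities at $0$ and $2\pi$. Then $m_j(\yy)\ge F(\yy,t^*)$ can be controlled by a constant depending only on $K_0,\dots,K_n$ and $\delta$.

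First I would fix a permutation $\sigma$ with $\yy\in\overline{S_\sigma}$ and $j=\sigma(k)$, so $I_j(\yy)=[y_{\sigma(k)},y_{\sigma(k+1)}]$, and take $t^*$ to be the point at arc distance $\delta/2$ from $y_j$ inside $I_j(\yy)$; this makes sense because $|I_j(\yy)|>\delta$, so $t^*$ lies in the interior of the arc. By construction
\[
d_\TT(t^*,y_j)=\delta/2\quad\text{and}\quad d_\TT(t^*,y_{\sigma(k+1)})=|I_j(\yy)|-\delta/2>\delta/2.
\]

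The key (and only nontrivial) step is to check that $d_\TT(t^*,y_i)\ge \delta/2$ for the remaining nodes as well. Every such $y_i$ lies in the closed complementary arc $\TT\setminus (y_{\sigma(k)},y_{\sigma(k+1)})$, so any path along the circle from $t^*$ to $y_i$ must cross at least one of the two endpoints of $I_j(\yy)$, giving
\[
d_\TT(t^*,y_i)\ge \min\bigl\{d_\TT(t^*,y_j),\,d_\TT(t^*,y_{\sigma(k+1)})\bigr\}\ge \delta/2.
\]
Equivalently, each argument $t^*-y_i\pmod{2\pi}$ lies in the compact interval $[\delta/2,\,2\pi-\delta/2]\subset(0,2\pi)$.

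To finish, I would use that each concave kernel $K_i$ is real-valued and continuous on $(0,2\pi)$, hence attains a finite minimum $-L_i$ on the compact set $[\delta/2,\,2\pi-\delta/2]$; then
\[
L:=\max\Bigl\{0,\;\sum_{i=0}^n L_i\Bigr\}
\]
depends only on $K_0,\dots,K_n$ and $\delta$, and
\[
m_j(\yy)\ge F(\yy,t^*)=\sum_{i=0}^n K_i(t^*-y_i)\ge -\sum_{i=0}^n L_i\ge -L.
\]
The only real obstacle is the geometric verification in the previous paragraph; once $t^*$ is known to sit at torus distance at least $\delta/2$ from \emph{every} node, the remainder is a routine compactness argument using the continuity of concave functions on their domain of finiteness.
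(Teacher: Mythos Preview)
Your proof is correct and follows essentially the same approach as the paper's: both find a point $t^*\in I_j(\yy)$ at torus distance at least $\delta/2$ from every node, then bound $F(\yy,t^*)$ from below using that each $K_i$ is continuous (hence bounded) on the compact interval $[\delta/2,2\pi-\delta/2]$. The only difference is cosmetic: the paper phrases this as ``$I_j(\yy)$ meets $B:=\TT\setminus\bigcup_i(y_i-\delta/2,y_i+\delta/2)$'' without naming a specific point, while you explicitly construct $t^*$ at arc distance $\delta/2$ from $y_j$ and verify the geometric claim. One tiny imprecision: your displayed equality $d_\TT(t^*,y_{\sigma(k+1)})=|I_j(\yy)|-\delta/2$ can fail when $|I_j(\yy)|-\delta/2>\pi$ (the shorter arc then goes the other way), but the needed inequality $d_\TT(t^*,y_{\sigma(k+1)})\ge\delta/2$ still holds, so the argument is unaffected.
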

\begin{proof}
Let $\delta\in (0,2\pi)$. Each function $K_j$, $j=0,\dots, n$ is bounded from below by $-L_j:=-L_j(\delta)\leq 0$ on $\TT\setminus (-\delta/2, \delta/2)$. So that for $\yy\in \TT^n$ the function $F(\yy,t)$ is bounded from below by $-L:=-(L_0+\cdots+ L_n)$ on $B:=\TT\setminus \bigcup_{j=0}^n(y_j-\delta/2,y_j+\delta/2)$.
Let $\yy\in \TT^{n}$ and $j\in \{0,\dots,n\}$ be such that $|I_j(\yy)|>\delta$, then there is $t\in B\cap I_j(\yy)$, hence  $m_j(\yy)\geq -L$.
\end{proof}

\begin{corollary} \label{cor:mMfin}
\begin{abc}
\item
The mapping $\MM$
is finite valued on $\TT^n$.
\item
$\MM$ is bounded.
\item For each simplex $S:=S_\sigma$ we have that $m(S),M(S)$ are finite, in particular $m,M\in \RR$.
\end{abc}

\end{corollary}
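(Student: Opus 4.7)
The plan is to derive both parts (a) and (b) simultaneously by proving a \emph{uniform} two-sided bound on $\MM$, and then deduce (c) as an easy consequence.

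For the upper bound, I would use that each concave kernel function $K_j:\TT\to [-\infty,\infty)$ is bounded above on $\TT$: since $K_j$ is upper semicontinuous on $[0,2\pi)$, continuous on the open interval, and never takes the value $+\infty$, its supremum $U_j:=\sup_{\TT}K_j$ is a finite real number. Therefore $F(\yy,t)\le U:=\sum_{j=0}^n U_j$ for every node system $\yy$ and every $t\in \TT$, which gives at once $\MM(\yy)\le U$ and in particular $\MM(\yy)<+\infty$.

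For the uniform lower bound, I would invoke a pigeonhole argument combined with Proposition~\ref{prp:estbelow}. For any $\yy\in \TT^n$ the arcs $I_j(\yy)$, $j=0,\dots,n$, partition $\TT$ and hence their lengths sum to $2\pi$; therefore at least one of them, say $I_{j^*}(\yy)$, has length at least $2\pi/(n+1)$. Fixing once and for all $\delta:=\pi/(n+1)$ and letting $L:=L(K_0,\dots,K_n,\delta)\ge 0$ be the constant furnished by Proposition~\ref{prp:estbelow}, we obtain $m_{j^*}(\yy)\ge -L$ and a fortiori
\[
\MM(\yy)=\max_{j=0,\dots,n} m_j(\yy)\ge m_{j^*}(\yy)\ge -L.
\]
In particular $\MM(\yy)>-\infty$ for every $\yy$, establishing (a), and the two-sided estimate $-L\le \MM(\yy)\le U$ holds uniformly in $\yy\in\TT^n$, establishing (b).

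For (c), the upper bound part is immediate: $\mm(\yy)\le \MM(\yy)\le U$ forces $m(A),M(A)\le U<+\infty$ for every $A\subseteq\TT^n$. The non-trivial half is the lower bound on $m(S)$, since nothing prevents $\mm$ from attaining $-\infty$ at degenerate configurations. Here I would exhibit one good configuration in each simplex $S=S_\sigma$: namely pick $\yy^{\circ}\in S_\sigma$ with $y^{\circ}_{\sigma(k)}=2\pi k/(n+1)$ for $k=1,\dots,n$, so that every arc $I_{\sigma,j}(\yy^{\circ})$ has length exactly $2\pi/(n+1)>\delta$. Another application of Proposition~\ref{prp:estbelow} yields $m_j(\yy^{\circ})\ge -L$ for \emph{every} $j$, hence $\mm(\yy^{\circ})\ge -L$, and therefore $m(S)\ge \mm(\yy^{\circ})\ge -L>-\infty$. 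The lower bound $M(S)\ge -L$ was already noted from (b). Finally, $M=\inf_{\TT^n}\MM$ and $m\ge \mm(\yy^{\circ})$ for any equidistant $\yy^{\circ}$ give $M,m\in\RR$.

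I do not foresee a real obstacle: the whole argument is a routine combination of concavity (for boundedness from above), pigeonhole on arc lengths, and the already proved Proposition~\ref{prp:estbelow} (for boundedness from below). The only point requiring a tiny bit of care is exhibiting a \emph{specific} point inside $S_\sigma$ where all arcs are bounded below in length, so as to certify that $\mm$ is not identically $-\infty$ on $S_\sigma$.
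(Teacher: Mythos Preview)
Your proof is correct and follows essentially the same approach as the paper: upper bound from boundedness above of the concave kernels, lower bound on $\MM$ via pigeonhole on arc lengths combined with Proposition~\ref{prp:estbelow}, and exhibiting an interior point of $S_\sigma$ to certify $m(S)>-\infty$. The only cosmetic differences are your choice of $\delta=\pi/(n+1)$ versus the paper's $\delta=2\pi/(n+2)$, and that you pick the equidistant configuration explicitly (obtaining the same uniform bound $-L$ for $m(S)$) whereas the paper just takes any $\yy\in S$ with distinct nodes and notes $m_j(\yy)>-\infty$.
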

\begin{proof}
Since $K_0,\dots, K_n$ are bounded from above, say by $C\geq 0$, $F(\yy,t)\leq (n+1)C$ for every $t\in\TT$ and $\yy\in \TT^{n}$. This yields $m(S),M(S)\leq (n+1)C$.

Take any $\yy\in S$ consisting of distinct nodes, so
$m_j(\yy)>-\infty$ for each $j=0,\dots,n$. Hence $m(S)\geq\min_{j=0,\dots,n}m_j(\yy)>-\infty$.

For $\delta:=\frac{2\pi}{n+2}$ take $L\geq0$ as in Proposition \ref{prp:estbelow}. Then for every $\yy\in S$
there is $j\in \{0,\dots,n\}$ with $|I_j(\yy)|>\delta$, so that for this $j$ we have $m_j(\yy)\geq-L$. This implies
$M(S)\geq M \geq -L>-\infty$.
\end{proof}

	\section{Continuity properties}
\label{sec:continuity}
	{\piros In this section we study the continuity properties of the various functions, $m_j$, 
$\mm$, $\MM$,  defined in Section \ref{sec:basdef}.
As
a consequence, we prove that for each of the problems \eqref{eq:Sminmax}, \eqref{eq:Smaxmin} extremal configurations exist, this is Proposition \ref{prop:exists}, a central statement of this section.}

	\medskip\noindent
	To facilitate the argumentation we shall consider $\RRR=[-\infty,\infty]$ endowed with the metric \[
    d_{\RRR}:
    [-\infty,\infty]\to \RR,\quad
    d_{\RRR}(x,y)
    :=|\arctan(x)-\arctan(y)|\]
	which makes it a compact metric space, with convergence meaning the usual convergence of real sequences to some finite or infinite limit. In this way, we may speak about uniformly continuous functions with values in $[-\infty,\infty]$. Moreover, $\arctan:[-\infty,\infty]\to [-\frac\pi2,\frac\pi2]$ is an order preserving homeomorphism, and hence $[-\infty,\infty]$ is order complete, and therefore a continuous function defined on a compact set attains maximum and minimum (possibly $\infty$ and $-\infty$).

By assumption any concave kernel function $K:\TT\to [-\infty,\infty)$ is (uniformly) continuous in  this extended sense.

	\begin{prp}\label{prop:cont0}
		For any concave kernel functions $K_0,\dots, K_n$ the sum of translates function
	\begin{align*}
	F&: \TT^n\times \TT\to [-\infty,\infty)	
	\end{align*}
    defined in \eqref{def:sumoftranslates}	is uniformly continuous (in the above defined extended sense).
	\end{prp}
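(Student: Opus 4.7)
The plan is to reduce the claim to Heine--Cantor. Since $\TT^n\times\TT$ is compact and the target space $(\RRR,d_\RRR)$ is compact too, any continuous map between them is automatically uniformly continuous. Hence the task boils down to proving continuity of $F$ as a map into $(\RRR,d_\RRR)$.

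First I would argue that each concave kernel $K_j$, with its common endpoint limits $K_j(0)=K_j(2\pi)$ (possibly equal to $-\infty$) and its continuity on $(0,2\pi)$, is continuous as a map from $\TT$ into $(\RRR,d_\RRR)$. At a point where $K_j=-\infty$ this uses only the upper semi-continuity; elsewhere it is the standing continuity assumption. From this it follows that every translate $(y_j,t)\mapsto K_j(t-y_j)$ is continuous on $\TT\times\TT$ into $(\RRR,d_\RRR)$, and, composing with the projection $(\yy,t)\mapsto(y_j,t)$, so is each summand on $\TT^n\times\TT$.

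The only real content then is the continuity of finite addition in the extended sense. Each $K_j$, being concave and upper semi-continuous on the compact set $\TT$, is bounded above by some constant $C_j<+\infty$. Consequently every partial sum lies in $[-\infty,\,C_0+\cdots+C_n]$ and never approaches $+\infty$. I would then verify that addition $(a,b)\mapsto a+b$ is continuous from $[-\infty,C]\times[-\infty,C]$ to $[-\infty,2C]$ in the $d_\RRR$ metric, for any fixed $C\in\RR$. For pairs with both coordinates finite this is the standard fact from calculus; the remaining case is when at least one coordinate equals $-\infty$, and there the elementary bound $a_k+b_k\le a_k+C$ immediately gives $a_k+b_k\to-\infty$. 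Iterating this continuity statement $n$ times yields continuity of the full sum $F$.

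I expect no serious obstacle here: the only trap is the potentially ill-defined expression $-\infty+\infty$, which is avoided thanks to the uniform upper bound on every summand. Heine--Cantor then promotes continuity to uniform continuity on the compact product $\TT^n\times\TT$, finishing the proof.
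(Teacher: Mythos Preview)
Your proposal is correct and follows essentially the same approach as the paper: establish continuity of $F$ into $(\RRR,d_{\RRR})$ and then invoke Heine--Cantor on the compact domain $\TT^n\times\TT$. The paper's own proof is more terse---it simply declares continuity of $F$ ``trivial'' given the continuity of the $K_j$'s and the fact that they never take the value $+\infty$, then cites compactness---whereas you have written out explicitly why addition is continuous on $[-\infty,C]\times[-\infty,C]$ and why the $-\infty+\infty$ trap is avoided; this extra care is justified but does not constitute a different route.
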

	\begin{proof}
	Continuity of $F$ (in the extended sense) is trivial since
the $K_j$'s
are  continuous in the sense described in the preceding paragraph.
Also, they do not take the value $\infty$. Since $\TT^n\times \TT$ is 
compact, 
uniform continuity follows.
\end{proof}

\medskip\noindent
Next, a node system $\yy$ determines $n+1$ arcs on $\TT$, and we would like to look at the continuity (in some sense) of the arcs as
a function
of the nodes. The technical difficulties are that the nodes may coincide and they may jump over $0\equiv2\pi$. 
Note that passing from one simplex to another one may  cause jumps in the definitions of the arcs $I_j(\yy)$, entailing jumps also in the definition of the corresponding $m_j$. 
{\piros Indeed, at points $\yy \in \TT^n\setminus X$, on the (common) boundary of some simplexes, the change of the arcs $I_j$ may be discontinuous. 
E.g., when $y_j$ and $y_k$ changes place (ordering changes between them, e.g., from $y_\ell<y_j \leq y_k<y_r$ to $y_\ell< y_k < y_j <y_r$), 
then the three arcs between these points will change from the system $I_\ell=[y_\ell,y_j], I_j=[y_j,y_k], I_k=[y_k,y_r]$ 
to the system $I_\ell=[y_\ell,y_k], I_k=[y_k,y_j], I_j=[y_j,y_r]$. 
This also means that the functions $m_j$ may be defined \emph{differently} on a boundary point $\yy \in \TT^n\setminus X$ 
depending on the simplex we use: the interpretation of the equality $y_j=y_k$ as part of the simplex with $y_j\leq y_k$ 
in general furnishes a different value of $m_j$ 
than the interpretation as  
part of the simplex with $y_k\leq y_j$ (when it becomes $\max_{t\in [y_j,y_r]} F(\yy,t)$).}

\medskip\noindent
 These problems can be overcome by the next
considerations.

\begin{remark}\label{rem:arcsystemcont} \label{rem:cut}
{\piros Let us fix any node system $\yy_0$, together with a small $0<\delta <\pi/(2n+2)$, then there exists an arc $I(\yy_0)$ among the ones determined by $\yy_0$, together with its center point $c=c(\yy_0)$ such that $|I(\yy_0)|> 2\delta$, so in a (uniform-) $\delta$-neighborhood
$U:=U(\yy_0,\delta):=\{ \xx\in \TT^n~:~ d_{\TT^n}(\xx,\yy_0)<\delta\}$
of $\yy_0\in\TT^n$, 
none of the nodes of the configurations can reach $c$. 
We {cut} the torus at $c$ and represent the points of the torus $\TT=\RR/2\pi\ZZ$ by the interval $[c,c+2\pi)\simeq [0,2\pi)$ and use the ordering of this interval.
Henceforth, such a cut---as well as the cutting point $c$---will be termed as an \emph{admissible cut}. Of course, the cut  depends on the fixed point $\yy_0$, but it will cause no confusion if this dependence is left out of the notation, as we did here.}

Moreover, for {\piros $\yy\in U$} and
$i=1,\dots,n$ we define
\begin{align*}	
\ell_i(\yy)&:=\min
\left\{
t\in [c,c+2\pi) ~:~ \#\{k:y_k\leq t\}\geq i
\right\},
\\
r_i(\yy)&:=
\sup\left\{
t\in [c,c+2\pi) ~:~ \#\{k:y_k\leq t\}\leq i
\right\},
\\
\widehat I_i(\yy)&:=[\ell_i(\yy),r_i(\yy)
],
\intertext{and we set}
\widehat I_0(\yy)&:=[c,\ell_1(\yy)]\cup [r_n(\yy),c+2\pi]=:[\ell_0(\yy),r_0(\yy)]\subseteq \TT \quad \text{(as an arc)}.
\end{align*}
Then $\widehat I_i(\yy)$ is the $i^{\text{th}}$ arc in this \emph{cut} of torus along $c$ corresponding to the node system $\yy$.
{\piros We immediately see the continuity of the mappings
\[
U\to \TT,\quad \yy\mapsto \ell_i(\yy)\in \TT\quad\text{and}\quad \yy\mapsto r_i(\yy)\in \TT
\]
at $\yy_0$ for each $i=0,\dots, n$.
 Obviously, the
 \emph{system of arcs}
 $\{ I_{\sigma,j}(\yy)~:~ j=0,\dots,n\}$ is the same as $\{ \widehat I_i(\yy)~:~ i=0,\dots,n\}$
 independently of $\sigma$.}
\end{remark}

	\begin{prp}\label{prop:cont}
		Let $K_0,\dots, K_n$ be any concave kernel functions, let $\yy_0\in \TT^n$ be a node system and let $c$ be an admissible cut
        (as in Remark \ref{rem:arcsystemcont}). Then for  $i=0,\dots,n$ the functions
		\begin{align*}
	\yy\mapsto		\widehat m_i(\yy):=\sup_{t\in \widehat I_i(\yy)}F(\yy, t)\in [-\infty,\infty]
		\end{align*}
	are continuous at $\yy_0$ (in the extended sense).
	\end{prp}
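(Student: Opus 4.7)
The plan is to combine the uniform continuity of $F$ on $\TT^n\times\TT$ (Proposition \ref{prop:cont0}, in the $d_{\RRR}$-extended sense) with the observation recorded in Remark \ref{rem:arcsystemcont} that each of the endpoints $\ell_i(\yy), r_i(\yy)$ depends continuously on $\yy$ at $\yy_0$. Once these are in hand, the compact arcs $\widehat I_i(\yy)\subseteq\TT$ vary continuously with $\yy$ in the Hausdorff sense, and we are in the familiar situation of the supremum of a continuous function over a continuously moving compact set. I would therefore split the statement into upper and lower semicontinuity at $\yy_0$ and handle them separately.

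For upper semicontinuity, take an arbitrary sequence $\yy_k\to\yy_0$ in $U$ and, after passing to a subsequence, assume that $\widehat m_i(\yy_k)$ converges in $[-\infty,\infty]$ to $\limsup_k\widehat m_i(\yy_k)$. Since $F(\yy_k,\cdot)$ is continuous on the compact arc $\widehat I_i(\yy_k)$ in the extended sense, there is a point $t_k\in\widehat I_i(\yy_k)$ realizing the supremum, i.e.\ $F(\yy_k,t_k)=\widehat m_i(\yy_k)$. A further subsequence gives $t_k\to t^*\in\TT$, and because $\ell_i(\yy_k)\to\ell_i(\yy_0)$ and $r_i(\yy_k)\to r_i(\yy_0)$ we have $t^*\in\widehat I_i(\yy_0)$. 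By Proposition \ref{prop:cont0} applied in the $d_{\RRR}$-metric, $F(\yy_k,t_k)\to F(\yy_0,t^*)$, and therefore $\limsup_k\widehat m_i(\yy_k)=F(\yy_0,t^*)\leq\widehat m_i(\yy_0)$.

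For lower semicontinuity, if $\widehat m_i(\yy_0)=-\infty$ there is nothing to prove, so suppose $\widehat m_i(\yy_0)>-\infty$ and pick $t^*\in\widehat I_i(\yy_0)$ with $F(\yy_0,t^*)=\widehat m_i(\yy_0)$ (attained by continuity on a compact set). I would produce $t_k\in\widehat I_i(\yy_k)$ with $t_k\to t^*$ by projecting $t^*$ onto $\widehat I_i(\yy_k)$: for $i\geq 1$ this is simply the clamp $t_k:=\max\{\ell_i(\yy_k),\min\{r_i(\yy_k),t^*\}\}$, while for $i=0$, working in the cut representation $[c,c+2\pi)$, the same clamping onto $[c,\ell_1(\yy_k)]$ or onto $[r_n(\yy_k),c+2\pi]$ (depending on which piece of $\widehat I_0(\yy_0)$ contains $t^*$) does the job. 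The continuity of the endpoints yields $t_k\to t^*$, and then Proposition \ref{prop:cont0} gives $F(\yy_k,t_k)\to F(\yy_0,t^*)$, so that $\liminf_k \widehat m_i(\yy_k)\geq \lim_k F(\yy_k,t_k)=\widehat m_i(\yy_0)$.

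The main obstacle I anticipate is bookkeeping rather than mathematics: the arc $\widehat I_0$ straddling the cut has to be written as two intervals in the $[c,c+2\pi)$ picture, and one must also treat the possibility that $\widehat I_i(\yy_0)$ degenerates to a single node, at which $F(\yy_0,\cdot)$ may equal $-\infty$ (if the relevant kernel is singular). In the latter case upper semicontinuity already forces $\widehat m_i(\yy_k)\to -\infty$, so the lower-bound step is automatic; and the admissibility of the cut (no node of any $\yy\in U$ reaches $c$) keeps the two pieces of $\widehat I_0$ uniformly separated from each other, so the two sub-cases of the projection are unambiguous for $\yy_k$ close to $\yy_0$.
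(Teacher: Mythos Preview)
Your argument is correct: you verify upper and lower semicontinuity directly by selecting maximizers along sequences and using the continuity of $F$ (Proposition~\ref{prop:cont0}) together with the continuity of the endpoints $\ell_i,r_i$ from Remark~\ref{rem:arcsystemcont}. The paper's proof rests on the same two ingredients but packages them differently: it composes $F$ with $\arctan$ to obtain a genuine real-valued continuous function $\arctan\circ F:\TT^n\times\TT\to[-\tfrac\pi2,\tfrac\pi2]$, invokes the standard fact that the maximum of a continuous function over a continuously moving compact interval is continuous, and then applies $\tan$ to recover $\widehat m_i$. This sidesteps the case distinctions you make (degenerate arcs, $\widehat m_i(\yy_0)=-\infty$, the two pieces of $\widehat I_0$) because once everything is real-valued and bounded, the classical argument needs no special handling. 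Your approach is more explicit and self-contained; the paper's is shorter but relies on the reader recognizing the $\arctan$ trick as reducing to a textbook situation.
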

\begin{proof}
	By Proposition \ref{prop:cont0} the function $\arctan\circ F:\TT^n \times \TT\to [-\frac\pi2,\frac\pi2]$ is  continuous at $\{\yy_0\}\times \TT$. Hence  $f_i(\yy):=\max_{t\in \widehat I_i(\yy)}\arctan\circ F(\yy,t)$ (and thus also $\widehat m_i=\tan\circ f_i$) is continuous, since $\ell_i$ and $r_i$ are continuous (see Remark \ref{rem:arcsystemcont}).
\end{proof}

The continuity of $\widehat m_i$ for fixed $i$ involves the cut of the torus at $c$. However, if we consider the system $\{m_0,\dots, m_n\}=\{\widehat m_0,\dots, \widehat m_n\}$ the dependence on the cut of the torus  can be cured. For $\xx\in \TT^{n+1}$ define
\begin{align*}
T_i(\xx)&:=\min\{t\in [c,c+2\pi)~:~ \exists k_0,\dots,k_i~{\text{s.t.}} ~ x_{k_0},\dots,x_{k_i}\leq t\}\quad \text{($i=0,\dots,n$)}
\intertext{and}
T(\xx)&:=(T_0(\xx),\dots, T_n(\xx)).
\end{align*}
The mapping $T$ arranges the coordinates of $\xx$
non-decreasingly
and it is easy to see that
$T:\RR^{n+1}\rightarrow \RR^{n+1}$ is continuous.

\begin{corollary}\label{cor:systemmjcont}
	For any concave kernel functions $K_0,\dots, K_n$ the mapping
	\[
	\TT^n\ni \yy\mapsto T(m_0(\yy),\dots,m_n(\yy))
	\]
	is (uniformly) continuous (in the extended sense).
\end{corollary}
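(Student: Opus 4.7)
The plan is to fix an arbitrary $\yy_0\in\TT^n$ and reduce continuity of $\yy\mapsto T(m_0(\yy),\dots,m_n(\yy))$ at $\yy_0$ to the continuity statement already proved in Proposition \ref{prop:cont}. To that end, I would invoke Remark \ref{rem:arcsystemcont} to pick an admissible cut $c=c(\yy_0)$ and a neighborhood $U=U(\yy_0,\delta)$ such that on $U$ the ``cut arcs'' $\widehat I_0(\yy),\dots,\widehat I_n(\yy)$ are well-defined and their endpoints $\ell_i(\yy), r_i(\yy)$ vary continuously in $\yy$. By Proposition \ref{prop:cont}, each of the functions $\widehat m_i(\yy)=\sup_{t\in \widehat I_i(\yy)}F(\yy,t)$, $i=0,\dots,n$, is then continuous at $\yy_0$ (in the extended sense, with values in $[-\infty,\infty]$).

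Next, I would invoke the last line of Remark \ref{rem:arcsystemcont}, which says that the \emph{system of arcs} $\{I_{\sigma,j}(\yy)~:~j=0,\dots,n\}$ coincides, as a set of arcs, with $\{\widehat I_i(\yy)~:~i=0,\dots,n\}$, regardless of the choice of $\sigma$. Since $m_j(\yy)$ and $\widehat m_i(\yy)$ are defined as the suprema of the same function $F(\yy,\cdot)$ over the arcs of these two systems, the two multisets
\[
\{m_0(\yy),\dots,m_n(\yy)\}\quad\text{and}\quad \{\widehat m_0(\yy),\dots,\widehat m_n(\yy)\}
\]
coincide (with multiplicities). Because the mapping $T$ simply rearranges the coordinates of its input non-decreasingly, its value depends only on the multiset of the input entries, and hence
\[
T\bigl(m_0(\yy),\dots,m_n(\yy)\bigr)=T\bigl(\widehat m_0(\yy),\dots,\widehat m_n(\yy)\bigr)\quad\text{for every }\yy\in U.
\]

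The right-hand side is continuous at $\yy_0$ as a composition of the continuous maps $\yy\mapsto (\widehat m_0(\yy),\dots,\widehat m_n(\yy))$ and $T$ (continuity of $T$ on $\RRR^{n+1}$ is standard for the sorting map and was noted right before the corollary). Therefore $\yy\mapsto T(m_0(\yy),\dots,m_n(\yy))$ is continuous at $\yy_0$, and since $\yy_0\in\TT^n$ was arbitrary, continuity holds on all of $\TT^n$. Finally, uniform continuity follows from the compactness of $\TT^n$ (together with compactness of the range $\RRR^{n+1}$ in the metric $d_{\RRR}$).

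The only real subtlety lies in the second paragraph, namely in verifying that the multisets of $m_j$-values and of $\widehat m_i$-values agree: once this identification is made, continuity of $T\circ(\widehat m_0,\dots,\widehat m_n)$ transfers directly to $T\circ(m_0,\dots,m_n)$, circumventing the discontinuity in the individual labeling $j\mapsto m_j(\yy)$ that occurs when one crosses the boundary between two simplexes $S_\sigma$ and $S_{\sigma'}$.
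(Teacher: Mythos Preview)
Your proof is correct and follows essentially the same route as the paper: the paper's argument is precisely that $T(m_0(\yy),\dots,m_n(\yy))=T(\widehat m_0(\yy),\dots,\widehat m_n(\yy))$ for every $\yy$, combined with the continuity of $\yy\mapsto(\widehat m_0(\yy),\dots,\widehat m_n(\yy))$ at any chosen $\yy_0$ from Proposition~\ref{prop:cont}. You have simply spelled out in more detail why the $T$-values agree (via the multiset identification) and why uniform continuity follows (compactness of $\TT^n$), but the underlying idea is identical.
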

\begin{proof}
	We have $T(m_0(\yy),\dots,m_n(\yy))=T(\widehat m_0(\yy),\dots,\widehat m_n(\yy))$ for any $\yy\in\TT$, while $\yy\mapsto (\widehat m_0(\yy),\dots,\widehat m_n(\yy))$ is continuous at any given point $\yy_0\in \TT^n$  and  for any {\piros fixed} admissible cut. But the left-hand term here  does not depend on the cut, so the assertion is proved.
\end{proof}

\begin{corollary}\label{cor:mmMMcont}
		Let  $K_0,\dots, K_n$ be any concave kernel functions.
 The functions $\MM: \TT^n\rightarrow (-\infty,\infty)$
and $\mm:\TT^n\rightarrow [-\infty,\infty)$ are continuous (in the extended sense).
\end{corollary}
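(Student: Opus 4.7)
The plan is to observe that $\MM(\yy)$ and $\mm(\yy)$ are symmetric functions of the tuple $(m_0(\yy),\dots,m_n(\yy))$, and hence can be read off from its non-decreasing rearrangement $T(m_0(\yy),\dots,m_n(\yy))$, whose continuity in $\yy$ is precisely the content of Corollary \ref{cor:systemmjcont}. The conceptual point this exploits is that although the individual functions $m_j$ may jump across boundaries of the simplexes $S_\sigma$ (because the labeling of arcs $I_{\sigma,j}$ depends on the permutation $\sigma$, cf.\ the discussion preceding Remark \ref{rem:arcsystemcont}), the symmetric functions $\min$ and $\max$ of the unordered multiset $\{m_0(\yy),\dots,m_n(\yy)\}$ are insensitive to any such relabelling.

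Concretely, I would observe that the map $T:\RRR^{n+1}\to\RRR^{n+1}$ arranges coordinates non-decreasingly, so the first coordinate projection $T_0$ picks out the minimum of a tuple and the last coordinate projection $T_n$ picks out the maximum. Therefore
\[
\mm(\yy)=T_0\bigl(m_0(\yy),\dots,m_n(\yy)\bigr),\qquad \MM(\yy)=T_n\bigl(m_0(\yy),\dots,m_n(\yy)\bigr).
\]
Since the coordinate projections $\RRR^{n+1}\to\RRR$ are continuous in the extended sense, composing them with the (uniformly) continuous mapping $\yy\mapsto T(m_0(\yy),\dots,m_n(\yy))$ from Corollary \ref{cor:systemmjcont} yields the continuity of $\MM$ and $\mm$ on $\TT^n$ in the extended sense.

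To complete the statement, I would invoke Corollary \ref{cor:mMfin}(a), which tells us that $\MM$ is finite-valued, so $\MM:\TT^n\to(-\infty,\infty)$ is in fact a real-valued continuous function, while $\mm$ may attain $-\infty$ (for instance when two nodes coincide or a node coincides with $y_0=0$) and is therefore continuous into $[-\infty,\infty)$ as stated.

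I do not foresee a real obstacle here: the heavy lifting has been carried out in Proposition \ref{prop:cont} and Corollary \ref{cor:systemmjcont}, and the only thing worth emphasizing is the symmetry argument that converts the continuity of the sorted tuple into continuity of $\MM$ and $\mm$.
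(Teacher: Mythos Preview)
Your proposal is correct and essentially the same as the paper's argument. The only cosmetic difference is that the paper cites Proposition~\ref{prop:cont} directly (together with Corollary~\ref{cor:mMfin} (a), (b)) rather than routing through Corollary~\ref{cor:systemmjcont}; since $\MM$ and $\mm$ are the max and min of the $\widehat m_i$, this comes to the same thing.
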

\begin{proof}
The assertion immediately follows from Proposition \ref{prop:cont} and Corollary \ref{cor:mMfin} (a) and (b).
\end{proof}
\begin{corollary}\label{cor:mjsimplexcont}
		Let  $K_0,\dots, K_n$ be any concave kernel functions, and let  $S:=S_\sigma$ be a simplex.
For $j=0,\dots,n$ the functions
		\[
		m_j: \overline{S} \to [-\infty,\infty]
		\]
		are (uniformly) continuous (in the extended sense).
\end{corollary}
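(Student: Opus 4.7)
\smallskip
\noindent\textbf{Plan of proof.} The strategy is to deduce the continuity of each $m_j$ on $\overline{S}$ from the pointwise continuity of the cut-indexed maxima $\widehat{m}_i$ given by Proposition \ref{prop:cont}, and then to leverage compactness of $\overline{S}$ to upgrade continuity to uniform continuity.  Since $\overline{S}$ is a closed, hence compact, subset of $\TT^n$, it suffices to establish pointwise continuity in the extended sense of each $m_j\colon\overline{S}\to[-\infty,\infty]$: Heine--Cantor applied to $\arctan\circ m_j$ then produces uniform continuity with respect to $d_{\TT^n}$ and $d_{\RRR}$.

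\smallskip
Fix $\yy_0\in\overline{S}$.  By Remark \ref{rem:arcsystemcont} choose an admissible cut $c\in\TT$ sitting at the centre of some arc of $\yy_0$ of length exceeding $2\delta$, together with the associated neighbourhood $U=U(\yy_0,\delta)\subseteq\TT^n$; by construction, no node of any $\yy\in U$ reaches $c$.  Proposition \ref{prop:cont} then guarantees that each of the cut-indexed functions $\widehat{m}_0,\ldots,\widehat{m}_n$ is continuous at $\yy_0$ in the extended sense.

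\smallskip
The heart of the proof is a local re-identification of the two indexings on $U\cap\overline{S}$.  On $\overline{S}$ the ordering $0=y_{\sigma(0)}\leq y_{\sigma(1)}\leq\cdots\leq y_{\sigma(n)}<2\pi$ is prescribed, and admissibility of $c$ prevents any node from crossing $c$ as $\yy$ varies in $U$.  Therefore the cyclic order of the nodes seen from $c$ is a fixed cyclic rotation of the $\sigma$-order throughout $U\cap\overline{S}$.  This produces a single permutation $\pi$ of $\{0,\ldots,n\}$, depending only on $\sigma$ and $c$, such that $I_{\sigma,j}(\yy)=\widehat{I}_{\pi(j)}(\yy)$ as subsets of $\TT$ for every $\yy\in U\cap\overline{S}$, and hence $m_j(\yy)=\widehat{m}_{\pi(j)}(\yy)$ on this set.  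Continuity of $\widehat{m}_{\pi(j)}$ at $\yy_0$ transfers directly to $m_j$; as $\yy_0\in\overline{S}$ was arbitrary, $m_j$ is continuous on all of $\overline{S}$, and the reduction described above then gives uniform continuity in the extended sense.

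\smallskip
The principal technical delicacy lies precisely in verifying that the permutation $\pi$ is genuinely constant on $U\cap\overline{S}$ at boundary configurations of $S$ where several nodes coincide and the corresponding arcs $I_{\sigma,j}(\yy)$ degenerate to singletons.  The admissibility of $c$ (which keeps $c$ away from every node throughout $U$) together with the frozen $\sigma$-ordering on $\overline{S}$ is what makes the two arc partitions of $\TT$ match under the same cyclic shift $\pi$ regardless of such degeneracies, and so the relation $m_j=\widehat{m}_{\pi(j)}$ survives these boundary points without change.
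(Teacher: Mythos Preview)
Your proof is correct and follows essentially the same approach as the paper's own proof: fix $\yy_0\in\overline{S}$, take an admissible cut $c$, identify $m_j$ locally with some $\widehat m_i$, and invoke Proposition \ref{prop:cont}. The paper's proof is terser---it simply asserts that ``there is some $i$, such that we have $m_j(\yy)=\widehat m_i(\yy)$ for all $\yy$ in a small neighborhood $U$ of $\yy_0$''---whereas you spell out explicitly why the bijection $\pi$ between the two indexings is constant on $U\cap\overline{S}$ (the $\sigma$-ordering is frozen on $\overline{S}$ and no node crosses $c$), and you make the passage from continuity to uniform continuity via compactness of $\overline{S}$ explicit.
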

\begin{proof}
	Let $\yy_0\in \overline{S}$, then there is an admissible cut at some $c$ (cf.{} Remark \ref{rem:arcsystemcont}) and there is some $i$, such that we have $m_j(\yy)=\widehat m_i(\yy)$ for all $\yy$ in a small neighborhood $U$ of $\yy_0$ in $S$. So the continuity follows from Proposition \ref{prop:cont}.
\end{proof}

\begin{remark}\label{rem:zjdef}
Suppose that the kernel functions are concave and at least one of them is strictly concave. {\piros For a fixed simplex $S_\sigma$ and $\yy\in S_\sigma$} also
$F(\yy,\cdot)$ is strictly concave on the interior of each arc
$I_j(\yy)$ and continuous on $I_j(\yy)$ (in the extended sense), so there is a
\emph{unique} $z_j(\yy)\in I_j(\yy)$ with
\[
m_j(\yy)=F(\yy,z_{j}(\yy))
\]
(this being trivially true if $I_j(\yy)$ is degenerate).
\end{remark}

If condition \eqref{eq:kernsing} holds, then it is evident that $z_j(\yy)$ belongs to the interior of $I_j(\yy)$ (if this latter is non-empty). {\piros However, we can obtain the same even under the weaker assumption \eqref{eq:kernsing'}, for which purpose we state the next lemma.}

\begin{lemma}\label{lem:new}
	Suppose that $K_0,\dots,K_n$ are concave kernel functions, with at least one of them
	strictly concave.
    \begin{abc}
	\item
	If condition \eqref{eq:kernsing'p} holds for $K_j$, then for any $\yy\in \TT^n$ the sum of translates function $F(\yy,\cdot)$ is strictly increasing on $(y_j,y_j+\varepsilon)$ for some  $\varepsilon>0$.
		\item
	If condition
\eqref{eq:kernsing'm}
holds for $K_j$, then for any $\yy\in \TT^n$
the sum of translates function
$F(\yy,\cdot)$ is strictly decreasing on $(y_j-\varepsilon,y_j)$ for some  $\varepsilon>0$.

	\end{abc}
	\end{lemma}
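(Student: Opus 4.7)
The plan is to exploit the fact that the singular one-sided derivative of $K_j$ at $0$ (resp.\ $2\pi$), guaranteed by \eqref{eq:kernsing'p} (resp.\ \eqref{eq:kernsing'm}), dominates the contributions of all other kernels on a sufficiently small one-sided neighborhood of $y_j$. Since $F(\yy,\cdot)$ is a sum of concave functions, it is itself concave on any arc that avoids the other kernel singularities, and one may then convert strict positivity (resp.\ negativity) of the one-sided derivative of $F$ into strict monotonicity.

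For part (a), I would first choose $\varepsilon_0 \in (0,2\pi)$ so small that the arc $(y_j, y_j+\varepsilon_0) \subset \TT$ contains no node $y_i$ with $y_i \neq y_j$ in $\TT$. Then each $K_i(\cdot-y_i)$ is concave on this arc, so $F(\yy,\cdot)$ is concave there with right derivative
\[
D_+ F(\yy,t) = \sum_{i=0}^n D_+ K_i(t-y_i).
\]
Next I would control the sum over $i\ne j$: for each $i$ with $y_i\neq y_j$ in $\TT$, the argument $t-y_i$ ranges over a compact subinterval of $(0,2\pi)$ on which $D_+K_i$ is finite, hence (by the non-increase of $D_+K_i$) bounded below by its value at the right endpoint. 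For $i\ne j$ with $y_i=y_j$ in $\TT$, non-increase of $D_+K_i$ gives $D_+K_i(t-y_j)\ge D_+K_i(\varepsilon_0)>-\infty$. Combining finitely many such bounds yields a finite constant $C$ with $\sum_{i\ne j} D_+K_i(t-y_i)\ge -C$ on $(y_j,y_j+\varepsilon_0)$.

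Now condition \eqref{eq:kernsing'p} applied to $K_j$ supplies $\lim_{s\downto 0} D_+K_j(s)=+\infty$, so shrinking $\varepsilon_0$ to a suitable $\varepsilon\in(0,\varepsilon_0)$ ensures $D_+K_j(s)>C+1$ for all $s\in(0,\varepsilon)$. Hence $D_+F(\yy,t)>1>0$ throughout $(y_j,y_j+\varepsilon)$, and because $F(\yy,\cdot)$ is concave and therefore locally absolutely continuous on this interval, strict positivity of its right derivative forces strict increase. Part (b) is carried out symmetrically: pick a left-sided arc free of other nodes, bound $\sum_{i\ne j} D_-K_i(t-y_i)$ from \emph{above} using non-increase of $D_-K_i$ (evaluated at left endpoints), and invoke \eqref{eq:kernsing'm} to drive $D_-K_j(t-y_j)\to-\infty$ as $t\upto y_j$. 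The only delicate point in either case is the possibility that several nodes coincide at $y_j$, in which case multiple kernels have simultaneous singularities there; this is handled precisely by exploiting the monotonicity of the one-sided derivatives of the coinciding kernels and evaluating them at an interior point of the neighboring arc rather than at the singular endpoint.
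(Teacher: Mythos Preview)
Your proposal is correct and follows essentially the same route as the paper: bound the one-sided derivatives of the non-singular summands using the monotonicity of $D_+K_i$ coming from concavity, then let the singular term $D_+K_j(t-y_j)\to+\infty$ dominate to force $D_+F(\yy,\cdot)>0$ on a short interval, whence strict monotonicity by concavity. The paper additionally splits off the trivial case $K_j(0)=-\infty$ (where $F(\yy,y_j)=-\infty$) before computing $D_+F(\yy,y_j)=\infty$, while you work entirely on the open interval $(y_j,y_j+\varepsilon)$ and thereby avoid that case distinction; your explicit handling of nodes coinciding with $y_j$ is also a bit more careful than the paper's, but the underlying idea is identical.
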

\begin{proof}(a)
	 Obviously, in case $K_j(0)=-\infty$, we also have
	$F(\yy,y_j)=-\infty$ and the assertion follows trivially since $F(\yy,\cdot)$ is concave on an interval $(y_j,y_j+\varepsilon)$, $\varepsilon>0$.
	So we may assume $K_j(0)\in \RR$, in which case
	$F(\yy, \cdot)$ is finite, continuous and concave  on
	$[y_j,y_j+\varepsilon]$ for some $\varepsilon>0$.
	Then for the fixed $\yy$ and for the function $f=F(\yy,\cdot)$ we have for any fixed
	$t\in (y_j,y_j+\varepsilon)$ that
	\[
	D_+f(y_j) =\lim\limits_{s\downto y_j} \sum_{k=0}^n D_+K_k(s-y_k) \ge
	\sum_{k=0, k\ne j}^n D_+K_k(t-y_k) + \lim\limits_{s\downto y_j} D_+K_j(s-y_j) = \infty,
	\]
	since $D_+K_k(\cdot-y_k)$ is
    non-increasing
    by concavity.
	Therefore, choosing $\varepsilon$ even smaller, we find that
	$D_+F(\yy,\cdot) >0$ in the interval $(y_j,y_j+\varepsilon)$, which implies that
	$F(\yy,\cdot)$ is strictly increasing in this interval.

	\medskip\noindent (b) Under condition
	\eqref{eq:kernsing'm}
	the proof is similar for the interval $(y_j-\varepsilon,y_j)$.
\end{proof}

{\piros \begin{prp}\label{prop:zjinter}
Suppose that $K_0,\dots,K_n$ are concave kernel functions, with at least one of them
	strictly concave. Let $S_\sigma$ be a simplex and let  $\yy\in S_\sigma$ (so that $\sigma$ is fixed, and $I_0(\yy),\dots,I_j(\yy)$ are well-defined). 
\begin{abc}
\item For each $j=0,\dots,n $ there is 
unique maximum point $z_j(\yy)$ of $F(\yy,\cdot)$ in $I_j(\yy)$, i.e., $F(\yy,z_j(\yy))=m_j(\yy)$.
\item
If condition \eqref{eq:kernsing'p} holds for $K_j$, and  $I_j(\yy)=[y_j,y_r]$ is non-degenerate, then $z_j(\yy)\neq y_j$.
\item
If condition \eqref{eq:kernsing'm} holds for $K_j$, and  $I_\ell(\yy)=[y_\ell,y_j]$ is non-degenerate, then $z_\ell(\yy)\neq y_j$.

\item If condition \eqref{eq:kernsing'pm} holds for each $K_j$, $j=0,\dots,n$,
then $z_j(\yy)$ belongs to the interior of $I_j(\yy)$
whenever 
 $I_j(\yy)$ 
is non-degenerate.
\end{abc}
\end{prp}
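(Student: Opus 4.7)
\medskip\noindent
\textbf{Plan of proof.}
For \textup{(a)} the strategy is to combine the upper semicontinuity of $F(\yy,\cdot)$ established in Proposition \ref{prop:cont0} with strict concavity of the sum. Since $I_j(\yy)$ is a compact arc and $F(\yy,\cdot)$ is continuous into $[-\infty,\infty)$ in the extended sense, it attains its supremum $m_j(\yy)$ somewhere in $I_j(\yy)$. Because the coordinates of $\yy\in S_\sigma$ are pairwise distinct, each summand $K_k(\cdot-y_k)$ remains finite on the open interior $(y_j,y_r)$ of $I_j(\yy)=[y_j,y_r]$, so $m_j(\yy)>-\infty$. For uniqueness I would argue by contradiction: if two distinct maximizers $s_1<s_2$ exist, both with finite value $m_j(\yy)$, then concavity of $F(\yy,\cdot)$ on $[y_j,y_r]$ forces $F(\yy,\cdot)\equiv m_j(\yy)$ on the whole segment $[s_1,s_2]$. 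This segment contains a nondegenerate subinterval of the open interior $(y_j,y_r)$ on which $F(\yy,\cdot)$ is \emph{strictly} concave (being the sum of the concave functions $K_k(\cdot-y_k)$, all finite there, with at least one strictly concave summand), a contradiction.

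For \textup{(b)} the plan is to invoke Lemma \ref{lem:new}(a). Under \eqref{eq:kernsing'p} for $K_j$, that lemma provides an $\varepsilon>0$ with $F(\yy,\cdot)$ strictly increasing on $(y_j,y_j+\varepsilon)$. If $K_j(0)=-\infty$, then $F(\yy,y_j)=-\infty<m_j(\yy)$ and $z_j(\yy)\neq y_j$ follows immediately. If instead $K_j(0)$ is finite, then the proof of Lemma \ref{lem:new}(a) actually establishes $D_+F(\yy,y_j)=+\infty$, which gives $F(\yy,t)>F(\yy,y_j)$ for every $t$ slightly larger than $y_j$, so again $y_j$ cannot be a maximizer. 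Part \textup{(c)} is entirely symmetric, using Lemma \ref{lem:new}(b) in place of (a). For \textup{(d)}, observe that \eqref{eq:kernsing'pm} imposed on every $K_k$ yields both \eqref{eq:kernsing'p} for $K_j$ and \eqref{eq:kernsing'm} for $K_r$ (the two endpoints of $I_j(\yy)=[y_j,y_r]$); then \textup{(b)} excludes $y_j$ and \textup{(c)} applied with the roles ``$\ell=j$'', ``$j=r$'' of its statement excludes $y_r$, forcing $z_j(\yy)$ into the open interior of $I_j(\yy)$.

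The only mildly delicate point in this plan is verifying, for uniqueness in \textup{(a)}, that $F(\yy,\cdot)$ is genuinely strictly concave on the open arc $(y_j,y_r)$: one needs every summand $K_k(\cdot-y_k)$ to be finite there, so that strict concavity of the distinguished strictly concave summand transfers to $F(\yy,\cdot)$ and is not obliterated by a $-\infty$ coming from elsewhere. This is guaranteed because each $K_k$ has its only possible singularity at $0\pmod{2\pi}$, while the distinctness of the coordinates of $\yy\in S_\sigma$ places every $y_k$ outside the open arc $(y_j,y_r)$. All remaining steps are routine applications of Proposition \ref{prop:cont0}, Lemma \ref{lem:new}, and standard properties of concave functions on intervals.
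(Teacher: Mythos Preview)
Your proposal is correct and follows essentially the same route as the paper. The paper's own proof is terse: for (a) it simply refers back to Remark~\ref{rem:zjdef} (strict concavity of $F(\yy,\cdot)$ on the open arc plus extended-sense continuity on the closed arc forces a unique maximizer), and for (b), (c) it says these follow from Lemma~\ref{lem:new}, with (d) then immediate from (b) and (c). Your write-up unpacks exactly these ingredients, including the ``mildly delicate point'' that all summands are finite on the open arc so strict concavity of one summand transfers to $F$; this is precisely what Remark~\ref{rem:zjdef} asserts without elaboration. One minor simplification: in (b) you do not need to go back into the proof of Lemma~\ref{lem:new} to extract $D_+F(\yy,y_j)=+\infty$. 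The lemma's conclusion (strict increase on $(y_j,y_j+\varepsilon)$) together with continuity at $y_j$ already gives $F(\yy,y_j)\le F(\yy,t_1)<F(\yy,t_2)$ for $y_j<t_1<t_2<y_j+\varepsilon$, so $y_j$ cannot be the maximizer.
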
}
\begin{proof} (a) Uniqueness of a maximum point, i.e., the definition of $z_j(\yy)$ has been already discussed in Remark \ref{rem:zjdef}.

\medskip\noindent The assertions (b) and (c) follow from Lemma \ref{lem:new}
and they imply (d).
\end{proof}

For the next lemma we need that the function $z_j$ is
well-defined for each $j=0,\dots,n$, so we need $F(\yy,\cdot)$
to be strictly concave, in order to which it suffices if at
least one of the kernels is strictly concave.

{\piros \begin{lemma}\label{lem:zjcont} 
Suppose that $K_0,\dots,K_n$ are concave kernel functions with at least one of them
strictly concave.
{\lila \begin{abc}
\item Let $S=S_\sigma$ be a simplex.
  (Recall that, because of strict concavity,  the maximum point $z_j(\yy)$ of
$F(\yy,\cdot)$ in $I_j(\yy)$ is unique for every $j=0,\dots,n$.) For
each $j=0,\dots, n$  the mapping
	\[
	z_j:\overline{S}\to \TT,\quad \yy\mapsto z_j(\yy)
	\]
	is continuous. 
    \item For a given $\yy_0\in \TT^n$ and an admissible cut of the torus (cf.{} Remark \ref{rem:cut}) the mapping
	\[
	\yy\mapsto \widehat z_i(\yy)
	\]
	is continuous at $\yy_0$.
    \end{abc}}
\end{lemma}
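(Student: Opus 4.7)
The plan is a standard compactness-plus-uniqueness argument, which for part (a) exploits that on the fixed simplex $\overline{S}=\overline{S}_\sigma$ the endpoints of
\[
I_j(\yy)=[y_j,y_{\sigma(\sigma^{-1}(j)+1)}]
\]
are coordinate projections of $\yy$, hence continuous on $\overline{S}$, and that the joint continuity of $F$ (Proposition \ref{prop:cont0}) and the continuity of $m_j$ (Corollary \ref{cor:mjsimplexcont}) are already at hand. Part (b) will then be proved by the same reasoning, replacing $I_j(\yy)$ by $\widehat I_i(\yy)$ and using the continuity of the endpoints $\ell_i,r_i$ observed in Remark \ref{rem:arcsystemcont}.

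For (a), fix $\yy_0\in\overline{S}$ and take a sequence $\yy_k\to \yy_0$ in $\overline{S}$. By compactness of $\TT$, any subsequence of $(z_j(\yy_k))$ has a further subsequence converging to some $z^*\in\TT$; relabeling, assume $z_j(\yy_k)\to z^*$. Since $z_j(\yy_k)\in I_j(\yy_k)$ and the endpoints of $I_j(\yy_k)$ converge to those of $I_j(\yy_0)$, one has $z^*\in I_j(\yy_0)$. By the uniform continuity of $F$ in the extended sense, $F(\yy_k, z_j(\yy_k))\to F(\yy_0,z^*)$; but $F(\yy_k, z_j(\yy_k))=m_j(\yy_k)\to m_j(\yy_0)$ by Corollary \ref{cor:mjsimplexcont}, so $F(\yy_0,z^*)=m_j(\yy_0)$, that is, $z^*$ is a maximizer of $F(\yy_0,\cdot)$ on $I_j(\yy_0)$. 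By the uniqueness recorded in Remark \ref{rem:zjdef}, $z^*=z_j(\yy_0)$. Since every cluster point of $(z_j(\yy_k))$ equals $z_j(\yy_0)$, the whole sequence converges, proving continuity.

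The only mildly delicate point is that the uniqueness step survives the edge cases. If $I_j(\yy_0)$ is degenerate (a singleton), then trivially $z^*$ and $z_j(\yy_0)$ both equal that point. If $I_j(\yy_0)$ has non-empty interior, then on the open interior $F(\yy_0,\cdot)$ is strictly concave (one of the $K_i$'s is strictly concave, and translation preserves strict concavity on arcs not containing $y_i$ in the interior) and finite valued, while it is upper semicontinuous in the extended sense on the closed arc; this is enough to guarantee a unique maximizer. For (b), working at the fixed $\yy_0$ with an admissible cut, the arc $\widehat I_i(\yy)$ has endpoints $\ell_i(\yy),r_i(\yy)$ that are continuous at $\yy_0$ by Remark \ref{rem:arcsystemcont}, and $\widehat z_i(\yy)$ coincides with some $z_{j}(\yy)$ (under the $\sigma$ valid at $\yy$); thus the same three-step scheme---extract a convergent subsequence, identify its limit as a maximizer via continuity of $F$ and of $\widehat m_i$ from Proposition \ref{prop:cont}, then invoke uniqueness---yields continuity of $\widehat z_i$ at $\yy_0$.
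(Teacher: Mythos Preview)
Your proposal is correct and follows essentially the same compactness-plus-uniqueness argument as the paper: take a sequence $\yy_k\to\yy_0$, pass to a convergent subsequence of $z_j(\yy_k)$, use continuity of $F$ and of $m_j$ to identify the limit as a maximizer, and conclude by uniqueness. You are in fact slightly more explicit than the paper in verifying $z^*\in I_j(\yy_0)$ and in handling the degenerate-arc edge case, but the core idea is identical.
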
}
\begin{proof}
{\piros Let $\yy_n\in \overline{S}$ with $\yy_n\to \yy\in \overline{S}$.} Then, by Proposition \ref{prop:cont}, $m_j(\yy_n)\to m_j(\yy) \in [-\infty,\infty)$.
 Let $x\in \TT$ be any accumulation point of the sequence $z_j(\yy_n)$, and by passing to a subsequence assume $z_j(\yy_n)\to x$.

	By definition of $z_j$, we have $F(\yy_n,z_j(\yy_n))=m_j(\yy_n)\to m_j(\yy)$, and by continuity of $F$ also $F(\yy_n,z_j(\yy_n))\to F(\yy,{x})$, so $F(\yy,{x})=m_j(\yy)$. But we have already remarked that by strict concavity there is a \emph{unique} point, where $F(\yy,\cdot)$ can attain its maximum on $I_j$ (this provided us the definition of $z_j(\yy)$ as a uniquely defined point in $I_j$). Thus we conclude $z_j(\yy)={x}$. 
    
    The second assertion follows from this in an
obvious way.
\end{proof}

\begin{prp}\label{prop:exists}
For a simplex $S=S_\sigma$ we always have $M(S)=M(\overline{S})$ and $m(S)=m(\overline{S})$. Furthermore, both minimax problems \eqref{eq:Sminmax} and \eqref{eq:Smaxmin} have finite extremal values, and both have an extremal node system, i.e., there are $\wS, \ws\in \overline{S}$ such that
	\begin{align*}
	&\MM(\wS)=M(S):=\inf_{\yy\in S}\MM(\yy)=M(\overline{S})=\min_{\yy\in \overline{S}}\MM(\yy)\in \RR,\\
	&\mm(\ws)=m(S):=\sup_{\yy\in {S}}\mm(\yy)=m(\overline{S})=\max_{\yy\in \overline{S}}\mm(\yy)\in \RR.
	\end{align*}
	\end{prp}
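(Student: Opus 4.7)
The closure $\overline{S}$ of a simplex $S=S_\sigma$ is a closed subset of the compact space $\TT^n$, hence compact. By Corollary \ref{cor:mmMMcont} both $\MM$ and $\mm$ are continuous on the whole of $\TT^n$ (in the extended sense, but by Corollary \ref{cor:mMfin}(a) $\MM$ is actually finite-valued, while $\mm$ is bounded above). Therefore, by the standard extreme value theorem on compact metric spaces, $\MM$ attains its minimum on $\overline{S}$ at some $\wS\in\overline{S}$, and $\mm$ attains its maximum on $\overline{S}$ at some $\ws\in\overline{S}$; this gives the existence statements. Finiteness of $M(\overline{S})$ and $m(\overline{S})$ follows from Corollary \ref{cor:mMfin}(c) together with the obvious bounds $M(\overline{S})\le M(S)<\infty$ and $m(\overline{S})\ge m(S)>-\infty$, combined with the upper bound $\MM\le (n+1)C$ on the one hand and the lower bound coming from Proposition \ref{prp:estbelow} on the other.

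\textbf{The equalities $M(S)=M(\overline{S})$ and $m(S)=m(\overline{S})$.} Since $S\subseteq\overline{S}$, one direction is trivial: $M(\overline{S})\le M(S)$ and $m(\overline{S})\ge m(S)$. For the reverse direction I would argue by density: by definition $\overline{S}$ is the topological closure of $S$ in $\TT^n$, so for every $\yy\in\overline{S}$ there is a sequence $\yy_k\in S$ with $\yy_k\to\yy$. Continuity of $\MM$ (resp.\ $\mm$) then yields $\MM(\yy_k)\to\MM(\yy)$ (resp.\ $\mm(\yy_k)\to\mm(\yy)$), which gives $M(S)\le\MM(\yy)$ for all $\yy\in\overline{S}$, hence $M(S)\le M(\overline{S})$; symmetrically, $m(S)\ge m(\overline{S})$.

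\textbf{Obstacles.} The argument is essentially a direct application of continuity and compactness, so there is no real obstacle beyond assembling the pieces already established in Corollaries \ref{cor:mMfin} and \ref{cor:mmMMcont}. The only point requiring a moment's thought is that the functions $m_j$ depend on the labelling of arcs and may behave discontinuously when we change simplex, but $\MM=\max_j m_j$ and $\mm=\min_j m_j$ only involve the unordered multiset $\{m_0(\yy),\ldots,m_n(\yy)\}$, which is independent of the choice of $\sigma$; this is exactly what makes the global continuity statement in Corollary \ref{cor:mmMMcont} available, so that the extreme value theorem can be applied on $\overline{S}$ without worrying about the simplex boundary.
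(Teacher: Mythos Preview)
Your proof is correct and follows essentially the same approach as the paper: continuity of $\MM$ and $\mm$ on $\TT^n$ (the paper cites Proposition \ref{prop:cont}, you cite its immediate consequence Corollary \ref{cor:mmMMcont}), combined with compactness of $\overline{S}$ and density of $S$ in $\overline{S}$, to obtain both the equalities $M(S)=M(\overline{S})$, $m(S)=m(\overline{S})$ and the existence of extremizers, with finiteness supplied by Corollary \ref{cor:mMfin}. Your explicit density argument and your remark on why the labelling issue does not affect $\MM,\mm$ are additional details the paper leaves implicit, but the substance is the same.
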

	\begin{proof}
	By Proposition \ref{prop:cont} the functions $\mm$ and $\MM$  are continuous (in the extended sense), whence we conclude  $m(S)=m(\overline{S})$ and $M(S)=M(\overline{S})$. Since $\overline{S}$ is compact, the function $\mm$ has a maximum on $\overline{S}$, i.e., \eqref{eq:Sminmax} has an extremal node system $\ws$. Similarly, $\MM$ has a minimum, meaning that \eqref{eq:Smaxmin} has an extremal node system $\wS$.

	Both of  these extremal values, however, must be \emph{finite}, according to Corollary \ref{cor:mMfin}.
	\end{proof}

 As a consequence, we obtain the following.
	\begin{corollary} \label{cor:minmaxexist}
		Both minimax problems \eqref{eq:minmax} and \eqref{eq:maxmin} have an extremal node system.
	\end{corollary}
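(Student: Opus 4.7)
The plan is to piggy-back on Proposition \ref{prop:exists}, which already gives existence on each closed simplex $\overline{S}_\sigma$, together with the finite decomposition $\TT^n=\bigcup_\sigma\overline{S}_\sigma$ coming from the finitely many permutations $\sigma$ of $\{1,\dots,n\}$. The key formulas are \eqref{eq:minmaxplus} and \eqref{eq:maxminplus}, which express $M$ and $m$ as a minimum (respectively maximum) over the finite index set of permutations of the values $M(\overline{S}_\sigma)$, $m(\overline{S}_\sigma)$.

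Concretely, I would proceed as follows. For each permutation $\sigma$, Proposition \ref{prop:exists} furnishes node systems $\wS_\sigma,\ws_\sigma\in\overline{S}_\sigma$ with $\MM(\wS_\sigma)=M(\overline{S}_\sigma)$ and $\mm(\ws_\sigma)=m(\overline{S}_\sigma)$, and both of these extremal values are finite. Since there are only finitely many permutations, the finite minimum $M=\min_\sigma M(\overline{S}_\sigma)$ in \eqref{eq:minmaxplus} is attained at some $\sigma_*$; then $\wS:=\wS_{\sigma_*}\in\TT^n$ satisfies $\MM(\wS)=M$, which is an extremal node system for \eqref{eq:minmax}. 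Symmetrically, the finite maximum in \eqref{eq:maxminplus} is attained at some $\sigma^*$, and $\ws:=\ws_{\sigma^*}$ is an extremal node system for \eqref{eq:maxmin}.

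Alternatively, and even more directly, one can bypass the simplex decomposition altogether: Corollary \ref{cor:mmMMcont} asserts that $\MM$ and $\mm$ are continuous on $\TT^n$ in the extended sense, and $\TT^n$ is compact, so $\MM$ attains its infimum and $\mm$ attains its supremum there; the finiteness of both extremal values is then guaranteed by Corollary \ref{cor:mMfin}. There is no real obstacle here—the only subtlety worth mentioning is that $\mm$ can a priori take the value $-\infty$ (on node systems with coinciding nodes when condition \eqref{eq:kernsing} is in force), but this poses no problem since continuity in the extended sense of $\RRR$ suffices for the attainment of extrema on a compact set, and the attained value is finite by Corollary \ref{cor:mMfin}.
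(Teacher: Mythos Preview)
Your proposal is correct and matches the paper's approach: the corollary is stated there simply as ``a consequence'' of Proposition \ref{prop:exists}, with the implicit reasoning being exactly your first argument via the finite decomposition \eqref{eq:minmaxplus}--\eqref{eq:maxminplus}. Your alternative direct argument through Corollary \ref{cor:mmMMcont} and compactness of $\TT^n$ is also valid and is in fact the same mechanism that underlies Proposition \ref{prop:exists} itself.
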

	To decide whether the extremal node systems belong to $S$ or to the boundary $\partial S$ is the subject of the next sections.

	\section{Approximation of kernels}\label{sec:approx}
	In this section 
    we consider sequences $K_j^{(k)}$ of kernel functions converging to $K_j$ as $k\to \infty$ for each $j=0,\dots,n$ (in some sense or another). The corresponding values of local maxima and related quantities will be denoted by $m_j^{(k)}(\xx)$, $\mm^{(k)}(\xx)$, $\MM^{(k)}(\xx)$, $m^{(k)}(S)$, $M^{(k)}(S)$, and we study the limit behavior of these as $k\to \infty$.  Of course, one has here a number of notions of convergence for the kernels, and we start with the easiest ones.

	\medskip
	Let $\Omega$ be a compact space and let  $f_n,f\in \Ce(\Omega;\RRR)$ (the set of continuous functions with values in $\RRR$). We say that $f_n\to f$ \emph{uniformly} (in the extended sense, e.s.{} for short) if $\arctan f_n\to \arctan f$ uniformly in the ordinary sense (as real valued functions).
	We say that $f_n\to f$ \emph{strongly uniformly} if for all $\varepsilon>0$ there is $n_0\in \NN$ such that
	\[
	f(x)-\varepsilon\leq f_n(x)\leq f(x)+\varepsilon\quad\text{for every $x\in K$ and $n\geq n_0$}.
	\]
	\begin{lemma}\label{lem:unifconv1}
		Let $f,f_n\in \Ce(\Omega;\bar\RR)$ be uniformly bounded from above. We then have $f_n\to f$ uniformly (e.s.) if and only if for each $R>0,\eta>0$ there is $n_0\in\NN$ such that for all $x\in \Omega$ and all $n\geq n_0$
		\begin{align}\label{eq:unifconv1}
		f_n(x)<-R+\eta\quad &\text{whenever $f(x)<-R$ and }\\
		\notag f(x)-\eta\leq f_n(x)\leq f(x)+\eta\quad & \text{whenever $f(x)\geq -R$.}
		\end{align}
		\end{lemma}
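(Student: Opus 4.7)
My plan is to pivot everything through the fact that $\arctan:[-\infty,\infty]\to[-\pi/2,\pi/2]$ is an order-preserving homeomorphism whose restriction to $\RR$ is $1$-Lipschitz (since $(\arctan)'(t)=1/(1+t^2)\leq 1$), and to exploit the common upper bound $C$ on all $f_n,f$ to confine the relevant values to $[-\infty,C]$. The two directions are then structurally symmetric.

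For the \emph{if} direction I would fix $\varepsilon>0$, pick $\eta:=\varepsilon/2$, and choose $R>0$ so large that $\arctan(-R+\eta)+\pi/2<\varepsilon$. With $n_0$ produced by the hypothesis at this $R,\eta$, I would split into two cases: if $f(x)\geq-R$ the stated condition gives $|f_n(x)-f(x)|\leq\eta$, which by $1$-Lipschitzness of $\arctan$ yields $|\arctan f_n(x)-\arctan f(x)|\leq\eta<\varepsilon$; if $f(x)<-R$, then both $\arctan f(x)$ and $\arctan f_n(x)$ lie in the short interval $[-\pi/2,\arctan(-R+\eta)]$, whose length is less than $\varepsilon$, and uniformity is immediate.

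For the \emph{only if} direction I would fix $R,\eta>0$ and introduce the strictly positive continuous functions $\phi(t):=\arctan(t+\eta)-\arctan(t)$ and $\psi(t):=\arctan(t)-\arctan(t-\eta)$. On the compact interval $[-R,C]$ they attain positive minima $\delta_1,\delta_2>0$. Together with $\delta_3:=\arctan(-R+\eta)-\arctan(-R)>0$, set $\delta:=\min(\delta_1,\delta_2,\delta_3)$, and invoke uniform convergence of $\arctan f_n$ to $\arctan f$ to obtain $n_0$ with $\sup_{x\in\Omega}|\arctan f_n(x)-\arctan f(x)|<\delta$ for $n\geq n_0$. For $f(x)\in[-R,C]$, the definitions of $\delta_1,\delta_2$ together with monotonicity of $\arctan$ force $\arctan(f(x)-\eta)\leq\arctan f_n(x)\leq\arctan(f(x)+\eta)$, hence $|f_n(x)-f(x)|\leq\eta$. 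For $f(x)<-R$, $\arctan f_n(x)<\arctan f(x)+\delta_3\leq\arctan(-R)+\delta_3=\arctan(-R+\eta)$ yields $f_n(x)<-R+\eta$.

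The only obstacle worth flagging is that $\tan$ has vertical asymptotes at $\pm\pi/2$, so a uniform conversion from $\arctan$-closeness back to value-closeness cannot work globally; it is precisely the uniform upper bound $C$ combined with the cutoff at $-R$ that carves out a compact window on which the positive moduli $\phi,\psi$ admit positive infima. Without the upper bound the scheme would collapse.
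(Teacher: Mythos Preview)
Your proof is correct and follows essentially the same route as the paper: both directions pivot through $\arctan$, split into the cases $f(x)\geq -R$ and $f(x)<-R$, and exploit the common upper bound $C$ to work on a compact window. The only cosmetic difference is that where the paper invokes uniform continuity of $\tan$ on $\arctan[-R-1,C+1]$ (respectively of $\arctan$ on $\RR$), you use the equivalent device of the positive infimum of $t\mapsto\arctan(t+\eta)-\arctan t$ on $[-R,C]$ (respectively the explicit $1$-Lipschitz bound for $\arctan$); the underlying mechanism is identical.
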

		\begin{proof}
        {\lila Let $C\geq 1$ be such that $f,f_n\leq C$ for each $n\in \NN$.}
			Suppose first that $f_n\to f$ uniformly (e.s.), and let $\eta>0$, $R>0$ be given. The set $L:=\arctan[-R-1,C+1]$ is compact in $(-\frac\pi2,\frac\pi2)$, and $\tan$ is uniformly continuous thereon. Therefore there is $\varepsilon\in(0,1]$ sufficiently small such 
that
\[\tan(s)-\eta\leq \tan (t)\leq \tan(s)+\eta\] 
whenever $|s-t|\leq \varepsilon$, $s\in\arctan[-R,C]$, {\lila in particular $\tan(\arctan(-R)+\varepsilon)\leq -R+\eta$. }
Let $n_0\in\NN$ be so large that 
$\arctan f(x)-\varepsilon\leq \arctan f_n(x)\leq \arctan f(x)+\varepsilon$ holds for every $n\geq n_0.$
			Apply the $\tan$ function to this inequality to obtain  that $f(x)-\eta\leq f_n(x)\leq f(x)+\eta$ for $x\in \Omega$ with $f(x)\in [-R,C]$, and
		 \[f_n(x)\leq \tan(\arctan f(x)+\varepsilon)<\tan(\arctan(-R)+\varepsilon)<-R+\eta\]
		  for $x\in \Omega$ with $f(x)<-R$.
	
		 \medskip\noindent Suppose now that condition \eqref{eq:unifconv1} involving $\eta$ and $R$ is satisfied, and let $\varepsilon>0$ be arbitrary. Take $R>0$ so large that $\arctan(t)<-\frac\pi 2+\varepsilon$ whenever $t<-R+1$. For $\varepsilon>0$ take $1>\eta>0$ according to the uniform continuity of $\arctan$. By assumption there is $n_0\in \NN$ such that for all $n\geq n_0$ we have \eqref{eq:unifconv1}. Let $x\in \Omega$ be arbitrary. If $f(x)<-R$, then
		 \begin{align*}
		 \arctan f(x)-\varepsilon<-\frac\pi 2&\leq \arctan f_n(x)\\
		 &\leq \arctan(-R+\eta)<-\frac\pi 2+\varepsilon<\arctan f(x)+\varepsilon.
		 \end{align*}
		 On the other hand, if $f(x)\geq -R$, then by the choice of $\eta$ and by the second part of \eqref{eq:unifconv1} we immediately obtain
		 \begin{equation*}
		 \arctan f(x)-\varepsilon< \arctan f_n(x)\leq \arctan f(x)+\varepsilon.
		 \end{equation*}
			\end{proof}
			The previous lemma has an obvious version for sequences that are not uniformly bounded from above. This is, however a bit more technical and will not be needed.	It is now also clear that strong uniform convergence implies uniform convergence. Furthermore, the next assertions follow immediately from the corresponding classical results about real-valued functions.
	\begin{lemma}\label{lem:unifconv}
For $n\in \NN$ let $f_n,g_n,f,g\in \Ce(\Omega;\RRR)$.
	\begin{abc}
		\item If $f_n,g_n\leq C<\infty$ and $f_n\to f$ and $g_n\to g$ uniformly (e.s.), then $f_n+g_n\to f+g$ uniformly (e.s.).
	\item If $f_n\downto f$ pointwise, i.e., if $f_n(x)\to f(x)$
    non-increasingly
    for each $x\in \Omega$, then $f_n\to f$
uniformly (e.s.).
	\item If $f_n\to f$ uniformly (e.s.), then $\sup f_n\to \sup f$ in $[-\infty,\infty]$.
	\end{abc}
	\end{lemma}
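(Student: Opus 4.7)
The overarching strategy is to reduce convergence in the extended sense to ordinary uniform convergence of bounded continuous real-valued functions on the compact space $\Omega$, via the order-preserving homeomorphism $\arctan:\RRR\to[-\tfrac{\pi}{2},\tfrac{\pi}{2}]$ that already features in the very definitions. Parts (b) and (c) will follow by direct transcription of the corresponding classical theorems through $\arctan$; part (a) requires slightly more care because addition does not commute with $\arctan$, and for it I plan to invoke the explicit $(R,\eta)$-criterion supplied by Lemma~\ref{lem:unifconv1}.

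For part (b), I would observe that monotonicity of $\arctan$ turns $f_n\downto f$ pointwise into $\arctan f_n\downto \arctan f$ pointwise on $\Omega$. All functions in sight are continuous, and $\Omega$ is compact, so the classical Dini theorem applies and yields uniform convergence of $\arctan f_n$ to $\arctan f$. By definition this is exactly uniform convergence of $f_n$ to $f$ in the extended sense.

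For part (c), the key observation is that $\arctan$, being an order-preserving homeomorphism, satisfies $\arctan(\sup h)=\sup(\arctan\circ h)$ for any function $h:\Omega\to\RRR$. Uniform convergence of $\arctan f_n$ to $\arctan f$ in the ordinary sense (as bounded real functions) trivially implies convergence of their suprema in $[-\tfrac{\pi}{2},\tfrac{\pi}{2}]$; applying the continuous function $\tan:[-\tfrac{\pi}{2},\tfrac{\pi}{2}]\to\RRR$ (continuous also at the endpoints, with values $\pm\infty$) converts this into $\sup f_n\to\sup f$ in $[-\infty,\infty]$.

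For part (a), I plan to use Lemma~\ref{lem:unifconv1}, whose uniform upper bound hypothesis is satisfied because $f_n+g_n\le 2C$ and, passing to pointwise limits, also $f+g\le 2C$. Given target parameters $R,\eta>0$, I would fix an auxiliary threshold $R':=2R+2|C|+1$ and apply Lemma~\ref{lem:unifconv1} to $f_n\to f$ and $g_n\to g$ with parameters $(R',\eta/2)$ to extract a common $n_0$. For $n\ge n_0$ and $x\in\Omega$ I would then distinguish two cases: if $(f+g)(x)\ge -R$ the upper bounds $f,g\le C$ force $f(x),g(x)\ge -R-C\ge -R'$, so the two $\eta/2$-estimates trigger and add to give $|(f_n+g_n)(x)-(f+g)(x)|\le\eta$; if $(f+g)(x)<-R$, I would split further according to whether each of $f(x),g(x)$ lies above or below $-R'$, using the upper bound $C$ to handle the mixed sub-cases. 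The only mild obstacle is the bookkeeping in the latter case: one must choose $R'$ generously enough relative to $R$ and $C$ so that in every sub-case the desired inequality $f_n(x)+g_n(x)<-R+\eta$ survives, and the choice $R'=2R+2|C|+1$ is a safe one, as a short arithmetic verification shows.
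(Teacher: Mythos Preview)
Your proposal is correct and follows essentially the same approach as the paper: the paper's own proof simply says that (a) can be based on Lemma~\ref{lem:unifconv1}, that (b) is a consequence of Dini's theorem, and that (c) follows from standard properties of $\arctan$ and $\tan$ together with the corresponding result for real-valued functions. You have supplied the details the paper omits, and your bookkeeping in part (a) with the auxiliary threshold $R'=2R+2|C|+1$ is sound.
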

	\begin{proof}
		(a) The proof can be based on Lemma \ref{lem:unifconv1}.
	
		\medskip\noindent (b) This is a consequence of Dini's theorem.
	
		\medskip\noindent (c) Follows from standard properties of $\arctan$ and $\tan$, and from the corresponding result for  real-valued functions.
		\end{proof}

	\begin{prp}\label{prp:unifconvm}
		Suppose the sequence of kernel functions $K_j^{(k)}\to K_j$ uniformly (e.s.) for $k\to \infty$ and $j=0,1, \dots, n$. Then for each simplex $S:=S_\sigma$ we have
		that $m^{(k)}_j\to m_j$ uniformly (e.s.) on $\Bar{S}$ ($j=0,1, \dots, n$). As a consequence, $m^{(k)}(S)\to m(S)$ and $M^{(k)}(S)\to M(S)$ as $k\to \infty$.
		\end{prp}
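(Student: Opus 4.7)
The plan is to move the uniform convergence (in the extended sense, abbreviated e.s.) from the kernels $K_j^{(k)}$ first up to the sum of translates $F^{(k)}$, then up to the suprema $m_j^{(k)}$, and finally to $\MM^{(k)}$, $\mm^{(k)}$ and the minimax values $M^{(k)}(S)$, $m^{(k)}(S)$. As a preliminary observation, since each concave kernel function $K_j$ is upper semi-continuous on the compact torus $\TT$ it is bounded above, and by the uniform convergence $K_j^{(k)}\to K_j$ (e.s.) the whole family $\{K_j^{(k)}: k\geq k_0,\ j=0,\dots,n\}$ is uniformly bounded above (apply Lemma \ref{lem:unifconv1} with some fixed $R$, $\eta$; for the finitely many small $k$ use individual bounds and take the maximum). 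First I would observe that each map $(\yy,t)\mapsto K_j^{(k)}(t-y_j)-K_j(t-y_j)$ on $\TT^n\times\TT$ depends only on the single variable $t-y_j$, so the uniform convergence of $K_j^{(k)}\to K_j$ on $\TT$ (e.s.) lifts trivially to uniform convergence on $\TT^n\times \TT$ (e.s.). An $(n+1)$-fold application of Lemma \ref{lem:unifconv}(a) then yields $F^{(k)}\to F$ uniformly (e.s.) on $\TT^n\times \TT$.

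For the key step $m_j^{(k)}\to m_j$ uniformly (e.s.) on $\overline{S}$, I would use the characterization in Lemma \ref{lem:unifconv1}. Given $R>0$ and $\eta>0$, invoke that lemma for $F^{(k)}\to F$ with parameters $R+1$ and $\eta/2$, obtaining $k_0$ such that for $k\geq k_0$ and every $(\yy,t)\in\TT^n\times\TT$ one has $F^{(k)}(\yy,t)<-R-1+\eta/2$ whenever $F(\yy,t)<-R-1$, and $|F^{(k)}(\yy,t)-F(\yy,t)|\leq \eta/2$ whenever $F(\yy,t)\geq -R-1$. Fixing $\yy\in\overline S$ and taking suprema over $t\in I_j(\yy)$: if $m_j(\yy)<-R$, then $F(\yy,t)<-R$ throughout $I_j(\yy)$, so in either case above $F^{(k)}(\yy,t)<-R+\eta$, giving $m_j^{(k)}(\yy)\leq -R+\eta$. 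If $m_j(\yy)\geq -R$, then the upper bound $F^{(k)}(\yy,t)\leq m_j(\yy)+\eta/2$ holds for every $t\in I_j(\yy)$ (again by splitting according to whether $F(\yy,t)\geq -R-1$ or not, assuming $\eta\leq 1$), so $m_j^{(k)}(\yy)\leq m_j(\yy)+\eta$; for the lower bound, since $m_j(\yy)$ is attained by strict concavity/upper semicontinuity on the compact arc $I_j(\yy)$, pick $t^\ast\in I_j(\yy)$ with $F(\yy,t^\ast)=m_j(\yy)\geq -R>-R-1$, whence $F^{(k)}(\yy,t^\ast)\geq m_j(\yy)-\eta/2$ and so $m_j^{(k)}(\yy)\geq m_j(\yy)-\eta$. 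These bounds are uniform in $\yy\in\overline S$ since $k_0$ does not depend on $\yy$, and applying the converse direction of Lemma \ref{lem:unifconv1} to $\{m_j^{(k)}\}\subset \Ce(\overline S;\RRR)$ gives the claimed uniform (e.s.) convergence.

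For the ``consequence'' part, note that $\MM=\max_{j=0,\dots,n} m_j$ and $\mm=\min_{j=0,\dots,n} m_j$ on $\overline{S}$; since $\arctan$ commutes with $\max$ and $\min$, uniform (e.s.) convergence of finitely many sequences is preserved under $\max$ and $\min$, so $\MM^{(k)}\to \MM$ and $\mm^{(k)}\to\mm$ uniformly (e.s.) on $\overline S$. Now Lemma \ref{lem:unifconv}(c) gives $\sup_{\yy\in\overline S}\mm^{(k)}(\yy)\to \sup_{\yy\in\overline S}\mm(\yy)$, and the analogous statement for infima (obtained by applying (c) to $-\MM^{(k)}$, or equivalently noting that the arguments in Lemma \ref{lem:unifconv1} are symmetric under $f\leftrightarrow -f$ up to boundedness from below, which does hold after possibly reducing to the compact set $\overline S$ where Corollary \ref{cor:mMfin} applies for large $k$) gives $M^{(k)}(S)\to M(S)$. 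Combined with Proposition \ref{prop:exists} (so that these suprema/infima are attained and equal to $m^{(k)}(S)$, $M^{(k)}(S)$), we conclude.

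The only mildly delicate point is the second step: the arcs $I_j(\yy)$ need not vary continuously on all of $\overline S$ if one crosses between simplices, but within a single simplex $\overline S=\overline{S_\sigma}$ each $I_j$ is a specific arc $[y_j,y_{\sigma(\sigma^{-1}(j)+1)}]$ and may only degenerate (to a point), in which case $m_j^{(k)}(\yy)=F^{(k)}(\yy,y_j)\to F(\yy,y_j)=m_j(\yy)$ (e.s.) is immediate. Thus the argument via suprema over varying arcs works without appealing to any continuity of the arcs themselves, because the uniform (e.s.) convergence of $F^{(k)}$ over the whole product $\TT^n\times\TT$ is strong enough to override the arc dependence.
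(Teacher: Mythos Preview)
Your proof is correct and follows essentially the same route as the paper: first obtain $F^{(k)}\to F$ uniformly (e.s.) on $\TT^n\times\TT$ via Lemma~\ref{lem:unifconv}(a), then pass to suprema over the arcs $I_j(\yy)$. The paper simply invokes Lemma~\ref{lem:unifconv}(c) for this second step (implicitly using that $\arctan$ is increasing and hence commutes with $\sup$, so uniform convergence of $\arctan F^{(k)}$ immediately gives uniform convergence of $\arctan m_j^{(k)}$), whereas you unpack the same argument through the characterization in Lemma~\ref{lem:unifconv1}; your added discussion of the consequence for $M^{(k)}(S)$ and $m^{(k)}(S)$ is likewise just a more explicit version of what the paper leaves to the reader.
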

		\begin{proof}
			The functions $F^{(k)}(\xx,t)=\sum_{j=0}^n K^{(k)}_j(t-x_j)$ are continuous on $\TT^{n+1}$ and converge uniformly (e.s.) to $F(\xx,t)=\sum_{j=0}^n K_j(t-x_j)$ by (a) of Lemma \ref{lem:unifconv}. So that we can apply part (c) of the same lemma, to obtain the assertion.
			\end{proof}

	We now relax the notion of convergence of the kernel functions, but, contrary to the above, we shall make essential use of the concavity of kernel functions. We say that a sequence of functions over a set $\Omega$ converges \emph{locally uniformly}, if this sequence of functions converges uniformly on each compact subset of $\Omega$.

\begin{remark}
	By using the facts that pointwise convergence of continuous monotonic functions, and pointwise convergence of concave functions, with a continuous limit function, is actually uniform (on compact intervals, see, e.g., \cite[Problems 9.4.6, 9.9.1]{Thomson} and \cite{Guberman}), it is not hard to see that if the kernel functions  $K_n$ converge to $K$ pointwise  on $[0,2\pi]$, then they even converge uniformly in the extended sense.
	\end{remark}

Recall the definitions of  $d_{\TT}(x,y)$ and $d_{\TT^m}(\xx,\yy)$ from \eqref{eq:tmetric} and \eqref{eq:tnmetric}.
	Define the compact set
	\[
	D:=\bigl\{ (\xx,t)~:~ \exists i\in\{0,1,\dots,n\}, \text{ such that }
	t=x_i\bigr\}=\bigcup_{i=0}^n \bigl\{ (\xx,t)~:~ t=x_i\bigr\}\subseteq \TT^{n+1}.
	\]

	\begin{lemma}\label{l:approx}
		Suppose the sequence of kernel functions $\Kkj$ converges to the kernel function $K_j$ locally
		uniformly on $(0,2\pi)$. Then $F^{(k)}(\xx,t)\to F(\xx,t)$ locally uniformly on $\TT^{n+1}\setminus D$, i.e., for every compact subset $H\subseteq \TT^{n+1}\setminus D $
		 one has $F^{(k)}(\xx,t)\to F(\xx,t)$ uniformly on $H$ as $k\to\infty$.
	\end{lemma}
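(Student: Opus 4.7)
The plan is to reduce the local uniform convergence of $F^{(k)}$ on $\TT^{n+1}\setminus D$ to the assumed local uniform convergence of each $K_j^{(k)}\to K_j$ on $(0,2\pi)$, summand by summand. The only thing that needs to be checked is that, when $(\xx,t)$ ranges over a given compact $H\subseteq \TT^{n+1}\setminus D$, each argument $t-x_j$ (mod $2\pi$) stays inside a single compact subinterval of $(0,2\pi)$, away from the singular endpoints $0\equiv 2\pi$.

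First, I would fix a compact $H\subseteq \TT^{n+1}\setminus D$ and, for each $j\in\{0,1,\dots,n\}$, consider the continuous function $(\xx,t)\mapsto d_\TT(t-x_j,0)$ on $\TT^{n+1}$. Because $H\cap D=\emptyset$, this function is strictly positive on $H$, and since $H$ is compact it attains a positive minimum $\delta_j>0$. Setting $\delta:=\min_{j=0,\dots,n}\delta_j>0$, I would conclude that for every $(\xx,t)\in H$ and every $j$, the representative of $t-x_j$ in $[0,2\pi)$ lies in the compact subinterval $[\delta,2\pi-\delta]\subseteq (0,2\pi)$.

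Next, I would invoke the hypothesis: since $K_j^{(k)}\to K_j$ locally uniformly on $(0,2\pi)$ for each $j$, and $[\delta,2\pi-\delta]$ is a compact subset, for any $\varepsilon>0$ there is $k_0$ such that for all $k\geq k_0$, all $j=0,\dots,n$, and all $s\in[\delta,2\pi-\delta]$ one has $|K_j^{(k)}(s)-K_j(s)|<\varepsilon/(n+1)$. Applying this to $s=t-x_j$ (representative in $[\delta,2\pi-\delta]$) and summing over $j$ gives $|F^{(k)}(\xx,t)-F(\xx,t)|<\varepsilon$ uniformly on $H$ for $k\geq k_0$, which is exactly the uniform convergence on $H$ claimed.

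There is no real obstacle; the only subtle point is guaranteeing the uniform separation of $t-x_j$ from $0\pmod{2\pi}$, but this follows purely from the compactness of $H$ and its disjointness from $D=\bigcup_j\{t=x_j\}$. Note that we do not need to invoke concavity here — only the assumed convergence and continuity — so the lemma holds for arbitrary continuous kernels enjoying the stated mode of convergence.
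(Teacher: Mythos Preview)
Your proof is correct and follows essentially the same approach as the paper's: both arguments use compactness of $H$ and disjointness from $D$ to obtain a uniform $\delta>0$ with $t-x_j\in[\delta,2\pi-\delta]$ for all $(\xx,t)\in H$ and all $j$, then apply the assumed local uniform convergence on that compact subinterval and sum over $j$. The only cosmetic difference is that the paper extracts the separation via the single quantity $\rho:=d_{\TT^{n+1}}(H,D)>0$, whereas you take per-index minima $\delta_j$ and then $\delta=\min_j\delta_j$; your concluding remark that concavity is not used here is also accurate and consistent with the paper's argument.
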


	Note that in general $F$ can attain $-\infty$, and that
	convergence in $0$ of the kernels is not postulated.

	\begin{proof} Because of
	compactness of $H$ and $D$  we have
$0<\rho:=d_{\TT^{n+1}}(H,D)$.	

Take $0<\delta<\rho$ arbitrarily and consider for any
	$(\xx,t)\in H$ the defining expression
	$F^{(k)}(\xx,t):=\sum_{i=0}^n \Kki(t-x_i)$.
For points of $H$ we have $|t-x_i|\geq\min\left(|t-x_i|,2\pi-|t-x_i|\right)=d_\TT(t,x_i)=d_{\TT^{n+1}}\left((\xx,t),(\xx,x_i)\right)\geq \rho>\delta$.
    In other words, $\Phi_i(H)\subset [\delta,2\pi-\delta]$
	for $i=0,1,\dots,n$, where $\Phi_i(\xx,t):=t-x_i$ is
	continuous---hence also uniformly continuous---on the whole
	$\TT^{n+1}$.

	As the locally uniform convergence of $\Kki$ (to $K_i$) on
	$(0,2\pi)$ entails uniform convergence on $[\delta,2\pi-\delta]$, we
	have uniform convergence of $f_i^{(k)}:=\Kki \circ \Phi_i$ on the compact set
	$H$ (to the function $K_i\circ \Phi_i$). It follows that
	$F^{(k)}=\sum_{i=0}^n f_i^{(k)}$ converges uniformly (to
	$F=\sum_{i=0}^n f_i$) on $H$, whence the assertion follows.
	\end{proof}

	\begin{lemma}\label{lem:Kconvuv}
		Let $K:(0,2\pi)\to \RR$ be {\lila any} concave function (so $K$ has limits, possibly $-\infty$, at $0$ and $2\pi$, {\lila defining $K(0)$ and $K(2\pi)$}).
		For each $u,v\in[0,1]$
	 we have
		\begin{align*}
		K(u) & \le K(u+v) - v\bigl(K(\pi+1/2)-K(\pi-1/2)\bigr), \\
		K(2\pi-u) &\le K(2\pi-u-v) + v\bigl(K(\pi+1/2)-K(\pi-1/2)\bigr).
		\end{align*}
	\end{lemma}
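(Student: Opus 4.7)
The plan is to exploit the monotonicity of chord slopes for concave functions. Set $C := K(\pi+1/2) - K(\pi-1/2)$, which is the slope of the chord of $K$ over the unit-length subinterval $[\pi-1/2,\pi+1/2]$ lying safely inside $(0,2\pi)$. The two inequalities then reduce to comparing the slope of $K$ on a short interval near $0$ (respectively near $2\pi$) with $C$.

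For the first inequality, observe that since $u,v\in[0,1]$, the interval $[u,u+v]$ is contained in $[0,2]$ and therefore lies strictly to the left of $[\pi-1/2,\pi+1/2]$, because $2<\pi-1/2$. Assume first $v>0$ and $u>0$, so that $K$ is finite on $[u,u+v]$. The standard chord-slope monotonicity for concave functions (for $a<b\le c<d$ in the domain, the chord slope over $[a,b]$ is at least that over $[c,d]$) applied to $[u,u+v]$ and $[\pi-1/2,\pi+1/2]$ gives
\[
\frac{K(u+v)-K(u)}{v}\ \ge\ \frac{K(\pi+1/2)-K(\pi-1/2)}{1}=C,
\]
which rearranges to $K(u)\le K(u+v)-vC$, as required.

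The second inequality follows by the symmetric argument: since $u,v\in[0,1]$ we have $2\pi-u-v\ge 2\pi-2>\pi+1/2$, so $[2\pi-u-v,2\pi-u]$ lies strictly to the right of $[\pi-1/2,\pi+1/2]$. Chord-slope monotonicity now yields
\[
C\ \ge\ \frac{K(2\pi-u)-K(2\pi-u-v)}{v},
\]
equivalent to $K(2\pi-u)\le K(2\pi-u-v)+vC$.

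The only delicate points are the degenerate ones. If $v=0$ both sides agree and both inequalities are trivial. If $u=0$ in the first inequality, then either $K(0)=-\infty$ (in which case the inequality holds trivially) or $K(0)$ is finite, and one recovers the inequality by applying the $v>0$, $u>0$ case on $[t,u+v]$ and letting $t\downto 0$, using $K(0)=\lim_{t\downto 0}K(t)$. The analogous limiting argument at $2\pi$ handles the boundary case of the second inequality. No genuine obstacle is anticipated; the whole proof is essentially a one-line application of chord-slope monotonicity once one checks the numerical separations $u+v\le 2<\pi-1/2$ and $2\pi-u-v\ge 2\pi-2>\pi+1/2$.
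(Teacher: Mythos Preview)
Your proof is correct and follows essentially the same approach as the paper: both arguments reduce to the chord-slope monotonicity inequality for concave functions, applied to the pair of intervals $[u,u+v]$ and $[\pi-1/2,\pi+1/2]$ for the first estimate and to $[\pi-1/2,\pi+1/2]$ and $[2\pi-u-v,2\pi-u]$ for the second, with the degenerate cases $u=0$ and $v=0$ handled separately by limiting and triviality, respectively.
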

	\begin{proof}It is sufficient to prove the statement for $u>0$ only, as the case $u=0$ follows from that by
 passing to the limit.

		Also we may suppose $v>0$ otherwise the inequalities are trivial.
	By concavity of $K$ for any system of four points
		$0<a<b<c<d<2\pi$ we clearly have the inequality
		\[
		\frac{K(b)-K(a)}{b-a}\geq \frac{K(d)-K(c)}{d-c}
		\]
see e.g. \cite{WayneVarberg}, p. 2, formula (2).
		 Specifying
$a:=u$, $b:=u+v \le 2 < c:=\pi-1/2$ and $d:=\pi+1/2$		
        yields the first inequality, while for
$a:=\pi-1/2$, $b:=\pi+1/2 < 4 < c:=2\pi-u-v$ and $d:=2\pi-u$,	
        we obtain the second one.
			\end{proof}
	
	\begin{thm}\label{thm:approx} Suppose that the kernels are such that
for all $\xx\in \TT^n$ and $z\in\TT$ with $F(\xx,z)=\MM(\xx)$
one has $z\ne x_j$, $j=0,\dots,n$.
	If the sequence of kernel functions $\Kkj \to K_j$ locally
	uniformly on $(0,2\pi)$, then $\MM^{(k)}(\xx)\to \MM(\xx)$
	uniformly on $\TT^n$.
	\end{thm}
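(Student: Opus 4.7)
The plan is to prove uniform convergence via \emph{continuous convergence}: for every sequence $\xx^{(k)}\to\xx^*$ in $\TT^n$ I will verify $\MMk(\xx^{(k)})\to\MM(\xx^*)$. Since $\MM$ is continuous on the compact space $\TT^n$ by Corollary \ref{cor:mmMMcont}, a routine subsequence argument upgrades such continuous convergence to uniform convergence; were uniform convergence to fail, one could extract $\xx^{(k)}$ with $|\MMk(\xx^{(k)})-\MM(\xx^{(k)})|\geq\varepsilon$, pass to a convergent subsequence $\xx^{(k)}\to\xx^*$, and contradict the continuous convergence together with continuity of $\MM$.

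For the $\liminf$ bound, fix $\xx^{(k)}\to\xx^*$. By the hypothesis there is $z\in\TT$ with $F(\xx^*,z)=\MM(\xx^*)$ and $z\ne x^*_j$ for every $j$, so $(\xx^*,z)\notin D$. On a compact neighborhood of $(\xx^*,z)$ disjoint from $D$, Lemma \ref{l:approx} gives uniform convergence $F^{(k)}\to F$; hence $F^{(k)}(\xx^{(k)},z)\to F(\xx^*,z)=\MM(\xx^*)$, and the trivial lower bound $\MMk(\xx^{(k)})\geq F^{(k)}(\xx^{(k)},z)$ yields $\liminf_k \MMk(\xx^{(k)})\geq \MM(\xx^*)$.

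For the $\limsup$ bound, pick $t^{(k)}\in\TT$ with $F^{(k)}(\xx^{(k)},t^{(k)})\geq \MMk(\xx^{(k)})-1/k$ and, after extracting subsequences, assume $t^{(k)}\to t^*$. If $t^*\ne x^*_j$ for all $j$, then $(\xx^*,t^*)\notin D$ and Lemma \ref{l:approx} immediately gives $F^{(k)}(\xx^{(k)},t^{(k)})\to F(\xx^*,t^*)\leq \MM(\xx^*)$. In the delicate case $t^*=x^*_j$ for some $j$, set $J:=\{i:x^*_i=t^*\}$. For $i\notin J$ the argument $t^{(k)}-x^{(k)}_i$ stays in a fixed compact subset of $(0,2\pi)$, so $K^{(k)}_i(t^{(k)}-x^{(k)}_i)\to K_i(t^*-x^*_i)$ by local uniform convergence. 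For $i\in J$, further subsequence extraction makes each $a^{(k)}_i:=t^{(k)}-x^{(k)}_i\pmod{2\pi}\to 0$ from a single fixed side (or identically zero). Applying Lemma \ref{lem:Kconvuv} with a small fixed $v\in(0,1)$ then gives either
\[
K^{(k)}_i(a^{(k)}_i)\leq K^{(k)}_i(a^{(k)}_i+v)-v c^{(k)}_i \quad\text{or}\quad K^{(k)}_i(a^{(k)}_i)\leq K^{(k)}_i(a^{(k)}_i-v)+v c^{(k)}_i,
\]
with $c^{(k)}_i:=K^{(k)}_i(\pi+1/2)-K^{(k)}_i(\pi-1/2)\to c_i$. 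As the shifted argument $a^{(k)}_i\pm v$ lies in a fixed compact subset of $(0,2\pi)$, letting $k\to\infty$ yields $\limsup_k K^{(k)}_i(a^{(k)}_i)\leq K_i(v)-vc_i$ or $K_i(2\pi-v)+vc_i$; then letting $v\downto 0$ and using $K_i(v),K_i(2\pi-v)\to K_i(0)$ (continuity in the extended sense) gives $\limsup_k K^{(k)}_i(a^{(k)}_i)\leq K_i(0)=K_i(t^*-x^*_i)$. Since all kernels are uniformly bounded above, $\limsup\sum\leq\sum\limsup$ applies, so summing gives
\[
\limsup_k F^{(k)}(\xx^{(k)},t^{(k)})\leq \sum_{i\in J}K_i(0)+\sum_{i\notin J}K_i(t^*-x^*_i)=F(\xx^*,t^*)\leq \MM(\xx^*).
\]

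The main obstacle is exactly this last case, where the approximate maximizer $t^{(k)}$ clusters at a node and local uniform convergence on $(0,2\pi)$ provides no direct control of the near-singular contributions $K^{(k)}_i(a^{(k)}_i)$. Lemma \ref{lem:Kconvuv}, a quantitative one-sided slope estimate that is a consequence of the concavity of each $K^{(k)}_i$, bridges the gap by transferring every such evaluation to a regular argument with an error that is $O(v)$ and independent of $k$; it converts the pointwise asymmetric behavior at the singularity into a clean \emph{joint} upper bound as $k\to\infty$ and $v\downto 0$. Once this step is in place, the $\liminf$ and $\limsup$ estimates combine to give $\MMk(\xx^{(k)})\to \MM(\xx^*)$, and the initial reduction delivers the claimed uniform convergence on $\TT^n$.
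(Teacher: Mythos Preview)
Your argument is correct and uses the same two key ingredients as the paper---Lemma \ref{l:approx} for the regular regime and Lemma \ref{lem:Kconvuv} to control near-singular contributions---but the packaging is genuinely different. The paper works directly with uniform estimates: for an arbitrary $\xx$ and the maximizer $z_k$ of $F^{(k)}(\xx,\cdot)$ it builds an explicit ``approximating point'' $(\yy^{(k)},w_k)$ by shifting every node that lies within $h$ of $z_k$, so that $(\yy^{(k)},w_k)$ lands in a fixed compact set $H\subset\TT^{n+1}\setminus D$; Lemma \ref{lem:Kconvuv} gives $F^{(k)}(\xx,z_k)\le F^{(k)}(\yy^{(k)},w_k)+(n+1)Ch$, and Lemma \ref{l:approx} plus uniform continuity of $\MM$ close the estimate. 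Your route instead reduces uniform convergence to continuous convergence via compactness, then handles a single limiting sequence $\xx^{(k)}\to\xx^*$ by subsequence extraction and a case split on whether the approximate maximizer $t^{(k)}$ clusters at a node. The paper's approach avoids case analysis and gives a single explicit bound valid for all $\xx$ simultaneously; your approach is more conceptual and, once the reduction to continuous convergence is made, the actual estimate is slightly lighter since you only shift the singular argument rather than the whole node system. Both are sound.
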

	\begin{proof} Let us define the set $H_0:=\{(\xx,z)~:~
	F(\xx,z)=\MM(\xx)\}\subset \TT^{n+1}$, which is obviously closed by virtue of the continuity of the occurring functions.
	By assumption $H_0\subseteq \TT^{n+1}\setminus D$, so the condition of Lemma \ref{l:approx} is satisfied,
	hence $F^{(k)}\to F$ uniformly on $H_0$.

	Let now $\xx\in \TT^n$ be arbitrary, and take any $z\in \TT$ such
	that $F(\xx,z)=\MM(\xx)$ (such a $z$ exists by compactness and continuity). Now, $\MM^{(k)}(\xx)\ge F^{(k)}(\xx,z)
	> F(\xx,z)-\varepsilon = \MM(\xx)-\varepsilon$ whenever $k>k_0(\varepsilon)$, hence
	$\liminf_{k\to \infty} \MM^{(k)}(\xx)\ge \MM(\xx)$ is clear,
	moreover, according to the above, this holds uniformly on
	$\TT^n$, as $\MM^{(k)}(\xx)> \MM(\xx)-\varepsilon$ for each $\xx\in \TT^n$
    whenever $k>k_0(\varepsilon)$.

	\medskip\noindent
	It remains to see that, given $\xx\in\TT^n$ and
    $\varepsilon>0$, there exists
$k_1(\varepsilon)$
such that $\mkx <\MM(\xx) +\varepsilon$ for all
$k>k_1(\varepsilon)$.
	 Let us define the constant
	\[
		C:= \max_{j=0,1,\dots,n} \max_{k\in\NN}
	|K^{(k)}_j (\pi+1/2) - K^{(k)}_j (\pi-1/2) |.
	\]
	The inner expression is indeed a finite maximum,
	as $K^{(k)}_j (\pi\pm
	1/2)\to K_j(\pi \pm 1/2)$ for $k\to\infty$. By Lemma \ref{lem:Kconvuv} for
	all $u, v \in [0,1]$
	\begin{equation}\label{eq:KkjuvC}
	\Kkj(u) \le \Kkj(u+v) + C v, \quad
	\Kkj (2\pi-u) \le \Kkj(2\pi-u-v)+Cv.
	\end{equation}
	For the given $\varepsilon>0$ choose $\delta\in (0,1/2)$ such that $\MM(\yy)\leq\MM(\xx)+\tfrac{\varepsilon}{3}$ holds for all $\yy$
	 with $d_{\TT^n}(\xx,\yy)<\delta$ (use Corollary \ref{cor:mmMMcont}, the uniform continuity of $\MM:\TT^n\to\RR$). Fix moreover
 $0<h<\min\{\delta/2,\varepsilon/(3C(n+1))\}$
and define
	 \[H:=\bigl\{(\yy,w)\in\TT^{n+1}~:~ d_\TT(y_i,w)\ge h~(i=0,1,\dots,n)\bigr\}.\]

	\medskip\noindent For
	an arbitrarily given point $(\xx,z)\in \TT^{n+1}$ we construct another one $(\yy,w)\in \TT^{n+1}$, which we will call
	``approximating point'', in two steps as follows. First, we shift them (even $x_0$ which was assumed to be $0$ all the time), and then correct them. So we set for $i=0,1,\dots,n$
	\[
	x'_i:=\begin{cases}
	x_i \qquad &\text{if }  \quad d_{\TT}(x_i,z)\ge h, \\
	x_i\pm h \qquad &\text{if } \quad d_{\TT}(x_i,z)\le h,
	\end{cases}
	\]
	where we add $h$ or $-h$ such that $d_{\TT}(x_i\pm h,z)\geq h$. Then we set
	 $y_i:=x'_i-x'_0$ ($i=0,1,\dots,n$) and $w:=z-x'_0$. This new approximating point $(\yy,w)$ has the following properties:
	\begin{equation}
	d_{\TT^n}(\xx,\yy)=\max_{i=1,\dots,n} d_{\TT}(x_i,y_i) \le 2h<\delta, \qquad d_\TT(z,w) \le h<\delta.
	\end{equation}
	Moreover, we have $(\yy,w)\in H$, since $d_\TT(y_i,w)=d_\TT(x_i',z_i)\geq h$ for $i=0,1,\dots,n$.

	\medskip\noindent By construction of $(\yy,w)$ we have
	\begin{align}\label{ea:approxpoint}
	y_i-w &=
    x_i-z \qquad \qquad
    &\text{if } \quad d_{\TT}(x_i,z)\ge h, \notag \\
	y_i-w &=
    x_i-z\pm h \qquad
    &\text{if } \quad d_{\TT}(x_i,z)\le h.
	\end{align}
	So by using both
	inequalities
     in \eqref{eq:KkjuvC}
    we conclude
	\begin{align*}
	\Kkj(x_j-z) \le \Kkj(y_j-w) +
    C h  \qquad (j=0,1,\dots,n),
    \end{align*}
	providing us
	\begin{equation*}
	\Fk(\xx,z) = \sum_{j=0}^n \Kkj(x_j-z) \le \sum_{j=0}^n (\Kkj(y_j-w) +
    C h  )
	= \Fk(\yy,w) +
    (n+1)Ch.
	\end{equation*}

	\medskip  Now, for given $\xx\in \TT^n$ let  $z_k\in \TT$ be any point with
	$\Fk(\xx,z_k)=\MMk(\xx)$, and let $(\yy^{(k)},w_k)\in H$ be the corresponding approximating point.
	So that we have
	\begin{equation}\label{eq:mkxupper}
	\mkx=\Fk(\xx,z_k)\le \Fk(\yy^{(k)},w_k) +
    (n+1)Ch.
	\end{equation}
	Since $(\yy^{(k)},w_k)\in H\subseteq \TT^n\setminus D$ we can
	invoke Lemma \ref{l:approx} to get $\Fk\to F$ uniformly on
	$H$. Therefore, for the given $\varepsilon>0$ there exists $k_1(\varepsilon)$
	with
	\begin{equation*}
	\Fk(\yy^{(k)},w_k) \le \max \bigl\{ F(\yy,w) ~:~
	(\yy,w)\in H, d_{\TT^n}(\xx,\yy) \le \delta, d_{\TT}(z,w)\le \delta \bigr\} + \tfrac{\varepsilon}{3}
	\end{equation*}
	for all $k\geq k_1(\varepsilon)$. Extending further the maximum on the right-hand side to
	arbitrary $w\in \TT$ we are led to
	\begin{equation}\label{eq:Fkwklimit}
	\Fk(\yy^{(k)},w_k) \le \max \bigl\{ \MM(\yy) ~:~
 d_{\TT^n}(\xx,\yy) \le \delta\bigr\}+ \tfrac{\varepsilon}{3} \qquad (k>k_1(\varepsilon)).
	\end{equation}
	From \eqref{eq:mkxupper},
	\eqref{eq:Fkwklimit} and by the choices of $h,\delta>0$ we conclude
	\begin{equation*}\label{eq:mkfinalesti}
	\mkx \le \Fk(\yy^{(k)},w_k)  +(n+1)Ch
    \le (\MM(\xx)+\tfrac{\varepsilon}{3}) +\tfrac{\varepsilon}{3}
+(n+1)Ch
    < \MM(\xx)+\varepsilon
	\end{equation*}
	for all $k>k_1(\varepsilon)$. So that we get that uniformly on $\TT^n$
	$\limsup_{k\to\infty} \mkx \le \MM(\xx)$ holds.

	Since $k_1(\varepsilon)$ does not depend on $\xx$, by using also the first part we obtain
	$\lim_{k\to\infty} \mkx = \MM(\xx)$ uniformly on $\TT^n$.
	\end{proof}
	
\section{Elementary properties}
\label{sec:elemprops}

In this section we record some elementary properties of the function $m_j$ that are useful in the study of minimax and maximin problems and constitute also a substantial part of the abstract framework of \cite{shi}.
Moreover, our aim is
to reveal the structural connections between these {\lila properties}.
\begin{prp}\label{prop:bdry}
	Suppose that the kernels $K_0,\dots,K_n$ satisfy \eqref{eq:kernsing}.
Let $S=S_\sigma$ be a simplex. Then
\begin{equation}
	\label{eq:bdry}
\lim_{\yy\to\partial S\atop{\piros \yy\in S}}  \max_{k=0,\dots,n-1}\bigl|m_{\sigma(k)}(\yy)-m_{\sigma(k+ 1)}(\yy)\bigr|=\infty.
\end{equation}
\end{prp}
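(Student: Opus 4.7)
The plan is to argue by contradiction. If the stated limit were not $\infty$, there would be an $R>0$ and a sequence $\yy^{(l)}\in S$ with $\yy^{(l)}\to\yy^*\in\partial S$ along which $\max_{k=0,\dots,n-1}\bigl|m_{\sigma(k)}(\yy^{(l)})-m_{\sigma(k+1)}(\yy^{(l)})\bigr|\leq R$ for every $l$. Since $\yy^*\in\partial S_\sigma$, at least one pair of $\sigma$-adjacent coordinates coincides, i.e., $y^*_{\sigma(k_0)}=y^*_{\sigma(k_0+1)}$ for some $k_0\in\{0,\dots,n\}$, so the arc length $\varepsilon_l:=|I_{\sigma,\sigma(k_0)}(\yy^{(l)})|=y^{(l)}_{\sigma(k_0+1)}-y^{(l)}_{\sigma(k_0)}$ tends to $0$.

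The key step is to show that $m_{\sigma(k_0)}(\yy^{(l)})\to-\infty$, using only the singularity $K_{\sigma(k_0)}(0)=-\infty$ from \eqref{eq:kernsing} together with the uniform upper bound $K_j\leq C$ already exploited in Corollary \ref{cor:mMfin}. For any $t\in I_{\sigma,\sigma(k_0)}(\yy^{(l)})$ one has $t-y^{(l)}_{\sigma(k_0)}\in[0,\varepsilon_l]$, and since $\lim_{x\downto 0}K_{\sigma(k_0)}(x)=-\infty$, given $M>0$ one can find $\delta>0$ with $K_{\sigma(k_0)}(x)<-M$ on $[0,\delta)$. For $\varepsilon_l<\delta$ this yields $F(\yy^{(l)},t)\leq K_{\sigma(k_0)}(t-y^{(l)}_{\sigma(k_0)})+nC<-M+nC$, hence $m_{\sigma(k_0)}(\yy^{(l)})\leq -M+nC$; as $M$ is arbitrary, the divergence follows. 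This works uniformly for every $k_0\in\{0,\dots,n\}$, including $k_0=n$, since only the singularity of $K_{\sigma(k_0)}$ at the left endpoint of the arc is needed and $\sigma(k_0)\in\{0,\dots,n\}$ always.

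To conclude, I combine this with the assumed bound on consecutive differences: a telescoping estimate yields $m_{\sigma(k)}(\yy^{(l)})\leq m_{\sigma(k_0)}(\yy^{(l)})+nR\to-\infty$ for every $k\in\{0,\dots,n\}$. The contradiction comes from pigeonhole, since among the $n+1$ arcs determined by $\yy^{(l)}$ at least one has length exceeding $\delta':=\tfrac{2\pi}{n+2}$; after passing to a subsequence I may assume this happens for a fixed index $k_1$ for all $l$, and Proposition \ref{prp:estbelow} then furnishes a constant $L=L(K_0,\dots,K_n,\delta')\geq 0$ with $m_{\sigma(k_1)}(\yy^{(l)})\geq -L$, contradicting the previous telescoping bound. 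The only real technical point is to make the blow-up of $m_{\sigma(k_0)}$ \emph{uniform} in $t$ on the shrinking arc, which assumption \eqref{eq:kernsing}---being strictly stronger than \eqref{eq:kernsing'}, as it forces $K_{\sigma(k_0)}$ itself to tend to $-\infty$ in a full neighborhood of the singularity---delivers without extra effort.
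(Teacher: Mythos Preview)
Your proof is correct and rests on the same two observations as the paper's: any arc whose length tends to $0$ has $m_j\to-\infty$ thanks to \eqref{eq:kernsing}, while some arc stays long and hence has bounded $m_j$. The paper packages this more directly---it simply picks a shrinking arc $I_{j_0}$ adjacent to a non-shrinking one and reads off $\bigl|m_{j_0}-m_{j_0\pm1}\bigr|\to\infty$ without contradiction, telescoping, or the explicit appeal to Proposition~\ref{prp:estbelow}---but the substance is the same.
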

\begin{proof}
	Without loss of generality we may suppose that $\sigma=\id$, i.e., $\sigma(k)=k$.
Let $\yy^{(i)}\in S$ be convergent to some $\yy^{(0)}\in\partial S$ as $i\to\infty$. This means that some arcs determined by the nodes $\yy^{(i)}$ and
$y_0=0\equiv 2\pi$ shrink to a singleton. On any such  arc $I_j(\yy^{(i)})$ we obviously
have, with the help of \eqref{eq:kernsing},
\begin{equation*}
m_j\bigl(\yy^{(i)}\bigr) \to -\infty \quad\mbox{as $i\to\infty$}.
\end{equation*}
Of course, there is at least one such arc, say with index $j_0$, that has a neighboring arc with index $j_0\pm 1$ which is not shrinking to a singleton as $i\to\infty$.
This means
\begin{equation*}
\left|m_{j_0}\bigl(\yy^{(i)}\bigr)-m_{j_0\pm 1}\bigl(\yy^{(i)}\bigr)\right|\to \infty\quad  \mbox{as } i\rightarrow\infty,
\end{equation*}
and the proof is complete.
\end{proof}

The properties introduced below have nothing to do with the conditions we pose on the kernel functions $K_0,\dots, K_n$ (concavity and some type of singularity at $0$ and $2\pi$), so we can formulate them in whole generality. (Note that $m_j$, in contrast to $z_j$, is well-defined even if the kernels are not strictly concave).

\begin{definition} Let $S=S_\sigma$ be a simplex.
\begin{abc}
\item \textbf{Jacobi Property:}\\
The functions $m_0,\dots,m_n$ are in $\Ce^1(S)$ and
\begin{align*}
\det \Bigl(\partial_i m_{\sigma(j)}\Bigr)_{i=1,j=0,j\neq k}^{n,n}\neq 0\quad\mbox{for each $k\in \{0,\dots,n\}$.}
\end{align*}
\item \textbf{Difference Jacobi Property:}\\
The functions $m_0,\dots,m_n$ belong to $\Ce^1(S)$  and
\begin{align*}
\det \Bigl(\partial_i (m_{\sigma(j)}-m_{\sigma(j+1)})\Bigr)_{i=1,j=0}^{n,n-1}\neq 0.
\end{align*}
\end{abc}
\end{definition}

\begin{remark}\label{rem:shi1}
	Shi \cite{shi} proved that under the condition \eqref{eq:bdry} (which is now a consequence of the assumption \eqref{eq:kernsing}) the  Jacobi Property implies the Difference Jacobi Property.
\end{remark}

\begin{definition}\label{def:equi} Let $S=S_\sigma$ be a simplex.
\begin{abc}
\item \textbf{Equioscillation Property:}\\
There exists an \textbf{equioscillation point} $\yy\in S$, i.e.,
\[
\MM(\yy)=\mm(\yy)=m_0(\yy)=m_1(\yy)=\cdots=m_n(\yy).\]
\item \textbf{(Lower) Weak Equioscillation Property:}\\
There exists a \textbf{weak equioscillation point} $\yy\in \overline{S}$, i.e.,
\[
m_j(\yy)\begin{cases} =\MM(\yy), \quad &{\text{if}} ~ {\piros I_j(\yy)}~\text {is non-degenerate}, \\
<\MM(\yy), \quad &{\text{if}} ~ {\piros I_j(\yy)}~\text{is degenerate}. \\\end{cases}
\]
\end{abc}
\end{definition}

 \begin{remark}\label{rem:equioscillation}
 	For a given $S=S_\sigma$ the Equioscillation Property implies the inequality $M(S)\leq m(S)$.
  \end{remark}
\begin{proof}
	Let $\yy\in S$ be an equioscillation point. Then for this particular point
	\[\MM(\yy)=\max_{j=0,\dots,n}m_j(\yy)=\min_{j=0,\dots,n}m_j(\yy)=\mm(\yy),\] hence
	\[
	M(S)\leq \MM(\yy)=\mm(\yy)\leq m(S).
	\]
\end{proof}

\begin{prp}\label{rem:sandwich}
	 Given a simplex $S=S_\sigma$ the following are equivalent:
\begin{iiv}
\item $M(S)\geq m(S)$.
\item For every $\xx\in S$ one has $\mm(\xx)=\min_{j=0,\dots,n} m_j(\xx)\leq M(S)$.
\item For every $\yy\in S$ one has $\MM(\yy)=\max_{j=0,\dots,n} m_j(\yy)\geq m(S)$.
\item  There exists a value $\mu\in \RR$ such that for each $\yy\in S$
\begin{equation*}
\MM(\yy)=\max_{j=0,\dots,n} m_j(\yy)\geq \mu \geq \mm(\yy)=\min_{j=0,\dots,n} m_j(\yy).
\end{equation*}
\end{iiv}
\end{prp}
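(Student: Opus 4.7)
The plan is to observe that all four conditions are really tautological consequences of the definitions $M(S) = \inf_{\yy \in S} \MM(\yy)$ and $m(S) = \sup_{\yy \in S} \mm(\yy)$, together with the finiteness of $M(S)$ and $m(S)$ (Corollary \ref{cor:mMfin}). I would not try to set up any geometric or analytic machinery; the whole argument is a short chain of inequalities and infimum/supremum manipulations. The finiteness is what legitimates the choice of $\mu$ as a real number in (iv).

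I would prove the implications (i) $\Leftrightarrow$ (ii), (i) $\Leftrightarrow$ (iii), and (i) $\Leftrightarrow$ (iv) separately, since each is essentially immediate. For (i) $\Rightarrow$ (ii): for any $\xx \in S$ we have $\mm(\xx) \le \sup_{\yy\in S} \mm(\yy) = m(S) \le M(S)$. For (ii) $\Rightarrow$ (i): taking the supremum over $\xx$ in $\mm(\xx) \le M(S)$ yields $m(S) \le M(S)$. The equivalence (i) $\Leftrightarrow$ (iii) is completely analogous, using that $\MM(\yy) \ge \inf_{\xx} \MM(\xx) = M(S)$.

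For (i) $\Rightarrow$ (iv) I would simply take $\mu := M(S)$, which is a real number by Corollary \ref{cor:mMfin}(c); then $\MM(\yy) \ge M(S) = \mu$ by the definition of infimum, while $\mm(\yy) \le m(S) \le M(S) = \mu$ by (i). Conversely, (iv) $\Rightarrow$ (i) follows by taking the infimum on the left and the supremum on the right of $\MM(\yy) \ge \mu \ge \mm(\yy)$ to obtain $M(S) \ge \mu \ge m(S)$.

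There is no real obstacle here; the only point worth stating explicitly is that $M(S), m(S) \in \RR$, so that the intermediate value $\mu$ in (iv) can be chosen as an honest real number (rather than $\pm\infty$). Everything else is bookkeeping with $\inf$ and $\sup$.
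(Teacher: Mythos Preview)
Your proof is correct and follows essentially the same route as the paper: both arguments reduce everything to the tautological inequalities $\MM(\yy)\ge M(S)$ and $\mm(\xx)\le m(S)$, then handle (iv) by choosing $\mu$ in $[m(S),M(S)]$ (the paper leaves this choice free, you take $\mu=M(S)$). The only minor difference is that the paper treats (i)--(iii) in one breath, while you spell out the sup/inf manipulations explicitly; your remark about finiteness via Corollary~\ref{cor:mMfin} is a welcome clarification.
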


\begin{proof} Recalling the inequalities
\begin{equation*}
\MM(\yy)=\max_{j=0,\dots,n} m_j(\yy)\geq M(S)=\inf_S \MM,\quad \sup_S \mm=m(S)\geq \mm(\xx)=\min_{j=0,\dots,n} m_j(\xx)
\end{equation*}
being true for each $\xx,\yy\in S$, the equivalence of (i), (ii) and (iii) is obvious. Suppose (i) and take $\mu\in[m(S),M(S)]$. Then  (iv) is evident.
From  (iv) assertion  (i) follows trivially.
\end{proof}

\begin{definition}\label{def:sandwich} Let $S=S_\sigma$ be a simplex. We say that the
 \textbf{Sandwich Property} is satisfied if any of the equivalent  assertions in Proposition \ref{rem:sandwich} holds true, i.e., if
for each $\xx,\yy\in S$
\begin{equation*}
\max_{j=0,\dots,n} m_j(\yy)=\MM(\yy)\geq \mm(\xx)=\min_{j=0,\dots,n} m_j(\xx).
\end{equation*}
\end{definition}

\begin{remark}\label{rem:fullsandwich}
	For given $S=S_\sigma$ the Equioscillation Property and the Sandwich Property together imply that $M(S)=m(S)$.
 \end{remark}

\begin{remark}
\label{rem:sandwichname}
The above are fundamental properties in interpolation theory, and thus have been extensively investigated.
First, for the Lagrange interpolation on $n+1$ nodes in $[-1,1]$ the maximum norm of the Lebesgue function is minimal if and only if all its local maxima are equal.
This equioscillation property was conjectured by Bernstein \cite{Bernstein} and proved by Kilgore \cite{kilgore2}, using also a lemma (Lemma 10 in the paper
\cite{kilgore2}) whose proof, in some extent, was based on direct input from de Boor and Pinkus \cite{deboor}.
Second, the property that the minimum of the local maxima is always below this equioscillation value was conjectured by Erd\H os in \cite{Erdos}, and proved in the paper \cite{deboor} of de Boor and Pinkus,  which appeared in the same issue as the article of Kilgore \cite{kilgore2}, and which is based very much on the analysis of Kilgore.
This latter property is just an equivalent formulation of the Sandwich Property, see Proposition \ref{rem:sandwich}.
For more details on the history of these prominent  questions of interpolation theory see in particular \cite{kilgore2}.
The name ``Sandwich Property'' seems to have appeared first in \cite{SzV}, see p.{} 96.
\end{remark}

\begin{definition}\label{def:majorization} {\piros Let $S=S_\sigma$ be a simplex and let $\xx,\yy\in \overline{S}$. We say that $\xx$ \emph{majorizes} (or \emph{strictly majorizes}) $\yy$---and $\yy$ \emph{minorizes} (or \emph{strictly minorizes}) $\xx$---if $m_j(\xx)\geq m_j(\yy)$ (or if $m_j(\xx)>m_j(\yy)$) for all $j=0,\dots,n$.
 We define the following properties on $S$.}
\begin{abc}
\item \textbf{Local (Strict) Comparison Property at $\zz$:} \\
There exists $\delta>0$ such that if $\xx,\yy\in \Ball(\zz,\delta)$
and $\xx$ (strictly) majorizes $\yy$, then $\xx=\yy$. In other words, there are no two different $\xx\ne \yy \in \Ball(\zz,\delta)$ with $\xx$ (strictly) majorizing $\yy$.
\item \textbf{Local (Strict) non-Majorization Property at $\yy$:}\\
There exists $\delta>0$ such that there is no $\xx\in (S\cap\Ball(\yy,\delta))\setminus \{\yy\}$ which (strictly) majorizes $\yy$.
\item \textbf{Local (Strict) non-Minorization Property at $\yy$:}\\
There exists $\delta>0$ such that there is no $\xx\in (S\cap\Ball(\yy,\delta))\setminus \{\yy\}$ which (strictly) minorizes $\yy$.
\end{abc}
Further, we will pick the following special cases as important.
\begin{ABC}
	\item \textbf{(Strict) Comparison Property on $S$:}\\
	If $\xx,\:\yy\in S$ and $\xx$ (strictly) majorizes $\yy$, then $\xx=\yy$. In other words, there exists no two different $\xx\ne \yy \in S$ with $\xx$ (strictly) majorizing $\yy$.
\item \textbf{Local (Strict) Comparison Property on $S$:}\\
At each point $\zz\in\overline{S}$, the Local (Strict) Comparison Property holds.
\item \textbf{Local (Strict) non-Majorization Property on $S$:}\\
At each point $\yy\in\overline{S}$, the Local (Strict) non-Majorization Property holds.
\item \textbf{Local (Strict) non-Minorization Property on $S$:}\\
At each point
$\yy\in\overline{S}$,
the Local (Strict) non-Minorization Property holds.
\item \textbf{Singular (Strict) Comparison Property on $S$:} \\
At each \emph{equioscillation point}
$\zz\in S$
the Local (Strict) Comparison Property holds.
\item \textbf{Singular (Strict) non-Majorization Property:} \\
At each \emph{equioscillation point} $\yy\in S$ the Local (Strict) non-Majorization Property holds.
\item \textbf{Singular (Strict) non-Minorization Property:} \\
At each \emph{equioscillation point} $\yy\in S$ the Local (Strict) non-Minorization Property holds.
\end{ABC}
\end{definition}

\begin{remark}\label{rem:hierarchy}
The comparison properties are symmetric in $\xx$ and $\yy$, while the non-majorization and non-minorization properties are not.
One has the following relations between the previously defined properties: (a)$\Rightarrow$(b) and (c), (A)$\Rightarrow$(B)$\Rightarrow$(E), (B)$\Rightarrow$(C) and (D), (E)$\Rightarrow$(F) and (G), (C)$\Rightarrow$(F), (D)$\Rightarrow$(G).
It will be proved in Corollary \ref{cor:majcomp} that for
\textit{strictly concave kernels}
all comparison, non-majorization and non-minorization properties (A), (B), (C), (D) (with their strict version as well) are equivalent to each other.
\end{remark}

\begin{remark}\label{rem:shi2}
	Shi \cite{shi} proved that (under condition \eqref{eq:bdry}) the Jacobi Property implies the  Comparison Property, the Sandwich Property,  and that the  Difference Jacobi Property implies the Equioscillation Property.
	Example \ref{ex:diffbutnostrong} below shows
that the  Comparison Property (even the Local Strict non-Majorization Property) fails in general, even though one has the Difference Jacobi Property.  In Proposition \ref{prop:mmatrix3} we will show that in our setting we always have the Difference Jacobi Property provided the kernels are at least twice continuously differentiable and, moreover we have the Equioscillation Property.
\end{remark}

\begin{example}\label{ex:diffbutnostrong}
	Let $n=1$ and $K_0:(0,2\pi)\to \RR$ be a strictly concave  kernel  function in $\Ce^\infty(0,2\pi)$ satisfying \eqref{eq:kernsing} and such that
	the  maximum of $K_0$ is $0$,
  while with some fixed $0<\alpha<\pi$ the function  $K_0$ is
    increasing
    in $(0,\alpha)$
     and is
    decreasing
    in $(\alpha,2\pi)$, and let $K_1(t):=K_0(2\pi-t)$.
	For $\yy:=y\in (0,2\pi)$ we have $F(\yy,t)=K_0(t)+K_1(t-y)=K_0(t)+K_0(2\pi+y-t)$, so
    by symmetry and concavity
    we obtain $z_0(\yy)=\frac{y}2$ and $z_1(\yy)=\frac{2\pi+y}2$.
So that
	\begin{align*}
m_0(\yy)&=F(\yy,z_0(\yy))=K_0(\tfrac{y}2)+K_0(2\pi+y-\tfrac{y}2)=2K_0(\tfrac{y}2),\\ m_1(\yy)&=F(\yy,z_1(\yy))=K_0(\tfrac{2\pi+y}2)+K_0(2\pi+y-\tfrac{2\pi+y}2)=2K_0(\tfrac{2\pi+y}2).
	\end{align*}
Whence we conclude that
	\begin{align*}
m_0(\yy+h)<m_0(\yy)\quad\text{and}\quad m_1(\yy+h)<m_1(\yy),
	\end{align*}
whenever $y\in (2\alpha,2\pi)$ and $h>0$ with $y+h\in (2\alpha,2\pi)$.
This shows that the non-Majorization Property does not hold in general. Since $m_0'(2\alpha)=0$, the Jacobi Property fails for this example (which anyway follows from Remark \ref{rem:shi1}). Notice also that
\[
	m_0'(\yy)-m_1'(\yy)=K_0'(\tfrac{y}2)-K_0'(\tfrac{2\pi+y}2)>0,
\]
since $K_0'$ is strictly decreasing, meaning that we have the Difference Jacobi Property (this holds in general, see Proposition \ref{prop:mmatrix3}).
Finally, we remark that we have the Singular non-Majorization Property. Indeed, $\yy$ is an equioscillation point if and only if
	\begin{align*}
	2K_0(\tfrac{y}2)=m_0(\yy)=m_1(\yy)=2K_0(\tfrac{2\pi+y}2),
	\end{align*}
i.e., at the corresponding points in the graph of $K_0$ there is a horizontal chord of length $\pi$. This implies that $y/2$ falls in the interval where $K_0$ is strictly increasing, whereas $\pi+y/2$ belongs to the interval where $K_0$ is strictly decreasing. Hence if we move $\yy=y$ slightly, $m_0$ and $m_1$ will change in different directions.
\end{example}
This example shows that Shi's results are not applicable in this general setting, even if we supposed the kernels to be in $\Ce^\infty(0,2\pi)$.

\section{Distribution of local minima of $\MM$}
\label{sec:distrlocmins}
{\piros In this section we start with a central perturbation result, which describes how for fixed permutation $\sigma$ the functions $m_{\sigma,j}(\yy)$ change for a small perturbation of $\yy$. This will allow us to relate local minimum points of $\MM$ and equioscillation points, see Proposition \ref{prop:minmaxnplusone}. Moreover,  the equioscillation property of the solutions of the minimax problem \eqref{eq:minmax} is established in Corollary \ref{cor:ezkell} under appropriate conditions on the kernels.}

\begin{remark}\label{rem:muij}
Suppose $f_j$ are (strictly) concave functions for $j=0,\dots, n$ and  let $f=\sum_{j=0}^n f_j$.
Let $\mu_j$ be the slope of a supporting line of $f_j$ at some point $t$. Then $\mu:=\sum_{j=0}^n\mu_j$ is the slope of a supporting  line of $f$ at the same point $t$. Conversely, if $\mu$ is given as the slope of a supporting  line at some point $t$, then it is not hard to find some $\mu_j$, $j=0,\dots, n$ being the slope of some supporting  line of $f_j$ at $t$ with $\mu=\sum_{j=0}^n \mu_j$.
\end{remark}
	
	\begin{lemma}[(Perturbation
lemma)]\label{lem:pertnodes00} Suppose that $K_0,\dots,
K_n$ are strictly concave. Let $\yy\in \TT^n$ be a node
system, and for $k\in\NN$, $1\leq k\leq n$ let $t_1,\dots, t_k\in
(0,2\pi)$ be all different from the nodes in $\yy$. Let
\[
\delta:=\tfrac 12\min\bigl\{|t_i-y_j|:i=1,\dots, k,\: j=0,\dots, n\bigr\}.
\]
For $i=1,\dots, k$
let $\mu^{(i)}$ be the slope of a supporting  line to the graph of $F(\yy,\cdot)$ at the point $t_i$.
Finally, let $\xx_{1},\dots, \xx_{n-k}\in \RR^n$ be fixed
arbitrarily.
\begin{abc}
	\item
Then there is $\vc{a}\in [-1,1]^n\setminus\{\vc{0}\}$  such
that $\xx_\ell^\trp \vc{a}= 0$ for $\ell=1,\dots,n-k$ and for all
$0<h<\delta$ we have
\begin{equation*}
F(\yy+h\vc{a},s_i)<F(\yy,t_i)+ \mu^{(i)}(s_i-t_i)
\end{equation*}
for all $s_i$ with $|s_i-t_i|<\delta$, $i=1,\dots,k$.

\item {\piros Let $S=S_\sigma$ be a simplex, and let $\yy\in \overline{S}$.}
If $F(\yy,\cdot)$ has local maximum in $t_i$
for some $i\in\{1,\dots,k\}$, i.e., if $t_i=z_{j}(\yy)\in \inter I_j(\yy)$ for some $j\in\{0,\dots,n\}$, then
\begin{equation*}
F(\yy+h\vc{a},s_i)<F(\yy,z_j(\yy))=m_j(\yy) \quad \mbox{for all $s_i$ with $|s_i-z_{j}(\yy)|<\delta$}.
\end{equation*}
\item For the fixed node system $\yy$ consider an admissible cut of the torus (cf.{} Remark \ref{rem:cut}). Let $i_1,\dots, i_k\in \{0,\dots,n\}$ be pairwise different, and suppose that $\widehat I_{i_1}(\yy),\dots, \widehat I_{i_k}(\yy)$ are non-degenerate and {\piros $\widehat z_{i_j}(\yy)\in\inter \widehat I_{i_j}(\yy)$} for each $j=1,\dots,k$. Then
there is $\eta>0$ such that for all $0<h<\eta$
\[
\widehat m_{i_j}(\yy+h\vc{a})<\widehat m_{i_j}(\yy) \quad \text{$j=1,\dots,k$}.
\]
\end{abc}
\end{lemma}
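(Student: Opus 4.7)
My plan is to combine the strict concavity of each $K_j$ at the points $t_i-y_j$ with a simple convex-cone dimension count to produce an admissible perturbation direction $\vc{a}$ for (a); parts (b) and (c) will then follow by specializing to supporting-line slope $0$ at interior local maxima and invoking the continuity of the arc endpoints (Remark \ref{rem:arcsystemcont}) and of $\widehat z_i$ (Lemma \ref{lem:zjcont}).

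\textbf{Supporting-line inequality.} Using Remark \ref{rem:muij}, for each $i=1,\dots,k$ fix a decomposition $\mu^{(i)}=\sum_{j=0}^n\mu_j^{(i)}$, where $\mu_j^{(i)}$ is the slope of some supporting line of $K_j(\cdot-y_j)$ at $t_i$. Concavity of each $K_j$ gives
\[ K_j(s_i-y_j-ha_j)\le K_j(t_i-y_j)+\mu_j^{(i)}\bigl((s_i-t_i)-ha_j\bigr), \]
with strict inequality whenever $s_i-ha_j\ne t_i$. For $|s_i-t_i|<\delta$ and $0<h<\delta$, the choice of $\delta$ keeps all these evaluations away from the singular set $\{y_j\}$, so summing over $j=0,\dots,n$ and using $a_0=0$ yields
\[ F(\yy+h\vc{a},s_i)\le F(\yy,t_i)+\mu^{(i)}(s_i-t_i)-h\,\vc{\mu}^{(i)}\cdot\vc{a},\qquad \vc{\mu}^{(i)}:=(\mu_1^{(i)},\dots,\mu_n^{(i)}). \]
A short case analysis shows that for any $\vc{a}\ne\vc{0}$ and any $s_i$ with $|s_i-t_i|<\delta$ at least one summand is strict: if $s_i\ne t_i$, then the $j=0$ term is already strict (since $a_0=0$); if $s_i=t_i$, then any coordinate $a_j\ne 0$ makes the $j$-th term strict (since $h>0$). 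Hence it suffices to find $\vc{a}\in[-1,1]^n\setminus\{\vc{0}\}$ with $\xx_\ell^\trp\vc{a}=0$ for $\ell=1,\dots,n-k$ and $\vc{\mu}^{(i)}\cdot\vc{a}\ge 0$ for $i=1,\dots,k$.

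\textbf{Producing $\vc{a}$.} Let $V:=\{\vc{a}\in\RR^n:\xx_\ell^\trp\vc{a}=0,\ \ell=1,\dots,n-k\}$, which satisfies $\dim V\ge k$. The set
\[ C:=\{\vc{a}\in V:\vc{\mu}^{(i)}\cdot\vc{a}\ge 0,\ i=1,\dots,k\} \]
is a closed polyhedral cone cut out by $k$ half-spaces inside $V$. If $C$ were $\{\vc{0}\}$, then the $k$ orthogonal projections $\Pi_V\vc{\mu}^{(i)}$ would positively span $V$; but $k$ vectors can never positively span a space of dimension $\ge k$, since positive spanning requires both a spanning set \emph{and} a non-trivial positive linear relation among the vectors, hence at least $\dim V+1$ of them. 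Thus $C\supsetneq\{\vc{0}\}$; rescaling any element of $C\setminus\{\vc{0}\}$ into $[-1,1]^n$ proves (a). The main technical point is the strict-inequality bookkeeping above: the constraint $\vc{\mu}^{(i)}\cdot\vc{a}\ge 0$ is only non-strict, yet strictness in $s_i$ must be forced uniformly, and this is precisely what the case analysis delivers.

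\textbf{Parts (b) and (c).} For (b), if $t_i=z_j(\yy)$ lies in the interior of $I_j(\yy)$, then it is an interior local maximum of the concave function $F(\yy,\cdot)$, so $0$ is an admissible supporting-line slope at $t_i$; running (a) with $\mu^{(i)}=0$ collapses the inequality to $F(\yy+h\vc{a},s_i)<F(\yy,z_j(\yy))=m_j(\yy)$. For (c), apply (a) with the $k$ points $t_j:=\widehat z_{i_j}(\yy)$ and slopes $\mu^{(j)}=0$; by the continuity of $\yy\mapsto\widehat z_i(\yy)$ (Lemma \ref{lem:zjcont}) and of the arc endpoints (Remark \ref{rem:arcsystemcont}) there exists $\eta\in(0,\delta)$ such that for every $0<h<\eta$ the point $\widehat z_{i_j}(\yy+h\vc{a})$ lies in $\inter \widehat I_{i_j}(\yy+h\vc{a})$ and within distance $\delta$ of $\widehat z_{i_j}(\yy)$. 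Substituting $s_i:=\widehat z_{i_j}(\yy+h\vc{a})$ into the inequality from (b) yields $\widehat m_{i_j}(\yy+h\vc{a})<\widehat m_{i_j}(\yy)$, as required.
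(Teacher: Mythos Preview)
Your proof is correct and follows essentially the same route as the paper's: the same supporting-line decomposition $\mu^{(i)}=\sum_j\mu_j^{(i)}$, the same strict-inequality bookkeeping (at least one summand is strict because $a_0=0$ and $\vc{a}\ne\vc{0}$), and the same reduction of (b) and (c) to (a) via $\mu^{(i)}=0$ and continuity of $\widehat z_{i_j}$. The only notable difference is that where the paper simply asserts the existence of $\vc{a}$ ``by standard linear algebra'', you spell this out via the positive-spanning/dual-cone argument; this is a valid and somewhat more informative justification of the same fact.
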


\begin{proof}
By Remark \ref{rem:muij} for  $i=1,\dots,k$ and $j=0,\dots,n$ there are $\mu_{ij}$ each of them being the slope of a supporting  line to the graph of $K_j$ at $t_i-y_j$, i.e., with
\[
\mu^{(i)}=\sum_{j=0}^n \mu_{ij}.
\]	
{\lila Take a vector  $\vc{a}\in[-1,1]^n\setminus\{\mathbf{0}\}$ with 
\begin{equation*}
\sum_{j=1}^n a_j\mu_{ij}\geq 0 \quad\mbox{for}~ i=1,\dots,k
\end{equation*}
	and with $\xx_\ell^\trp \vc{a}=0$ for $\ell=1,\dots,n-k$.}
Such a vector does  exist by standard linear algebra.  We set $a_0:=0$.

(a)  Since $K_j$ is concave, it follows
\[
K_j(s_i-(y_j+ha_j))\leq K_j(t_i-y_j) + \mu_{ij}(s_i-t_i-ha_j)
\]
for $s_i$ with $|s_i-t_i|<\delta$ and $0 \leq h <\delta$, because then $|s_i-t_i-ha_j|<\delta+|a_j|h <2\delta$ and $|t_i-y_j|\geq 2\delta$ guarantees that the full interval between the points $t_i-y_j$ and $s_i-(y_j+ha_j)$ stays in $(0,2\pi)$, i.e., the continuous change of $t_i-y_j$ to $s_i-(y_j+ha_j)$ happens within the concavity interval of $K_j$.

Observe that here in view of strict concavity equality holds for some $i,j$ if and only if $s_i-t_i-ha_j=0$.  However, for any given value of $i$, this cannot occur for all
$j=0,\dots,n$. Indeed, if this were so, then $a_0=0$ would imply $s_i=t_i$ and,
by $h> 0$, it would follow that $\vc{a}=0$, which was excluded.

Summing for all $j$, with at least one of the inequalities being strict, we obtain
\[
\sum_{j=0}^n K_j(s_i-(y_j+ha_j)) < \sum_{j=0}^n K_j(t_i-y_j)
+ \sum_{j=0}^n \mu_{ij}(s_i-t_i-ha_j)
\]
for $|s_i-t_i|<\delta$, $i=1,\dots, k$, i.e., dropping also $a_0=0$
\[
F(\yy+h\vc{a},s_i) < F(\yy,t_i) + \mu^{(i)} (s_i-t_i) - h \sum_{j=1}^n \mu_{ij} a_j.
\]
Now, by the choice of $\vc{a}$, the last sum is
non-negative, and since $h>0$ the last term can be
estimated from above by $0$, and we obtain the first statement.

\medskip\noindent (b) In the case when  $t_i=z_j(\yy)$ for some $j$ (and only then) the supporting  line can be chosen horizontal, i.e., $\mu^{(i)}=0$. Therefore, with this choice the already proven result directly implies the second statement.

\medskip\noindent (c) Take a fixed $\yy$ and an admissible cut of the torus at some $c$ (cf.{} Remark \ref{rem:cut}). 
For sufficiently small $\eta$ we have {\piros $\widehat z_{i_j}(\yy)\in \widehat I_{i_j}(\yy+h\vc a)$} for all $0<h<\eta$ and $j=1,\dots, k$. Since $\xx\mapsto \widehat z_{i_j}(\xx)$ is continuous at $\yy$ (see Lemma \ref{lem:zjcont}), for some possibly even smaller $\eta>0$ we have
$|\widehat z_{i_j}(\yy)-\widehat z_{i_j}(\yy+h\vc{a})|<\delta$, whenever $0<h<\eta$.
 From this we conclude, by the already proven part (b), that for all $j=1,\dots,k$
\[
\widehat m_{i_j}(\yy+h\vc{a})=F(\yy+h\vc{a},\widehat z_{i_j}(\yy+h\vc{a}))<\widehat m_{i_j}(\yy).\qedhere
\]
\end{proof}

The next lemma is an analogue of Lemma \ref{lem:new} for kernels in $\Ce^1(0,2\pi)$.
\begin{lemma}	Suppose the kernels $K_0,\dots, K_n$ are in $\Ce^1(0,2\pi)$ and are non-constant. Let $S=S_\sigma$ be a simplex, let $\yy\in \overline{S}$ and let $j\in \{0,\dots,n\}$. Then
there exists $\varepsilon>0$ such that either for all $t\in (y_j-\varepsilon,y_j)$ or for all $t\in (y_j,y_j+\varepsilon)$ we have
$F(\yy,t)>F(\yy,y_j)$.
\label{lem:KjConenew}
	\end{lemma}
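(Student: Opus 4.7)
The plan is to examine the two one-sided derivatives of $t\mapsto F(\yy,t)$ at $t=y_j$ and to show that at least one of them has the favorable sign for the conclusion: either $D_+F(\yy,y_j)>0$, giving $F(\yy,t)>F(\yy,y_j)$ on a right neighborhood of $y_j$, or $D_-F(\yy,y_j)<0$, giving $F(\yy,t)>F(\yy,y_j)$ on a left neighborhood. Write $J:=\{i:y_i=y_j\}$. First I would dispense with the singular case: if some $i\in J$ has $K_i(0)=-\infty$ then $F(\yy,y_j)=-\infty$, whereas for $t$ in a small punctured neighborhood of $y_j$ the value $F(\yy,t)$ is finite---for $k\notin J$ the difference $t-y_k$ stays close to the nonzero torus value $y_j-y_k$ and so stays in $(0,2\pi)$ where $K_k$ is finite, while for $i\in J$ we have $t-y_i\in(0,2\pi)$ where $K_i$ is finite by the $\Ce^1$ hypothesis. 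The conclusion is then trivial on both sides.

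Assume now that all $K_i$ with $i\in J$ are finite valued: they are then continuous, $2\pi$-periodic, concave, non-constant, $\Ce^1$ on $(0,2\pi)$, and satisfy $K_i(0)=K_i(2\pi)$. Using periodicity to rewrite $K_i(-h)=K_i(2\pi-h)$ in the left difference quotient, the one-sided derivatives of $F(\yy,\cdot)$ at $y_j$ are
\[
D_+F(\yy,y_j)=\sum_{i\in J}D_+K_i(0)+\sum_{i\notin J}K_i'(y_j-y_i),\quad D_-F(\yy,y_j)=\sum_{i\in J}D_-K_i(2\pi)+\sum_{i\notin J}K_i'(y_j-y_i),
\]
the terms for $i\notin J$ being genuine derivatives since the representative of $y_j-y_i$ lies in the interior $(0,2\pi)$ where $K_i\in\Ce^1$. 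The structural heart of the argument is the strict inequality $D_+K_i(0)>0>D_-K_i(2\pi)$ for each $i\in J$. Indeed, concavity makes $K_i'$ non-increasing on $(0,2\pi)$; if $D_+K_i(0)\leq 0$ then $K_i'\leq 0$ throughout $(0,2\pi)$, so $K_i$ is non-increasing, which combined with $K_i(0)=K_i(2\pi)$ forces $K_i$ to be constant, contradicting the hypothesis. The symmetric argument rules out $D_-K_i(2\pi)\geq 0$.

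Summing these strict inequalities over the non-empty set $J$ yields
\[
D_+F(\yy,y_j)-D_-F(\yy,y_j)=\sum_{i\in J}\bigl[D_+K_i(0)-D_-K_i(2\pi)\bigr]>0,
\]
so one cannot simultaneously have $D_+F(\yy,y_j)\leq 0$ and $D_-F(\yy,y_j)\geq 0$. Hence at least one of $D_+F(\yy,y_j)>0$ or $D_-F(\yy,y_j)<0$ holds, and the very definition of one-sided derivatives produces an $\varepsilon>0$ on which the corresponding strict inequality $F(\yy,t)>F(\yy,y_j)$ is valid. The main technical subtlety I expect is the periodic bookkeeping for the left derivative and, compared to Lemma \ref{lem:new}, the fact that strictness of the ``corner jumps'' $D_+K_i(0)>0$ and $D_-K_i(2\pi)<0$ must here be extracted purely from non-constancy and concavity, rather than from the strong singularity condition \eqref{eq:kernsing'}.
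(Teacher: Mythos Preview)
Your proof is correct and follows essentially the same route as the paper's: both reduce to the strict corner inequality $D_+F(\yy,y_j)>D_-F(\yy,y_j)$, obtained by summing the jumps $D_+K_i(0)-D_-K_i(2\pi)>0$ over $i\in J$, which non-constancy, concavity and periodicity together force. The paper frames the last step as a contradiction (assuming $y_j$ is a two-sided local maximum and deriving $D_-F\geq 0\geq D_+F$), whereas you argue directly that one of the one-sided derivatives has the favorable sign; your sharper observation $D_+K_i(0)>0>D_-K_i(2\pi)$ is not needed in full, but its proof is sound.
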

    \begin{proof}
     Let the left and right neighboring non-degenerate arcs to
$y_j$ be $[y_\ell,y_j]$ and $[y_j,y_r]$, respectively.\footnote{If all nodes are positioned at $y_0=0$, these arcs can be the same.}  Let us write
$y_\ell < y_{j_1}=\dots=y_{j_\nu}<y_r$ with  $j_1=j$ 
{\lila (so that there exists a degenerate arc equal to $\{y_j\}$ 
precisely when $\nu>1$)}. 
We can assume $K_{j_\lambda}>-\infty$
for all $\lambda=1,\dots,\nu$, otherwise $F(\yy,y_j)=-\infty$, 
{\lila while} 
$F(\yy,\cdot)$ is finite valued on $(y_\ell,y_j)\cup (y_j,y_r)$, 
and the statement is trivial.
So summing up, $F(\yy,\cdot)$ is concave and 
continuously differentiable both on
$(y_\ell,y_j)$ and
$(y_j,y_r)$, and continuous on $[y_\ell,y_r]$.  
{
\lila

\medskip\noindent
Since  $F(\yy,\cdot)$ is concave, there
is a maximum point $z_\ell \in [y_\ell,y_j]$
(which, however, need not be unique if $F$ is not strictly concave), 
and by concavity $F(\yy,\cdot)$ is
non-decreasing
on $[y_\ell,z_\ell]$ and
non-increasing
on $[z_\ell,y_j]$. 
It follows that $F(\yy,z_\ell)\ge F(\yy,y_j)$. 
Moreover, in case we find strict inequality, 
we are done, for then 
\[F(\yy,t)\ge L(t):=\frac{y_j-t}{y_j-z_\ell}F(\yy,z_\ell)+\frac{t-z_\ell}{y_j-z_\ell}F(\yy,y_j) >F(\yy,y_j)\] for all $z_\ell<t<y_j$.

There remains the case when $F(\yy,z_\ell)=F(\yy,y_j)$, 
which means that $F(\yy,y_j)$ is maximum itself on $[y_\ell,y_j]$, too.

By an analogous reasoning either 
we find an interval $[y_j,y_j+\varepsilon]$, 
where the function is above $F(\yy,y_j)$, or $y_j$ 
is a maximum point even for the whole of $[y_j,y_r]$.

In all, either there are intervals as needed, or 
we find $F(\yy,y_j)=\max_{[y_\ell,y_r]} F(\yy,\cdot)$. 
Next, we show that this latter situation is impossible, 
which will conclude the proof.
}

\medskip\noindent So assume for a contradiction that $F(\yy,\cdot)$ stays below $F(\yy,y_j)$ on $[y_\ell,y_r]$, and hence we find
\[
D_-F(\yy,y_j)\geq 0 \geq D_+F(\yy,y_j).\]
Using the non-constancy of the kernel functions $K_i$
in the form that $D_-K_i(0)< D_+K_i(0)$, we find
\begin{align*}
D_-F(\yy,y_j) &= \lim_{t\upto y_j} \sum_{i=0}^n K_i'(t-y_i)
=
\sum_{\substack{\lambda=0
\\ \lambda \ne j_1,\dots,j_\nu}}^n
K_\lambda'(y_j-y_\lambda)
+ \sum_{\lambda=1}^\nu D_-K_{j_\lambda}(0)
\\& < \sum_{\substack{\lambda=0 \\ \lambda \ne j_1,\dots,j_\nu}}^n
K_\lambda'(y_j-y_\lambda)
+ \sum_{\lambda=1}^\nu D_+K_{j_\lambda}(0) = \lim_{t\downto y_j} \sum_{i=0}^n K_i'(t-y_i)= D_+F(\yy,y_j),
\end{align*}
which furnishes the required contradiction. Whence the
statement follows.
    \end{proof}

\begin{lemma}\label{l:neighborincrease} {\piros Let the kernel functions $K_0,\dots,K_n$ be concave, let $S_\sigma$ be a simplex, and let $\yy\in\overline{S_\sigma}$ be such that the interval
$I_j(\yy)=[y_j,y_{j'}]$ is degenerate, i.e., a singleton.}
	\begin{abc}
\item Suppose that the kernel $K_j$  satisfies condition \eqref{eq:kernsing'm}.  Then
there exists $\varepsilon>0$ such that for all $t\in (y_j-\varepsilon,y_j)$ we have
$F(\yy,t)>m_j(\yy)$.
\item Suppose that the kernel $K_j$  satisfies condition \eqref{eq:kernsing'p}.  Then
there exists $\varepsilon>0$ such that for all $t\in (y_j,y_j+\varepsilon)$ we have
$F(\yy,t)>m_j(\yy)$.
\item Suppose the kernels $K_0,\dots, K_n$ are in $\Ce^1(0,2\pi)$ and are non-constant. Then
there exists $\varepsilon>0$ such that either for all $t\in (y_j-\varepsilon,y_j)$ or for all $t\in (y_j,y_j+\varepsilon)$ we have
$F(\yy,t)>m_j(\yy)$.
\end{abc}
\end{lemma}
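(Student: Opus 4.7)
The plan is to first observe that when $I_j(\yy)=[y_j,y_{j'}]$ is degenerate (i.e., $y_j=y_{j'}$), we have $m_j(\yy)=\sup_{t\in I_j(\yy)}F(\yy,t)=F(\yy,y_j)$. Hence all three parts reduce to the single common claim that $F(\yy,t)>F(\yy,y_j)$ for $t$ in the appropriate one-sided neighborhood of $y_j$.

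With this reduction in hand, part (c) is immediate from Lemma \ref{lem:KjConenew}, whose conclusion furnishes exactly this inequality (and whose hypothesis does not even require $I_j(\yy)$ to be degenerate). So the substance of the proof is in parts (a) and (b).

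For parts (a) and (b), I would mimic the argument from the proof of Lemma \ref{lem:new}. Concentrating on (b): under \eqref{eq:kernsing'p} for $K_j$, consider
\[
D_+F(\yy,t)=D_+K_j(t-y_j)+\sum_{i\ne j}D_+K_i(t-y_i)\qquad (t>y_j).
\]
The distinguished term $D_+K_j(t-y_j)$ tends to $+\infty$ as $t\downto y_j$ by assumption, while every other term remains bounded from below in a right neighborhood of $y_j$: for indices $i$ with $y_i=y_j$, because $D_+K_i$ is non-increasing on $(0,2\pi)$ and hence bounded below on any right neighborhood of $0$; for indices with $y_i\ne y_j$, because the argument $t-y_i$ stays bounded away from $0$ and $2\pi$ modulo $2\pi$, so the term is even continuous in $t$ at $y_j$. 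Consequently $D_+F(\yy,\cdot)>0$ on some $(y_j,y_j+\varepsilon)$, which makes $F(\yy,\cdot)$ strictly increasing there. Continuity of $F(\yy,\cdot)$ at $y_j$ from the right, inherited from the convention $K_i(0)=\lim_{h\downto 0}K_i(h)$ together with upper semi-continuity, then yields $F(\yy,t)>F(\yy,y_j)=m_j(\yy)$ on that interval. Part (a) is entirely symmetric, applied with $D_-F$ and using \eqref{eq:kernsing'm} to force the dominant term to $-\infty$ as $t\upto y_j$.

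The only minor subtlety is the case $F(\yy,y_j)=-\infty$, which can arise if some $K_i$ with $y_i=y_j$ satisfies $K_i(0)=-\infty$; but then the desired inequality $F(\yy,t)>-\infty=F(\yy,y_j)$ is automatic for any $t$ in the one-sided neighborhood, since all kernel arguments are bounded away from their singularities there. I do not anticipate any genuine obstacle: the main thrust recycles the derivative-blow-up technique of Lemma \ref{lem:new}, combined with the trivial observation $m_j(\yy)=F(\yy,y_j)$ coming from the degeneracy of $I_j(\yy)$, and part (c) is essentially a direct citation of Lemma \ref{lem:KjConenew}.
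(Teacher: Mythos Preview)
Your proposal is correct and matches the paper's approach exactly: the paper reduces to $m_j(\yy)=F(\yy,y_j)$ via degeneracy, disposes of the $-\infty$ case, then cites Lemma~\ref{lem:new} for (a) and (b) and Lemma~\ref{lem:KjConenew} for (c). Your choice to reconstruct the derivative-blow-up argument of Lemma~\ref{lem:new} rather than cite it is arguably more careful, since Lemma~\ref{lem:new} carries a strict-concavity hypothesis not assumed in the present lemma (though its proof does not actually use that hypothesis).
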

\begin{proof}
		Let $I_j(\yy)=\{y_j\}=\{y_{j'}\}=\{z_j(\yy)\}$ and let $\varepsilon>0$ be so small that the functions $K_k(\cdot-y_k)$ are all finite and concave on $(y_j-\varepsilon,y_j)$ and $(y_j,y_j+\varepsilon)$. In particular, for a point $t$ in one of these intervals $F(\yy,t)\in \RR$, so in case of $K_j(0)=-\infty$, we also have $F(\yy,z_j(\yy))=-\infty<F(\yy,t)$ and there is nothing  to prove.

	\medskip\noindent (a) and (b) follow from Lemma \ref{lem:new} and from the fact that $F(\yy,y_j)=m_j(\yy)$.

\medskip\noindent (c) follows from Lemma \ref{lem:KjConenew}
by also taking into account that $F(\yy,y_j)=m_j(\yy)$.
\end{proof}

\begin{corollary}\label{cor:neighborjump}
	Let the kernel functions $K_0,\dots,K_n$ be concave. {\piros Let $S_\sigma$ be a simplex and suppose that
	$I_j(\yy)$ is degenerate for some $\yy\in \overline{S_\sigma}$.}
		\begin{abc}
				\item Suppose that at least $n$ of the kernels $K_0,\dots, K_n$  satisfy condition \eqref{eq:kernsing'}.  Then for at least one neighboring, non-degenerate
			arc $I_\ell(\yy)$ we have $m_\ell(\yy)>m_j(\yy)$.

	\item Suppose the kernels  are in $\Ce^1(0,2\pi)$ and are non-constant. Then for at least one neighboring, non-degenerate
arc $I_\ell(\yy)$ we have $m_\ell(\yy)>m_j(\yy)$.
	\end{abc}
\end{corollary}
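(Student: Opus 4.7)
The plan is to reduce both cases to showing that on some one-sided open neighborhood of $y_j$, $F(\yy,t) > m_j(\yy) = F(\yy,y_j)$: once such an interval is found, shrinking it so that it sits inside the nearest non-degenerate neighbor of $I_j(\yy)$ gives the desired strict inequality $m_\ell(\yy) > m_j(\yy)$ by taking a supremum.

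Since $I_j(\yy)$ is degenerate, a cluster of $\nu \ge 2$ coincident nodes sits at $y_j$; write them in the $\sigma$-order as $y_j=y_{a_1}=\cdots=y_{a_\nu}$, so that $I_{a_1}(\yy),\dots,I_{a_{\nu-1}}(\yy)$ are degenerate singletons while $I_{a_\nu}(\yy)$ is the first non-degenerate arc immediately to the right of the cluster. Denote by $I_L(\yy)$ the first non-degenerate arc to the left, ending at $y_j$. The neighboring non-degenerate arc $I_\ell(\yy)$ called for in the statement will be either $I_L(\yy)$ or $I_{a_\nu}(\yy)$.

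For (b), the conclusion is immediate from Lemma \ref{l:neighborincrease}(c): for some $\varepsilon>0$, either $F(\yy,t)>m_j(\yy)$ on $(y_j-\varepsilon,y_j)\subset I_L(\yy)$ or on $(y_j,y_j+\varepsilon)\subset I_{a_\nu}(\yy)$. For (a), at most one of the $n+1$ kernels fails $(\infty')$: if $K_j$ itself satisfies $(\infty')$, Lemma \ref{l:neighborincrease}(a) or (b) gives exactly the same one-sided inequality and the argument concludes as in (b). If instead $K_j$ is the unique failing kernel, every cluster mate $K_{a_i}$ with $i\ge 2$ satisfies $(\infty')$. Picking any such one and applying Lemma \ref{lem:new} at $y_{a_i}=y_j$ yields strict monotonicity of $F(\yy,\cdot)$ on $(y_j-\varepsilon,y_j)$ or on $(y_j,y_j+\varepsilon)$; the upper semicontinuity of $F(\yy,\cdot)$ at $y_j$ in the extended sense (trivial if $F(\yy,y_j)=-\infty$, otherwise giving $F(\yy,y_j)=\lim_{r\to y_j}F(\yy,r)\le F(\yy,s)<F(\yy,t)$ for an intermediate $s$) then upgrades this to $F(\yy,t)>m_j(\yy)$ throughout that one-sided neighborhood.

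The only real obstacle is this last subcase of (a): Lemma \ref{l:neighborincrease} presumes the one-sided condition to hold for the kernel indexing the degenerate arc, so when that kernel is the exceptional one the argument has to retreat one level to Lemma \ref{lem:new} and insert the short semicontinuity/limit argument to upgrade open-side monotonicity to a strict inequality against the boundary value $F(\yy,y_j)$. Everything else is cluster bookkeeping and the choice of a small enough $\varepsilon$.
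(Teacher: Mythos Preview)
Your argument is correct and matches the derivation the paper leaves implicit (the corollary is stated without proof immediately after Lemma~\ref{l:neighborincrease}). The reduction to Lemma~\ref{l:neighborincrease}(c) for part (b), and to Lemma~\ref{l:neighborincrease}(a),(b) for part (a) when $K_j$ itself satisfies \eqref{eq:kernsing'}, is exactly right; your handling of the remaining subcase---where $K_j$ is the sole kernel failing \eqref{eq:kernsing'} and one falls back on a cluster mate via Lemma~\ref{lem:new}---is the natural completion that the paper does not spell out.

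Two minor points. First, your phrase ``every cluster mate $K_{a_i}$ with $i\ge 2$'' tacitly assumes $j=a_1$; this is harmless because all degenerate arcs at the common point $p=y_j$ share the value $m_{a_i}(\yy)=F(\yy,p)$, so one may relabel, but it is worth saying so. Second, the justification in the last subcase should invoke continuity rather than upper semicontinuity: by Proposition~\ref{prop:cont0} the function $F(\yy,\cdot)$ is continuous on $\TT$ in the extended sense, so when $F(\yy,y_j)$ is finite one has $F(\yy,y_j)=\lim_{s\to y_j}F(\yy,s)$ in the ordinary sense, and then for $t$ in the one-sided interval and any intermediate $s$ between $y_j$ and $t$ one gets $F(\yy,t)>F(\yy,s)\ge F(\yy,y_j)=m_j(\yy)$ as you write. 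Upper semicontinuity alone would point the inequality in the wrong direction.
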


\begin{corollary}\label{cor:equiosciinside} If $K_0,\dots,K_n$
are non-constant, concave kernel functions and either  $n$ of them satisfy \eqref{eq:kernsing'}, or all belong to $\Ce^1(0,2\pi)$,
then an equioscillation point $\ee\in \TT^n$ must belong to
the interior of some simplex {\piros $S$, i.e., we  have
$\ee\in X=\bigcup_{\sigma} S_{\sigma}$.}
\end{corollary}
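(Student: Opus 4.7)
The plan is to argue by contradiction: suppose $\ee\in\TT^n$ is an equioscillation point but $\ee\notin X$. First I would fix any permutation $\sigma$ with $\ee\in\overline{S_\sigma}$ (such a $\sigma$ exists because $\bigcup_\sigma \overline{S_\sigma}=\TT^n$), which fixes the arc system $I_0(\ee),\dots,I_n(\ee)$ and the corresponding values $m_0(\ee),\dots,m_n(\ee)$; these are well-defined and continuous on $\overline{S_\sigma}$ by Corollary \ref{cor:mjsimplexcont}, so the equioscillation condition $m_0(\ee)=\cdots=m_n(\ee)$ makes unambiguous sense at the boundary point $\ee$.

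Since $\ee$ does not lie in the open set $X=\bigcup_\sigma S_\sigma$, at least two of the points $0=e_0,e_1,\dots,e_n$ must coincide, which forces at least one of the arcs $I_j(\ee)$ to be degenerate. On the other hand the arcs cover the full torus and $\sum_j |I_j(\ee)|=2\pi>0$, so not all of them can be singletons. Walking around the cycle of arcs until we see a transition from a degenerate one to a non-degenerate one, we can therefore choose a degenerate arc $I_j(\ee)$ whose immediate neighbor on at least one side is non-degenerate.

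The core step is then a direct application of Corollary \ref{cor:neighborjump} to this $\yy=\ee$ and this index $j$: hypothesis (a) of that corollary is available if $n$ of the kernels satisfy \eqref{eq:kernsing'}, and hypothesis (b) applies if all kernels lie in $\Ce^1(0,2\pi)$ and are non-constant, so under either alternative of our statement the corollary yields a non-degenerate neighboring arc $I_\ell(\ee)$ with
\[
m_\ell(\ee)>m_j(\ee).
\]
This strict inequality directly contradicts the equioscillation identity $m_\ell(\ee)=m_j(\ee)$, and the contradiction finishes the proof.

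I do not expect any serious obstacle: essentially all the work has already been carried out in Corollary \ref{cor:neighborjump} (which in turn rested on Lemma \ref{l:neighborincrease} and the perturbation machinery of Lemma \ref{lem:pertnodes00}); the present corollary is just a clean packaging of those results. The only minor bookkeeping item is to argue---as in the second paragraph---that among all degenerate arcs at a boundary point $\ee$ one can always find one whose neighbor is non-degenerate, so that Corollary \ref{cor:neighborjump} can actually be invoked.
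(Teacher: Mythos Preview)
Your proof is correct and follows essentially the same route as the paper's: pick a degenerate arc at the boundary point, invoke Corollary \ref{cor:neighborjump} to produce a non-degenerate neighbor with strictly larger $m_\ell$, and contradict equioscillation. Your extra ``walking around'' step is harmless but redundant---Corollary \ref{cor:neighborjump} already guarantees the existence of a non-degenerate neighboring arc, so you may apply it directly to any degenerate $I_j(\ee)$; also note that the relevant underlying lemma is Lemma \ref{l:neighborincrease}, not the Perturbation Lemma \ref{lem:pertnodes00}.
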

\begin{proof} {\piros Let $\yy \in \TT\setminus X$ be arbitrary, 
and choose a permutation $\sigma$ with $\yy\in \partial S_\sigma$.} 
Then there exists some $j$ with $I_j(\yy)$  degenerate. 
According to the above, there exists 
some $\ell\ne j$ with $m_j(\yy)<m_\ell(\yy)$, so there is no equioscillation at $\yy$.
\end{proof}

\begin{example} It can happen that an equioscillation point falls on the boundary of a simplex $S$, and that maximum points of non-degenerate arcs lie on the endpoints. Indeed, let $K_0:=-4\pi^3/|x|$ on $[-\pi,\pi)$, extended periodically, and let $K_1(x):=K_2(x):=-(x-\pi)^2$ on $(0,2\pi)$, again extended periodically. Observe that $K_0$ satisfies \eqref{eq:kernsing'pm} (and belongs to $\Ce^1((0,\pi)\cup(\pi,2\pi))$, and $K_1, K_2\in \Ce^1(0,2\pi)$. Still, for the node system $y_1=y_2=\pi$, we have $\yy\in \partial S=\partial S_{\rm Id}$, $F(\yy,x)=F(\yy,2\pi-x)=-4\pi^3/x-2x^2$ ($0\leq x\leq \pi$), hence $z_0=z_1=z_2=\pi$ and $m_0=m_1=m_2=F(\yy,\pi)=-6\pi^2$, showing that $\yy$ is in fact an equioscillation point.
\end{example}

\begin{lemma}	Suppose the kernels $K_0,\dots, K_n$ are strictly concave and either all
satisfy \eqref{eq:kernsing'}, or all belong to $\Ce^1(0,2\pi)$.
	{\piros Let $\ww\in \TT^n$ and fix a permutation $\sigma$ with $\ww\in \overline{S_\sigma}$ to determine the ordering of the nodes. If $j\in \{0,\dots,n\}$  is such that $m_j(\ww)=\MM(\ww)$, 
	 then $I_j(\ww)$ is non-degenerate and $z_j(\ww)$ belongs to the interior of $I_j(\ww)$.}
\label{lem:interioronndeg}

	\end{lemma}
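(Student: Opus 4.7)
The plan is to prove both assertions by contradiction, exploiting that $m_j(\ww)=\MM(\ww)$.

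For the non-degeneracy of $I_j(\ww)$: if $I_j(\ww)$ were degenerate, Corollary \ref{cor:neighborjump} (applicable under either the \eqref{eq:kernsing'} or the $\Ce^1$/non-constancy hypothesis) would supply a non-degenerate neighboring arc $I_\ell(\ww)$ with $m_\ell(\ww)>m_j(\ww)$, contradicting $m_j(\ww)=\MM(\ww)=\max_k m_k(\ww)$.

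For the interior assertion, write $I_j(\ww)=[w_j,w_{j'}]$ and suppose for contradiction that $z_j(\ww)=w_j$ (the case $z_j(\ww)=w_{j'}$ being symmetric). Consider the maximal block of consecutive $\sigma$-indices $k_1\leq\cdots\leq k_2$ (in the cyclic order) with $w_{\sigma(k)}=w_j$; by the non-degeneracy of $I_j(\ww)$ just proved, $\sigma(k_2)=j$. If $k_1<k_2$, the arc $I_{\sigma(k_1)}(\ww)$ is degenerate, and applying Corollary \ref{cor:neighborjump} to it furnishes a non-degenerate neighbor with strictly larger $m$-value. Its right neighbor is either another degenerate arc in the block (when $k_1+1<k_2$) or is $I_j$ itself (when $k_1+1=k_2$) with $m_j=\MM$, so the strict inequality must come from the left non-degenerate neighbor $I_{\sigma(k_1-1)}(\ww)$, giving $m_{\sigma(k_1-1)}(\ww)>F(\ww,w_j)=\MM(\ww)$, a contradiction.

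It remains to treat the singleton-block case $k_1=k_2$. Then the left arc $I_\ell(\ww)=[w_\ell,w_j]$ is non-degenerate, and $m_\ell(\ww)\geq F(\ww,w_j)=\MM(\ww)$ forces $m_\ell(\ww)=\MM(\ww)$; by strict concavity $z_\ell(\ww)=w_j$. Under hypothesis \eqref{eq:kernsing'}, $K_j$ satisfies at least one of \eqref{eq:kernsing'p} or \eqref{eq:kernsing'm}: the former contradicts $z_j(\ww)=w_j$ via Proposition \ref{prop:zjinter}(b), the latter contradicts $z_\ell(\ww)=w_j$ via Proposition \ref{prop:zjinter}(c). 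Under the $\Ce^1$ hypothesis, $F(\ww,\cdot)$ is $\Ce^1$ on $(w_\ell,w_j)\cup(w_j,w_{j'})$, and having a maximum at $w_j$ from both sides forces $\lim_{t\downto w_j}F'(\ww,t)\leq 0\leq \lim_{t\upto w_j}F'(\ww,t)$; but since $K_j$ is the only summand with a node at $w_j$, the difference of these one-sided limits equals $D_+K_j(0)-D_-K_j(0)$, which is strictly positive by non-constancy of $K_j$ on the torus, a contradiction. The chief subtlety lies in this singleton-block case, where the hypothesis $m_j(\ww)=\MM(\ww)$ is essential: it propagates the maximum to $I_\ell$ and creates the rigid two-sided maximum at $w_j$ that the kernel's one-sided behavior (asymmetry under \eqref{eq:kernsing'}, or kink under $\Ce^1$) then obstructs.
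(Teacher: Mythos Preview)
Your proof is correct. The paper takes a shorter route that avoids your block/singleton case-split altogether: it simply observes that under either hypothesis, $F(\ww,\cdot)$ is strictly larger on some one-sided neighborhood of \emph{every} node $w_i$ (citing Lemma~\ref{lem:new} in the \eqref{eq:kernsing'} case and Lemma~\ref{lem:KjConenew} in the $\Ce^1$ case), so $F(\ww,w_i)<\MM(\ww)$ for every $i$. Since $m_j(\ww)=\MM(\ww)$ is attained at $z_j(\ww)$, that point cannot be an endpoint of $I_j(\ww)$, and the argument is finished---without ever introducing the left arc $I_\ell$ or propagating the maximum to it. Your singleton-block derivative computation is essentially a re-derivation of Lemma~\ref{lem:KjConenew}, and your invocation of Proposition~\ref{prop:zjinter} repackages Lemma~\ref{lem:new}; the paper applies these lemmas once, globally (``no node is a global maximizer''), whereas you apply them locally after an elaborate contradiction setup. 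The payoff of the paper's approach is brevity; your approach buys nothing extra here, though it does make explicit the two-sided maximum obstruction that is implicit in Lemma~\ref{lem:KjConenew}.
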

	\begin{proof}
		By Corollary \ref{cor:neighborjump} it follows that the arc $I_j(\ww)=[w_j,w_r]$  is non-degenerate.
		
		\medskip\noindent
        Suppose first that all kernels satisfy \eqref{eq:kernsing'}.
{\piros In this case, $F$ can attain global maximum neither at $w_j$ nor at $w_r$, because $F$
is strictly increasing on a left or a right neighborhood of these nodes due to condition \eqref{eq:kernsing'} (use  Lemma \ref{lem:new}).} Therefore, in this case $z_j(\ww)$ belongs to the interior of $I_j(\ww)$.
		
		\medskip\noindent Next, let us suppose that the kernels are in $\Ce^1(0,2\pi)$. By an application of Lemma \ref{lem:KjConenew} we obtain $\MM(\ww)>F(\ww,w_i)$ for all $i=0,1\dots, n$. Hence, in the case $\MM(\ww)=m_j(\ww)=F(\ww,z_j)$, we cannot have $z_j=w_j$ or $z_j=w_r$.
		\end{proof}

As usual, we say that a point $\ww\in\TT^n$ is a local
minimum point of $\MM$ if there exists $\eta>0$ such that
\begin{equation}
\label{cond:locmin}
\MM(\wS)=\min \{ \MM(\yy) : d_{\TT^n}(\yy,\wS)<\eta \}.
\end{equation}
Note that the $\eta$-neighborhood here may intersect several
different simplexes.

\begin{prp}\label{prop:minmaxnplusone}
Suppose the kernels $K_0,\dots, K_n$ are strictly concave and {either all
satisfy \eqref{eq:kernsing'}, or all belong to $\Ce^1(0,2\pi)$. }
Let $\wS\in \TT^n$ be a local minimum point of $\MM$,
see \eqref{cond:locmin}.
{\piros Then $\wS$ is an equioscillation point, i.e.,
\begin{equation*}
\mm(\wS)=\MM(\wS).
\end{equation*}}
As a consequence, such a local minimum point belongs to $X=\bigcup_{\sigma} S_\sigma$.
\end{prp}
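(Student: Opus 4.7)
The plan is to argue by contradiction using the Perturbation Lemma. Suppose $\wS$ is a local minimum of $\MM$ but not an equioscillation point, so that $\mm(\wS)<\MM(\wS)$. I will construct a direction $\vc{a}\in[-1,1]^n\setminus\{\vc 0\}$ along which $\MM$ strictly decreases near $\wS$, contradicting local minimality.

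First I would fix any permutation $\sigma$ with $\wS\in\overline{S_\sigma}$, so that the arcs $I_j(\wS)$ and values $m_j(\wS)$ are meaningfully defined, and let $J:=\{j\in\{0,\dots,n\}\colon m_j(\wS)=\MM(\wS)\}$. Under the contradiction hypothesis $|J|\leq n$. The crucial preparatory step is Lemma~\ref{lem:interioronndeg}: for every $j\in J$ the arc $I_j(\wS)$ is non-degenerate and the maximum is attained in its interior, $z_j(\wS)\in \inter I_j(\wS)$. Without this, the Perturbation Lemma could not be invoked, so I expect this to be where the hypotheses on the kernels (either \eqref{eq:kernsing'} for all $K_j$, or all $K_j\in \Ce^1(0,2\pi)$) enter essentially.

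Next I would pass to an admissible cut of the torus at $\wS$ (Remark~\ref{rem:cut}), giving the reindexed system of arcs $\widehat I_i$ with corresponding values $\widehat m_i$. The cut-invariant set $\hat J$ of indices $i$ with $\widehat m_i(\wS)=\MM(\wS)$ satisfies $|\hat J|=|J|\leq n$, and for $i\in \hat J$ the arc $\widehat I_i(\wS)$ is non-degenerate with $\widehat z_i(\wS)$ in its interior. Now I apply Lemma~\ref{lem:pertnodes00}(c) with $k:=|\hat J|\leq n$, indices $\hat J$, and $\xx_1=\cdots=\xx_{n-k}:=\vc 0$ (so no extra linear constraint is imposed). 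This produces $\vc a\in[-1,1]^n\setminus\{\vc 0\}$ and $\eta>0$ such that
\[
\widehat m_i(\wS+h\vc a)<\widehat m_i(\wS)=\MM(\wS) \qquad\text{for all }i\in\hat J,\ 0<h<\eta.
\]

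Finally, for every $i\notin\hat J$ we have $\widehat m_i(\wS)<\MM(\wS)$ strictly, so by the continuity of $\widehat m_i$ at $\wS$ (Proposition~\ref{prop:cont}) I may shrink $\eta$ to ensure $\widehat m_i(\wS+h\vc a)<\MM(\wS)$ for those $i$ as well. Since $\MM(\yy)=\max_i \widehat m_i(\yy)$ in a neighborhood of $\wS$, I conclude $\MM(\wS+h\vc a)<\MM(\wS)$ for all $0<h<\eta$, contradicting the assumed local minimality. Hence $\mm(\wS)=\MM(\wS)$, i.e., $\wS$ equioscillates. The concluding claim $\wS\in X=\bigcup_\sigma S_\sigma$ then follows immediately from Corollary~\ref{cor:equiosciinside}. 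The most delicate step, where I anticipate the only real technical care, is ensuring that the $z_{j}(\wS)$ for $j\in J$ lie strictly inside their arcs (handled by Lemma~\ref{lem:interioronndeg}), since only then can the Perturbation Lemma produce a \emph{strict} decrease of all the maximal $m_j$.
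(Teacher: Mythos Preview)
Your proposal is correct and follows essentially the same argument as the paper: argue by contradiction, invoke Lemma~\ref{lem:interioronndeg} to place the maximum points $z_j(\wS)$ strictly inside their arcs, apply the Perturbation Lemma~\ref{lem:pertnodes00}(c) to strictly lower the maximal $\widehat m_i$, use continuity (Proposition~\ref{prop:cont}) to control the remaining ones, and finish with Corollary~\ref{cor:equiosciinside}. The paper's proof is slightly terser (it passes to the admissible cut immediately rather than first fixing $\sigma$), but the logic and the cited lemmas are identical.
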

\begin{proof} Consider an  admissible cut of the torus (cf.{} Remark \ref{rem:cut}).
Suppose for a contradiction that $i_1,\dots,
i_k\in\{0,\dots,n\}$ with $k\leq n$ are precisely the
indices $i$ with \[\widehat{m}_i(\wS)=\MM(\wS)=:M_0.\]  
By Lemma \ref{lem:interioronndeg} {\piros $t_{j}:=\widehat z_{i_j}(\wS)$} (for $j=1,\dots, k$) belong to the interior of non-degenerate arcs.
With this choice we can use the Perturbation Lemma \ref{lem:pertnodes00} to
slightly move $\wS=(w_1,\dots, w_n)$ to $\vc{w}'=(w'_1,\dots,
w'_n)$, $|\vc{w}'-\wS|<\eta$ and achieve
\begin{equation*}
\max_{j=1,\dots,k}
\widehat{m}_{i_j}(\vc{w}')<M_0,
\end{equation*}
while at the same time $\widehat m_{q}(\vc{w}')$ for $q\neq i_j$,
$j=1,\dots,k$ do not increase too much (because by Proposition
\ref{prop:cont} the functions $\widehat m_q$ are continuous), i.e.,
\begin{equation*}
\max_{p=0,\dots,n}m_{p}(\vc{w}')=\max_{j=1,\dots,k}\widehat{m}_{i_j}(\vc{w}')<M_0,
\end{equation*}
which is a contradiction.

\medskip\noindent The last assertion follows now immediately from Corollary \ref{cor:equiosciinside}.
\end{proof}

\begin{corollary} \label{cor:SS'better} Suppose that the kernels $K_0,\dots, K_n$ are
strictly concave, and that either all
satisfy \eqref{eq:kernsing'}, or all belong to $\Ce^1(0,2\pi)$.
Let $S=S_\sigma$ be a simplex, and let $\wS\in \overline{S}$ be
an extremal node system for \eqref{eq:Sminmax}. Then the
following assertions hold.
\begin{abc}
\item If $\wS \in S$, then $\wS$ is an equioscillation point.
\item Even in case $\wS\in \partial S$ we have that $\wS$
    is a weak equioscillation point.
\item Furthermore, if also \eqref{eq:kernsing} holds, then
    we have $\{m_0(\wS),\dots, m_n(\wS)\}\subseteq
    \{-\infty,M(S)\}$, with $m_j(\wS)=-\infty$ iff
    $I_j(\wS)$ is degenerate.
\item If $\wS\in \partial
    S$, then there exists another simplex
    $S'=S_{\sigma'}$  with $\wS\in \overline{S} \cap
    \overline{S'}$ and $M(S')<M(S)$, moreover $\wS$ is not
    even a local (conditional) minimum within
    $\overline{S'}$.
\end{abc}
\end{corollary}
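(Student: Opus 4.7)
The overall strategy is to use Proposition~\ref{prop:minmaxnplusone} for (a), and the Perturbation Lemma~\ref{lem:pertnodes00}(c) together with Lemma~\ref{lem:interioronndeg} for (b), (c), (d); the common thread is that whenever $\wS$ fails a stated conclusion, a perturbation produced by the Perturbation Lemma strictly decreases $\MM$, contradicting minimality on $\overline{S}$. For (a), $\wS\in S$ open means $S$ itself is a $\TT^n$-neighborhood of $\wS$ on which $\MM$ is minimized, so $\wS$ is a local minimum in the sense of \eqref{cond:locmin}, and Proposition~\ref{prop:minmaxnplusone} delivers the equioscillation.

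For (b), fix an admissible cut (Remark~\ref{rem:cut}) and assume for contradiction that some non-degenerate arc $I_{j_0}(\wS)$ satisfies $m_{j_0}(\wS)<M(S)=\MM(\wS)$. Let $\{i_1,\dots,i_k\}$ enumerate the indices $i$ with $\widehat m_i(\wS)=\MM(\wS)$; Lemma~\ref{lem:interioronndeg} ensures that each $\widehat I_{i_r}(\wS)$ is non-degenerate with $\widehat z_{i_r}(\wS)$ in its interior. If $p$ denotes the number of degenerate arcs at $\wS$, then $k+p\leq n$, since $I_{j_0}$ is a non-degenerate arc outside the active set. Now apply Perturbation Lemma~\ref{lem:pertnodes00}(c) taking as the free linear constraints the $p$ equalities $a_{\sigma(\ell+1)}=a_{\sigma(\ell)}$ (with the convention $a_{\sigma(0)}:=0$), one for each collision $w_{\sigma(\ell)}=w_{\sigma(\ell+1)}$, padded trivially to reach $n-k$ in total. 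The resulting $\vc a\neq\vc 0$ keeps $\wS+h\vc a\in\overline{S}$ for small $h>0$ while forcing $\widehat m_{i_r}(\wS+h\vc a)<\MM(\wS)$ for every $r$; combined with continuity of $\widehat m_j$ (Proposition~\ref{prop:cont}) and the strict inequality $\widehat m_j(\wS)<\MM(\wS)$ for $j\notin\{i_1,\dots,i_k\}$, one obtains $\MM(\wS+h\vc a)<M(S)$, a contradiction. The remaining strict inequality $m_j(\wS)<\MM(\wS)$ at degenerate arcs---the second half of the weak equioscillation property---follows directly from Corollary~\ref{cor:neighborjump}. Claim (c) is then immediate from (b) together with \eqref{eq:kernsing}, since on a degenerate arc $I_j(\wS)=\{w_j\}$ two or more colliding translated kernels contribute $-\infty$, forcing $m_j(\wS)=-\infty$.

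For (d), $\wS\in\partial S$ gives $p\geq 1$, and by (b) the active set already exhausts all non-degenerate arcs, so $k=n+1-p\leq n$. Apply Perturbation Lemma~\ref{lem:pertnodes00}(c) this time imposing no boundary-preserving constraints (the $n-k=p-1$ slots filled trivially). The same reasoning as in (b) yields $\MM(\wS+h\vc a)<M(S)$ for small $h>0$. Since $\wS$ minimizes $\MM$ on $\overline{S}$, necessarily $\wS+h\vc a\notin\overline{S}$, so $\wS+h\vc a\in\overline{S_{\sigma'}}$ for some $\sigma'\neq\sigma$. Closedness and $\wS+h\vc a\to\wS$ as $h\downto 0$ imply $\wS\in\overline{S_{\sigma'}}$, and $M(S_{\sigma'})\leq\MM(\wS+h\vc a)<M(S)$ by Proposition~\ref{prop:exists}. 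The same sequence exhibits points in $\overline{S_{\sigma'}}$ arbitrarily close to $\wS$ with smaller $\MM$-values, so $\wS$ is not a local minimum of $\MM$ on $\overline{S_{\sigma'}}$ either.

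The principal technical difficulty is the constraint count in (b): one must verify $p\leq n-k$ so that the Perturbation Lemma can absorb all $p$ boundary-preserving equalities. This reduction rests on Lemma~\ref{lem:interioronndeg}, which confines the active indices $\{i_1,\dots,i_k\}$ to non-degenerate arcs, combined with the extra non-degenerate arc $I_{j_0}$ supplied by the contradiction hypothesis.
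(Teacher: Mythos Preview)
Your proof is correct, and parts (a) and (c) match the paper's argument. For (b) and (d), however, you take a genuinely different route.

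For (b), the paper does not perturb within $\overline{S}$ at all; instead it performs a \emph{dimension reduction}: the $s$ distinct node values of $\wS$ are taken as a new node system lying in the interior of an $s$-dimensional simplex $\tilde S$, the kernels at coinciding nodes are summed into new kernels $L_0,\dots,L_s$, the resulting sum-of-translates function $\tilde F$ coincides with $F(\wS,\cdot)$, and part (a) applied to this reduced problem gives equioscillation on the non-degenerate arcs directly. Your approach instead uses the free linear constraints in the Perturbation Lemma to freeze each collision and remain on the same face of $\partial S$; the constraint count $k+p\le n$, enabled by the extra non-active non-degenerate arc $I_{j_0}$ supplied by the contradiction hypothesis, is exactly the hinge, as you correctly identify. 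For (d), the paper argues by contrapositive: if $\wS$ were a local conditional minimum in every neighboring simplex it would be a local minimum on all of $\TT^n$, whence Proposition~\ref{prop:minmaxnplusone} would force $\wS\in X$, contradicting $\wS\in\partial S$. Your direct perturbation instead produces an explicit escape direction $\vc a$ and then locates a fixed $S_{\sigma'}$ via pigeonhole on the finitely many simplexes.

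The paper's arguments are shorter and more conceptual (the reduction trick in (b) is elegant and reusable), while yours are more hands-on and yield a concrete witness at the cost of the bookkeeping you flag. One minor expositional point in your (d): the simplex containing $\wS+h\vc a$ may vary with $h$, so the passage to a fixed $\sigma'$ with $\wS\in\overline{S_{\sigma'}}$ uses finiteness of the set of simplexes; this is implicit in your ``closedness'' remark and should be made explicit.
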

\begin{proof} (a) When the extremal node system $\wS$ lies
in the interior of the simplex $S$, it is necessarily a local
minimum point, hence the previous Proposition
\ref{prop:minmaxnplusone} applies.

\medskip\noindent (b) For notational convenience we assume without loss of generality that $\sigma=\id$, the identical pertmutation. Let $\wS=(w_1,\dots,w_n)\in \partial S$ and assume that
\begin{align*}
	0&=w_0=\dots=w_{i_0}<w_{i_0+1}=\dots=w_{i_0+i_1}<w_{i_0+i_1+1}=\dots =w_{i_0+i_1+i_2}\\
    &<\dots< w_{i_0+\dots +i_{s-1}+1} = \dots= w_{i_0+\dots +i_s}<w_{i_0+\dots +i_s+1} :=2\pi
\end{align*}
is the listing of nodes with
the number of equal ones being exactly $i_0, i_1,\dots,i_{s}$. Thus we have $i_0+\dots +i_{s}=n$ with $i_0$
possibly $0$ but the other $i_j$'s are at least $1$, and the number of distinct nodes strictly in $(0,2\pi)$
is $s$.

In between the equal nodes there are degenerate arcs $I_k$,
where---in view of Corollary \ref{cor:neighborjump}---the
respective maximum $m_k(\wS)$ of the function $F(\wS,\cdot)$ is
strictly smaller, than one of the maximums on the neighboring
non-degenerate arcs, hence $m_k(\wS)$ is also smaller than
$\MM(\wS)$.

So in particular if $s=0$ and there is only one non-degenerate
arc $I_{i_0}=[0,2\pi]$, with all the nodes merging to $0$, then weak equioscillation (of this one value $m_{i_0}$)
trivially holds.

Next, assume that there exists at least one node $0<w_k<2\pi$, and
let us now define a new system of $s$ ($1 \leq s <n$) nodes
$\yy=(y_1,\dots,y_s)$ with $y_j=w_{i_0+\dots+i_j}$
($j=1,\dots,s$) extended the usual way by $y_0=0$. Note that we
will thus have $0=y_0<y_1<\dots<y_s<2\pi$, and the arising $s$ non-degenerate
arcs between these nodes are exactly the same as the
non-degenerate arcs determined by the node system $\wS$.

Further, let us define new kernel functions $L_j:=K_{i_0+\dots+
i_{j-1}+1}+\dots +K_{i_0+\dots+i_j}$ for $j=1,\dots,s$, and
$L_0=K_0+K_1+\dots+K_{i_0}$.
Obviously, the new $s+1$-element system $L_0,L_1,\dots,L_s$
consists of strictly concave kernels, either {all}  satisfying
\eqref{eq:kernsing'}, or all belonging to $\Ce^1(0,2\pi)$, and
now the node system $\yy$ belongs to the interior of the
respective $s$-dimensional simplex $\tilde{S}$.

Observe that by construction we now have
\[\tilde{F}(\yy,t)=\sum_{j=0}^s L_j(t-y_j) = \sum_{i=0}^n
K_i(t-w_i) = F(\wS,t),\] and so from the assumption that
$\MM(\wS)$ is minimal within the simplex $S$, it also follows that
$\sup_{t\in \TT}\tilde F(\yy,t)$
is minimal within $\tilde{S}$.
Therefore, by part (a)  the maximum values
$\widetilde{m}_j$ of the function $\tilde{F}$ on
these non-degenerate arcs are all equal, and this was to be proven.

\medskip\noindent (c) is obvious once we have the weak equioscillation in view of
(b).

\medskip\noindent (d) If we had $\wS$ being a local conditional minimum point in each
of the simplexes to the boundary of which it belongs, then
altogether it would even be a local minimum point on $\TT^n$. Then Proposition
\ref{prop:minmaxnplusone} would yield $\wS\in X$, contradicting the
assumption. So there has to be some simplex $S'$, containing
$\wS$ in $\partial S'$, where $\wS$ is not a local conditional
minimum point. Consequently, $M(S')<\MM(\wS)=M(S)$, whence the
assertion follows.
\end{proof}

\begin{corollary}\label{cor:ezkell}
Suppose the kernels $K_0,\dots, K_n$ are strictly concave and
{either all
satisfy \eqref{eq:kernsing'}, or all belong to $\Ce^1(0,2\pi)$.}
 If $\ww$ is an extremal node system for
\eqref{eq:minmax}, i.e.,
\begin{equation*}
\MM(\ww)=\min_{\yy\in\TT^n}\MM(\yy)=M,
\end{equation*}
then the nodes $w_j$ ($j=0,\dots,n$) are pairwise different
(i.e., $\ww\in X$) and, moreover, $\ww$ is an equioscillation
point, i.e., we have
\begin{equation*}
m_{j}(\ww)=M\quad\mbox{for $j=0,\dots, n$}.
\end{equation*}
\end{corollary}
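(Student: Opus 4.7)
The proof should be an almost immediate consequence of Proposition \ref{prop:minmaxnplusone} once one observes that a global minimum point of $\MM$ on $\TT^n$ is, trivially, a local minimum point in the sense of \eqref{cond:locmin}. By Corollary \ref{cor:minmaxexist} such a global minimizer $\ww$ does exist and satisfies $\MM(\ww)=M$; the standing hypotheses of this corollary---strict concavity together with either \eqref{eq:kernsing'} or $\Ce^1(0,2\pi)$ for every kernel---are verbatim those required by Proposition \ref{prop:minmaxnplusone}.

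First I would spell out that for any $\eta>0$ the condition $\MM(\ww)=\min\{\MM(\yy):d_{\TT^n}(\yy,\ww)<\eta\}$ holds trivially, since $\MM(\ww)=M=\inf_{\TT^n}\MM$. Hence $\ww$ qualifies as a local minimum of $\MM$ and Proposition \ref{prop:minmaxnplusone} applies directly, yielding two facts simultaneously: $\ww\in X=\bigcup_\sigma S_\sigma$, meaning that $0=w_0,w_1,\dots,w_n$ are pairwise distinct; and $\mm(\ww)=\MM(\ww)$, so that $\ww$ is an equioscillation point.

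Finally, combining $\mm(\ww)=\MM(\ww)=M$ with the trivial chain $\mm(\ww)\le m_j(\ww)\le \MM(\ww)$ valid for every $j=0,\dots,n$ gives $m_j(\ww)=M$ for all $j$, as claimed. The statement therefore requires no genuinely new argument: all the substance lies already in Proposition \ref{prop:minmaxnplusone}, whose proof in turn relies on the Perturbation Lemma \ref{lem:pertnodes00} together with Lemma \ref{lem:interioronndeg} (to produce, whenever equioscillation fails, a perturbation direction along which all the maximal $m_j$'s strictly decrease) and on Corollary \ref{cor:equiosciinside} (to exclude boundary configurations $\yy\in\TT^n\setminus X$ from being equioscillation points). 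Since no extra obstacle appears, I expect the present corollary to reduce to a one-line deduction.
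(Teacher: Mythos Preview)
Your proposal is correct and matches the paper's intended approach: the corollary is stated in the paper without its own proof block, precisely because it is an immediate consequence of Proposition \ref{prop:minmaxnplusone} applied to a global (hence local) minimum point of $\MM$. Your observation that existence of $\ww$ is guaranteed by Corollary \ref{cor:minmaxexist} is a helpful addition, though strictly speaking the corollary's hypothesis already assumes such a $\ww$ is given.
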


\section{Distribution of local maxima of $\mm$}
\label{sec:concavity}
{\piros In this section we prove that the function 
$\mm$ is (strictly) concave on any closed simplex $\overline{S}$, 
if the kernels are such. 
As a corollary we obtain a unique 
solution of the maximin problem \eqref{eq:Smaxmin}.}

\begin{lemma}
	Suppose the kernels $K_0,\dots, K_n$ are strictly concave.
	Let $S=S_\sigma$ be a simplex. Then $F(\yy,s) :\TT^n\times\TT \to [-\infty,\infty)$ restricted to the convex open set
	\[
	\DD:=\DD_{\sigma,i}:=\bigl\{(\yy,s) :\yy\in S ~\text{and}~ s\in \inter I_i(\yy)\bigr\}
	\]
	is strictly concave.
\label{l:Fstrictconcave}
\end{lemma}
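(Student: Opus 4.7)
The plan is to lift the periodic setup to the real line by choosing suitable representatives of the arguments $s - y_j$ modulo $2\pi$, and then reduce strict concavity of $F$ on $\DD$ to the strict concavity of each $K_j$ on $(0, 2\pi)$. Write $i = \sigma(k)$, so that the condition $s \in \inter I_i(\yy)$ becomes $y_{\sigma(k)} < s < y_{\sigma(k+1)}$. First I would observe that $\DD$ is convex: it is carved out by the strict linear inequalities $0 = y_{\sigma(0)} < y_{\sigma(1)} < \cdots < y_{\sigma(n)} < 2\pi$ defining $S_\sigma$, together with the two further strict linear inequalities $y_{\sigma(k)} < s < y_{\sigma(k+1)}$.

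Next, for each $j \in \{0, 1, \ldots, n\}$ I would define the affine function
\[
L_j(\yy, s) := \begin{cases} s - y_j, & j \in \{\sigma(0), \ldots, \sigma(k)\}, \\ s - y_j + 2\pi, & j \in \{\sigma(k+1), \ldots, \sigma(n)\}, \end{cases}
\]
and check case by case (using only the simplex inequalities and the fact that $y_{\sigma(k+1)} - y_{\sigma(k)} < 2\pi$ whenever $n \ge 1$) that $L_j(\DD) \subseteq (0, 2\pi)$ strictly. Then, by $2\pi$-periodicity of $K_j$, we have $K_j(s - y_j) = K_j(L_j(\yy, s))$, so that
\[
F(\yy, s) = \sum_{j=0}^n K_j(L_j(\yy, s)) \quad\text{on } \DD.
\]
Each summand is the composition of the concave $K_j$ (restricted to $(0, 2\pi)$) with an affine map into $(0, 2\pi)$, hence is concave on $\DD$, and therefore so is $F$.

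For the strict concavity, suppose $(\yy_1, s_1) \ne (\yy_2, s_2)$ in $\DD$ and $\lambda \in (0,1)$, and assume for contradiction that equality holds in Jensen's inequality for $F$. Concavity of each summand then forces equality for each $K_j \circ L_j$; since $L_j$ is affine and $K_j$ is \emph{strictly} concave on $(0, 2\pi)$, we must have $L_j(\yy_1, s_1) = L_j(\yy_2, s_2)$ for every $j$. Applied to $j = 0$ (where $y_0 = 0$ forces $L_0(\yy, s) = s$) this yields $s_1 = s_2$; then for $j = 1, \ldots, n$, equality of $s - y_j$ (or of $s - y_j + 2\pi$) at the two endpoints together with $s_1 = s_2$ gives $y_{j,1} = y_{j,2}$. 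Hence $(\yy_1, s_1) = (\yy_2, s_2)$, a contradiction. The only mild obstacle is the bookkeeping in the second step, where one must verify that the chosen representative of $s - y_j$ modulo $2\pi$ lies in the single interval $(0, 2\pi)$ of concavity of $K_j$ uniformly over $\DD$; the simplex structure together with the fixed choice of arc $I_i(\yy)$ provide precisely this.
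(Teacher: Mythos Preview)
Your proof is correct and follows essentially the same approach as the paper: both establish convexity of $\DD$ from the defining linear inequalities and then deduce (strict) concavity of $F$ from that of each $K_j$ applied to the affine argument $s-y_j$, with the strictness step singling out $j=0$ (where $y_0=0$) to force $s_1=s_2$ and then the remaining $j$ to force $\yy_1=\yy_2$. The only difference is cosmetic: you make explicit the choice of representative $L_j(\yy,s)\in(0,2\pi)$ for $s-y_j\pmod{2\pi}$, whereas the paper leaves this implicit when invoking concavity of $K_\ell$ on the convex combination.
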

\begin{proof} First, note that the set $\DD:=\DD_{\sigma,i}$ is a convex subset of $\TT^{n+1}$. Indeed, let $(\xx,r), (\yy,s) \in \DD$
and	$t\in [0,1]$. Then $x_i<x_\ell$ and $y_i<y_\ell$ imply $tx_i+(1-t)y_i < tx_\ell+(1-t)y_\ell$, and $x_i<r<x_\ell$, $y_i<s<y_\ell$ entails also $tx_i+(1-t)y_i <tr+(1-t)s <tx_\ell+(1-t)y_\ell$.

\medskip\noindent
Now, consider the sum representation of $F$ and concavity of each $K_\ell$ to conclude
\begin{align}\label{Fconcave}
F(t(\xx,r)+(1-t)(\yy,s))&=\sum_{\ell=0}^n K_\ell(tr+(1-t)s-
(tx_\ell+(1-t)y_\ell))
\notag \\ & \geq \sum_{\ell=0}^n t
K_\ell(r-x_\ell)
+(1-t)
K_\ell(s-y_\ell)
\notag \\ & = tF(\xx,r) +(1-t)F(\yy,s).
\end{align}
This shows concavity of $F$. To see strict concavity suppose $t\ne 0,1$ and that  $(\xx,r), (\yy,s) \in \DD$ are different points.  If $r\ne s$, then using the strict concavity of $K_0$ we must have \[K_0(tr+(1-t)s)> t K_0(r) +(1-t)K_0(s),\] and if $r=s$, but $x_\ell \ne y_\ell$ for some $1\leq \ell \leq n$, then using strict concavity of $K_\ell$ (and also that $r=s$) it follows that \[K_\ell(tr+(1-t)s-(tx_\ell+(1-t)y_\ell))= K_\ell(s-(tx_\ell+(1-t)y_\ell)) > t K_\ell(s-x_\ell)+(1-t)K_\ell(s-y_\ell).\] Altogether we obtain strict inequality in \eqref{Fconcave}.
\end{proof}

\begin{prp}\label{l:mistrictconcave}
		Suppose the kernels $K_0,\dots, K_n$ are strictly concave.
	Then for all $i=0,1,\dots,n$, the functions $m_i(\yy) :S \to \RR$ are also strictly concave.
	As a consequence,
	\[
	\mm:S\to [-\infty,\infty),\quad \mm(\yy):=\min_{j=0,\dots,n}m_j(\yy)
	\]
	is a strictly concave function.
\end{prp}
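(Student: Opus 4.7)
My plan is to deduce strict concavity of $m_i$ from the strict concavity of $F$ on $\DD_{\sigma,i}$ (Lemma \ref{l:Fstrictconcave}) together with the existence of unique maximizers $z_i(\yy)\in I_i(\yy)$ of $F(\yy,\cdot)$ (Remark \ref{rem:zjdef}, applicable since all kernels are strictly concave). Strict concavity of $\mm$ will then follow formally.

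Fix $\yy_1,\yy_2\in S$ with $\yy_1\ne\yy_2$ and $t\in(0,1)$, and set $\yy_t:=t\yy_1+(1-t)\yy_2\in S$. Take the unique maximizers $s_k:=z_i(\yy_k)$ for $k=1,2$, so that $m_i(\yy_k)=F(\yy_k,s_k)$. Because all of $S$ is ordered by the same permutation $\sigma$, the interpolant $s_t:=ts_1+(1-t)s_2$ lies in $I_i(\yy_t)$, and therefore $m_i(\yy_t)\ge F(\yy_t,s_t)$. Consequently, the strict concavity of $m_i$ reduces to proving the strict inequality
\[
F(\yy_t,s_t)\,>\,tF(\yy_1,s_1)+(1-t)F(\yy_2,s_2).
\]
If both pairs $(\yy_k,s_k)$ lie in $\DD_{\sigma,i}$, then Lemma \ref{l:Fstrictconcave} applies directly and we are done.

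The main obstacle is the case when some $s_k$ falls on the boundary of $I_i(\yy_k)$, so that $(\yy_k,s_k)\in\overline{\DD_{\sigma,i}}\setminus\DD_{\sigma,i}$. To handle this, I would retrace the proof of Lemma \ref{l:Fstrictconcave}: using a common lift of each difference $s_k-y_{k,\ell}$ into $[0,2\pi]$ (well-defined since the ordering $\sigma$ is shared by $\yy_1,\yy_2,\yy_t$), write the left-hand side minus the right-hand side as a sum over $\ell=0,\dots,n$ of nonnegative concavity remainders of the $K_\ell$. Since $\yy_1\ne\yy_2$, at least one of the arguments differs: if $s_1\ne s_2$ one may take $\ell=0$ and exploit $y_{1,0}=y_{2,0}=0$; otherwise $s_1=s_2$ and one can pick any $\ell\ge1$ with $y_{1,\ell}\ne y_{2,\ell}$. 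Strict concavity of that particular $K_\ell$ on $[0,2\pi]$---valid for any two distinct points of its concavity domain, including points where $K_\ell$ might equal $-\infty$---then contributes a strictly positive term, yielding the desired strict inequality.

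Finally, strict concavity of $\mm$ follows for free: for $\yy_1\ne\yy_2$, $t\in(0,1)$, and every $j\in\{0,\dots,n\}$, strict concavity of $m_j$ gives
\[
m_j(\yy_t)\,>\,tm_j(\yy_1)+(1-t)m_j(\yy_2)\,\ge\,t\mm(\yy_1)+(1-t)\mm(\yy_2),
\]
and taking the minimum over $j$ on the left side preserves this strict inequality.
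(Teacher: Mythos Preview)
Your proof is correct and follows essentially the same route as the paper: pick the maximizers $z_i(\yy_k)$, interpolate, and invoke strict concavity of $F$ on $\DD_{\sigma,i}$ (Lemma~\ref{l:Fstrictconcave}) to beat the convex combination. The paper's argument simply cites Lemma~\ref{l:Fstrictconcave} without addressing whether $(\yy_k,z_i(\yy_k))$ actually lies in the \emph{open} set $\DD_{\sigma,i}$; you have noticed this and handled the boundary case by reopening the proof of the lemma and using that strict concavity of each $K_\ell$ extends from $(0,2\pi)$ to $[0,2\pi]$, which is correct. So your treatment is in fact a bit more careful than the paper's on this point, but the underlying idea is identical.
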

\begin{proof}
Take $i\in \{0,1,\dots,n\}$, $\xx, \yy \in S$ and abbreviate $w:=z_i(\xx)$, $v:=z_i(\yy)$ (the unique maximum points of $F(\xx,\cdot)$ and $F(\yy,\cdot)$ in $I_i(\xx)$ and $I_i(\yy)$, respectively, i.e., $m_i(\xx)=F(\xx,w)$, $m_i(\yy)=F(\yy,v)$).  Let $\zeta(t):=z_i(t\xx+(1-t)\yy)$, $\zeta(0)=v$,  $\zeta(1)=w$.
According to the previous Lemma \ref{l:Fstrictconcave} the function $F$ is strictly concave on $\DD_{\sigma,i}$, hence for different $\xx\ne \yy$ we necessarily have
\[
F(t(\xx,w)+(1-t)(\yy,v))> tF(\xx,w) +(1-t)F(\yy,v)=tm_i(\xx)+(1-t)m_i(\yy).
\]
Here the left-hand side can be written as $F(t\xx+(1-t)\yy,\omega(t))$ with \[
\omega(t)=tw+(1-t)v\in I_i(t\xx+(1-t)\yy).\] Thus by the definition of $m_i$
we have
\[
m_i(t\xx+(1-t)\yy)=\max_{s\in I_i(t\xx+(1-t)\yy)} F(t\xx+(1-t)\yy,s) \geq
F(t(\xx,w)+(1-t)(\yy,v)).
\]
Hence, the previous considerations yield even $m_i(t\xx+(1-t)\yy)>tm_i(\xx)+(1-t)m_i(\yy)$, whence the first assertion follows. Since minimum of strictly concave functions is strictly concave, the last assertion follows, too.
\end{proof}

\begin{corollary} \label{cor:muniquenomaj}
	Suppose the kernels $K_0,\dots, K_n$ are strictly concave, and let $S:=S_\sigma$ be a simplex.
\begin{abc}
\item In $\overline{S}$ the function $\mm$ has a \emph{unique} global maximum point $\ys$, and no local minimum point in $S$.
\item If the kernels satisfy \eqref{eq:kernsing}, then $\ys\in S$.
\item There is no other point in $\overline{S}$  majorizing $\ys$ than $\ys$ itself.
\end{abc}
\end{corollary}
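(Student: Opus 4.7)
The plan is to derive all three assertions from the strict concavity of $\mm$ on $S$ (Proposition~\ref{l:mistrictconcave}), its continuity on $\overline{S}$ (Corollary~\ref{cor:mjsimplexcont}), and the finiteness of $m(S)$ (Corollary~\ref{cor:mMfin}\,(c)). For existence of a global maximum in (a), $\mm$ is continuous on the compact set $\overline{S}$, so it attains its maximum at some $\ys\in\overline{S}$, with $m(S)=\mm(\ys)\in\RR$. The ``no local minimum in $S$'' claim is immediate from strict concavity of $\mm$ on the open convex set $S$: along any line segment in $S$ through an interior point, the restriction is a strictly concave function of one variable, which admits no interior local minimum.

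For uniqueness in (a) I would argue by contradiction: suppose $\yy_1\ne\yy_2\in\overline{S}$ both attain $m^*:=m(S)$, and consider the midpoint $\yy^*:=\tfrac{1}{2}(\yy_1+\yy_2)\in\overline{S}$. If $\yy^*\in S$, I would show $m_j(\yy^*)>m^*$ for every $j$, contradicting maximality of $m^*$. Concretely, fix $j$, pick small $\varepsilon>0$, and set $\tilde\yy_i:=(1-\varepsilon)\yy^*+\varepsilon\yy_i$ for $i=1,2$. Since $\yy^*\in S$ is interior and $\yy_i\in\overline{S}$, the points $\tilde\yy_i$ lie in $S$. Strict concavity of $m_j$ on $S$ yields $m_j(\yy^*)>\tfrac{1}{2}(m_j(\tilde\yy_1)+m_j(\tilde\yy_2))$, while ordinary concavity of $m_j$ on $\overline{S}$ (inherited from strict concavity on $S$ by continuity via Corollary~\ref{cor:mjsimplexcont}) yields $m_j(\tilde\yy_i)\ge(1-\varepsilon)m_j(\yy^*)+\varepsilon m_j(\yy_i)$. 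Combining and cancelling the $\yy^*$-terms gives $m_j(\yy^*)>\tfrac{1}{2}(m_j(\yy_1)+m_j(\yy_2))\ge m^*$, where the last inequality uses $m_j(\yy_i)\ge\mm(\yy_i)=m^*$. If instead $\yy^*\in\partial S$, then the standard fact that a half-open segment between an interior and a closure point of an open convex set stays in the interior forces both $\yy_1,\yy_2\in\partial S$; and since the boundary of $S_\sigma$ is cut out by \emph{monotone} equalities $y_{\sigma(j)}=y_{\sigma(j+1)}$, a midpoint satisfying such an equality forces both $\yy_1$ and $\yy_2$ to satisfy it as well, placing them on a common face of $\partial S_\sigma$. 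I would then merge the coincident nodes and sum the corresponding kernels into new strictly concave kernels (as in the proof of Corollary~\ref{cor:SS'better}\,(b)), turning that face into the closure of a reduced open simplex of strictly smaller dimension, and finish by induction on $n$ with a direct one-dimensional base case.

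For (b), under \eqref{eq:kernsing} every $\yy\in\partial S$ has some degenerate arc $I_j(\yy)=\{y_j\}=\{y_{j'}\}$, so $F(\yy,y_j)$ contains the summand $K_{j'}(0)=-\infty$, forcing $m_j(\yy)=-\infty$ and hence $\mm(\yy)=-\infty$; since $\mm(\ys)=m(S)$ is finite, $\ys\in S$. For (c), if $\yy\in\overline{S}$ majorizes $\ys$, then $\mm(\yy)=\min_j m_j(\yy)\ge\min_j m_j(\ys)=\mm(\ys)$, so $\yy$ is another global maximizer of $\mm$ on $\overline{S}$, and the uniqueness established in (a) yields $\yy=\ys$. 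The main obstacle is the uniqueness assertion of (a) in the boundary case: strict concavity on the open set $S$ combined with merely continuity on $\overline{S}$ does not \emph{a priori} prevent two distinct boundary maximizers, and only the polyhedral structure of $S_\sigma$---which lets us reduce to a genuinely smaller instance of the same theorem---makes the induction on $n$ go through.
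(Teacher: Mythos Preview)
Your arguments for (b), (c), and the existence and ``no local minimum'' parts of (a) match the paper's, and you are right that the paper's one-liner for uniqueness in (a) (``strictly concave on $S$ and continuous on $\overline{S}$, hence evident'') glosses over the case where two putative maximizers both lie on $\partial S$. However, your proposed induction on $n$ for that boundary case does not close. After merging coincident nodes as in Corollary~\ref{cor:SS'better}\,(b), the reduced minimum function $\widetilde{\mm}$ on the face $F$ is the minimum only over the \emph{surviving, non-degenerate} arcs, whereas the original $\mm$ restricted to $F$ still includes the term $m_{\sigma(k)}(\yy)=F(\yy,y_{\sigma(k)})$ coming from the now-degenerate arc $I_{\sigma(k)}$. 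Thus $\mm|_F=\min\bigl(\widetilde{\mm},\,m_{\sigma(k)}\bigr)$, which in general differs from $\widetilde{\mm}$. The induction hypothesis gives you a unique maximizer of $\widetilde{\mm}$ on $F$, but your two points $\yy_1,\yy_2$ are maximizers of $\mm|_F$, and there is no reason they should maximize $\widetilde{\mm}$; so the induction hypothesis says nothing about them.

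A cleaner repair is to observe that the proof of Proposition~\ref{l:mistrictconcave} in fact yields strict concavity of each $m_j$ on all of $\overline{S}$ (wherever finite), not just on $S$. The only extra ingredient is that each kernel $K_\ell$, strictly concave on $(0,2\pi)$ and continuous at the endpoints, is strictly concave on the \emph{closed} interval $[0,2\pi]$: for $0<s<a\le 2\pi$ pick any $0<s'<s$ and combine the strict chord inequality at $s',s,a$ with the weak one at $0,s',a$ (obtained by continuity) to get $K_\ell(s)>\tfrac{a-s}{a}K_\ell(0)+\tfrac{s}{a}K_\ell(a)$. Since two maximizers of $\mm$ on $\overline{S}$ have all $m_j$-values at least $m(S)>-\infty$, strict concavity of $\mm=\min_j m_j$ on $\overline{S}$ then forces them to coincide directly, with no induction needed.
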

\begin{proof}
	(a) Since $\mm$ is strictly concave on ${S}$  and continuous on $\overline{S}$ the assertion is evident.

\medskip\noindent (b) Under condition \eqref{eq:kernsing} we have $\mm|_{\partial S}=-\infty$, whence the assertion is immediate.

\medskip\noindent (c) If $\xx\in\overline{S}$ with $m_j(\xx) \geq m_j(\ys)$ for all $j=0,1,\dots,n$, then for
$\mm=\min_{j=0,\dots,n} m_j$
we also have $\mm(\xx)\geq \mm(\ys)$, hence $\xx$ is also a maximum point, and by uniqueness (part (a)) this entails $\xx=\ys$.
\end{proof}

\section{Local properties of sums of translates}\label{sec:localprop}
{\piros
Exploiting concavity of $\mm$ (as has been proven in the previous section), we can study now the Comparison Property and the Sandwich Property and relate these to the non-uniqueness of equioscillation points in a closed simplex $\overline{S}$, see Proposition \ref{prop:whatifequinon-unique}. By putting the previous results together we can prove a version of Theorem \ref{thm:0mainspecialcase} for a \emph{given and fixed} simplex. This is the content of Proposition \ref{prop:Mmextremal}.}

\begin{corollary}
	Suppose the kernels $K_0,\dots, K_n$ are strictly concave. Let $S:=S_\sigma$ be a simplex.
\label{cor:majcomp}
	\begin{abc}
\item	Let $\yy\in S$, $\xx\in\overline{S}$, $\xx\neq \yy$ be such that $\xx$ majorizes $\yy$, i.e., $m_j(\xx)\ge m_j(\yy)$ for each $j=0,\dots,n$. Then there are $\vca \in \RR^n$ and $\delta>0$ such that for every $j=0,\dots, n$
\begin{align*}
 &m_j(\yy+t\vca)> m_j(\yy) \qquad \big( t\in (0,\delta) \big),\\
		& m_j(\yy-t\vca)< m_j(\yy)   \qquad  \big( t\in (0,\delta) \big).
        \end{align*}
In particular, the Local Strict non-Majorization Property (b) and non-Minorization Property (c) fail at $\yy$.
\item On $S$  the Local non-Majorization Property (C), the Local non-Minorization Property (D), the Local Comparison Property (B) and the Comparison Property (A) are all equivalent, also together with their strict versions.
\end{abc}
\end{corollary}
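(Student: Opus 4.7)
For part (a), the plan is to exploit the strict concavity of each $m_j$ on the open simplex $S$ from Proposition \ref{l:mistrictconcave}. Setting $\vca := \xx - \yy \neq \vc{0}$, I would first pick $t_0 \in (0,1)$ small enough so that the auxiliary point $\yy + t_0 \vca$ still lies in $S$. Ordinary concavity of $m_j$ on $\overline{S}$ (immediate from strict concavity on $S$ together with continuity, Corollary \ref{cor:mjsimplexcont}) combined with the hypothesis $m_j(\xx) \geq m_j(\yy)$ gives $m_j(\yy + t_0 \vca) \geq (1-t_0)m_j(\yy) + t_0 m_j(\xx) \geq m_j(\yy)$. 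Then for any $t\in(0,t_0)$ I would express $\yy + t\vca$ as a convex combination of the two \emph{distinct interior points} $\yy$ and $\yy + t_0 \vca$, and apply strict concavity on $S$ to obtain $m_j(\yy + t\vca) > m_j(\yy)$. Symmetrically, writing $\yy = \tfrac{t_0}{t_0+t}(\yy - t\vca) + \tfrac{t}{t_0+t}(\yy + t_0\vca)$ again as a convex combination of two distinct points of $S$ (valid for $t$ small), strict concavity and the estimate on $m_j(\yy + t_0\vca)$ rearrange to $m_j(\yy - t\vca) < m_j(\yy)$. These strict inequalities hold for every $j$, so $\yy + t\vca$ strictly majorizes and $\yy - t\vca$ strictly minorizes $\yy$, witnessing the failure of the Local Strict non-Majorization and non-Minorization Properties at $\yy$.

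For part (b), Remark \ref{rem:hierarchy} already yields (A) $\Rightarrow$ (B) $\Rightarrow$ (C), (D), and every non-strict property trivially implies its strict version. The remaining task is to show that the strict versions of (C) or (D) each imply the non-strict (A), which closes the loop of eight implications. This follows by contraposition from part (a): if (A) fails, then some $\xx\neq\yy \in S$ majorizes $\yy$, and part (a) produces points of $S\cap \Ball(\yy,\delta)\setminus\{\yy\}$ that \emph{strictly} majorize and \emph{strictly} minorize $\yy\in S\subseteq \overline{S}$, contradicting strict (C) as well as strict (D). Thus (A), (B), (C), (D) together with their strict versions all coincide on $S$.

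The main technical point is that Proposition \ref{l:mistrictconcave} supplies strict concavity only on the \emph{open} simplex $S$, whereas the given majorizing point $\xx$ may lie on $\partial S$. The trick above sidesteps this: plain concavity on $\overline{S}$ is invoked exactly once, in order to transfer the inequality $m_j(\xx)\geq m_j(\yy)$ onto the interior point $\yy + t_0\vca \in S$; thereafter, every use of strict concavity is along a segment joining two points of $S$, which is precisely the setting covered by Proposition \ref{l:mistrictconcave}.
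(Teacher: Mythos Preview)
Your proof is correct and follows essentially the same route as the paper's: both set $\vca=\xx-\yy$ and exploit the strict concavity of each $m_j$ from Proposition~\ref{l:mistrictconcave} along the segment through $\yy$ in the direction of $\xx$, then read off the equivalence in (b) by contraposition. Your two-step device (first pulling $\xx$ back to an interior point $\yy+t_0\vca$ via ordinary concavity, then applying strict concavity only between points of $S$) handles the case $\xx\in\partial S$ a bit more explicitly than the paper's direct one-line application, but the substance is identical.
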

\begin{proof} (a)
Take $\vca:=\xx-\yy$ and let
\[
\yy_t:=\yy + t \vca = (1-t) \yy + t \xx.
\]
For sufficiently small $\delta>0$ we have $\yy_t\in S$ for every $(-\delta,1]$ (since $S$ is convex and open).
By the strict concavity of $m_j$ we obtain for $t\in (0,1)$ that
	\begin{align*}
		m_j(\yy_t) &>(1-t) m_j(\yy) + t m_j(\xx) \geq (1-t) m_j(\yy)+t m_j(\yy)=m_j(\yy)
		\intertext{and for $t\in (-\delta,0)$}
		m_j(\yy_t) &<(1-t) m_j(\yy) + t m_j(\xx) \leq (1-t) m_j(\yy)+t m_j(\yy)=m_j(\yy).
	\end{align*}
This proves the first assertion.

\medskip\noindent (b) The Comparison Property evidently implies the Local Comparison Property and that implies further the Local non-Minorization and non-Majorization Properties. The already established first assertion (a) provides the converse
implications
if
we start with the even weaker Local Strict non-Minorization or non-Majorization Properties.
\end{proof}

\begin{prp}\label{prop:whatifequinon-unique}
	Suppose that the kernel functions $K_0,\dots,K_n$ are strictly concave.
	 Let $S=S_\sigma$ be a fixed simplex and let $\ee, \ff \in \overline{S}$ be  two different equioscillation points.
	\begin{abc}
\item
Then we have $M(S)<m(S)$, and the Sandwich Property (see Definition \ref{def:sandwich} and Remark \ref{rem:sandwich}) fails.
\item If $\MM(\ee)\leq \MM(\ff)$ and $\ee\in S$, then the
Local Strict  non-Majorization (b)
and
 all the non-Minorization Properties
fail to hold at $\ee$.
\item If the kernels {either all
satisfy \eqref{eq:kernsing'},  or are all in $\Ce^1(0,2\pi)$}, then   the Comparison Property (A) fails
  (see Definition \ref{def:majorization}).
\end{abc}
\end{prp}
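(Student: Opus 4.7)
For part (a), my plan is to exploit the strict concavity of the functions $m_j$ on $S$ established in Proposition \ref{l:mistrictconcave}. Write $\mu_\ee := \MM(\ee) = \mm(\ee)$ and $\mu_\ff := \MM(\ff) = \mm(\ff)$, and, without loss of generality, assume $\mu_\ee \le \mu_\ff$. Since equioscillation points lie in $S$ by Definition \ref{def:equi}(a), the midpoint $\vc{g} := (\ee+\ff)/2$ also lies in the open convex set $S$, so strict concavity furnishes
\[
m_j(\vc{g}) > \tfrac{1}{2}\bigl(m_j(\ee)+m_j(\ff)\bigr) = \tfrac{\mu_\ee+\mu_\ff}{2} \ge \mu_\ee
\]
for every $j=0,\dots,n$. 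Hence $\mm(\vc{g}) > \mu_\ee = \MM(\ee) \ge M(S)$, so $m(S) \ge \mm(\vc{g}) > M(S)$, which is precisely the negation of the Sandwich Property in the equivalent form (i) of Proposition \ref{rem:sandwich}.

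For part (b), the hypothesis $\MM(\ee) \le \MM(\ff)$ translates immediately into $m_j(\ff) = \mu_\ff \ge \mu_\ee = m_j(\ee)$ for every $j$, i.e., $\ff$ majorizes $\ee$. I would then feed $\xx := \ff$ and $\yy := \ee$ into Corollary \ref{cor:majcomp}(a): this produces a direction $\vca \in \RR^n$ and a radius $\delta>0$ such that for every $j$ and every $t\in(0,\delta)$ one has $m_j(\ee+t\vca) > m_j(\ee)$ and $m_j(\ee-t\vca) < m_j(\ee)$. The first family of inequalities supplies points arbitrarily near $\ee$ that strictly majorize it, killing the Local Strict non-Majorization Property (b) at $\ee$; the second family supplies strict minorizers, which simultaneously kills the Local Strict non-Minorization and, a fortiori, the Local non-Minorization Property at $\ee$.

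For part (c), I plan to use the additional kernel hypotheses to invoke Corollary \ref{cor:equiosciinside}, which forces every equioscillation point to lie in $X = \bigcup_\tau S_\tau$. Since the open simplexes $S_\tau$ are pairwise disjoint and every boundary point of $\overline{S_\sigma}$ has at least two coinciding coordinates---hence violates the strict ordering defining every $S_\tau$---the closure $\overline{S_\sigma}$ meets $X$ exactly in $S_\sigma$. Thus $\ee, \ff \in S_\sigma = S$. After relabeling so that $\MM(\ee) \le \MM(\ff)$, the point $\ff$ majorizes $\ee$ with $\ff\ne\ee$ and both points in $S$, which directly contradicts the Comparison Property (A).

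The main obstacle I anticipate is the strict concavity step in (a): if one relaxes the notion of equioscillation point to allow boundary positions, the midpoint $\vc{g}$ could fall in $\partial S$, where strict concavity need not hold on the relevant segment. Under the standard convention of Definition \ref{def:equi}(a) this issue does not arise, but a boundary-tolerant version would presumably require a reduction collapsing coincident nodes and amalgamating the corresponding kernels, in the spirit of the proof of Corollary \ref{cor:SS'better}(b).
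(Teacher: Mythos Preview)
Your argument follows the paper's closely. One point to correct: Definition \ref{def:equi}(a) does not say every equioscillation point lies in $S$; it defines the Equioscillation \emph{Property} of the simplex $S$ as the existence of such a point in $S$, whereas an ``equioscillation point'' is simply any $\yy$ with $m_0(\yy)=\cdots=m_n(\yy)$, and the hypothesis of the proposition places $\ee,\ff$ only in $\overline{S}$. The paper's proof of (a) differs from yours by first splitting off the case $\MM(\ee)<\MM(\ff)$, where $M(S)\le\MM(\ee)<\MM(\ff)=\mm(\ff)\le m(S)$ needs no concavity at all, and invoking the midpoint argument only when $\MM(\ee)=\MM(\ff)$; in that case the paper too writes $\vc{g}\in\overline{S}$ and appeals to strict concavity of the $m_j$. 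Your unified midpoint inequality $m_j(\vc{g})>\tfrac12(\mu_\ee+\mu_\ff)\ge\mu_\ee$ is slightly slicker and works equally well once one grants (as the paper tacitly does) that the strict concavity of Proposition \ref{l:mistrictconcave} carries to the closure. Parts (b) and (c) coincide with the paper's reasoning; your route in (c) is marginally more direct than the paper's, which passes through part (b) and the hierarchy of Remark \ref{rem:hierarchy} rather than contradicting (A) outright.
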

\begin{proof}
 For definiteness assume,
as we may, that $\MM(\ee)\leq \MM(\ff)$.
	
\medskip\noindent (a) If $\MM(\ee)<\MM(\ff)$, then we obviously have $M(S)\leq \MM(\ee)<\MM(\ff)=\mm(\ff)\leq m(S)$. If, on the other hand, $\MM(\ee)=\MM(\ff)$, then for the point $\vcg:=\frac12 (\ee+\ff)\in \overline{S}$ by the strict concavity we find
$m_j(\vcg) >\frac12 (m_j(\ee)+m_j(\ff)) = \MM(\ee)$
for all $j=0,\dots,n$, hence also $\mm(\vc{g})>\MM(\ee)$ and thus also $m(S)\geq \mm(\vc{g})>\MM(\ee)\geq M(S)$. In both cases the Sandwich Property  must fail, because by Remark \ref{rem:sandwich} this property is equivalent to $M(S)\geq m(S)$.

\medskip\noindent (b)  If $\MM(\ee)\leq \MM(\ff)$, then $\ff$ majorizes $\ee$, so Corollary \ref{cor:majcomp} (a) finishes the proof.

\medskip\noindent (c) Under the conditions we have $\ee,\ff\in S$ in view of Corollary \ref{cor:equiosciinside}. According to the previous part (b), we find that the the
 Local Strict  non-Majorization (b)
and
non-Minorization Properties (c), (D) and (G)
fail to hold at $\ee$. However, it has already been noted in Remark \ref{rem:hierarchy} that in this case the Comparison Property (A) must fail as well.
\end{proof}

\begin{corollary}\label{l:WeakLocalM}
	Suppose the kernels $K_0,\dots, K_n$ are strictly concave.
Let $S:=S_\sigma$ be a simplex and let  $\yS \in S$ be a  local minimum point of $\MM$, see \eqref{cond:locmin}.
\begin{abc}
	\item Then there exists no other point different from $\yS$ in  $\overline{S}$  majorizing $\yS$.
\item Suppose the kernels {either all
satisfy \eqref{eq:kernsing'}, or all are in $\Ce^1(0,2\pi)$.}
 Then there exists no other local minimum point of $\MM$ in the sense \eqref{cond:locmin}
in the closure $\overline{S}$ of $S$.
\end{abc}
\end{corollary}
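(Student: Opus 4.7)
The plan for part (a) is to derive a direct contradiction from the assumed existence of a majorizing point by invoking the perturbation statement proved in Corollary \ref{cor:majcomp}(a). Suppose there exists $\xx\in\overline{S}$ with $\xx\neq \yS$ and $m_j(\xx)\ge m_j(\yS)$ for every $j=0,\dots,n$. Applied with $\yy=\yS\in S$, Corollary \ref{cor:majcomp}(a) then furnishes a direction $\vca\in\RR^n$ and a $\delta>0$ such that $m_j(\yS-t\vca)<m_j(\yS)$ for every $j$ and every $t\in(0,\delta)$. Strict inequality is preserved under the pointwise maximum over the finite index set $\{0,\dots,n\}$, so $\MM(\yS-t\vca)<\MM(\yS)$ for all small $t>0$, contradicting \eqref{cond:locmin}. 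Since $S$ is open and $\yS\in S$, the perturbed system $\yS-t\vca$ stays in $S$ for small $t$, so this perturbation is legal.

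For part (b) the plan is to suppose, for contradiction, that $\wS\in\overline{S}$ with $\wS\neq \yS$ is another local minimum point of $\MM$ in the sense \eqref{cond:locmin}. Under the extra hypotheses of (b), Proposition \ref{prop:minmaxnplusone} applies to both $\yS$ and $\wS$, yielding that each of them belongs to $X=\bigcup_{\sigma}S_\sigma$ and is an equioscillation point. A point of $X$ has pairwise distinct coordinates distinct also from $0$, so it lies in precisely one open simplex, namely the one corresponding to the strict ordering of its coordinates; hence $X\cap\overline{S}=S$, and in particular $\wS\in S$. Equioscillation then gives $m_j(\yS)=\MM(\yS)$ and $m_j(\wS)=\MM(\wS)$ for every $j=0,\dots,n$.

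I would then split into two cases according to the relative size of $\MM(\yS)$ and $\MM(\wS)$. If $\MM(\yS)\le\MM(\wS)$, the equioscillation identities force $m_j(\wS)\ge m_j(\yS)$ for all $j$, so $\wS\in\overline{S}$ majorizes $\yS\in S$; this directly contradicts part (a), already proved. In the opposite case $\MM(\yS)>\MM(\wS)$ the roles are reversed: $\yS$ majorizes $\wS\in S$, which is itself a local minimum of $\MM$, and applying part (a) with $\wS$ in place of $\yS$ again produces a contradiction. The one genuinely delicate ingredient here is the identification $X\cap\overline{S}=S$, which is what lets us deploy part (a) at \emph{either} minimum point; it is a straightforward combinatorial check from the definitions of $X$ and $\overline{S_\sigma}$, but without it the symmetric argument in the second case would not close.
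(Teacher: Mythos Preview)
Your proof is correct and follows essentially the same route as the paper: part~(a) via Corollary~\ref{cor:majcomp}(a), and part~(b) via Proposition~\ref{prop:minmaxnplusone} to get equioscillation, then majorization, then part~(a). You are in fact a bit more careful than the paper in making explicit the check $X\cap\overline{S}=S$, which is needed to apply part~(a) at the \emph{second} local minimum in the case $\MM(\yS)>\MM(\wS)$; the paper's proof uses this implicitly.
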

\begin{proof}
	\medskip\noindent (a) Suppose $\xx\in \overline{S}$ majorizes $\yS$ and $\xx\ne\yS$. Then by Corollary \ref{cor:majcomp} (a) there are $\vca\in \RR^n$ and  $\delta>0$ with  $m_j(\yS-t\vca)<m_j(\yS)$ for every $t\in (0,\delta)$ and $j=0,\dots,n$. Hence $\yS$ cannot be a local minimum point for $\MM$.
	
\medskip\noindent (b)  By Proposition \ref{prop:minmaxnplusone}, under the conditions on the kernels the local minimum points of $\MM$ are also equioscillation points. Therefore, if $\yy\in \overline{S}$, $\yy\neq \yS$ is another local minimum point of $\MM$, then one of $\yy$ and $\yS$ majorizes the other. But then by part (a) the two points must be equal.
\end{proof}

To sum up our findings we can state:

\begin{prp}\label{prop:Mmextremal}
	Suppose the kernels $K_0,\dots, K_n$ are strictly concave and {either all
satisfy \eqref{eq:kernsing'}, or all belong to $\Ce^1(0,2\pi)$.}
	Let $S:=S_\sigma$ be a simplex.
	 If $\MM$ has a local minimum point $\yS \in S$, then $\yS$ is a unique point of equioscillation in $\overline{S}$, and $\mm$ has there its (unique, global) maximum. In particular, then $M(S)=m(S)$. Moreover, the Sandwich Property holds true in $S$. Furthermore,  the Singular non-Majorization and non-Minorization Properties hold on $S$.
 \end{prp}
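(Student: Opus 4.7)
The plan is to establish the five conclusions in the order (1) $\yS$ is an equioscillation point, (2) it is the unique equioscillation point in $\overline{S}$, (3) $\mm$ attains its (unique) global maximum there, (4) $M(S)=m(S)$ (the Sandwich Property), and (5) the Singular non-Majorization and non-Minorization Properties hold on $S$.

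Parts (1)--(3) are quick consequences of the earlier results. By Proposition \ref{prop:minmaxnplusone}, the local minimum $\yS\in S$ of $\MM$ is an equioscillation point; write $\mu:=\MM(\yS)=\mm(\yS)$, so $m_j(\yS)=\mu$ for every $j$. If $\ff\in\overline{S}$ is another equioscillation point, one may assume $\MM(\ff)\geq\mu$, whence $m_j(\ff)=\MM(\ff)\geq\mu=m_j(\yS)$ for each $j$ and $\ff$ majorizes $\yS$, so Corollary \ref{l:WeakLocalM}(a) forces $\ff=\yS$. Next, by Corollary \ref{cor:muniquenomaj}(a) the function $\mm$ has a unique global maximum $\ys\in\overline{S}$; the chain $m_j(\ys)\geq \mm(\ys)\geq \mm(\yS)=\mu=m_j(\yS)$ shows that $\ys$ majorizes $\yS$, so Corollary \ref{l:WeakLocalM}(a) applied once more gives $\ys=\yS$ and thus $m(S)=\mu$.

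For (4), the inequality $M(S)\leq \MM(\yS)=\mu=m(S)$ is immediate; for the reverse, pick an extremal $\wS\in\overline{S}$ with $\MM(\wS)=M(S)$. If $\wS\in S$, Corollary \ref{cor:SS'better}(a) asserts that $\wS$ is an equioscillation point, so the uniqueness from (2) forces $\wS=\yS$ and $M(S)=\mu$. The case $\wS\in \partial S$ is the main obstacle: here I intend to invoke Corollary \ref{cor:SS'better}(d), which produces an adjacent simplex $S'\ni \wS$ with $M(S')<M(S)$ in which $\wS$ fails to be a local conditional minimum of $\MM$, and combine this with the hypothesis that $\yS$ is a local minimum of $\MM$ on all of $\TT^n$ (not merely on $\overline{S}$). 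The expected closure is along the segment $\gamma(t):=(1-t)\yS+t\wS$: strict concavity of each $m_j$ together with $\MM\geq \mu$ in a $\TT^n$-neighborhood of $\yS$ is meant to rule out $\MM(\wS)<\mu$ by forcing the level-set structure of $\MM$ near $\yS$. Once (4) is in hand, the Sandwich Property follows from Proposition \ref{rem:sandwich}.

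For (5), the Singular non-Majorization Property at $\yS$ is immediate from Corollary \ref{l:WeakLocalM}(a). For the Singular non-Minorization Property, suppose some $\yy\in (S\cap B(\yS,\delta))\setminus\{\yS\}$ satisfies $m_j(\yy)\leq m_j(\yS)=\mu$ for every $j$; then $\yS$ majorizes $\yy$ and Corollary \ref{cor:majcomp}(a) furnishes a direction $\vca$ and $\delta'>0$ such that $m_j(\yy-t\vca)<m_j(\yy)\leq \mu$ for all $j$ and all $t\in(0,\delta')$. Consequently $\MM(\yy-t\vca)<\mu=\MM(\yS)$ while $\yy-t\vca$ can be kept close to $\yS$ by choosing $\delta$ and $t$ small enough, contradicting the fact that $\yS$ is a local minimum of $\MM$ on $\TT^n$. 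The hardest point of the whole proof is the boundary case $\wS\in \partial S$ in (4): the bare statements of Corollary \ref{cor:SS'better} allow $\wS$ to be a weak equioscillation point on $\partial S$ with $\MM(\wS)<\mu$, and ruling this out requires blending its weak equioscillation structure with the $\TT^n$-local-minimum hypothesis on $\yS$ and the strict concavity of the $m_j$'s.
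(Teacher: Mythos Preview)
Your step (2) contains a real error. You write ``one may assume $\MM(\ff)\geq\mu$'', but the situation is not symmetric: $\yS$ is a local minimum of $\MM$ while $\ff$ need not be, so the roles cannot be interchanged. In fact, once you know (from your step (3)) that $\yS$ is the unique global maximizer of $\mm$, any \emph{other} equioscillation point $\ff\in\overline S$ must satisfy $\mm(\ff)<\mu$, hence $\MM(\ff)=\mm(\ff)<\mu$; so the only live case is precisely the one your argument does not cover, namely $\yS$ strictly majorizes $\ff$, and Corollary~\ref{l:WeakLocalM}(a) is silent about points that $\yS$ majorizes. This gap then propagates to your step (4), whose interior case $\wS\in S$ rests on step (2). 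You can salvage the interior case directly via Corollary~\ref{l:WeakLocalM}(b): if $\wS\in S$ realizes $M(S)$, then $\wS$ is a local minimum of $\MM$ in the sense of \eqref{cond:locmin} (as $S$ is open), so $\wS=\yS$ by uniqueness of local minima.

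The paper reverses your order. It first establishes $\ys=\yS$ (your step (3)), then asserts $M(S)=m(S)$, and only \emph{afterwards} deduces uniqueness of equioscillation via Proposition~\ref{prop:whatifequinon-unique}(a): two distinct equioscillation points would force $M(S)<m(S)$, a contradiction. Once $M(S)=m(S)=\mu$ is in hand, $\yS$ is automatically the global minimizer of $\MM$ on $\overline S$, and the Singular non-Minorization follows immediately. Thus the paper's ordering eliminates the problematic ``WLOG'' entirely. Your acknowledged obstacle in (4)---the boundary case $\wS\in\partial S$---is exactly where the paper is tersest (the sentence ``Therefore we obtain $M(S)=m(S)$'' directly after $\ys=\yS$); your sketch via Corollary~\ref{cor:SS'better}(d) and the segment $\gamma(t)$ does not close it either, since strict concavity of the $m_j$ along $\gamma$ produces only lower bounds, not the upper bounds needed to drive $\MM(\gamma(t))$ below $\mu$ near $\yS$. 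Your argument for the Singular non-Minorization Property in (5) is correct and in fact somewhat more self-contained than the paper's, which appeals to $\yS$ being the global minimum of $\MM$.
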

\begin{proof}
	Let $\ys\in\overline{S}$ be the (unique, global) maximum point of $\mm$,
see Corollary \ref{cor:muniquenomaj} (a).
    Obviously,
	\[\min_{j=0,\dots,n} m_j(\ys)=\mm(\ys) \geq \mm(\yS).\] By assumption we can apply Proposition \ref{prop:minmaxnplusone} to conclude that $\yS$ is an equioscillation point, i.e., $\mm(\yS) = \MM(\yS) =m_j(\yS)$ for $j=0,\dots,n$. Thus we find that $\ys$ majorizes the point $\yS$. According to Corollary \ref{l:WeakLocalM} (a) this is not possible unless $\ys=\yS$. Therefore we obtain $M(S)=m(S)$, and Remark \ref{rem:sandwich} yields the Sandwich Property. If $\ee\in \overline{S}$ is another equioscillation point, then $\MM(\ee)\geq \MM(\yS)$ (since $\yS$ is a minimum point). By Proposition \ref{prop:whatifequinon-unique} (a) this would imply  $M(S)<m(S)$, which would be a contradiction.
Therefore, there exists no other equioscillation point in $\overline{S}$ than $\yS$ itself.
	Since $\yS\in S$ is a local minimum point of $\MM$, by Corollary \ref{l:WeakLocalM} (a) there is no point majorizing it. But also $\yS$ is the unique global minimum point  of $\MM$, so there is no point in $\overline{S}$ minorizing it.
	\end{proof}

\section{The Difference Jacobi Property}\label{sec:equi}
{\piros
In this section we show that if the kernels are 
in $\Ce^2(0,2\pi)$ with strictly negative second derivative, 
then we have the Difference Jacobi Property on any simplex. 
This will result in a global homeomorphism result (Corollary \ref{cor:diffhomeo}) and in 
the uniqueness of equioscillation points 
(in a fixed simplex) under the 
condition \eqref{eq:kernsing}, see Corollary \ref{cor:equiuniqueC2}.}

\begin{prp}
Suppose that $K_0,\dots, K_n$ are in $\Ce^2(0,2\pi)$ with $K''_j<0$ ($j=0,\dots,n$), and let $S=S_\sigma$ be a simplex.
For $j=0,\dots, n$ the functions $m_j(\yy)$ are continuously differentiable in $S$ and
\label{prop:jacobielemei0}
\begin{equation}
\label{eq:mudiff}
\frac{\partial m_j}{\partial y_r} (\yy)
=-K_r'\big(z_j(\yy)- y_r\big)\quad\mbox{for $r=1,\dots,n$.}
\end{equation}
\end{prp}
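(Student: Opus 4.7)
The plan is to derive \eqref{eq:mudiff} from the envelope theorem, once it is known that $z_j(\yy)$ lies in the \emph{interior} of $I_j(\yy)$ for every $\yy\in S$, so that the first-order optimality condition $\partial_tF(\yy,z_j(\yy))=0$ is available. Since each $K_i\in\Ce^2(0,2\pi)$ with $K_i''<0$, the sum $F(\yy,\cdot)$ is $\Ce^2$ and strictly concave on each open arc determined by $\yy\in S$, and the unique maximizer $z_j$ depends continuously on $\yy\in S$ by Lemma \ref{lem:zjcont}.

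First I would verify that $z_j(\yy)\in\inter I_j(\yy)$ for every $\yy\in S$. This is the analogue of Proposition \ref{prop:zjinter}\,(d) / Lemma \ref{lem:KjConenew} in the present $\Ce^2$ setting: one uses that $K_i''<0$ together with $2\pi$-periodicity to rule out that $F(\yy,\cdot)$ attains its maximum on the closed arc $I_j(\yy)$ at a node. This is the main technical step, since without a growth hypothesis like \eqref{eq:kernsing'} the maximum could a priori migrate to an endpoint; the argument has to combine strict concavity on the open arc with the jump $D_-K_i(0)<D_+K_i(0)$ (which is forced by $K_i''<0$ and non-constancy) in the style of Lemma \ref{lem:KjConenew}.

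Having $z_j(\yy_0)\in\inter I_j(\yy_0)$, I would set
\[
\Phi(\yy,t):=\partial_tF(\yy,t)=\sum_{i=0}^n K_i'(t-y_i).
\]
On a neighborhood of $(\yy_0,z_j(\yy_0))$ in $S\times(0,2\pi)$ the map $\Phi$ is $\Ce^1$ because $z_j(\yy_0)-y_{0,i}\in(0,2\pi)$ for each $i$; moreover $\Phi(\yy_0,z_j(\yy_0))=0$ by the first-order condition and
\[
\partial_t\Phi(\yy_0,z_j(\yy_0))=\sum_{i=0}^n K_i''\bigl(z_j(\yy_0)-y_{0,i}\bigr)<0
\]
is nonzero. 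The implicit function theorem then produces a $\Ce^1$ solution $\yy\mapsto\zeta(\yy)$ of $\Phi(\yy,\zeta(\yy))=0$ with $\zeta(\yy_0)=z_j(\yy_0)$, and by the continuity of $z_j$ (Lemma \ref{lem:zjcont}) together with the uniqueness of the strictly concave maximizer, $\zeta=z_j$ in a neighborhood of $\yy_0$. Hence $z_j\in\Ce^1(S)$.

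Finally, I would differentiate $m_j(\yy)=F(\yy,z_j(\yy))$ by the chain rule and invoke the envelope identity $\partial_tF(\yy,z_j(\yy))=0$:
\[
\frac{\partial m_j}{\partial y_r}(\yy)=\frac{\partial F}{\partial y_r}(\yy,z_j(\yy))+\partial_tF(\yy,z_j(\yy))\,\frac{\partial z_j}{\partial y_r}(\yy)=-K_r'\bigl(z_j(\yy)-y_r\bigr),
\]
which is the formula in \eqref{eq:mudiff}. Continuity of this expression on $S$ is inherited from $z_j\in\Ce^1(S)$ and $K_r'\in\Ce^1(0,2\pi)$. The main obstacle is the opening step: securing interiority of the maximizer without appealing to \eqref{eq:kernsing'}; once that is in hand, the implicit function theorem and envelope theorem deliver both the $\Ce^1$ regularity and the explicit form of $\partial_{y_r}m_j$ in a routine way.
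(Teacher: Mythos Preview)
Your approach is exactly the paper's: apply the implicit function theorem to $\partial_t F(\yy,z_j(\yy))=0$ to get $z_j\in\Ce^1$, then differentiate $m_j(\yy)=F(\yy,z_j(\yy))$ via the chain rule and use $\partial_t F(\yy,z_j(\yy))=0$ to kill the $\partial z_j/\partial y_r$ term. The paper's argument is even terser---it simply writes ``Recall that $t=z_j(\yy)$ is the unique maximum point in $I_j(\yy)$, i.e., with $F'(\yy,t)=0$'' and proceeds directly to the implicit function theorem and the chain rule.

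You are right to single out interiority of $z_j(\yy)$ as the crux, but your proposed verification via the jump $D_-K_i(0)<D_+K_i(0)$ in the style of Lemma~\ref{lem:KjConenew} will not succeed. That lemma only shows that no node is a \emph{two-sided} local maximum of $F(\yy,\cdot)$; it does not preclude the \emph{one-sided} maximum over the arc $I_j(\yy)$ from sitting at an endpoint. Under the bare $\Ce^2$ hypothesis this can happen: with $n=1$, pick $K_0,K_1$ so that $D_+K_0(0)+D_-K_1(0)>0$ (only $\int_0^{2\pi}K_j'=0$ and $K_j''<0$ are forced, so this is easily arranged). Then for all sufficiently small $y_1>0$ the left $t$-derivative of $F$ at $t=y_1$ equals $K_0'(y_1)+D_-K_1(0)>0$, so $F(y_1,\cdot)$ is strictly increasing on $I_0=[0,y_1]$ and $z_0(y_1)=y_1$. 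There the right-hand side of~\eqref{eq:mudiff} reads $-K_1'(0)$, which is undefined, while in fact $dm_0/dy_1=K_0'(y_1)$. So this is a genuine gap---in your plan and, as you correctly sensed, in the paper's proof as stated. The paper's downstream uses avoid the issue (Proposition~\ref{prop:mmatrix3} assumes $z_j(\yy)\in\intt I_j(\yy)$ explicitly; Corollaries~\ref{cor:diffhomeo} and~\ref{cor:equiuniqueC2} impose \eqref{eq:kernsing}, which forces interiority), so the formula should be read as valid where $z_j(\yy)$ is interior.
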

\begin{proof}
	Let $\yy\in S$ be fixed.	Recall that $t=z_j(\yy)$ is the unique maximum point in $I_j(\yy)$, i.e., with $F'(\yy,t)=0$. Since
	\[
	F''(\yy,t)=K_0''(t)+\sum_{j=1}^nK_j''(t-y_j)<0
	\]
	by the implicit function theorem, for a suitable neighborhood $U\times V\subseteq  S\times I_j(\yy)$ we have that  $z_j:U\to V$ is continuously differentiable. Since  $m_j(\yy)=F(\yy,z_j(\yy))$ we obtain that $m_j$, too is continuously differentiable and
\begin{align*}
\frac{\partial m_j}{\partial y_r}(\yy)
&=
\frac{\partial }{\partial y_r}\Bigl(F\big( \yy, z_j(\yy)\big)\Bigr)
=
\frac{\partial F} {\partial y_r} \big( \yy, z_j(\yy)\big)
+
\frac{\partial}{\partial t}  F\big( \yy, t \big)\vert_{t=z_j(\yy)}
\frac{\partial}{\partial y_r} z_j\big(\yy\big)
\\
&=
-K_r'\big(z_j(\yy)- y_r\big).\qedhere
\end{align*}
\end{proof}

As a consequence,
the Jacobian matrix
$D\vc{m}$ of $\vc{m}=(m_0,\dots, m_{n})^\trp$ is
\begin{equation}
\label{mujacobi}
D\vc{m} =
\bordermatrix{
~ & ~ & \overset{r}{\downarrow} & ~ \cr
~ & ~ & \vdots & ~ \cr
j\rightarrow & \cdots & -K_r'\big(z_j(\yy)- y_r\big) & \cdots \cr
~ & ~ & \vdots & ~
}
\end{equation}
where $j=0,\ldots,n$ and $r=1,\ldots,n$ .

For a given permutation $\sigma$ of $\{1,\dots, n\}$ let us consider the mapping $\Delta_\sigma$ defined by
\begin{equation}
\label{def:delta}
\Delta_\sigma(\yy) :=
(m_{\sigma(1)}(\yy)-m_{\sigma(0)}(\yy),\ldots,m_{\sigma(n)}(\yy)-m_{\sigma(n-1)}(\yy))^\trp.
\end{equation}
Its
Jacobian matrix
$D\Delta_\sigma$ is
\begin{equation}
\label{deltajacobi}
D\Delta_\sigma (\yy) =
\bordermatrix{
~ & ~ & \overset{r}{\downarrow} & ~ \cr
~ & ~ & \vdots & ~ \cr
j\rightarrow & \cdots & -K_r'\big(z_{\sigma(j)}(\yy)- y_r\big) + K_r'\big(z_{\sigma(j-1)}(\yy)- y_r\big) & \cdots \cr
~ & ~ & \vdots & ~
}
\end{equation}
where   $j=1,\ldots,n$ and $r=1,\ldots,n$.

\begin{prp}
Suppose that for each $j=0,\dots,n$ the kernel $K_j$ belongs to $\Ce^2(0,2\pi)$ with $K_j''<0$.
Let $S=S_\sigma$ be a simplex and let $\yy\in S$ be such that for each $j=0,1,\dots,n$ we have $z_j(\yy)\in \intt I_j(\yy)$.
 Then, the
Jacobian matrix of $\Delta_\sigma(\yy)$
is non-singular. That is, on $S$, we have the Difference Jacobi Property.
\label{prop:mmatrix3}
\end{prp}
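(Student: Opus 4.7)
The strategy is to pin down the sign pattern of the entries of $D\Delta_\sigma(\yy)$, derive a row-sum identity from the interior criticality of the maxima $z_j(\yy)$, and then conclude via strict diagonal dominance. Write $\zeta_j:=z_{\sigma(j)}(\yy)$ and $\psi_r(t):=K_r'(t-y_r)$, so that $(D\Delta_\sigma)_{j,r}=\psi_r(\zeta_{j-1})-\psi_r(\zeta_j)$. The hypotheses $\yy\in S_\sigma$ and $z_j(\yy)\in\intt I_j(\yy)$ for every $j$ produce the interlacing
\[
0<\zeta_0<y_{\sigma(1)}<\zeta_1<y_{\sigma(2)}<\cdots<y_{\sigma(n)}<\zeta_n<2\pi,
\]
and since $K_r\in\Ce^2(0,2\pi)$ with $K_r''<0$, each $\psi_r$ is strictly decreasing along the open arc $\TT\setminus\{y_r\}$ taken in its natural (counterclockwise) orientation.

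The first step is the sign analysis. The only node of $\yy$ lying in the short arc $(\zeta_{j-1},\zeta_j)$ is $y_{\sigma(j)}$, so for $r\ne\sigma(j)$ the function $\psi_r$ is continuous and strictly decreasing on $[\zeta_{j-1},\zeta_j]$, giving $(D\Delta_\sigma)_{j,r}>0$. For $r=\sigma(j)$, lifting $\zeta_{j-1}$ and $\zeta_j$ to the single monotone branch $(y_r,y_r+2\pi)$ of $\psi_r$ replaces $\zeta_{j-1}$ by $\zeta_{j-1}+2\pi$; since $\zeta_j-\zeta_{j-1}<2\pi$ one has $\zeta_j<\zeta_{j-1}+2\pi$ in this branch, hence $\psi_r(\zeta_j)>\psi_r(\zeta_{j-1})$ and $(D\Delta_\sigma)_{j,\sigma(j)}<0$. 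The second step is the row-sum computation. The interior criticality $\partial_t F(\yy,\zeta_j)=\sum_{r=0}^n K_r'(\zeta_j-y_r)=0$ gives $\sum_{r=1}^n K_r'(\zeta_j-y_r)=-K_0'(\zeta_j)$ (recall $y_0=0$), and therefore
\[
\sum_{r=1}^n (D\Delta_\sigma)_{j,r}=K_0'(\zeta_j)-K_0'(\zeta_{j-1}).
\]
Since $K_0'$ is strictly decreasing on $(0,2\pi)$ and $0<\zeta_{j-1}<\zeta_j<2\pi$, this row sum is strictly negative.

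To close the argument I permute the columns of $D\Delta_\sigma$ by $\sigma$, defining $M_{j,r}:=(D\Delta_\sigma)_{j,\sigma(r)}$; this preserves non-singularity, and the row sums remain the row sums of $D\Delta_\sigma$. By the previous two steps $M$ has strictly negative diagonal entries, strictly positive off-diagonal entries, and strictly negative row sums. Equivalently $-M$ has strictly positive diagonal entries that strictly dominate the sums of the absolute values of the off-diagonal entries in every row, so by the Gershgorin disc theorem $-M$—and hence $D\Delta_\sigma$—is non-singular. The main technical subtlety is the careful bookkeeping of torus periodicity in the sign analysis at $r=\sigma(j)$; once the relevant points are moved onto a single monotone branch of $\psi_r$, both the sign flip and the strict diagonal dominance conclusion are immediate.
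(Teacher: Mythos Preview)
Your proof is correct. The sign analysis is essentially the same as the paper's (the paper takes $\sigma=\id$ and works with $A:=-D\Delta_\sigma$, showing it is a Z-matrix), but the way you establish dominance is genuinely different. The paper computes the \emph{column} sums of $A$ via a telescoping identity,
\[
\sum_{j=1}^n\bigl(K_r'(z_j-y_r)-K_r'(z_{j-1}-y_r)\bigr)=K_r'(z_n-y_r)-K_r'(z_0-y_r)>0,
\]
and then invokes the M-matrix criterion from Berman--Plemmons to conclude non-singularity. You instead compute the \emph{row} sums of $D\Delta_\sigma$ using the first-order optimality conditions $\partial_t F(\yy,\zeta_j)=0$ at the interior maxima---thereby making direct use of the hypothesis $z_j(\yy)\in\intt I_j(\yy)$---and finish with Gershgorin/strict diagonal dominance. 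Your route is more elementary and self-contained (no external M-matrix reference), while the paper's telescoping trick does not rely on the criticality of the $z_j$'s and thus slightly decouples the linear-algebraic conclusion from the analytic setup; both get to the same place.
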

\begin{proof}
For the sake of brevity we may suppose $\sigma=\id$, i.e., $\sigma(j)=j$, otherwise we can relabel the kernels $K_j$ accordingly.
We abbreviate $z_j:=z_j(\yy)$ and have
 according to the assumption
\[
 z_{j-1}<y_{j}<z_{j}\quad\mbox{ for } j=1,\dots,n.
\]
Write $A:=-D \Delta_\sigma(\yy)$.
First, we show that $A$ is a so-called Z-matrix, that is,
the entries are non-negative on the diagonal and are non-positive off the diagonal (see e.g. \cite{bermanplemmons}, p. 132 and p. 279).

On the diagonal the entries are
{\lila $K_r'(z_{r}- y_r) - K_r'(z_{{r-1}}- y_r)$}, $r=1,\ldots,n$.
Since $0<z_{{r-1}}<y_r<z_{r}<2\pi$ 
we obtain $z_{{r-1}}-y_r<0<z_{r}-y_r$ and 
{\lila $2\pi+z_{{r-1}}-y_r<2\pi$, furthermore, $0<z_{r}-y_r<2\pi+z_{{r-1}}-y_r<2\pi$.} 
Now, using the $2\pi$ periodicity of $K_r'$ and that $K_r'$ is strictly monotone decreasing {\lila in $(0,2\pi)$},
we obtain
$K_r'(z_{{r-1}}- y_r) < K_r'(z_{r}- y_r)$, that is,
$K_r'(z_{r}- y_r) - K_r'(z_{{r-1}}- y_r) > 0$.

\medskip\noindent
For
 $j<r$
we have
 $z_{{j-1}}<z_{j}\leq z_{{r-1}}<y_r$.
Therefore,
$-2\pi<z_{{j-1}}-y_r<z_{j}-y_r <0$
and
using that $K_r'$ is strictly monotone decreasing and $2\pi$ periodic,
we can write
\[
 K_r'(z_{j}- y_r) - K_r'(z_{{j-1}}- y_r)<0.
\]
Therefore the elements above the diagonal of $A$ are strictly negative.

If
$j>r$,
then
 $y_r<z_{r}\leq z_{{j-1}}<z_j$.
As above,
 $0<z_{{j-1}}-y_r<z_{j}-y_r <2\pi$
and using that $K_r'$ is strictly monotone decreasing,
we can write
\[
 K_r'(z_{j}- y_r) - K_r'(z_{{j-1}}- y_r) < 0,
\]
meaning that the entries below the diagonal of $A$ are strictly negative, too.
 So we have seen that $A$ is a Z-matrix.

\medskip\noindent We now show that the column sums of $A$  are strictly positive.
Indeed, the sum of the $r^{\text{th}}$ column of $A$ is telescopic
\begin{align*}
\sum_{l=1}^n \Bigl(K_r'(z_{l}-y_r) - K_r'(z_{{l-1}}-y_r)\Bigr)=K_r'(z_{n}-y_r)-K'_r(z_{0}-y_r).
\end{align*}
Since  $0<z_{0}<y_r<z_{n}<2\pi$, we have $0<z_{n}-y_r<2\pi+z_{0}-y_r<2\pi$. Since $K'_r$ is strictly decreasing and $2\pi$ periodic, it follows $K_r'(z_{n}-y_r)-K'_r(z_{0}-y_r)>0$.

Therefore, with  $\xx=(1,1,\ldots,1)^\trp\in \RR^n$ we have  $A^\trp \xx$ is a strictly positive vector. This means that $A^\trp$ satisfies condition I27 in \cite{bermanplemmons} (see page 136). Hence by  Theorem 2.3 on pp.{} 134--138 in \cite{bermanplemmons} it follows that $A^\trp$ is an M-matrix and is non-singular, this yielding also the non-singularity of $-A$. The proof is hence complete.
\end{proof}

\begin{corollary}\label{cor:diffhomeo}
	 Suppose that for each $j=0,\dots,n$ the kernel $K_j$ belongs to $\Ce^2(0,2\pi)$ with $K_j''<0$ and satisfies \eqref{eq:kernsing}.
	Let $S=S_\sigma$ be a simplex.
The mapping $\Delta_\sigma:S\to \RR^n$ is then a homeomorphism.
\end{corollary}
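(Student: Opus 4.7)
The plan is to combine three ingredients: non-singularity of the Jacobian, properness at the boundary, and a Hadamard-type global inversion argument.

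First I would verify that $\Delta_\sigma$ is a $\Ce^1$ local diffeomorphism on all of $S$. The map is continuous by Corollary \ref{cor:mjsimplexcont} and $\Ce^1$ by Proposition \ref{prop:jacobielemei0}. Under condition \eqref{eq:kernsing} we have $F(\yy,y_j)=-\infty$ at every node, so for each $\yy\in S$ and each $j$ the maximum point $z_j(\yy)$ lies in the \emph{interior} of $I_j(\yy)$; hence Proposition \ref{prop:mmatrix3} applies at every point of $S$, giving $\det D\Delta_\sigma(\yy)\ne 0$ throughout $S$. By the inverse function theorem $\Delta_\sigma$ is then an open map and a local $\Ce^1$-diffeomorphism from $S$ into $\RR^n$.

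Next I would establish properness. Condition \eqref{eq:kernsing} places us exactly in the setting of Proposition \ref{prop:bdry}, which gives
\[
\lim_{\yy\to \partial S,\,\yy\in S}\; \max_{k=0,\dots,n-1}\bigl|m_{\sigma(k)}(\yy)-m_{\sigma(k+1)}(\yy)\bigr| = \infty,
\]
that is, $\|\Delta_\sigma(\yy)\|\to\infty$ as $\yy$ approaches $\partial S$. Since $S$ is relatively compact in $\TT^n$, this is equivalent to saying that $\Delta_\sigma^{-1}(C)$ is a closed subset of the compact set $\overline{S}$ that stays away from $\partial S$, hence compact in $S$, for every compact $C\subseteq\RR^n$. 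In other words, $\Delta_\sigma:S\to\RR^n$ is proper.

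Having a proper local homeomorphism between connected manifolds of the same dimension, with the target simply connected, I would invoke Hadamard's global inverse theorem (equivalently, the fact that such a proper local homeomorphism is a covering map of $\RR^n$, which must be trivial since $\RR^n$ is simply connected and $S$ is connected): this forces $\Delta_\sigma$ to be a homeomorphism of $S$ onto its image. Since $\Delta_\sigma(S)$ is open (local diffeomorphism) and closed in $\RR^n$ (properness plus continuity: the image of a proper map between locally compact Hausdorff spaces is closed) and non-empty, it equals $\RR^n$ by connectedness. Combining these steps yields that $\Delta_\sigma:S\to\RR^n$ is a homeomorphism (indeed, a $\Ce^1$-diffeomorphism).

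The main obstacle will be giving a clean justification for the global injectivity; the local-to-global passage is conceptually standard but requires either explicitly invoking Hadamard's theorem or proving by hand that a proper local homeomorphism onto a simply connected target with connected domain is a bijection. A self-contained proof of the latter would use that properness upgrades the local homeomorphism to a covering map, and then connectedness of $S$ together with simple-connectedness of $\RR^n$ forces the number of sheets to be one.
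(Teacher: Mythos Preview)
Your proof is correct and follows essentially the same route as the paper: local invertibility from Proposition~\ref{prop:mmatrix3}, boundary blow-up from Proposition~\ref{prop:bdry}, and a global inversion principle. The paper simply cites the global step as a well-known result (referring to \cite{SzV}, \cite{palais}, \cite{ortegarheinboldt}), whereas you spell out the Hadamard/covering-map justification explicitly; the content is the same.
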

\begin{proof}
By Proposition \ref{prop:mmatrix3} the mapping $\Delta_\sigma$ is locally a homeomorphism (onto its image), and by Proposition \ref{prop:bdry} it carries the boundary $\partial S$ onto the boundary of the one-point compactified $\RR^n$. By a well-known result---
see e.g.{} \cite[p.{} 105, Lemma 3.24]{SzV},  \cite[Corollary 4.3]{palais}, or \cite[pp.{} 136--137, Theorem 5.3.8]{ortegarheinboldt}---$\Delta_\sigma$ is a homeomorphism.
\end{proof}

Here is a proof of existence (and even uniqueness) of equioscillation points in a given simplex under the special conditions of this section.
\begin{corollary}\label{cor:equiuniqueC2}
Suppose that for each $j=0,\dots,n$ the kernel $K_j$ belongs to $\Ce^2(0,2\pi)$ with $K_j''<0$ and satisfies \eqref{eq:kernsing}.
Then all equioscillation points belong to some (open) simplex, and in each simplex $S=S_\sigma$ there is a unique equioscillation point.
\end{corollary}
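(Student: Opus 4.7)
The plan is to derive both assertions as essentially immediate consequences of results already assembled in Sections \ref{sec:distrlocmins} and \ref{sec:equi}. The hypothesis that $K_j\in\Ce^2(0,2\pi)$ with $K_j''<0$ forces each $K_j$ to be non-constant and strictly concave, and in particular each $K_j$ belongs to $\Ce^1(0,2\pi)$, so the standing hypotheses of the relevant results hold. In addition, \eqref{eq:kernsing} is assumed, which via the remarks immediately preceding its introduction gives \eqref{eq:kernsing'pm} and hence both \eqref{eq:kernsing'm} and \eqref{eq:kernsing'p}.

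For the first assertion (that every equioscillation point lies in some open simplex), I would just invoke Corollary \ref{cor:equiosciinside}: its hypothesis ``all $K_j$ belong to $\Ce^1(0,2\pi)$ and are non-constant'' is satisfied under our assumptions, so any equioscillation point $\ee$ must satisfy $\ee \in X=\bigcup_\sigma S_\sigma$.

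For the second assertion, the key observation is that a point $\yy\in S=S_\sigma$ is an equioscillation point precisely when $m_{\sigma(0)}(\yy)=m_{\sigma(1)}(\yy)=\cdots=m_{\sigma(n)}(\yy)$, i.e.{} exactly when $\Delta_\sigma(\yy)=\vc{0}$, with $\Delta_\sigma$ as defined in \eqref{def:delta}. Under our assumptions the hypotheses of Corollary \ref{cor:diffhomeo} hold (using Proposition \ref{prop:zjinter}(d), granted by \eqref{eq:kernsing'pm}, to ensure $z_j(\yy)\in \intt I_j(\yy)$ so that Proposition \ref{prop:mmatrix3} applies), and it provides that $\Delta_\sigma:S\to\RR^n$ is a homeomorphism. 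Hence $\Delta_\sigma^{-1}(\vc{0})$ is a single point of $S$, which is the unique equioscillation point in $S$. Both existence and uniqueness follow at once.

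There is no substantial obstacle to overcome here; everything reduces to verifying hypotheses and to the remark that the equioscillation condition is the vanishing of the consecutive-differences map $\Delta_\sigma$. The only minor point worth stating carefully is that the preliminary checks (non-constancy, strict concavity, $\Ce^1$-smoothness, $z_j$ in the interior of $I_j$) are all immediate from $K_j\in\Ce^2(0,2\pi)$, $K_j''<0$, and \eqref{eq:kernsing}.
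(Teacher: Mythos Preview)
Your proof is correct and follows essentially the same approach as the paper: invoke Corollary \ref{cor:equiosciinside} for the first assertion, and for the second observe that equioscillation points in $S_\sigma$ are precisely $\Delta_\sigma^{-1}(\vc{0})$, then apply Corollary \ref{cor:diffhomeo}. Your additional hypothesis-checking (non-constancy, strict concavity, interior location of $z_j$) is more explicit than the paper's terse two-line argument but not substantively different.
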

\begin{proof}
An equioscillation point must belong to $X$ according to Corollary \ref{cor:equiosciinside}.
In a fixed simplex $S_\sigma$, an equioscillation point is the inverse image of $\vc{0}\in \RR^n$ under the homeomorphism $\Delta_\sigma$ from Corollary \ref{cor:diffhomeo}.
\end{proof}

\section{Equioscillation points}\label{sec:equioscillation}

In this section we prove the existence of equioscillation points in each simplex $S=S_\sigma$, and discuss the uniqueness of such points. The main tool will be the approximation of kernels by a sequence of kernel functions having special properties, so the arguments rely on the results of Section \ref{sec:approx}.

\begin{lemma}\label{l:equilimit} Suppose that $K_0,\dots, K_n$ are strictly concave kernel functions and that a sequence of strictly concave kernel functions $(K_j^{(k)})_{k\in\NN}$ converges uniformly (e.s.) to $K_j$ as $k\to \infty$, $j=1,\dots,n$. {\piros Let $S=S_\sigma$ be a simplex.}
For each $k\in \NN$ let
$\ee^{(k)}\in \overline{S}$
be an equioscillation point for the system of kernels $K_j^{(k)}$, $j=0,\dots,n$. Then any  accumulation point  $\ee\in \overline{S}$ of the sequence $(\ee^{(k)})_{k\in \NN}$ is an equioscillation point of the system $K_j$, $j=0,\dots,n$.
\end{lemma}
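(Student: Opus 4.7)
The plan is to pass to a subsequence so that $\ee^{(k)}\to\ee$, and then transfer the equalities $m_0^{(k)}(\ee^{(k)})=\cdots=m_n^{(k)}(\ee^{(k)})$ (valid for each $k$ by hypothesis) to the limit relations $m_0(\ee)=\cdots=m_n(\ee)$, using uniform convergence of the local maximum functions on the closed simplex $\overline{S}$. This is purely a continuity/convergence argument, with no need to invoke any further structural properties of the kernels.

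First, I would invoke Proposition \ref{prp:unifconvm}: since $K_j^{(k)}\to K_j$ uniformly (e.s.) for each $j=0,\dots,n$, one obtains that $m_j^{(k)}\to m_j$ uniformly (e.s.) on $\overline{S}$ for every $j=0,\dots,n$. Combined with the continuity of $m_j$ on $\overline{S}$ in the extended sense, which is Corollary \ref{cor:mjsimplexcont}, a standard $\varepsilon/2$-argument in the compact metric space $(\RRR,d_\RRR)$ yields
\[
m_j^{(k)}(\ee^{(k)})\xrightarrow{k\to\infty} m_j(\ee)\quad \text{in }(\RRR,d_\RRR)
\]
for each $j=0,\dots,n$. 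Indeed, for any $\varepsilon>0$ choose $k_0$ such that $d_\RRR(m_j^{(k)}(\yy),m_j(\yy))<\varepsilon/2$ for all $\yy\in\overline{S}$ and $k\geq k_0$, and $k_1\geq k_0$ such that $d_\RRR(m_j(\ee^{(k)}),m_j(\ee))<\varepsilon/2$ for $k\geq k_1$; then the triangle inequality gives $d_\RRR(m_j^{(k)}(\ee^{(k)}),m_j(\ee))<\varepsilon$.

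Second, since $\ee^{(k)}$ is an equioscillation point for the system $K_0^{(k)},\dots,K_n^{(k)}$ we have $m_i^{(k)}(\ee^{(k)})=m_j^{(k)}(\ee^{(k)})$ for all $i,j\in\{0,\dots,n\}$ and every $k$. Passing to the limit via the previous step---and invoking the continuity of $\arctan$ and $\tan$ to move between the ordinary real and the $d_\RRR$ settings---we obtain $m_i(\ee)=m_j(\ee)$ for all $i,j$, which is exactly the equioscillation property at $\ee$ for the limit system $K_0,\dots,K_n$.

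The only subtle point to keep an eye on is that if $\ee\in\partial S$, some arcs $I_j(\ee)$ may degenerate and the common value $m_j(\ee)$ could equal $-\infty$ (for instance when the kernels satisfy \eqref{eq:kernsing}); correspondingly, some of the convergent sequences $m_j^{(k)}(\ee^{(k)})$ may diverge to $-\infty$. This potential obstacle is however already built into the extended-sense notion of uniform convergence (cf.\ Lemma \ref{lem:unifconv1}): the entire argument is carried out in the compact metric space $(\RRR,d_\RRR)$, where $-\infty$ is a legitimate point, so no separate case distinction between finite and $-\infty$ limits is required and the transfer of equalities proceeds uniformly.
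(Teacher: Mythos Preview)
Your proof is correct and follows essentially the same route as the paper's own proof: pass to a subsequence, invoke Proposition~\ref{prp:unifconvm} to get uniform (e.s.) convergence $m_j^{(k)}\to m_j$ on $\overline{S}$, and conclude $m_j^{(k)}(\ee^{(k)})\to m_j(\ee)$ so that the equioscillation equalities pass to the limit. The paper's proof is terser (it simply asserts ``It follows that $m_j^{(k)}(\ee^{(k)})\to m_j(\ee)$''), while you spell out the $\varepsilon/2$-argument and cite Corollary~\ref{cor:mjsimplexcont} explicitly; but the substance is identical. One small remark on your final paragraph: the common limit value cannot actually be $-\infty$, since $\MM(\ee)$ is always finite by Corollary~\ref{cor:mMfin}(a), so your caveat, while harmless, is not needed.
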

\begin{proof} By passing to a subsequence we may assume that $\ee^{(k)}\to \ee \in \overline{S}$. By assumption and by Proposition \ref{prp:unifconvm}  $m_j^{(k)}\to m_j$
uniformly (e.s.)
on $\overline{S}$ as $k\to \infty$. It follows that $m_j^{(k)}(\ee_k)\to m_j(\ee)$ as $k\to \infty$, so $\ee\in \overline{S}$ is an equioscillation point.
\end{proof}

We need another lemma, similar to \cite[Theorem 1]{Azagra}, in order to be able to apply the previous result.
\begin{lemma}\label{l:convexC2approx} Let $f:[0,1)\to \RR$ be a strictly concave,
non-increasing
function.
Then for each $\varepsilon>0$ there exists another strictly concave
 decreasing
function $g:[0,1)\to \RR$
such that $g\in \Ce^\infty[0,1)$, $g''<0$ on $[0,1)$, and $f(x)-\varepsilon\leq g(x) \leq  f(x)$ for each $x\in [0,1)$.
\end{lemma}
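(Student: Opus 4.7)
The plan is to construct $g$ in three stages: a piecewise-linear concave lower interpolant of $f$, a concavity-preserving smoothing of its corners, and a small strictly concave perturbation to force $g''<0$.

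First, I would pick a partition $0=x_0<x_1<x_2<\cdots$ of $[0,1)$ with $x_n\to 1$, chosen by adaptive refinement so that the oscillation of $f$ on each subinterval $[x_n,x_{n+1}]$ is at most $\varepsilon/6$. This is possible by the continuity of $f$ on every compact subinterval (the spacings shrink as $x_n\to 1$ if $f\to -\infty$ there, but countably many pieces suffice). Let $L\colon[0,1)\to\RR$ be the piecewise-linear interpolant of $f$ at these nodes. Concavity of $f$ then yields concavity of $L$, together with $L\leq f$, and the mesh condition gives $f-L\leq \varepsilon/3$. Since $f$ is non-increasing, so is $L$.

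Second, I would smooth $L$ into a concave $\Ce^\infty$ function $g_0\leq L$ with $L-g_0\leq \varepsilon/3$. A convenient tool is the ``soft-min'' log-sum-exp construction: for affine pieces $\ell_1,\dots,\ell_N$, the function $-\lambda\log\bigl(\sum_{i=1}^N e^{-\ell_i(x)/\lambda}\bigr)$ is $\Ce^\infty$, strictly concave (log-sum-exp being strictly convex whenever the slopes $\ell_i'$ are not all equal), lies pointwise below $\min_i\ell_i$, and differs from it by at most $\lambda\log N$. For $L$ with (possibly) infinitely many corners accumulating at $1$, I would patch local soft-mins on a locally finite cover: near each corner $x_n$ use the soft-min of the two adjacent affine pieces with bandwidth $\lambda_n$ small enough that this local soft-min is exponentially flat against the relevant linear piece beyond a small distance $\delta_n$ from $x_n$, with non-overlapping zones. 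Stitching the local soft-mins with the interior linear stretches yields a globally $\Ce^\infty$ concave function $g_0$ with the desired bound.

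Third, set $g(x):=g_0(x)-\eta x^2$ with some $\eta\in(0,\varepsilon/3)$. Then $g\in\Ce^\infty[0,1)$, $g''=g_0''-2\eta<0$ pointwise (so $g$ is strictly concave), $g\leq g_0\leq L\leq f$, and
\[
f-g\leq (f-L)+(L-g_0)+\eta\leq \varepsilon.
\]
Since $g_0$ is non-increasing, $g'(x)=g_0'(x)-2\eta x\leq 0$, strictly for $x>0$; integrating shows $g$ is strictly decreasing on $[0,1)$.

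The technical heart of the argument is the smoothing stage. A fixed-bandwidth mollification of $L$ is unavailable when $f\to-\infty$ at $1$, because the local Lipschitz constant of $L$ blows up there; and a generic partition-of-unity gluing of locally smooth concave pieces destroys global concavity. The soft-min construction is the right tool precisely because log-sum-exp is a natively concave operation, but one must choose the local bandwidths $\lambda_n$ small enough, relative to the positive slope gaps $s_{n-1}-s_n$ of $L$ and the chosen stitching distances $\delta_n$, so that the exponential tails of each local soft-min vanish to the required order at the zone boundaries. As an alternative, one can simply invoke the global $\Ce^\infty$ approximation theorem for concave functions in the Azagra paper cited immediately before the lemma statement.
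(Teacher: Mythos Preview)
Your three-stage outline (piecewise-linear lower interpolant, concavity-preserving smoothing, small strictly concave perturbation) is sound in spirit, and stages~1 and~3 are fine. The difficulty is entirely in stage~2, and there the soft-min patching you describe does not quite close. The local soft-min of two affine pieces is never \emph{exactly} affine away from the corner---it is only exponentially close---so the piecewise-defined function that equals the local soft-min on $[x_n-\delta_n,x_n+\delta_n]$ and equals the linear piece in between is not even continuous at the zone boundaries, let alone $\Ce^\infty$. You correctly diagnose that partition-of-unity gluing destroys concavity, but the ``stitching'' you propose instead has the opposite defect: it preserves concavity piecewise but fails to be smooth globally. Your fallback to Azagra's theorem is legitimate and would finish the proof, though of course it outsources exactly the hard step.

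The paper's argument takes a genuinely different route that sidesteps this gluing problem. It works at the level of the derivative: writing $f(x)=\int_0^x\phi$ with $\phi=f'_+$ non-increasing, it suffices to find a smooth non-increasing $\gamma\leq\phi$ with $\int_0^1(\phi-\gamma)<\varepsilon$, and then set $g=\int_0^x\gamma$. The key observation is that $\phi$ can be approximated from below by a countable sum of (negative) Heaviside steps $\sum_j s_j H(\cdot-r_j)$, and each step can be replaced by a smoothed step $\tau_{r_j,h_j}\geq H(\cdot-r_j)$ with $\tau$ non-decreasing and $\Ce^\infty$. The resulting $\gamma=\sum_j s_j\tau_{r_j,h_j}$ is automatically smooth and non-increasing---a sum of smooth monotone functions---so concavity of $g$ is free, with no gluing required. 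The strict concavity is then forced by subtracting $\eta(x+1)^2$, just as you do. In effect, the paper smooths your piecewise-linear $L$ not by rounding its corners directly but by mollifying the jumps of $L'$; at the derivative level the compatibility condition is just monotonicity, which survives arbitrary sums.
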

\begin{proof}
This lemma is fairly standard, but for sake of completeness, we include a proof.

Assume, without loss of generality,  that $f(0)=0$.
Let us consider the right (hence right continuous) derivative
$f'_{+}$
of $f$ for our construction:
We can write $f(x)=\int_0^x \phi (t) dt$, where
$\phi(t):=f'_{+}(t)$
and  $\phi: [0,1)\to (-\infty,0]$.

It suffices to construct a $\Ce^\infty$-approximation $\gamma:[0,1)\to (-\infty,0]$ to the
non-increasing function $\phi$, which has non-positive, continuous derivative $\gamma' \in \Ce^\infty[0,1)$, and which satisfies $\gamma(x) \le \phi(x)$ on $[0,1)$ and $\int_0^1 (\phi(x)-\gamma(x))dx <\varepsilon$.
Indeed, then $g(x):=\int_0^x \gamma(t) dt$ is a suitable approximant to $f$.
(If needed, we can easily achieve $g''<0$ by adding $-\eta \cdot (x+1)^2$ to $g$ where $\eta>0$ is small enough,
still satisfying $f(x)-\varepsilon-4\eta\leq g(x) - \eta \cdot (x+1)^2 \leq f(x)$).

Write $\phi(x)= \alpha(x) + \beta(x)$, where $\alpha(x)$ is a pure jump function and $\beta(x)$ is continuous. Both $\alpha$ and $\beta$ are non-increasing.

Approximate $\beta$ with a pure jump function $\beta_1$ such that $\beta_1$ is non-increasing and $\beta(x)-\varepsilon/2 \le \beta_1(x) \le \beta(x)$ for all $x\in[0,1)$.

Consider $\alpha(x)+\beta_1(x)=\sum_{j=1}^\infty s_j H(x-r_j)$,   where $H(x)$ is the usual Heaviside function, $H(x)=1$ for $x\geq 0$ and otherwise zero.
Here  $s_j<0$, $r_j\in [0,1)$, $\sum_{i~:r_i<x} |s_i| <\infty$ (for all $x<1$).
By construction, $\phi(x)- \varepsilon/2  \le \sum_{j=1}^\infty s_j H(x-r_j)\le \phi(x)$.

Take $\psi\in\Ce^{\infty}(\RR)$ with $\psi\ge 0$, $\supp \psi=[-1,0]$, $\int_\RR\psi(t)dt=1$ and define  $\theta(x):=\int_{-\infty}^x \psi(t)dt$.
Consider the translated and dilated versions $\tau_{r,h}(x):=\theta((x-r)/h)$ of $\theta$.
Then  $\tau_{r,h} \in \Ce^\infty[0,1)$ for any $h>0$, and these functions are
non-decreasing, and  $ H(x-r)\le \tau_{r,h}(x) $ with strict inequality  holding precisely for $x\in (r-h,r)$. As a result, we have $\int_0^1 |\tau_{r,h}(x)-H(x-r)| dx\leq h$. Approximate now the constructed pure jump function from below as follows:
\begin{equation*}
\sum_{i=1}^\infty s_i H(t-r_i) \ge \sum_{i=1}^\infty s_i \tau_{r_i,h_i} (t),
\end{equation*}
where both sums are absolutely and uniformly convergent for all $t \in [0,x]$ for any fixed $x<1$, if only we assume $h_i \le \tfrac{1}2(1-{r_i})$. (Indeed, this follows for the first sum by $\sum_{i~:r_i<x} |s_i| <\infty$, while the assumption entails that $r_i-h_i<x \Rightarrow r_i < x + \tfrac{1}2(1-{r_i}) \Leftrightarrow r_i < \frac{2x+1}3 (<1)$, whence the sum $\sum_{i~:r_i-h_i<x} |s_i| \le \sum_{i~:r_i<(2x+1)/3} |s_i|$ also converges.) Furthermore, we also have
\begin{align*}
0&\le \int_0^x \sum_{i=1}^\infty s_i H(t-r_i) - \sum_{i=1}^\infty s_i \tau_{r_i,h_i} (t)\:dt
= \sum_{i=1}^\infty s_i  \int_0^x H(t-r_i) - \tau_{r_i,h_i} (t) \:dt \\
  &\le \sum_{i :~ r_i- h_i<x} \left|s_i\right| h_i <\frac{\varepsilon}{2},
\end{align*}
if we also know that $h_i$ are so small that $\sum_{i=1}^\infty \left| s_i \right| h_i <\varepsilon/2 $. Here we can choose $h_i:= \min \bigl(\tfrac{1}2-\tfrac {r_i}2,2^{-i} \tfrac{\varepsilon}{4 \left|s_i\right|} \bigr)$.

Finally, let $\gamma(x):=\sum_{i=1}^\infty s_i \tau_{r_i,h_i} (t)$.
Then $\phi(x)\ge \gamma(x)$ and $\int_0^1 \gamma(x)-\phi(x)dx < \varepsilon $.
This finishes the proof of this lemma.
\end{proof}
\begin{lemma}
 \label{l:convexC2kernapprox} Let $K$ be a strictly concave kernel function.
Then for each $\varepsilon>0$ there exists {\piros another strictly concave
function} $k\in \Ce^2(0,2\pi)$, $k''<0$ on $(0,2\pi)$, and $K(x)-\varepsilon\leq k(x) \leq  K(x)$ for each $x\in (0,2\pi)$.
\end{lemma}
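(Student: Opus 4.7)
The plan is to reduce to the monotone case of Lemma \ref{l:convexC2approx} by splitting at the maximum of $K$. Since $K$ is strictly concave, there is a unique point $a \in [0, 2\pi]$ at which $K$ attains its maximum, being non-decreasing on $(0, a)$ and non-increasing on $(a, 2\pi)$. In the degenerate cases $a \in \{0, 2\pi\}$, the function $K$ is monotone on the whole interval, and an affine reparameterization of $(0, 2\pi)$ onto $[0, 1)$ (possibly followed by a reflection) reduces the problem directly to Lemma \ref{l:convexC2approx}; pulling the resulting $g$ back gives the required $k$. It therefore remains to treat the generic case $a \in (0, 2\pi)$.

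In this case, consider the strictly concave, non-increasing reparameterizations $f_1(t) := K(a + t(2\pi - a))$ and $f_2(t) := K(a - ta)$ on $[0, 1)$, both satisfying $f_1(0) = f_2(0) = K(a)$. Applying Lemma \ref{l:convexC2approx} with tolerance $\varepsilon/2$ yields $\Ce^\infty$ strictly concave approximants $g_1, g_2$ with $g_i'' < 0$, $f_i - \varepsilon/2 \le g_i \le f_i$, and $g_i(0) = K(a)$ (this last equality comes directly from the integral construction $g(x) = f(0) + \int_0^x \gamma(t)\,dt$ used in the proof of Lemma \ref{l:convexC2approx}). Pulling back, the piecewise definition
\begin{equation*}
k(x) := \begin{cases} g_2((a-x)/a), & x \in (0, a], \\ g_1((x-a)/(2\pi - a)), & x \in [a, 2\pi), \end{cases}
\end{equation*}
is continuous on $(0, 2\pi)$, is $\Ce^\infty$ with $k'' < 0$ away from $\{a\}$, and satisfies $K - \varepsilon/2 \le k \le K$ on $(0, 2\pi) \setminus \{a\}$.

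The principal obstacle, which is the technical heart of the proof, is to secure $\Ce^2$ regularity (with strictly negative second derivative) at the join point $a$: the one-sided derivatives of the naive pullback at $a$ coincide only when $K$ is differentiable at $a$ with $K'(a) = 0$, and even then the second derivatives generally disagree. To overcome this, I would first shrink the outer pieces to intervals $(0, a - \eta]$ and $[a + \eta, 2\pi)$ with $\eta > 0$ so small that $K(a) - K(a \pm \eta) < \varepsilon/4$ (possible by continuity of $K$ at its interior maximum $a$), and then replace $k$ on the bridge $[a - \eta, a + \eta]$ by a smooth, strictly concave function that $\Ce^2$-matches the outer pieces at the endpoints $a \pm \eta$. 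Such a bridge can be built from a standard Hermite-type smooth interpolant augmented with a sufficiently large concave quadratic corrector $-\kappa(x - a)^2$ to guarantee $k'' < 0$ throughout the bridge. The near-flatness of $K$ on $[a - \eta, a + \eta]$, together with the prescribed boundary values (close to $K(a)$) of the bridge, keeps the bridge within $\varepsilon/2$ of $K$, so the resulting $k$ satisfies $k \in \Ce^2(0, 2\pi)$, $k'' < 0$, and $K - \varepsilon \le k \le K$.
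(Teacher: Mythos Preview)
Your approach is essentially the same as the paper's: split at the maximum point, apply Lemma \ref{l:convexC2approx} to each monotone half after an affine reparametrization, then repair the $\Ce^2$ regularity at the join. The paper is in fact terser than you are on the gluing step, merely asserting that ``by a modification of this kernel function even a smooth approximating kernel function \dots\ can be easily found.''

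Two minor remarks. First, your degenerate case $a\in\{0,2\pi\}$ cannot occur: the kernel condition $K(0)=K(2\pi)$ together with strict concavity forces $K(t)>K(0)$ for every $t\in(0,2\pi)$, so the maximum lies strictly inside. Second, your bridge recipe ``Hermite interpolant plus $-\kappa(x-a)^2$'' does not quite work as written, since the quadratic corrector alters the $\Ce^2$ boundary data you are trying to match; moreover, the ``within $\varepsilon/2$'' estimate gives $|k-K|\le\varepsilon/2$ but not directly the required one-sided bound $k\le K$. Both issues are easily repaired (e.g., prescribe a negative continuous $k''$ on $[a-\eta,a+\eta]$ with the correct endpoint values, integral, and first moment, then integrate twice; and subtract a small constant at the end to secure $k\le K$), so the argument goes through.
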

\begin{proof}
This approximation is indeed possible, for given $\varepsilon>0$ and a given (strictly) concave function $K:(0,2\pi)\to \RR$ satisfying \eqref{eq:kernsing}, we can
choose
the
maximum point $c\in (0,2\pi)$, and consider the intervals 
$[c,2\pi)$ and $(0,c]$ separately: 
applying Lemma \ref{l:convexC2approx}  for $-K((x-c)/(2\pi-c))$ 
and $-K((c-x)/c)$ separately provides an approximating strictly 
concave kernel function $k\in \Ce^2((0,2\pi)\setminus\{c\})$ 
with $k''<0$ and $K-\varepsilon<k<K$. 
By a modification of this kernel function even a smooth 
approximating kernel function, 
as in the assertion, can be easily found.
\end{proof}

\begin{thm}\label{thm:equiexists}
Suppose that for each $j=0,\dots,n$ the kernels $K_j$ are strictly concave. Then for each simplex $S=S_\sigma$ there exists an equioscillation point in $\overline{S}$.

Moreover, if the kernels are either all in $\Ce^1(0,2\pi)$ or at least $n$ of them satisfy \eqref{eq:kernsing'}, then any equioscillation point is in the open simplex $S$.
\end{thm}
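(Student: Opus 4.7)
The plan is to prove existence via approximation: approximate the general strictly concave kernels $K_0,\dots,K_n$ by a sequence of strictly concave $\Ce^2$ kernels $K_j^{(k)}$ with $(K_j^{(k)})''<0$ satisfying \eqref{eq:kernsing}, for which equioscillation points already exist and are unique in $S_\sigma$ by Corollary \ref{cor:equiuniqueC2}; then pass to the limit via Lemma \ref{l:equilimit}.

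Concretely, fix once and for all a strictly concave reference function $g\in\Ce^2(0,2\pi)$ with $g''<0$ and $g(0)=g(2\pi)=-\infty$ (the choice $g(t):=\log\sin(t/2)$ works, since it is $\Ce^\infty$ on $(0,2\pi)$, strictly concave with $g''(t)=-\tfrac14\csc^2(t/2)<0$, and $-\infty$ at the endpoints). For each $k\in\NN$ and each $j=0,\dots,n$ use Lemma \ref{l:convexC2kernapprox} to select a strictly concave $k_j^{(k)}\in\Ce^2(0,2\pi)$ with $(k_j^{(k)})''<0$ and $K_j-\tfrac1k\leq k_j^{(k)}\leq K_j$, and set
\[
K_j^{(k)}:=k_j^{(k)}+\tfrac{1}{k}\,g.
\]
Then $K_j^{(k)}$ is a strictly concave kernel in $\Ce^2(0,2\pi)$, with $(K_j^{(k)})''<0$ and satisfying \eqref{eq:kernsing}. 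Corollary \ref{cor:equiuniqueC2} produces a unique equioscillation point $\ee^{(k)}\in S_\sigma$ for the approximating system. By compactness of $\overline{S_\sigma}$ a subsequence $\ee^{(k_i)}\to\ee\in\overline{S_\sigma}$ converges, and Lemma \ref{l:equilimit} (once uniform extended-sense convergence $K_j^{(k)}\to K_j$ is verified) yields that $\ee$ is an equioscillation point for the original system $K_0,\dots,K_n$.

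For the moreover part, suppose the additional hypothesis holds (all kernels in $\Ce^1(0,2\pi)$, or at least $n$ of them satisfying \eqref{eq:kernsing'}). Then Corollary \ref{cor:equiosciinside} asserts that any equioscillation point lies in $X=\bigcup_\tau S_\tau$, i.e., in the open interior of some simplex $S_\tau$. Since $\ee\in\overline{S_\sigma}$ and any open simplex that meets $\overline{S_\sigma}$ must actually meet the open simplex $S_\sigma$, and since open simplexes corresponding to distinct permutations are disjoint, we must have $\tau=\sigma$ and hence $\ee\in S_\sigma$.

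The main obstacle is the verification of the hypothesis of Lemma \ref{l:equilimit}, namely that $K_j^{(k)}\to K_j$ uniformly in the extended sense: the two bullets of Lemma \ref{lem:unifconv1} have to be checked, and the delicate point is the lower bound $K_j^{(k)}(x)\geq K_j(x)-\eta$ at points $x$ in $\{K_j\geq -R\}$ close to $0$ or $2\pi$, where $\tfrac1k g(x)$ may still be very negative. When the original $K_j$ itself satisfies \eqref{eq:kernsing}, the set $\{K_j\geq -R\}$ is compactly contained in $(0,2\pi)$ and the estimate is immediate; in the general case one has to refine the construction (e.g., by cutting off the perturbation $\tfrac1k g$ so that it equals $0$ outside a neighborhood of $\{0,2\pi\}$ which shrinks fast enough with $k$, while preserving strict concavity and the endpoint behavior $-\infty$), or, alternatively, to bypass Lemma \ref{l:equilimit} and pass to the limit in the equioscillation identity $m_0^{(k)}(\ee^{(k)})=\cdots=m_n^{(k)}(\ee^{(k)})$ using the locally uniform convergence machinery of Section \ref{sec:approx} (Lemma \ref{l:approx}, Theorem \ref{thm:approx}) together with the continuity of the individual $m_j$ on $\overline{S_\sigma}$ (Corollary \ref{cor:mjsimplexcont}).
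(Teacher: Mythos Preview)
Your overall plan---approximate by $\Ce^2$ kernels with strictly negative second derivative satisfying \eqref{eq:kernsing}, invoke Corollary \ref{cor:equiuniqueC2}, and pass to the limit---is exactly the paper's strategy, and you have correctly located the one real difficulty. But the gap you flag is not a technicality that can be patched by either of your two suggestions; it is the reason the paper's proof is split into three genuinely different steps.

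The first suggested fix cannot work in principle. If some $K_j$ has $K_j(0)>-\infty$, then \emph{no} sequence $K_j^{(k)}$ satisfying \eqref{eq:kernsing} can converge to $K_j$ uniformly in the extended sense: by the characterization in Lemma \ref{lem:unifconv1}, for $R>|K_j(0)|$ one would need $K_j^{(k)}(0)\ge K_j(0)-\eta$ eventually, which is impossible since $K_j^{(k)}(0)=-\infty$. No amount of cutting off $\tfrac1k g$ helps, because you still need \eqref{eq:kernsing} to invoke Corollary \ref{cor:equiuniqueC2}. So Lemma \ref{l:equilimit} is simply unavailable in one shot.

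Your second suggestion (argue directly via locally uniform convergence) is what the paper does, but only after an intermediate reduction. The direct $\limsup/\liminf$ argument for $m_j^{(k)}(\ee^{(k)})\to m_j(\ee)$ needs $z_j(\ee)\in\intt I_j(\ee)$ so that for large $k$ one has $z_j(\ee)\in I_j(\ee^{(k)})$ and $F^{(k)}(\ee^{(k)},z_j(\ee))=F(\ee^{(k)},z_j(\ee))$; it also needs Corollary \ref{cor:neighborjump} to exclude $\ee\in\partial S$. Both of these rely on \eqref{eq:kernsing'pm} (via Proposition \ref{prop:zjinter}(d)), which a general strictly concave kernel need not satisfy. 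The paper therefore layers the argument: Step 1 handles \eqref{eq:kernsing} via your route; Step 2 reduces \eqref{eq:kernsing'pm} to \eqref{eq:kernsing} by adding $\log_-(k|x|)$ and running the direct $\limsup/\liminf$ argument; Step 3 reduces the general case to \eqref{eq:kernsing'pm} by adding $(\sqrt{|x|}-1/k)_-$, which \emph{does} converge uniformly to $0$, so Lemma \ref{l:equilimit} applies at this last stage. Your proposal is missing precisely this intermediate Step 2.

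Your handling of the ``moreover'' clause is fine; indeed $\partial S_\sigma\cap X=\varnothing$, so $\ee\in\overline{S_\sigma}\cap X$ forces $\ee\in S_\sigma$, which is what Corollary \ref{cor:equiosciinside} gives.
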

\begin{proof}
We split the proof into several steps.

\medskip\noindent{\it Step 1.}  First, let us suppose that all the kernel functions $K_0, \dots, K_n$  satisfy \eqref{eq:kernsing}. By Lemma \ref{l:convexC2kernapprox} we can take  a sequence  $(K_i^{(k)})_{k\in \NN}$ of strictly concave functions in $\Ce^2(0,2\pi)$  satisfying $\frac{d^2}{dt^2} K_i^{(k)}(t)<0$ and converging strongly uniformly (and therefore locally uniformly, too) to the functions $K_i$. Note that hence we also require that $K_j^{(k)}$ satisfy \eqref{eq:kernsing}. 

According to Corollary \ref{cor:equiuniqueC2} each system $K_j^{(k)}$, $j=0,\dots,n$, has a unique equioscillation point $\ee^{(k)}$. By Lemma \ref{l:equilimit} any accumulation point $\ee$ of this sequence (and, by compactness,  there is one) is an equioscillation point. Finally, by Corollary \ref{cor:equiosciinside} an equioscillation point is necessarily inside $S$.
This concludes the proof for the special case when all the kernels satisfy \eqref{eq:kernsing}.

\medskip\noindent {\it Step 2.} Now, let us consider the  case when the kernels are strictly concave but satisfy \eqref{eq:kernsing'pm} only. Let us fix the auxiliary functions $L_k(x):= \log_{-}(k|x|)$, which are concave, even, non-positive functions on $(-\pi,0) \cup (0,\pi)$ with singularity at $0$. We extend these functions to $\RR$ periodically. For $k\in \NN$ and $j=0,\dots, n$
define
 $K^{(k)}_j:=L_k+K_j$.
Then  $K_j^{(k)}\upto K_j$ on $\TT\setminus \{0\}$.
By Step 1, for each $k\in \NN$ there is an equioscillation point $\ee^{(k)}$ for the system $K_j^{(k)}$, $j=0,\dots,n$. By passing to a subsequence we can assume $\ee^{(k)}\to \ee\in \overline{S}$. For $j\in \{0,\dots,n\}$ we have
\begin{equation*}
m_j^{(k)}(\ek) = \max_{t\in I_j(\ek)} F^{(k)}(\ek,t) \leq \max_{t\in I_j(\ek)} F(\ek,t) =m_j(\ek).
\end{equation*}
Since $m_j$ is continuous on $\overline{S}$, we obtain
\begin{equation}\label{eq:mjkekleqmj}
\limsup_{k\to\infty} m_j^{(k)}(\ek)\leq m_j(\ee).
\end{equation}

Suppose first that the arc $I_j(\ee)$ is non-degenerate for all $j=0,1,\dots,n$, i.e., assume $\ee\in S$. Then Proposition \ref{prop:zjinter} (d) yields $z_j(\ee)\in\intt I_j(\ee)=(e_j,e_r)$, so for sufficiently large $k$ we have $z_j(\ee)\in\intt I_j(\ee^{(k)})$, too; furthermore,
since by construction $K_j(t)=K_j^{(k)}(t)$ for $t\not\in[-\frac1k,\frac1k]$, for sufficiently large $k$ we even have $e^{(k)}_j+1/k < z_j(\ee) < e^{(k)}_r-1/k$, whence $F^{(k)}(\ek,z_j(\ee)) = F(\ek,z_j(\ee))$, too. Therefore we obtain
\begin{equation*}
m_j^{(k)}(\ek) = \max_{t\in I_j(\ek)} F^{(k)}(\ek,t)\geq F^{(k)}(\ek,z_j(\ee)) = F(\ek,z_j(\ee)).
\end{equation*}
This implies
\begin{equation}\label{eq:mjkekgeqmj}
\liminf_{k\to\infty}m_j^{(k)}(\ek) \geq \liminf_{k\to\infty} F(\ek,z_j(\ee))=F(\ee,z_j(\ee))=m_j(\ee).
\end{equation}
So the proof of Step 2 is complete if $\ee\in S$.

Finally, we show that $\ee\in \partial S$ is impossible.
Indeed, if there is a degenerate arc $I_j(\ee)$, then by Corollary \ref{cor:neighborjump} there is a neighboring non-degenerate arc $I_i(\ee)$ such that $m_i(\ee)>m_j(\ee)$. But then we are led to a contradiction, because using \eqref{eq:mjkekleqmj} and \eqref{eq:mjkekgeqmj} we also have
\[
m_j(\ee)\geq \limsup_{k\to\infty} m_j^{(k)}(\ek)
\geq 
\liminf_{k\to\infty} m_j^{(k)}(\ek)
=
\liminf_{k\to\infty} m_i^{(k)}(\ek)
\geq 
m_i(\ee),
\]
taking into account the equioscillation of $m^{(k)}$ at $\ek$. 

\medskip\noindent
{\it Step 3.} Finally, we suppose only that $K_0,\dots, K_n$  are strictly concave kernel functions.
 We now take the functions $L_k(x):=(\sqrt{|x|}-1/k)_{-}$, which are negative only for $-1/k^2<x<1/k^2$ and zero otherwise, and converge uniformly to zero. Restricting $L_k$ to $[-\pi,\pi)$ and then extending it periodically we thus obtain a function on $\TT$ which is concave on $(0,2\pi)$ and converges to
 $0$ uniformly on $[0,2\pi]$.
 Note that
 $\lim_{x\to 0\pm 0} L'_k(x) = \pm \infty$,
 hence the perturbed kernels $\Kkj:=K_j+L_k$, $j=0,\dots,n$, satisfy \eqref{eq:kernsing'pm}.
Again, in view of the already proven case in Step 2,
there exist some equioscillation points $\ek$ for the system $\Kkj$, $j=0,\dots,n$, and by compactness, there exists an accumulation point $\ee \in \overline{S}$ of the sequence $(\ek)_{k\in \NN}$.
By uniform convergence of the kernels we can apply  Lemma \ref{l:equilimit} to conclude that $\ee$ is an equioscillation point of the system $K_j$, $j=0,\dots,n$.

\medskip\noindent
It remains to prove that  $\ee\in S$ if the additional assumptions are fulfilled, but this
has already been done
in Corollary \ref{cor:equiosciinside}.\end{proof}

\begin{corollary}\label{cor:Mleqm} Let the kernel functions $K_0,\dots,K_n$ be strictly concave. Then in any simplex $S=S_\sigma$ the Equioscillation Property holds, and we have $M(S)\leq m(S)$.
\end{corollary}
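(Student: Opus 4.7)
The plan is to derive both assertions directly from the existence result Theorem \ref{thm:equiexists}, combined with the passage between $S$ and $\overline{S}$ recorded in Proposition \ref{prop:exists}; the underlying computation is the one already indicated in Remark \ref{rem:equioscillation}, and the whole burden of work has been carried by Theorem \ref{thm:equiexists}.

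First I invoke Theorem \ref{thm:equiexists} to secure an equioscillation point $\yy \in \overline{S}$, so that by definition
\[
m_0(\yy) = m_1(\yy) = \cdots = m_n(\yy),
\]
and consequently $\MM(\yy) = \mm(\yy)$. This establishes the Equioscillation Property for $S$. (Under the further hypotheses of Corollary \ref{cor:equiosciinside}---that the kernels lie in $\Ce^1(0,2\pi)$ or that at least $n$ of them satisfy \eqref{eq:kernsing'}---the point $\yy$ automatically lies in the open simplex $S$; otherwise, strict concavity alone only guarantees $\yy \in \overline{S}$, but this suffices for what follows.)

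For the inequality, I chain the trivial bounds
\[
M(\overline{S}) \;\le\; \MM(\yy) \;=\; \mm(\yy) \;\le\; m(\overline{S}),
\]
and then appeal to Proposition \ref{prop:exists}, which gives $M(S) = M(\overline{S})$ and $m(S) = m(\overline{S})$. Together these yield $M(S) \le m(S)$ at once. No further obstacle appears in this step; everything hard has already been absorbed into Theorem \ref{thm:equiexists} and the continuity arguments of Section \ref{sec:continuity}.
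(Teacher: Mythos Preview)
Your proof is correct and follows essentially the same route the paper intends: invoke Theorem \ref{thm:equiexists} for the existence of an equioscillation point in $\overline{S}$, then run the trivial chain $M(\overline{S})\le \MM(\yy)=\mm(\yy)\le m(\overline{S})$ together with $M(S)=M(\overline{S})$, $m(S)=m(\overline{S})$ from Proposition \ref{prop:exists}. Your parenthetical remark about $S$ versus $\overline{S}$ is apt---Definition \ref{def:equi}(a) literally asks for $\yy\in S$, but the paper itself uses Corollary \ref{cor:Mleqm} only to secure an equioscillation point in $\overline{S}$ (cf.\ the opening line of the proof of Corollary \ref{cor:Meqm}), so your reading is the intended one.
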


\begin{corollary}\label{cor:Meqm} Let the kernel functions $K_0,\dots,K_n$ be strictly concave and let $S=S_\sigma$ be a simplex.
	Suppose that $M(S)=m(S)$. Then there is $\ws\in \overline{S}$ with $m(S)=\mm(\ws)$ and $\ws$ is the unique equioscillation point in $\overline{S}$.
\end{corollary}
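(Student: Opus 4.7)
My plan is to combine two ingredients that have already been established in the excerpt: the Equioscillation Property on every simplex (Corollary \ref{cor:Mleqm}, itself extracted from Theorem \ref{thm:equiexists}), and the uniqueness of the global maximizer of $\mm$ on $\overline{S}$ (Corollary \ref{cor:muniquenomaj}(a), which ultimately rests on the strict concavity of $\mm$ proved in Proposition \ref{l:mistrictconcave}). All the heavy lifting has been done; what remains is essentially a chain of inequalities.

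First I would invoke Corollary \ref{cor:Mleqm} to produce an equioscillation point $\ee\in\overline{S}$, so that $\MM(\ee)=\mm(\ee)=m_0(\ee)=\cdots=m_n(\ee)$. Then I would observe the universal sandwich
\[
M(S)\;\le\;\MM(\ee)\;=\;\mm(\ee)\;\le\;m(S),
\]
the outer inequalities being just the definitions of $M(S)$ and $m(S)$ as an infimum and a supremum. Under the standing assumption $M(S)=m(S)$, these inequalities are forced to be equalities, and in particular $\mm(\ee)=m(S)$. Consequently, every equioscillation point in $\overline{S}$ is a global maximizer of $\mm$ on $\overline{S}$.

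Next I would apply Corollary \ref{cor:muniquenomaj}(a): the function $\mm$ has a unique global maximum point $\ys$ on $\overline{S}$ (strict concavity of $\mm$ on $S$, together with continuity on $\overline{S}$ via Corollary \ref{cor:mmMMcont}, forbids any second maximizer, whether interior or boundary). Setting $\ws:=\ys$, I obtain $\mm(\ws)=m(S)$, and by the preceding paragraph the equioscillation point $\ee$ must coincide with $\ws$. Hence $\ws$ is \emph{the} unique equioscillation point in $\overline{S}$.

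There is no substantial obstacle left once Corollary \ref{cor:Mleqm} and Corollary \ref{cor:muniquenomaj}(a) are in hand; the only minor point to verify is that the uniqueness assertion of Corollary \ref{cor:muniquenomaj}(a) really extends to the closed simplex $\overline{S}$, but this is automatic from strict concavity on $S$ (two distinct maximizers on $\overline{S}$ would admit a midpoint in $S$ with strictly larger $\mm$-value, a contradiction). The real work was concentrated earlier, in establishing existence of equioscillation points via the approximation machinery of Section \ref{sec:approx}; here we merely reap the benefit.
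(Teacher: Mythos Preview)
Your proof is correct and follows essentially the same approach as the paper's: produce an equioscillation point $\ee$ via Corollary~\ref{cor:Mleqm}, observe that the chain $M(S)\le\MM(\ee)=\mm(\ee)\le m(S)$ collapses under the hypothesis $M(S)=m(S)$, and then invoke the uniqueness of the maximizer of $\mm$ from Corollary~\ref{cor:muniquenomaj}(a). The only minor difference is that the paper handles the uniqueness of the equioscillation point by a separate appeal to Proposition~\ref{prop:whatifequinon-unique}(a), whereas you argue more directly that \emph{every} equioscillation point must maximize $\mm$ and hence coincide with the unique maximizer $\ws$; your route is slightly more economical but not substantively different.
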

\begin{proof}
Let $\ee\in \overline{S}$ be an equioscillation point (see Corollary \ref{cor:Mleqm}), and let $\ws\in \overline{S}$ be such that $\mm(\ws)=m(S)$ (see  Proposition \ref{prop:exists}). 
Because $\mm(\ee)=\MM(\ee)\geq M(S)=m(S)=\mm(\ws)$,
we  find
that $\ee$ is also a maximum point of $\mm$, and that $\mm(\ee)=M(S)$. By Corollary \ref{cor:muniquenomaj}
(a),
$\ee=\ws$, and by  $M(S)=m(S)$ and in view of
Proposition \ref{prop:whatifequinon-unique}
(a),
the equioscillation point is unique.
\end{proof}

\section{{\piros Proof of Theorem \ref{thm:0mainspecialcase}, some consequences and conclusions}}
\label{sec:summary}
{\piros For the sake of better legibility we recall here Theorem \ref{thm:0mainspecialcase} from Section \ref{sec:intro}
by using the terminology introduced in the previous sections. Then we discuss the sharpness of the result and draw some further consequences.}

\begin{thm}\label{thm:mainspecialcase}
	Suppose the kernel functions $K_0,K_1,\dots,K_n$ are strictly concave and either all
satisfy \eqref{eq:kernsing'}, or all belong to $\Ce^1(0,2\pi)$.
Then there is $\wS\in \TT^n$, $\wS=(w_1,\dots,w_n)$ with
\begin{align*}
M:=\inf_{\yy\in\TT^n}\sup_{t\in \TT}F(\yy,t)=\sup_{t\in \TT}F(\wS,t).
\end{align*}
Moreover, we have the following:
\begin{abc}
	\item $\wS$ is an equioscillation point, i.e., $m_0(\wS)=\cdots=m_n(\wS)$.
	\item $\wS\in S:=S_\sigma$ for some simplex, i.e., the nodes in $\wS$ are different, and
\begin{align*}
M(S)=\inf_{\yy\in S}\max_{j=0,\dots,n}\sup_{t\in I_j(\yy)} F(\yy,t)=M=\sup_{\yy\in S}\min_{j=0,\dots,n}\sup_{t\in I_j(\yy)} F(\yy,t)=m(S).
\end{align*}
\item We have the Sandwich Property on $S$, i.e., for each $\xx,\yy\in S$
\[
\mm(\xx)\leq M\leq \MM(\yy).
\]
\end{abc}
\end{thm}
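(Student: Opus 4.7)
The plan is to combine the existence of a global minimizer with the simplex-level structural results already established in the previous sections. First I would invoke Corollary \ref{cor:minmaxexist} to secure the existence of an extremal node system $\wS\in\TT^n$ attaining $\MM(\wS)=M$. The key step then is to apply Corollary \ref{cor:ezkell}, whose hypotheses (strict concavity together with either \eqref{eq:kernsing'} or $\Ce^1(0,2\pi)$-regularity) match ours exactly: it yields that the nodes $w_0,w_1,\dots,w_n$ are pairwise distinct and that $\wS$ is an equioscillation point. This settles (a) and the first half of (b): writing $S=S_\sigma$ for the unique open simplex containing $\wS$, we have $\wS\in S\subseteq X$.

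Next, to establish $M(S)=M=m(S)$ I would note that $\wS$ is a global minimum of $\MM$ on $\TT^n$ and lies in the open set $S$, so $\wS$ is in particular a local minimum of $\MM$ in the sense of \eqref{cond:locmin}. Under our hypotheses Proposition \ref{prop:Mmextremal} is therefore directly applicable and yields that $\wS$ is the unique equioscillation point in $\overline{S}$, that $\mm$ attains its global maximum on $\overline{S}$ at $\wS$, that $M(S)=m(S)$, and that the Sandwich Property holds on $S$. The identification $M(S)=M$ follows from the chain $M\le M(S)\le \MM(\wS)=M$, closing part (b).

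Part (c) is then immediate: the Sandwich Property on $S$ furnished by Proposition \ref{prop:Mmextremal} is, through the equivalence (iv) of Proposition \ref{rem:sandwich} with the choice $\mu:=M=M(S)=m(S)$, exactly the inequality $\mm(\xx)\le M\le \MM(\yy)$ for all $\xx,\yy\in S$.

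The principal obstacle in this line of attack has already been overcome in Section \ref{sec:distrlocmins}, where Corollary \ref{cor:ezkell} is proved: that is the bridge turning a global minimax statement into one localized to a specific (open) simplex, and it depends crucially on the Perturbation Lemma \ref{lem:pertnodes00} together with the boundary analysis excluding degenerate arcs. What remains here is essentially an assembly of previously proved facts, the only mild subtlety being the observation that a global minimum of $\MM$ lying in the \emph{interior} of a simplex automatically satisfies the local-minimum hypothesis of Proposition \ref{prop:Mmextremal}, which is clear since $S$ is open.
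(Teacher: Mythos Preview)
Your proposal is correct and follows essentially the same route as the paper's proof: existence via Corollary \ref{cor:minmaxexist}, then Corollary \ref{cor:ezkell} for part (a) and $\wS\in X$, and finally Proposition \ref{prop:Mmextremal} for the remainder of (b) and (c). Your additional explicit justification of $M(S)=M$ via the chain $M\le M(S)\le \MM(\wS)=M$ and the remark that a global minimum in the open set $S$ is a local minimum are welcome clarifications but do not deviate from the paper's strategy.
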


\begin{proof}
In view of Corollary \ref{cor:minmaxexist}, a global minimum point $\ww^*$ of $\MM$ must exist.
Next, Corollary \ref{cor:ezkell} furnishes part (a) and $\ww^*\in X$, i.e. the first half of (b).
Finally, Proposition \ref{prop:Mmextremal}
implies the second half of (b) and the assertion in (c).
\end{proof}

\begin{example}\label{examp:diffmvlaues}\label{examp:last2}
We present an example showing that on different simplexes we may have different
 values of $M$. {\lila This will be done in several steps, and we begin with considering the functions }
\begin{align*}
K(x)&:=\pi-|x-\pi| &&\quad \text{for $x\in [0,2\pi]$},\\
Q(x)&:=x(2\pi-x)&&\quad \text{for $x\in [0,2\pi]$},
\end{align*}
and extend them periodically to $\RR$.
We take $K_0=K_1=K$ and $K_2=K_3=\varepsilon Q$ where
$\varepsilon\in (0,\frac14)$
is fixed arbitrarily.  {\lila This is not yet the system of kernels that we are looking for, but they will serve as a basis for the construction. }

Note that this system of kernels almost satisfies
the conditions of Theorem \ref{thm:mainspecialcase}:
two kernels satisfy \eqref{eq:kernsing'pm} and all the  kernels are in $\Ce^1((0,2\pi)\setminus\{\pi\})$,
and the two not satisfying \eqref{eq:kernsing'pm} are even in
$\Ce^1(0,2\pi)$
(which, again, could have been enough if satisfied by all).

We consider two simplexes $S=S_\sigma$ for $\sigma=(2,1,3)$ and $S'=S_{\sigma'}$ with $\sigma'=(3,2,1)$. We prove that there is an equioscillation point $\ee\in S$ and for this equioscillation point we have
$\MM(\ee)>\MM(S')$.
This will be done
first in
two steps below, then in Step 3 we shall take an appropriate sequence of kernel functions $K_j^{(k)}$ converging to $K_j$ ($j=0,1,\dots,n$) and obtain
\[
M^{(k)}(S)>M^{(k)}(S')
\]
{\lila as required.}

\subsubsection*{Step 1.} We take the node system
$\ee:$
$e_0=0$, $e_1=\pi$, $e_2=\frac{\pi}2$, $e_3=\frac{3\pi}2$.
Then we have $\ee\in S$ and
\[
F(\ee,t)
=K_0(t)+
K_1(t-e_1)+K_2(t-e_2)+K_3(t-e_3)
=\pi+\varepsilon Q(t-\tfrac\pi 2)+\varepsilon Q(t-\tfrac{3\pi} 2).
\]
It is easy to see that
\[
m_0(\ee)=F(\ee,0)=\max_{t\in [0,\frac\pi 2]}F(\ee,t)=\pi+3\varepsilon \tfrac{\pi^2}{2},
\]
and by symmetry $m_0(\ee)=m_1(\ee)=m_2(\ee)=m_3(\ee)$, i.e., $\ee$ is an equioscillation point.

\subsubsection*{Step 2.} Consider the node system $x_0=0$, $x_1=\pi+(3-2\sqrt{2})\varepsilon \pi^2$, $x_2=(2\sqrt{2}-2)\pi$, $x_3=0$. Then of course $\xx\in\overline{S'}\cap \overline{S}$.
It is easy to see that
\begin{equation*}
F(\xx,t)=\begin{cases}
-2\varepsilon t^{2}+2(1+\varepsilon x_{2})t -\varepsilon x_2^2+2 \pi  (\varepsilon x_2+1)-x_1, &
\mbox{ if }0\le t\le x_1-\pi,\\
-2\varepsilon t^{2}+2\varepsilon x_2 t-\varepsilon x_{2}\left(-2\pi+x_{2}
\right)+ x_{1},
&
\mbox{ if }x_1-\pi\le t\le x_2,\\
-2\varepsilon t^{2}+2\varepsilon\left(2\pi+x_2\right)t-\varepsilon x_{2}\left(2\pi+x_{2}\right)+x_{1}, &
\mbox{ if }x_2\le t\le\pi,\\
-2\varepsilon t^{2}+2(\varepsilon x_{2}+2\varepsilon\pi-1)t - \varepsilon x_{2}(2\pi+x_2) + x_1+2\pi, &
\mbox{ if }\pi\le t\le x_1,\\
-2\varepsilon t^{2}+2\varepsilon\left(2\pi+x_{2}\right)t-\varepsilon x_2 (2\pi +x_2)-x_1+2\pi, &
\mbox{ if }x_{1}\le t\le 2\pi.
\end{cases}
\end{equation*}
 For definiteness of indexing, let us consider the node system $\xx$ as an element of the simplex $S'$ where $\sigma'=(3,2,1)$.

Now, an easy but tedious computation leads to the following.
The maximum of $F(\xx,\cdot)$ on
 $I_0(\xx)=[x_0,x_3]=[0,0]$
is
\begin{align*}
m_0(\xx)&=F(\xx,0)=\pi+\varepsilon\pi^2(14\sqrt{2}-19),\\
\intertext{the maximum of  $F(\xx,\cdot)$ on $I_1(\xx)=[x_1,2\pi]$ is attained at $z_1(\xx)=\pi+\tfrac{x_2}{2}$ and }
m_1(\xx)&=F(\xx,\pi+\tfrac{w_2}{2})=\pi+\varepsilon\pi^2(6\sqrt{2}-7),
\intertext{the maximum of  $F(\xx,\cdot)$ on $I_2(\xx)=[x_2,x_1]$ is attained at $z_2(\xx)=\pi$ and }
m_2(\xx)&=F(\xx,\pi)=\pi+\varepsilon\pi^2(6\sqrt{2}-7),
\intertext{the maximum of  $F(\xx,\cdot)$ on $I_3(\xx)=[x_3,x_2]=[0,x_2]$ is attained at $z_3(\xx)=\tfrac{x_2}{2}$ and }
m_3(\xx)&=F(\xx,\tfrac{x_2}{2})=\pi+\varepsilon\pi^2(6\sqrt{2}-7).
\end{align*}
From this we conclude
\[
\MM(\xx)=\pi+\varepsilon\pi^2(6\sqrt{2}-7)<\pi+3\varepsilon \tfrac{\pi^2}{2}=\MM(\ee),
\]
and hence
\[
 M(S),\:\:
M(S')\leq \MM(\xx)<\MM(\ee).
\]
Note that the equioscillation point $\ee\in S$ thus cannot be a minimum point of $\MM$ on the simplex $S$,
while $\xx\in\overline{S}\cap\overline{S'}$ is a weak equioscillation point on the boundary of both simplexes.

\subsubsection*{Step 3.} Now, let
\[
K_j^{(k)}(x):=K_j(x)+\frac{1}{k}\sqrt{\pi^2-(x-\pi)^2},
\]
for $j=0,1,2,3$. Then
$K_0^{(k)}$, $K_1^{(k)}$, $K_2^{(k)}$, $K_3^{(k)}$ are strictly concave,  symmetric, satisfying the condition \eqref{eq:kernsing'pm} and
\[
K_j^{(k)}\to  K_j\quad \text{uniformly as $k\to \infty$ for $j=0,1,2,3$}.
\]
Since the configuration of the kernel functions for the simplex $S$ is symmetric and the node system $\ee$ is symmetric, it is easy to see that $\ee$ is an equioscillation point in $S$ also in the case of  the kernels $K_j^{(k)}$.
By Proposition \ref{prp:unifconvm} we have $M^{(k)}(S)\to M(S)$, $m^{(k)}(S)\to m(S)$ and $m_j^{(k)}(\ee)\to m_j(\ee)$ as $k\to \infty$.
Let ${\wS}^{(k)}\in \overline{S}$ be such that $M^{(k)}(S)=\MM({\wS}^{(k)})$.

Now if for some $k\in \NN$ we have $\MM^{(k)}(\ee)\neq M^{(k)}(S)$, then
${\wS}^{(k)}\in \partial S$  (by Proposition \ref{prop:Mmextremal})
and $m^{(k)}(S)\geq\mm^{(k)}(\ee)>M^{(k)}(S)$.
By Corollary \ref{cor:SS'better} (d) we have then $M^{(k)}(S'')<M^{(k)}(S)$ for some neighboring simplex $S''$.
Since by symmetry there are basically two simplexes, we must have $M^{(k)}(S'')=M^{(k)}(S')$ (recall $S'=S_{\sigma'}$ for $\sigma'=(3,2,1)$).
Therefore
\[
M^{(k)}(S)>M^{(k)}(S').
\]

{\piros On the other hand, we cannot have $\MM^{(k)}(\ee)=
M^{(k)}( S )$ for all $k\in \NN$, because then for all large $k$
\[
M^{(k)}(S)=\MM^{(k)}(\ee)>\MM^{(k)}(\xx)
\]
would hold, and that is impossible by $\xx\in \overline{S}$.}

We sum up what has been found in this example: There are strictly concave kernel functions $K_j^{(k)}$, $j=0,1,2,3$ satisfying \eqref{eq:kernsing'pm},  and there are two simplexes $S$ and $S'$ such that $M^{(k)}(S)>M^{(k)}(S')$.
\end{example}

	The phenomenon observed in the previous example can be present also for strictly concave kernels with the \eqref{eq:kernsing} property.
\begin{example}\label{examp:last}
Consider some symmetric kernel functions $K_0$, $K_1$, $K_2$, $K_3$ satisfying \eqref{eq:kernsing'pm} with $M(S_{\sigma})> M(S_{\sigma'})$ (see the previous Example \ref{examp:last2}). Let $L$ be a strictly concave, symmetric kernel function with \eqref{eq:kernsing}, and consider $K_j^{(k)}:=\frac1kL+K_j$, $j=0,\dots,3$.
Then, as in Example \ref{examp:last2}, by means of Proposition \ref{prp:unifconvm} we obtain 
\begin{equation*}
M^{(k)}(S_{\sigma})> M^{(k)}(S_{\sigma'})
\end{equation*}
for large $k$.
\end{example}

\begin{example} It can happen that $M(\TT^3)<m(\TT^3)$.

Indeed, let $K_0$, $K_1$, $K_2$, $K_3\in \Ce^2(0,2\pi)$  be strictly concave symmetric kernel functions  satisfying \eqref{eq:kernsing} with
\[
M(S_{\sigma})> M(S_{\sigma'})
\]
for different simplexes $S_\sigma$ and $S_{\sigma'}$.
Consider, for example, the situation of the preceding Example \ref{examp:last}.

Let $\wS\in \TT^3$ be a global minimum point of $\MM$ on $\TT^3$. Let $S_{\sigma''}$ denote the simplex in which $\wS$ lies. We then have
\[
M(\TT^3)=m(S_{\sigma''})=M(S_{\sigma''})\leq M(S_{\sigma'})<M(S_{\sigma})\leq m(S_{\sigma})
\]
by Theorem \ref{thm:mainspecialcase} (b) and by Corollary \ref{cor:Mleqm}.
This implies $  M(\TT^3)<m(\TT^3) $.
\end{example}

Next, let us discuss the case when all but one kernel functions are the same. This is  analogous to the setting of Fenton \cite{Fenton} in the interval case. Under these circumstances the phenomenon in the previous example is not present anymore. We first need the next lemma, whose similar versions have appeared already in \cite{Fenton} and \cite{Saff}.
\begin{lemma}\label{lem:szethuzas}
Let $K$ be strictly concave and let $a,b>0$, 
{\lila $0<x\le y <2\pi $} 
be given. {\lila Then for  $0<\delta<\min\{\frac{x}{b},\frac{2\pi-y}{a}\}$ we have}
		\[
	\frac{1}{a}K(t-(y+ah))+\frac1bK(t-(x-bh))<\frac1aK(t-y)+\frac1b K(t-x)
	\]
for each  $t\in (0,x-b\delta)\cup (y+a\delta, 2\pi)$ and each $0<h<\delta$.
	\end{lemma}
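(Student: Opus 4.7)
The plan is to reduce the claim to strict concavity of $K$ on $(0,2\pi)$ via a classical ``spreading'' argument. Writing $u:=t-y$ and $v:=t-x$ one has $u\le v$ (from $x\le y$), while the shifted arguments are $u-ah=t-(y+ah)$ and $v+bh=t-(x-bh)$, so the target inequality becomes
\[
\tfrac{1}{a}K(u-ah)+\tfrac{1}{b}K(v+bh)\;<\;\tfrac{1}{a}K(u)+\tfrac{1}{b}K(v).
\]
The idea is to express the two ``inner'' points $u$ and $v$ as strict convex combinations of the two ``outer'' points $u-ah$ and $v+bh$, then combine two instances of strict concavity with weights $1/a$ and $1/b$.

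Before invoking concavity the four arguments must all lie in the concavity interval $(0,2\pi)$, so the first step is to check this on each of the two admissible ranges of $t$. For $t\in(y+a\delta,2\pi)$ no shift is needed: $u-ah>u-a\delta>0$ by $t>y+a\delta$, and $v+bh<v+b\delta<2\pi-x+b\delta<2\pi$ since $\delta<x/b$. For $t\in(0,x-b\delta)$ all four arguments are negative, but a single $+2\pi$-translation sends them into $(0,2\pi)$; here the bound $\delta<(2\pi-y)/a$ (equivalently $y+a\delta<2\pi$) together with $t>0$ is what keeps the translate $u-ah+2\pi$ positive. In both cases the ordering $u-ah<u\le v<v+bh$ is preserved, and by $2\pi$-periodicity the values of $K$ are unchanged by the common translation.

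The algebraic step is then routine. With
\[
\lambda_1:=\frac{v-u+bh}{v-u+(a+b)h}\in(0,1),\qquad \lambda_2:=\frac{bh}{v-u+(a+b)h}\in(0,1),
\]
one checks directly that $u=\lambda_1(u-ah)+(1-\lambda_1)(v+bh)$ and $v=\lambda_2(u-ah)+(1-\lambda_2)(v+bh)$. Since $u-ah\ne v+bh$, strict concavity of $K$ yields
\[
K(u)>\lambda_1 K(u-ah)+(1-\lambda_1)K(v+bh),
\]
\[
K(v)>\lambda_2 K(u-ah)+(1-\lambda_2)K(v+bh).
\]
Dividing the first inequality by $a$, the second by $b$, and adding, the identities
\[
\tfrac{\lambda_1}{a}+\tfrac{\lambda_2}{b}=\tfrac{1}{a},\qquad \tfrac{1-\lambda_1}{a}+\tfrac{1-\lambda_2}{b}=\tfrac{1}{b}
\]
(each of which collapses at once after clearing the common denominator $v-u+(a+b)h$) deliver exactly the claimed strict inequality.

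The only real obstacle is the periodicity bookkeeping outlined above: since $K$ is concave only on $(0,2\pi)$ and is merely extended periodically across $0\equiv 2\pi$, concavity cannot be applied to arguments straddling the singularity. The stated conditions on $\delta$ together with the exclusion of $t\in[x-b\delta,y+a\delta]$ are precisely calibrated so that all four arguments land in a single copy $(2\pi k,2\pi(k+1))$ of the concavity interval for a common integer $k\in\{0,1\}$, which is what makes the combination argument legal.
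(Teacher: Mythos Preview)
Your proof is correct; the algebra of the convex-combination coefficients $\lambda_1,\lambda_2$ and the weighted identities $\lambda_1/a+\lambda_2/b=1/a$, $(1-\lambda_1)/a+(1-\lambda_2)/b=1/b$ all check out, and your periodicity bookkeeping is exactly what is needed to place all four arguments in a single copy of $(0,2\pi)$.

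The paper, however, takes a much shorter route. Writing the claimed inequality equivalently as
\[
\frac{K(t-x+bh)-K(t-x)}{bh}\;<\;\frac{K(t-y)-K(t-y-ah)}{ah},
\]
one sees that it is simply a comparison of two chord slopes of $K$: the slope over $[t-x,\,t-x+bh]$ on the left versus the slope over $[t-y-ah,\,t-y]$ on the right. Since $t-y-ah<t-y\le t-x<t-x+bh$ all lie in one concavity interval (this is the same periodicity check you do), strict concavity gives that the right slope strictly exceeds the left one, and the proof is finished in one line. Your spreading/convex-combination argument reaches the same destination but with considerably more computation; what it buys is an explicit decomposition that makes the role of the weights $1/a,1/b$ visible, whereas the paper's slope-monotonicity formulation hides this in the algebra of the rearrangement.
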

\begin{proof}
		By strict concavity the difference quotients of  $K$ are strictly decreasing in both variables, so that for all $h\in (0,\delta)$ and $t\in (0,x-b\delta)$ or $t\in (y+a\delta,2\pi)$
\begin{equation*}
\frac{K(t-x+bh)-K(t-x)}{bh}
<
\frac{K(t-y)-K(t-y-ah)}{ah}.
\end{equation*}
			But this inequality is equivalent to the assertion.
		\end{proof}

\begin{thm}\label{thm:mainspecialcase2}
Suppose the kernel functions $L,K$ are strictly concave and either  $K$ satisfies \eqref{eq:kernsing'} or both $K$ and $L$ belong to $\Ce^1(0,2\pi)$. Set
\[
F(\yy,t):= L(t)+ \sum_{j=1}^n K(t-y_j).
\]
Then there is an up to permutation unique $\wS\in \TT^n$, $\wS=(w_1,\dots,w_n)$ with
\begin{align*}
M:=\inf_{\yy\in\TT^n}\sup_{t\in \TT}F(\yy,t)=\sup_{t\in \TT}F(\wS,t).
\end{align*}
Moreover, we have the following:
\begin{abc}
	\item The nodes $w_0,\dots,w_n$ are different and $\wS$ is an equioscillation point, i.e.,
	\[
	m_0(\wS)=\cdots=m_n(\wS).
	\]
	\item We have
\begin{align*}
M=\inf_{\yy\in \TT^n}\max_{j=0,\dots,n}\sup_{t\in I_j(\yy)} F(\yy,t)=\sup_{\yy\in\TT^n}\min_{j=0,\dots,n}\sup_{t\in I_j(\yy)} F(\yy,t)=m.
\end{align*}
{\piros (Here it is immaterial that for a given $\yy$ which permutation $\sigma$ is taken with $\yy\in \overline{S_\sigma}$.)}
\item We have the Sandwich Property on $\TT^n$, i.e.,
for each $\xx,\yy\in \TT^{n}$
\[
\mm(\xx)\leq m=M \leq \MM(\yy).
\]
\item If $K$ is as in the above and $L=K$, then a permutation of the points
$w_0=0,w_1,\dots,w_n$
lies equidistantly in $\TT$.
\end{abc}
\end{thm}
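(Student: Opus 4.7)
The plan is to reduce everything to Theorem \ref{thm:mainspecialcase} by exploiting the additional symmetries created when the kernels at all nodes $y_1,\ldots,y_n$ equal $K$ (and, for part (d), also when $L=K$). First I would apply Theorem \ref{thm:mainspecialcase} to obtain a global minimizer $\wS$ lying in some open simplex $S_0 = S_{\sigma_0}$, with $M(S_0) = m(S_0) = M$ and the Sandwich Property on $S_0$. Because $K_1 = \cdots = K_n = K$, the function $F(\yy,t)$ is invariant under permutations of the coordinates $y_1,\ldots,y_n$, so $\MM$ and $\mm$ are symmetric functions on $\TT^n$ and any such permutation sends one simplex $S_\sigma$ bijectively onto another. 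It follows that $M(S_\sigma)$ and $m(S_\sigma)$ are independent of $\sigma$, and combined with $M(S_0) = m(S_0) = M$ this yields $M(S_\sigma) = m(S_\sigma) = M$ for every $\sigma$, hence $m = M$ as in (b). The global Sandwich Property (c) is immediate from $\mm(\xx) \le m = M \le \MM(\yy)$ for all $\xx,\yy \in \TT^n$, and (a) follows from Corollary \ref{cor:ezkell}. Uniqueness within each closed simplex (Corollary \ref{cor:Meqm}) together with the permutation symmetry then gives the up-to-permutation uniqueness of the minimizer.

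For part (d), with $L = K$ the full sum $\sum_{j=0}^n K(t-y_j)$ is invariant under a simultaneous rotation of $t$ and all the $y_j$'s. Translating the minimizer $\wS$ (together with $w_0 = 0$) by $-w_1$ and re-standardizing so that the new zeroth node lies at $0$ produces another node system achieving the minimum value $M$, whose underlying set is $W' = \{-w_1,\, 0,\, w_2-w_1,\ldots,w_n-w_1\} \pmod{2\pi}$. Up-to-permutation uniqueness forces $W' = W$, where $W := \{0,w_1,\ldots,w_n\}$. Running the same argument for each $i = 1,\ldots,n$ shows that $W$ is invariant under translation by every $-w_i$, hence by the subgroup $H \le \TT$ generated by $w_1,\ldots,w_n$. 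Since $0 \in W$ the orbit $H\cdot 0 = H$ is contained in $W$, while conversely $W \subseteq H \cup \{0\} = H$. Thus $W = H$ is a finite subgroup of $\TT$ of order $n+1$, which is necessarily the equidistant set $\{2\pi k/(n+1) : k = 0,\ldots,n\}$.

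The main obstacle I anticipate is the careful bookkeeping in (d) between genuine rotations of the torus and the normalizing convention $y_0 = 0$: a rotation must be re-indexed so that the new zeroth node is again $0$, and this is precisely what turns a rotational symmetry of the \emph{unordered} node set into a permutation of the \emph{ordered} node system, at which point the up-to-permutation uniqueness established in (a)–(c) can be invoked. An alternative, more hands-on route to (d) would go through the spreading lemma (Lemma \ref{lem:szethuzas}) applied to two adjacent nodes whose surrounding arcs have unequal lengths, producing a perturbation that strictly decreases $\MM$ and contradicting minimality; but the group-theoretic argument above seems cleaner once parts (a)–(c) are in hand.
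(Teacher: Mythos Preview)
Your reduction to Theorem \ref{thm:mainspecialcase} via the permutation symmetry of the coordinates $y_1,\dots,y_n$ is exactly what the paper does, and your group-theoretic argument for (d) is a pleasant sharpening of the paper's one-line appeal to ``complete symmetry.'' However, there is a genuine gap in your treatment of (a)--(c).

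The hypothesis of Theorem \ref{thm:mainspecialcase2} is that \emph{either} $K$ satisfies \eqref{eq:kernsing'} \emph{or} both $K$ and $L$ belong to $\Ce^1(0,2\pi)$. In the second alternative all $n+1$ kernels are $\Ce^1$ and Theorem \ref{thm:mainspecialcase} applies as you say. But in the first alternative nothing is assumed about $L$ beyond strict concavity: $L$ need not satisfy \eqref{eq:kernsing'} and need not lie in $\Ce^1(0,2\pi)$. In that case the kernel $K_0=L$ fails both branches of the hypothesis of Theorem \ref{thm:mainspecialcase} (and of Corollary \ref{cor:ezkell}), so you cannot invoke either result to place the global minimizer $\wS$ in an open simplex or to conclude it is an equioscillation point. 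The paper singles out precisely this remaining case (``$K$ satisfies \eqref{eq:kernsing'} while $L$ does not'') and supplies a separate argument: it uses Lemma \ref{lem:new} and the Perturbation Lemma \ref{lem:pertnodes00} to rule out $w_i=w_0$ for some $i\ge 1$, and then Lemma \ref{lem:szethuzas} (pulling apart two coinciding nodes $w_{\sigma(j)}=w_{\sigma(j+k)}$ in $(0,2\pi)$) to rule out any remaining coincidences, thereby forcing $\wS\in X$. Only after this is established does the equioscillation follow (via Theorem \ref{thm:equiexists} and Corollary \ref{l:WeakLocalM}), and then your symmetry argument finishes (b), (c) and uniqueness.

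So your outline is correct when the hypotheses of Theorem \ref{thm:mainspecialcase} happen to be met, but it omits the substantive extra case that motivates stating Theorem \ref{thm:mainspecialcase2} separately in the first place. You should add the boundary-exclusion argument for the case where only $K$ (and not $L$) is known to satisfy \eqref{eq:kernsing'}.
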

\begin{proof} First of all, notice that assertion (d) is obvious by the complete symmetry of the setup. Furthermore, again by the cyclic symmetry of the situation, even if $K \ne L$, we still have for any two simplexes $S_\sigma$ and $S_{\sigma'}$ that $M(S_\sigma)=M(S_{\sigma'})=M$ and $m(S_\sigma)=m(S_{\sigma'})=m$. Thus, if $L$ and $K$ satisfies \eqref{eq:kernsing'}, or if both belong to $\Ce^1(0,2\pi)$, existence, uniqueness, and the assertions (a), (b) and (c) are contained in Theorem \ref{thm:mainspecialcase}.
	
It remains to prove parts (a), (b) and (c) in the case when $K$ satisfies \eqref{eq:kernsing'} while $L$ does not, so that $L$ is a real-valued continuous function on $\TT$. Without loss of generality we may assume that $K$ satisfies \eqref{eq:kernsing'm}.

Let $\wS=(w_1,\dots,w_n)$ be a global minimum point of $\MM$ in $\TT^{n}$ (Corollary \ref{cor:minmaxexist}). We first show that $\wS\in X$, i.e., $\wS\in S$ for some simplex $S$. 
{\piros We argue by contradiction and assume that $\wS\in \TT^n\setminus X$, i.e. $\wS\in \partial S_\sigma$ for some permutation $\sigma$, which is now fixed for the numbering of the nodes.}

As the kernels $K_i=K$ satisfy {\lila \eqref{eq:kernsing'm}}
for $i=1,\dots,n$, Lemma \ref{lem:new} (b) 
immediately provides $M > F(\wS, w_i)$ for each $w_i$, $i=1,\dots,n$. Now if $\wS \in \partial S_\sigma$ is such that $w_i=w_0=0$ for some $i\in \{1,2,\dots,n\}$, then we also have $M > F(\wS, w_0)$, and so for any maximum point $z$ of $F(\wS,\cdot)$ we necessarily have $z \in \TT\setminus \{w_0,w_1,w_2,\dots,w_n\}$. That is, for the unique local maximum points 
{\piros $z_{j_i}(\wS) \in I_{j_i}(\wS)$} with 
{\piros $M=m_{j_i}(\wS)=F(\wS,z_{j_i}(\wS))$}, where $i=1,\dots,k$, neither of these points can be endpoints of the respective 
{\piros $I_{j_i}(\wS)$}, and so they are all located in the interior of the respective arcs. Note that by assumption $\wS \in \partial S_\sigma$, hence there are at most $n$ non-degenerate arcs, so $k\le n$ and the Perturbation Lemma \ref{lem:pertnodes00} (c) applies. This provides us some perturbation of the node system $\wS$ to a new node system $\ww'$ with all the maxima $m_{j_i}(\ww')<M$ ($i=1,\dots,k$). As the other arcs had maxima strictly below $M$, and in view of continuity (Proposition \ref{prop:cont}), altogether we would get $\MM(\ww')<M$, a contradiction.

Therefore, it remains to settle the case when there is no $i\ge 1$ with $w_i=w_0$ (but we still have $\wS \in \partial S_\sigma$). 
So assume that $(0=w_0<) w_{\sigma(j)}=\dots=w_{\sigma(j+k)}(<2\pi)$ 
is a complete list of $k+1$ 
coinciding nodes within $(0,2\pi)$. 
As before, {\lila in view of  condition \eqref{eq:kernsing'm}  }
Lemma \ref{lem:new} (b) applies 
providing $M > F(\wS,w_{\sigma(j)})$. 
Consider now the perturbed system $\ww'$ 
obtained from $\wS$ by means of slightly pulling apart $w_{\sigma(j)}$ and $w_{\sigma(j+k)}$, 
i.e. taking $w'_{\sigma(j)}:=w_{\sigma(j)}-h$ 
and $w'_{\sigma(j+k)}=w_{\sigma(j+k)}+h$ (and leaving the other nodes unchanged).
Referring to Lemma \ref{lem:szethuzas} with $a=b=1$, we obtain for small enough $h>0$ that $F$ is strictly decreased in $\TT\setminus (w_{\sigma(j)}-h,w_{\sigma(j)}+h))$, whence even $\max_{\TT\setminus (w_{\sigma(j)}-h,w_{\sigma(j)}+h)} F(\ww',t) <M$, while in the missing interval of length $2h$ continuity of $F$ and $M > F(\wS,w_{\sigma(j)})$ entails $\max_{[w_{\sigma(j)}-h,w_{\sigma(j)}+h]} F(\ww',t) <M$. 
Altogether, we are led to $\MM(\ww') < M$, a contradiction again. 
{\piros This proves that $\wS \in X$, i.e., belongs to the interior of some simplex.}

\medskip\noindent Now, by Theorem \ref{thm:equiexists} there is an equioscillation point $\ee\in S$, which certainly majorizes $\wS$. By Corollary \ref{l:WeakLocalM} {\lila(a)} we obtain $\wS=\ee$. This proves (a). Let $\ws$ be a maximum point of $\mm$ in $\overline{S}$. Then, $\ws$ majorizes the equioscillation point $\wS$, so again Corollary \ref{l:WeakLocalM} {\lila(a)} yields $\wS=\ws$. This proves (b) and (c).
\end{proof}

\section{An application: A minimax problem on the torus}\label{sec:HKSmini}

The aim of this section is to prove the next result, which generalizes Theorem \ref{th:ABE-HKS} of Hardin, Kendall and Saff from \cite{Saff} in the extent that we do not assume the kernels to be even. We also add some extra information about the extremal node system: It is the unique solution of the dual maximin problem.

\begin{corollary}\label{cor:HKSgen}
	Let $K$ be any concave kernel function, and let $0=e_0<e_1<\cdots<e_n$ be the equidistant node system
	in $\TT$.
	Consider $F(\yy,t)= K(t)+ \sum_{j=1}^n K(t-y_j)$.
	\begin{abc}
		\item For $\ee=(e_1,\dots,e_n)$ we have
		\[
		\max_{t\in \TT} F(\ee,t)=M=\inf_{\yy\in\TT^n}\max_{t\in \TT} F(\yy,t),
		\]
		i.e., $\ee$ is a minimum point of $\MM$. Moreover,
		\[
		\inf_{\yy\in\TT^n}\max_{j=0,\dots,n} m_j(\yy)=
		 M=m=\sup_{\yy\in\TT^n}\min_{j=0,\dots,n} m_j(\yy).
		 \]
		
		\item If $K$ is strictly concave, then $\ee$ is the unique (up to permutation of the 
nodes) maximum point of $\mm$ and the unique minimum point of $\MM$.
		\end{abc}
\end{corollary}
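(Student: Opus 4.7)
The plan is to reduce to Theorem \ref{thm:mainspecialcase2} by approximating the concave kernel $K$ with strictly concave, singular kernels. Fix once and for all a bounded, smooth, strictly concave auxiliary kernel $Q$ on $(0,2\pi)$ satisfying \eqref{eq:kernsing'pm}---for instance $Q(t):=\sqrt{t(2\pi-t)}$, extended $2\pi$-periodically---and set $K^{(k)}:=K+\tfrac{1}{k}Q$. Each $K^{(k)}$ is strictly concave and satisfies \eqref{eq:kernsing'pm}, so Theorem \ref{thm:mainspecialcase2} applies with $L=K^{(k)}$; by its part (d) the unique (up to permutation) minimizer of $\MM^{(k)}$ is the equidistant system $\ee$, and in particular
\[
\MM^{(k)}(\ee)=M^{(k)}=m^{(k)}=\mm^{(k)}(\ee)\qquad\text{for every }k\in\NN.
\]
Since $Q$ is bounded, $K^{(k)}\to K$ uniformly on $\TT$, so Proposition \ref{prp:unifconvm} yields $\MM^{(k)}\to\MM$ uniformly on $\TT^n$, $M^{(k)}\to M$, and $m^{(k)}\to m$ (the cyclic symmetry imposed by all kernels being equal gives $M=M(S_{\mathrm{id}})$, $m=m(S_{\mathrm{id}})$, so working in a single simplex suffices). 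Passing to the limit yields $\MM(\ee)=M=m=\mm(\ee)$, which is exactly the content of part (a).

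Assuming henceforth that $K$ is strictly concave, part (b) splits naturally in two halves. For the unique maximum of $\mm$: by Proposition \ref{l:mistrictconcave} the function $\mm$ is strictly concave on $\overline{S_{\mathrm{id}}}$, and by Corollary \ref{cor:muniquenomaj}(a) it has a unique maximum point in $\overline{S_{\mathrm{id}}}$; since $\mm(\ee)=m$ by (a) and $\ee\in S_{\mathrm{id}}$, this unique maximizer must be $\ee$. The full cyclic symmetry of the setup (all kernels equal to $K$) then transfers this to the other simplexes, giving the $\mm$-maximizer up to permutation. Moreover, $M(S_{\mathrm{id}})=m(S_{\mathrm{id}})$ by (a), so Corollary \ref{cor:Meqm} ensures that $\ee$ is in fact the unique \emph{equioscillation point} in $\overline{S_{\mathrm{id}}}$.

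For the unique minimum of $\MM$, let $\wS$ be any minimizer, which by symmetry may be taken to lie in $\overline{S_{\mathrm{id}}}$. The inequality $m_j(\wS)\le\MM(\wS)=M=m_j(\ee)$ for every $j$ shows that $\ee$ majorizes $\wS$. If $\wS\in S_{\mathrm{id}}$ (the interior case), Corollary \ref{cor:majcomp}(a) produces $\vca\in\RR^n$ and $\delta>0$ such that $m_j(\wS-t\vca)<m_j(\wS)$ for every $j$ and $t\in(0,\delta)$; hence $\MM(\wS-t\vca)<M$, contradicting the minimality of $M$, so $\wS=\ee$. The main obstacle is the boundary case $\wS\in\partial S_{\mathrm{id}}$: Corollary \ref{cor:majcomp}(a) does not apply directly, and neither Proposition \ref{prop:minmaxnplusone} nor Corollary \ref{cor:SS'better} can be invoked because their hypotheses \eqref{eq:kernsing'} or $\Ce^1(0,2\pi)$ are not given. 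To dispose of this case I would combine the separation Lemma \ref{lem:szethuzas} (to perturb two coincident nodes apart and use that this strictly decreases $F$ on the complement of a small interval around them) with the Perturbation Lemma \ref{lem:pertnodes00} applied to the approximants $K^{(k)}$ (which do satisfy \eqref{eq:kernsing'pm}); together with the uniform convergence $\MM^{(k)}\to\MM$ and the fact that for each $k$ the only $\MM^{(k)}$-minimizer is $\ee$, lying in $S_{\mathrm{id}}$, this forces any candidate boundary minimizer for the original $K$ back into the interior and therefore reduces to the interior case already handled.
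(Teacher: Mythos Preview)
Your proof of part~(a) is correct and coincides with the paper's argument: approximate $K$ by strictly concave kernels satisfying \eqref{eq:kernsing'pm}, apply Theorem~\ref{thm:mainspecialcase2}(d), and pass to the limit using Proposition~\ref{prp:unifconvm}.

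For part~(b), your treatment of the $\mm$-maximizer via strict concavity (Corollary~\ref{cor:muniquenomaj}) is fine, and your observation via Corollary~\ref{cor:Meqm} that $\ee$ is the \emph{unique} equioscillation point in $\overline{S_{\mathrm{id}}}$ is exactly the right ingredient. Your interior case for the $\MM$-minimizer (Corollary~\ref{cor:majcomp}(a)) is also valid.

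The genuine gap is the boundary case for the $\MM$-minimizer. Your sketch does not work: the approximation route yields only $\MM^{(k)}(\wS)>M^{(k)}$, and passing to the limit gives $\MM(\wS)\ge M$, which is already known and excludes nothing. The phrase ``forces any candidate boundary minimizer back into the interior'' has no content---a boundary point cannot be moved into the interior by taking a limit, and the Perturbation Lemma applied to $K^{(k)}$ gives no uniform-in-$k$ control that would survive.

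The paper's route is both simpler and avoids the interior/boundary dichotomy altogether. It shows directly that \emph{every} minimizer $\wS\in\overline{S_{\mathrm{id}}}$ must equioscillate: if $m_j(\wS)<\MM(\wS)$ for some $j$, pull the two endpoints of $I_j(\wS)$ apart using Lemma~\ref{lem:szethuzas} with $a=b=1$ (applied to $K$ itself, which only requires strict concavity). This strictly decreases $F$ outside a neighbourhood of $I_j$, hence strictly decreases every other $m_i$, while $m_j$ stays below $\MM(\wS)$ by continuity (Corollary~\ref{cor:mjsimplexcont}). Thus $\MM(\ww')<\MM(\wS)$, a contradiction. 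This argument is indifferent to whether $I_j(\wS)$ is degenerate, so it covers the boundary at once. Since you have already established that $\ee$ is the unique equioscillation point in $\overline{S_{\mathrm{id}}}$, the conclusion $\wS=\ee$ is immediate. You had all the pieces in hand; the missing step was to use Lemma~\ref{lem:szethuzas} to force equioscillation rather than detouring through majorization.
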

\begin{proof}
Since the permutation of the nodes is irrelevant we may restrict the consideration to the simplex $S:=S_{\id}$, where $\id$ is the identical permutation. We have $M=M(S)$ and $m=m(S)$.
	
	\medskip\noindent
	(a)
Approximate $K$ uniformly by
strictly concave
kernel functions $K^{(k)}$ satisfying \eqref{eq:kernsing'pm} (cf.{} Example \ref{examp:last}).
By Theorem \ref{thm:mainspecialcase2},  $M^{(k)}=\MM^{(k)}(\mathbf{e})$  and $M^{(k)}=m^{(k)}$
and obviously we have $M^{(k)}=M^{(k)}(S)$, $m^{(k)}=m^{(k)}(S)$.
By Proposition \ref{prp:unifconvm} we have $M^{(k)}(S)\to M(S)=M$, $m^{(k)}(S)\to m(S)=m$, $\mm^{(k)}(\ee)\to \mm(\ee)$ and  $\MM^{(k)}(\ee)\to \MM(\ee)$. So $\MM(\ee)=M=M(S)=m(S)=m$.

\medskip\noindent (b) Let $\wS\in \overline{S}$ be a minimum point of $\MM$.
 If $m_j(\wS)<\MM(\wS)=M(S)$ held for some $j\in \{0,1,\dots,n\}$, then by an application of Lemma \ref{lem:szethuzas}
 (with $a=b=1$ there)  and Corollary \ref{cor:mjsimplexcont} we could arrive at a new node system $\ww'$ with $\MM(\ww')<\MM(\wS)$, which is impossible. We conclude therefore that $\wS$ is an equioscillation point. Since by part (a) we have $m(S)=M(S)$, the equioscillation point is unique by
Proposition \ref{prop:whatifequinon-unique} (a).
 Hence $\wS=\ee$, and uniqueness follows.
\end{proof}

\section{An application: Generalized polynomials and Bojanov's result}
\label{sec:bojanovext}
{\piros
In this section we present two  applications of the previously developed theory to Chebyshev type problems for generalized polynomials and generalized trigonometric polynomials, thereby refining some results of Bojanov \cite{Bojanov} in the polynomial situation (see Theorem \ref{thm:bojanov} below), and proving the analogue of this generalization in the trigonometric situation.}

We shall use the following form of our main theorem.
\begin{thm}\label{thm:mainspecialcase3}
Suppose the kernel function $K$ is strictly concave and either satisfies {\eqref{eq:kernsing'}, or is in $\Ce^1(0,2\pi)$}.
Let
$r_0,r_1,\ldots,r_n>0$,
set
$K_j:=r_j K$
and
\[
F(\yy,t):= K_0(t)+ \sum_{j=1}^n  K_j(t-y_j) =
r_0 K(t)+ \sum_{j=1}^n r_j K(t-y_j).
\]
Let $S=S_\sigma$ be a simplex.
Then there is a unique $\wS\in S$, $\wS=(w_1,\dots,w_n)$ with
\begin{align*}
M(S):=\inf_{\yy\in S}\sup_{t\in \TT}F(\yy,t)=\sup_{t\in \TT}F(\wS,t).
\end{align*}
Moreover, we have the following:
\begin{abc}
	\item The nodes $w_0,\dots,w_n$ are different and $\wS$ is an equioscillation point, i.e.,
	\[
	m_0(\wS)=\cdots=m_n(\wS).
	\]
	\item We have
\begin{align*}
\inf_{\yy\in S}
\max_{j=0,\dots,n}\sup_{t\in I_j(\yy)} F(\yy,t)=M(S)=m(S)=
\sup_{\yy\in S}
\min_{j=0,\dots,n}\sup_{t\in I_j(\yy)} F(\yy,t).
\end{align*}
\item We have the Sandwich Property
 in  $\overline{S}$,
i.e., for each $\xx,\yy\in \overline{S}$
\[
\mm(\xx)\leq
M(S)
\leq \MM(\yy).
\]
\end{abc}
\end{thm}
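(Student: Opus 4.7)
The plan is to observe first that the kernels $K_j=r_j K$, with $r_j>0$, inherit strict concavity from $K$, and that whichever of the conditions \eqref{eq:kernsing'} or $\Ce^1(0,2\pi)$ holds for $K$ is also inherited by each $K_j$. Hence the full body of results from the previous sections applies to the system $K_0,K_1,\dots,K_n$. In particular, Proposition \ref{prop:exists} will furnish a minimizer $\wS\in\overline{S}$ with $\MM(\wS)=M(S)$. Everything else in the theorem will then follow via Proposition \ref{prop:Mmextremal} as soon as we can establish that $\wS$ actually lies in the open simplex $S$.

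Ruling out $\wS\in\partial S$ is the main task, and the one place where the proportionality $K_j=r_j K$ enters essentially, through Lemma \ref{lem:szethuzas}, whose structure $\tfrac{1}{a}K+\tfrac{1}{b}K$ is precisely matched to proportional kernels. I would adapt the boundary-exclusion argument from the proof of Theorem \ref{thm:mainspecialcase2}. Suppose for contradiction that $\wS\in\partial S$. The first case is that two interior nodes of $\wS$ coincide, $w_{\sigma(j)}=w_{\sigma(j+1)}\in(0,2\pi)$; then Lemma \ref{lem:szethuzas} applied with $a=1/r_{\sigma(j+1)}$ and $b=1/r_{\sigma(j)}$ pulls these two nodes slightly apart while preserving the $\sigma$-ordering (so the perturbed system $\ww'$ stays in $\overline{S}$), strictly decreasing $F$ off a tiny interval around the merged point; inside that interval, continuity of $F$ together with the strict inequality $F(\wS,w_{\sigma(j)})<M(S)$---supplied by Lemma \ref{lem:new} under \eqref{eq:kernsing'} and by Lemma \ref{lem:KjConenew} under the $\Ce^1$ hypothesis---keeps $F$ below $M(S)$. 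Thus $\MM(\ww')<M(S)$, contradicting minimality. The second case is that some node coincides with $w_0=0$. The same lemmas now give $F(\wS,w_j)<M(S)$ at every node, so the maximum points of $F(\wS,\cdot)$ on each non-degenerate arc lie strictly in its interior; as there are at most $n$ active maxima, the Perturbation Lemma \ref{lem:pertnodes00}(c), used in conjunction with the freedom in part (a) of the same lemma (the equality constraints being chosen so that the perturbed configuration preserves the $\sigma$-ordering for small $h$), yields a direction $\vca$ along which all active $m_j$'s strictly decrease; continuity handles the remaining arcs, and we obtain again $\MM(\wS+h\vca)<M(S)$ with $\wS+h\vca\in\overline{S}$, contradicting the minimality of $\wS$.

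Once $\wS\in S$ is secured, it is in particular a local minimum point of $\MM$ in $S$, and Proposition \ref{prop:Mmextremal} applies and delivers all remaining assertions of the theorem in one stroke: $\wS$ is the unique equioscillation point in $\overline{S}$, giving (a) together with the uniqueness of the minimizer; $\mm$ attains its unique global maximum on $\overline{S}$ at $\wS$, yielding $M(S)=m(S)$ and hence (b); and the Sandwich Property holds on $S$, giving (c). The principal difficulty I expect is precisely the second subcase of the boundary exclusion, where care is needed in calibrating the Perturbation Lemma's direction so that the perturbed configuration does not leave $\overline{S}$; this is a technical but essentially routine variant of the corresponding step in the proof of Theorem \ref{thm:mainspecialcase2}.
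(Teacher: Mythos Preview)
Your overall strategy---show that the minimizer $\wS\in\overline{S}$ of $\MM$ actually lies in the open simplex $S$, then invoke Proposition \ref{prop:Mmextremal}---is exactly the paper's. The issue is in how you exclude $\wS\in\partial S$.

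In Case 1 you should pull apart the two \emph{outermost} nodes of a maximal coinciding block, as the paper does (taking $w_{\sigma(k)}$ and $w_{\sigma(\ell)}$ with $w_{\sigma(k-1)}<w_{\sigma(k)}=\cdots=w_{\sigma(\ell)}<w_{\sigma(\ell+1)}$), not just any two adjacent ones. If three or more nodes coincide and you move an interior pair, the perturbed system leaves $\overline{S}$.

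Case 2 has a genuine gap. You propose using the Perturbation Lemma \ref{lem:pertnodes00} with equality constraints chosen to keep $\wS+h\vca\in\overline{S}$. But suppose the only degeneracy is $w_{\sigma(1)}=0$. By Corollary \ref{cor:SS'better}(b) the minimizer $\wS$ is a weak equioscillation point, so all $n$ non-degenerate arcs are active: $k=n$. The Perturbation Lemma then leaves $n-k=0$ equality constraints available, so you cannot force $a_{\sigma(1)}\ge 0$, and nothing prevents $\wS+h\vca$ from leaving $\overline{S}$. This is \emph{not} a routine variant of the corresponding step in Theorem \ref{thm:mainspecialcase2}: there the minimizer is global over $\TT^n$, so a perturbation exiting $\overline{S}$ still contradicts $\MM(\ww')<M$; here you only have minimality over $\overline{S}$, and a perturbation landing outside proves nothing about $M(S)$.

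The paper avoids this by reusing Lemma \ref{lem:szethuzas} in Case 2 as well. Treat $w_0$ temporarily as movable and pull $w_0$ and $w_{\sigma(\ell)}$ apart (with $a=1/r_{\sigma(\ell)}$, $b=1/r_0$), obtaining a system with $w_0'\ne 0$ and strictly smaller $\MM$; then rotate all nodes by $-w_0'$ to restore $w_0''=0$. The rotated system lies in $\overline{S}$ and has $\MM(\ww'')<M(S)$, giving the contradiction.
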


\begin{proof}
	There is $\ww\in\overline{S}$ with $M(S)=\sup_{t\in \TT}F(\ww,t)$. 	
	By Proposition \ref{prop:Mmextremal} we only need to prove
that $\ww$ belongs
to the interior of the simplex,  i.e., $\ww\in S$. Suppose by contradiction that
$w_{\sigma(k-1)} \le
w_{\sigma(k)}=w_{\sigma(k+1)}=\cdots=w_{\sigma(\ell)}<
 2\pi=
w_{\sigma{(n+1})}$ with $k\neq \ell$, $k\in\{1,\dots,n\}$
(the case $k=0$ will be  considered below separately).
Then we can apply Lemma \ref{lem:szethuzas}
with
$a=\frac1{r_{\sigma(\ell)}}$, $b=\frac1{r_{\sigma(k)}}$
and
$x=w_{\sigma(k)}$,   $y=w_{\sigma(\ell)}$,
and move the two nodes
$w_{\sigma(k)}$ and $w_{\sigma(\ell)}$
away from each other, such that
 the
new node system $\ww'$ still belongs to {\lila $\overline{S}$}.  We conclude
\begin{align*}
&F(\ww',t)-F(\ww,t)\\
&=K_{\sigma(k)}(t-w_{\sigma(k)}')+K_{\sigma(\ell)}(t-w_{\sigma(\ell)}')-K_{\sigma(k)}(t-w_{\sigma(k)})-K_{\sigma(\ell)}(t-w_{\sigma(\ell)})<0 
\end{align*}
for all
$t\in \TT\setminus [w_{\sigma(k)}',w_{\sigma(\ell)}']$.
Hence  we obtain
\begin{equation}
\label{cond:mjwpmjw}
m_j(\ww')<m_j(\ww)\quad\text{for each } j\in \{0,\dots,n\}\setminus
 \{\sigma(k),\dots, \sigma(\ell-1)\}.
\end{equation}
Since by Corollary \ref{cor:neighborjump}
$m_{\sigma(k)}(\ww)=m_{\sigma(k+1)}(\ww)=\cdots=m_{\sigma(\ell-1)}(\ww)<\MM(\ww)$,
if we move the two nodes
$w_{\sigma(k)}$ and $w_{\sigma(\ell)}$
by
a sufficiently small amount, by Corollary \ref{cor:mjsimplexcont} we can achieve
\begin{equation}
\label{cond:mjcsokk}
m_{\sigma(k)}(\ww'),\
m_{\sigma(k+1)}(\ww'),\
\cdots,\
m_{\sigma(\ell-1)}(\ww')
<\MM(\ww).
\end{equation}
Putting together \eqref{cond:mjwpmjw} and \eqref{cond:mjcsokk},
we would obtain $\MM(\ww')<\MM(\ww)$, which is in contradiction with the choice of $\ww$.

If
finally, {\lila $k=0$, that is }
$w_0$ happens to coincide with some
$w_{\sigma(\ell)}$,
then we can move $w_0$ and
$w_{\sigma(\ell)}$
away from each other as above and obtain a new node system $w_0'\in \TT$,  $\ww'=(w_1',\dots, w_n')$ with $\MM(\ww')<\MM(\ww)$, and then we need to rotate back {\lila all the nodes} by $w'_0$.

 We have seen that $\wS:=\ww\in S$, therefore the proof is complete.
	\end{proof}

Bojanov proved in \cite{Bojanov} the following.
\begin{thm}[(Bojanov)]\label{thm:bojanov}
Let $\nu_{1},\ldots,\nu_{n}$ be fixed positive integers. Fix $[a,b]\subset\RR$.
Then, there exists a unique system of points
$a<x_{1}<\ldots<x_{n}<b$ such that
\[
\| (x-x_{1})^{\nu_{1}}\ldots(x-x_{n})^{\nu_{n}}\| =\inf_{a\leq y_{1}<\ldots<y_{n}\leq b}\| (x-y_{1})^{\nu_{1}}\ldots(x-y_{n})^{\nu_{n}}\|
\]
where $\|\cdot\| $ denotes the sup-norm over $[a,b]$.
The extremal polynomial
\[
P^{*}(x):=(x-x_{1})^{\nu_{1}}\ldots(x-x_{n})^{\nu_{n}}
\]
is uniquely characterized by the property that there exist $a=s_{0}<s_{1}<\ldots<s_{n-1}<s_{n}=b$
such that $|P^{*}(s_{j})|=\| P^{*}\| $  for
$j=0,1,\ldots,n$.
 Moreover, in this situation
 \[
 P^{*}(s_{j+1})=(-1)^{\nu_{j+1}}P^{*}(s_{j})\quad \text{for $j=0,1,\ldots,n-1$.}
 \]
\end{thm}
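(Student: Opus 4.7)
My plan is to transfer the interval Chebyshev-type problem to a sum-of-translates problem on the torus via the cosine substitution, and then invoke Theorem \ref{thm:mainspecialcase3}. After an affine normalization we may assume $[a,b]=[-1,1]$. Passing to logarithms, the objective becomes
\[
\sup_{\theta\in[0,\pi]}\sum_{j=1}^n\nu_j\log|\cos\theta-\cos\phi_j|,
\]
where $y_j=\cos\phi_j$ with $\phi_j\in(0,\pi)$. The identity $\cos\theta-\cos\phi=-2\sin\tfrac{\theta+\phi}{2}\sin\tfrac{\theta-\phi}{2}$ rewrites each summand as $-\log 2+K(\theta-\phi_j)+K(\theta-(2\pi-\phi_j))$, with $K(t):=\log|2\sin(t/2)|$. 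The kernel $K$ is strictly concave on $(0,2\pi)$, $2\pi$-periodic, even ($K(2\pi-t)=K(t)$), and satisfies \eqref{eq:kernsing}. Moreover, because the sum is invariant under $\theta\mapsto 2\pi-\theta$, its supremum over $[0,\pi]$ equals its supremum over the full torus. So the Bojanov problem becomes a torus minimax problem of the form in Theorem \ref{thm:mainspecialcase3} with $2n$ free nodes placed at $\phi_1,\ldots,\phi_n,2\pi-\phi_1,\ldots,2\pi-\phi_n$, each node carrying the weight $\nu_j$ of its corresponding interval root.

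In the cyclic listing $0<\phi_1<\cdots<\phi_n<2\pi-\phi_n<\cdots<2\pi-\phi_1<2\pi$, the associated weight sequence is the palindrome $(\nu_1,\nu_2,\ldots,\nu_n,\nu_n,\ldots,\nu_2,\nu_1)$. Applying Theorem \ref{thm:mainspecialcase3} (after reducing by the rotational invariance of the torus to $2n-1$ free nodes with $y_0=0$ fixed) within this simplex yields a unique equioscillation point $\wS$, which attains both the minimax $M(S)$ and the maximin $m(S)$. The key step is then to exploit the evenness of $K$ together with the palindromic weight structure: the torus problem is invariant under the reflection $\theta\mapsto-\theta$, and this reflection carries the chosen simplex into itself (it reverses the cyclic order of the nodes, but the reversal is absorbed by the palindromic weights). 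By the uniqueness of the equioscillation point within the simplex, $\wS$ must be reflection-invariant; after rotating so that a reflection axis sits at $\theta=0$, the $2n$ extremal nodes are forced to be of the form $(\phi_j^*,2\pi-\phi_j^*)$ with $\phi_j^*\in(0,\pi)$.

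Setting $x_j^*:=\cos\phi_j^*$ returns the unique Bojanov-extremal node system in $(-1,1)$. The $2n$ equioscillating local maxima of the torus function collapse, under the two-to-one identification $\theta\mapsto\cos\theta$, to $n+1$ equal local maxima of $|P^*|$ at points $s_0=-1<s_1<\cdots<s_n=1$; the extra points $s_0$ and $s_n$ arise from the fixed points $\theta=\pi$ and $\theta=0$ of the reflection. Finally, the sign alternation $P^*(s_{j+1})=(-1)^{\nu_{j+1}}P^*(s_j)$ is elementary: between consecutive extremal points $s_j$ and $s_{j+1}$ there sits exactly one root $x_{j+1}^*$ of multiplicity $\nu_{j+1}$, across which $P^*$ changes sign by $(-1)^{\nu_{j+1}}$, while $|P^*(s_j)|=|P^*(s_{j+1})|=\|P^*\|$.

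I expect the principal technical obstacle to be the symmetry step: the convention $y_0=0$ in Theorem \ref{thm:mainspecialcase3} places a \emph{node} at the origin, whereas the natural reflection axes of the symmetric Bojanov configuration on the torus pass through no node (they pass through the midpoints of arcs, at $\theta=0$ and $\theta=\pi$). The cleanest fix is to apply the theorem in the two simplexes related by reflection, use the rotational invariance of the torus minimax value to equate their infima, and then use uniqueness within each simplex to match the two extremal configurations as reflections of one another; this forces the orbit of the extremum under rotations to be reflection-invariant and supplies a reflection-symmetric representative, which is the configuration we need.
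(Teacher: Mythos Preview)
The paper does not give its own proof of this particular statement: Theorem \ref{thm:bojanov} is quoted as a known result from \cite{Bojanov}. What the paper proves is the generalization Theorem \ref{thm:bojanovgen} (arbitrary real exponents $\nu_j>0$), and it does so via precisely the route you outline: the cosine substitution $x=\cos t$, the product identity \eqref{eq:cos_id}--\eqref{eq:cos_mult}, reduction to a symmetric ``doubled'' trigonometric problem with $2n$ nodes and palindromic weight sequence, application of Theorem \ref{thm:mainspecialcase3}, and a reflection/uniqueness argument to force the extremal torus configuration to be symmetric (this is Theorem \ref{thm:symm_minmax_prob}). Your proposal is therefore essentially the paper's own argument specialized to integer exponents, and the technical obstacle you anticipate---the tension between fixing $y_0=0$ and the fact that the reflection axes pass through no node---is exactly what the paper handles in the proof of Theorem \ref{thm:symm_minmax_prob} by first applying uniqueness with a node fixed and then rotating so that $w_1+(w_{2n}-2\pi)=0$, after which the reflection $t\mapsto 2\pi-t$ carries the configuration to another extremal one in the same simplex and uniqueness forces $w_k=2\pi-w_{2n+1-k}$.
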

Now, we are going to establish a similar result for trigonometric polynomials
and relate this new result to Bojanov's theorem.

It is well known (see e.g.{} \cite{BorweinErdelyi} p.{} 10) that a trigonometric
polynomial \[T(t)=a_{0}+\sum_{k=1}^{m}a_{k}\cos(kt)+b_{k}\sin(kt)\]
where $|a_{m}|+|b_{m}|>0$,  can be written in the form
$T(t)=c\prod_{j=1}^{2m}\sin\frac{t-t_{j}}{2}$ where $c$,
$t_{1},\ldots,t_{2m}$ are numbers. More precisely, if $T(t')=0$,
$t'\in\CC$, $\Re t'\in[0,2\pi)$, then $t'$ appears in
$t_{1},\ldots,t_{2m}$ and if $a_{0},a_{1},b_{1},\ldots,a_{m},b_{m}\in\RR$
and $T(t')=0$, $t'\in\CC\setminus\RR$, $\Re t'\in[0,2\pi)$,
then the conjugate of $t'$ is also a zero, $T(\overline{t'})=0$
and both appear among $t_{1},\ldots,t_{2m}$.

Functions of the form
\[
a\prod_{j=1}^{m}\Bigl|\sin\frac{t-t_{j}}{2}\Bigr|^{r_{j}},
\]
where $a,r_{j}>0$, $t_{j}\in\CC$ for all $j=1,\ldots,m$, are called \emph{generalized trigonometric
polynomials} (GTP for short), see, e.g.
\cite{BorweinErdelyi} Appendix 4. The number $\frac{1}{2}\sum_{j=1}^{m}r_{j}$ is usually
called the degree of this GTP.

In the next theorem, we describe {\lila Chebyshev type  extremal GTPs 
(having minimal sup
norm and fixed leading coefficient)}
when the multiplicities of
the zeros are fixed and the zeros are real.
Let us mention a  {\lila related } result 
of Kristiansen (see \cite[Thm.{} 2]{Kristiansen},  which is also mentioned in \cite{BojanovTuran} as Theorem B) concerning trigonometric polynomials with prescribed multiplicities of zeros.
{\lila However, the paper \cite{Kristiansen} does not concern extremal (minimax or maximin) problems but gives an existence and uniqueness result for trigonometric polynomials when the local extrema are also prescribed.}
\begin{thm}
Let $r_{0},r_{1},\ldots,r_{n}>0$ be fixed.
Then, there exists a unique system of points $0=w_{0}<w_{1}<\ldots<w_{n}<2\pi$ such that
\[
\Bigl\| \Bigl|\sin\frac{t-w_{0}}{2}\Bigr|^{r_{0}}\cdots\Bigl|\sin\frac{t-w_{n}}{2}\Bigr|^{r_{n}}\Bigr\| 
={\lila \inf_{0=y_{0}\leq y_{1}\leq \ldots\leq y_{n}\leq 2\pi} }
\Bigl\| \Bigl|\sin\frac{t-y_{0}}{2}\Bigr|^{r_{0}}\cdots\Bigl|\sin\frac{t-y_{n}}{2}\Bigr|^{r_{n}}\Bigr\| 
\]
where $\| \cdot\| $ denotes the sup-norm over $[0,2\pi]$.
The extremal GTP
\[T^{*}(t):=\Bigl|\sin\frac{t-w_{0}}{2}\Bigr|^{r_{0}}\cdots\Bigl|\sin\frac{t-w_{n}}{2}\Bigr|^{r_{n}}\]
is uniquely determined by properties that there exist $0<z_{0}<z_{1}<z_{2}<\ldots<z_{n}<2\pi$
such that $w_{j}$'s and $z_{j}$'s interlace, i.e., $0=w_{0}<z_{0}<w_{1}<\ldots<w_{n}<z_{n}<w_{0}+2\pi=2\pi$,
and $T^{*}(z_{j})=\| T^{*}\| $ for $j=0,1,\ldots,n$.\end{thm}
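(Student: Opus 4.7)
The plan is to recast the problem as a sum-of-translates minimax problem on $\TT$ via the logarithm, and then apply Theorem~\ref{thm:mainspecialcase3}. I would set
\[
K(t):=\log\bigl|\sin\tfrac{t}{2}\bigr|\quad (t\in\RR),
\]
which is $2\pi$-periodic with $K(0)=K(2\pi)=-\infty$, and since $K''(t)=-\tfrac{1}{4}\sin^{-2}(t/2)<0$ on $(0,2\pi)$, it is a strictly concave kernel function satisfying condition \eqref{eq:kernsing} (hence also \eqref{eq:kernsing'}). Setting $K_j:=r_j K$, the associated sum-of-translates function becomes
\[
F(\yy,t)=\sum_{j=0}^{n}r_j K(t-y_j)=\log T(\yy,t),\qquad T(\yy,t):=\prod_{j=0}^{n}\bigl|\sin\tfrac{t-y_j}{2}\bigr|^{r_j},
\]
so that minimizing $\|T(\yy,\cdot)\|$ over $0=y_0\le y_1\le\cdots\le y_n\le 2\pi$ is equivalent, via the strictly increasing $\log$, to minimizing $\MM(\yy)=\sup_{t\in\TT}F(\yy,t)$ over $\overline{S}$, where $S:=S_{\id}$.

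Next I would apply Theorem~\ref{thm:mainspecialcase3} to this simplex. It furnishes a unique $\wS=(w_1,\dots,w_n)\in S$ with $\MM(\wS)=M(S)=m(S)$, which is moreover an equioscillation point: $m_0(\wS)=\cdots=m_n(\wS)=M(S)$. Since the minimizer lies in the open simplex, the claimed ordering $0<w_1<\cdots<w_n<2\pi$ holds automatically. Because $K$ is strictly concave and satisfies \eqref{eq:kernsing}, on each arc $I_j(\wS)=[w_j,w_{j+1}]$ the function $F(\wS,\cdot)$ is strictly concave and real valued on $(w_j,w_{j+1})$, while it equals $-\infty$ at the two endpoints. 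Hence there is a unique interior maximum point $z_j\in(w_j,w_{j+1})$ with $F(\wS,z_j)=m_j(\wS)=M(S)$. Exponentiating produces $T^*(z_j)=\|T^*\|$ for each $j=0,\dots,n$ with the required interlacing $0<z_0<w_1<z_1<\cdots<w_n<z_n<2\pi$.

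For the converse characterization, suppose $\hat T(t):=\prod_{j=0}^{n}\bigl|\sin\tfrac{t-\hat w_j}{2}\bigr|^{r_j}$ with $0=\hat w_0<\hat w_1<\cdots<\hat w_n<2\pi$ admits interlacing points $\hat z_j\in(\hat w_j,\hat w_{j+1})$ such that $\hat T(\hat z_j)=\|\hat T\|$ for every $j$. Writing $\hat\yy:=(\hat w_1,\dots,\hat w_n)\in S$, taking logarithms gives $F(\hat\yy,\hat z_j)=\log\|\hat T\|=\MM(\hat\yy)$, and since $\hat z_j\in I_j(\hat\yy)$, this forces $m_j(\hat\yy)=\MM(\hat\yy)$ for every $j$. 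Hence $\hat\yy$ is an equioscillation point in $\overline{S}$. Because Theorem~\ref{thm:mainspecialcase3}(b) yields $M(S)=m(S)$, Corollary~\ref{cor:Meqm} ensures the uniqueness of the equioscillation point in $\overline{S}$, forcing $\hat\yy=\wS$ and therefore $\hat T=T^*$.

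The analytic input from the kernel side---strict concavity of $K$ and its $-\infty$ singularities at $0$ and $2\pi$---is routine. The essential observation is that the Chebyshev-type interlacing property of the extremal GTP corresponds, under $\log$, to the equioscillation property of Theorem~\ref{thm:mainspecialcase3}, after which existence, uniqueness and the characterization all follow from that theorem combined with Corollary~\ref{cor:Meqm}. I expect the only delicate point to be verifying that a candidate $\hat T$ satisfying the interlacing conditions yields an equioscillation point of $F$ in the sense of Definition~\ref{def:equi}; here the singularity $K(0)=-\infty$ is crucial, as it places each $\hat z_j$ strictly in the interior of the corresponding arc $I_j(\hat\yy)$ and thereby identifies $F(\hat\yy,\hat z_j)$ with the arc-wise maximum $m_j(\hat\yy)$.
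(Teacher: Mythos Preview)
Your proposal is correct and follows essentially the same route as the paper: introduce $K(t)=\log|\sin(t/2)|$, set $K_j=r_jK$, and apply Theorem~\ref{thm:mainspecialcase3} on the simplex $S=S_{\id}$ to obtain the unique minimizer $\wS\in S$ as an equioscillation point, then translate back via $T^*=\exp F(\wS,\cdot)$. Your treatment of the converse characterization---showing that any $\hat T$ with the interlacing property yields an equioscillation point and then invoking $M(S)=m(S)$ together with Corollary~\ref{cor:Meqm} to force $\hat\yy=\wS$---is in fact more explicit than the paper's own proof, which leaves this direction to the reader.
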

\begin{proof}
{\lila Let $K(x):=\log|\sin(x/2)|$
for $-\pi\leq x\leq\pi$, and extend it $2\pi$-periodically to $\RR$.}
Then $K$ is a kernel in $\Ce^2(0,2\pi)$ with $K''<0$.
Let $K_{j}(x):=r_{j}K(x)$, $j=0,1,2,\ldots,n$
be the kernels and consider the simplex $S:=S_{\id}$.
Further, let $T(\yy,t):=\prod_{j=0}^{n}|\sin\frac{t-y_{j}}{2}|^{r_{j}}$
where $\yy\in S$ and $F(\yy,t):=\log|T(\yy,t)|$.
{\lila Then $F(\yy,t)$ is a
sum of translates function, because}
\[
F(\yy,t)=K_{0}(t)+\sum_{j=1}^{n}K_{j}(t-y_{j})
=\sum_{j=0}^{n}r_{j}K(x-y_{j}).
\]
Applying Theorem  \ref{thm:mainspecialcase3},
we obtain that $M(S)=\inf_{\yy\in S}\sup_{t\in[0,2\pi)}F(\yy,t)$
is attained at exactly one point $\wS=(w_{1},\ldots,w_{n})\in S$, i.e.,
\[
M(S)=\sup_{t\in[0,2\pi)}F(\wS,t)\quad \text{and}\quad
\sup_{t\in[0,2\pi)}F(\yy,t)>M(S)\quad\text{when $\yy\ne\wS$.}
\]
   Moreover, there exist $0<z_{0}<z_{1}<z_{2}<\ldots<z_{n}<2\pi$
such that $F(\wS,z_{j})=M(S)$, that is,  $\wS$ is an equioscillation point.  The
interlacing property obviously follows. Rewriting these properties for
$T^{*}(t):=\exp F(\wS,t)$, we
obtain the assertions.
\end{proof}

We turn to the interval case. Suppose the $n$  positive real numbers $r_{1},r_{2},\ldots,r_{n}>0$ are fixed,
and consider $P(x):=|x-y_{1}|^{r_{1}}\ldots|x-y_{n}|^{r_{n}}$.
Such functions are sometimes called \emph{generalized algebraic polynomials}
(GAP, see, for instance, \cite{BorweinErdelyi} Appendix 4). Now, fix $[a,b]\subset\RR$
and consider the following minimization problem
\begin{equation}
\inf_{a\leq y_{1}<\ldots<y_{n}\leq b}\sup_{x\in [a,b]} \bigl| |x-y_{1}|^{r_{1}}\ldots|x-y_{n}|^{r_{n}}\bigr|. \label{prob:algmin}
\end{equation}

{\lila In order to solve this,} 
we will investigate the problem
\begin{equation}
{\lila \inf_{\mathbf{t}} }\sup_{t\in [0,2\pi]}\:\:\Biggl| \Bigl|\sin\frac{t-t_{1}}{2}\Bigr|^{r_{n}}\ldots\Bigl|\sin\frac{t-t_{n}}{2}\Bigr|^{r_{1}}\Bigl|\sin\frac{t-t_{n+1}}{2}\Bigr|^{r_{1}}\ldots\Bigl|\sin\frac{t-t_{2n}}{2}\Bigr|^{r_{n}}\Biggr| \label{prob:doubledtrigmin0}
\end{equation}
where the infimum is taken for {\lila $\mathbf{t}:=(t_1,\ldots,t_{2n})$ with $0\leq t_{1}\le \ldots\le t_{n}\le t_{n+1}\le\ldots\le t_{2n}\le 2\pi$ with $t_1+t_{2n}=2\pi$, the latter normalizition being natural in view of the periodicity of the occurring sine functions.} 
{\lila Note that in the original Bojanov problem the $y_j$'s are different, while we allow the $t_j$'s to coincide; this apparently larger generality leads to the same problem actually.}

\begin{thm}
\label{thm:doubled_trig_prob}
With the previous notation, the infimum in
{\piros \eqref{prob:doubledtrigmin0}} is attained at a unique point $\wS=(w_{1},w_{2},\ldots,w_{2n})$
with $w_{1}+(w_{2n}-2\pi)=0$ and $0<w_{1}<\ldots<w_{2n}<2\pi$.
Furthermore, $\wS$ is symmetric: $w_{k}=2\pi-w_{2n+1-k}$
for $k=1,2,\ldots,n$.
\end{thm}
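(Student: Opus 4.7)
The plan is to take logarithms, reducing \eqref{prob:doubledtrigmin0} to a sum of translates minimax problem that fits Theorem \ref{thm:mainspecialcase3}, and then to deduce the palindromic symmetry $w_k=2\pi-w_{2n+1-k}$ by a short reflection argument.

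Set $K(x):=\log|\sin(x/2)|$ for $x\in(-\pi,\pi)$, extended $2\pi$-periodically. Then $K$ is a strictly concave, \emph{even} kernel in $\Ce^2(0,2\pi)$ with $K''<0$ satisfying \eqref{eq:kernsing}. Define the palindromic multiplicity pattern by $\rho_j:=r_{n+1-j}$ for $j=1,\ldots,n$ and $\rho_j:=r_{j-n}$ for $j=n+1,\ldots,2n$, so that $\rho_j=\rho_{2n+1-j}$. Passing to the logarithm, \eqref{prob:doubledtrigmin0} amounts to minimizing $\sup_{x\in\TT}F(\mathbf{t},x)$, where $F(\mathbf{t},x):=\sum_{j=1}^{2n}\rho_j K(x-t_j)$, over sorted tuples subject to $t_1+t_{2n}=2\pi$.

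To apply Theorem \ref{thm:mainspecialcase3}, I use the translation invariance of $\TT$ to temporarily fix the first node at $0$, and take kernels $K_0:=\rho_1 K$, $K_j:=\rho_{j+1}K$ for $j=1,\ldots,2n-1$. In the identity simplex (so $0<y_1<\ldots<y_{2n-1}<2\pi$), Theorem \ref{thm:mainspecialcase3} provides a unique minimizer $\widetilde{\mathbf{y}}=(\widetilde y_1,\ldots,\widetilde y_{2n-1})$. Every sorted configuration with multiplicity pattern $\rho$, after translating by $-t_1$, becomes a configuration in the identity simplex with first node $0$; uniqueness in the simplex then forces it to coincide with $\widetilde{\mathbf{y}}$, so every such configuration has the form $t_k=t_1+\widetilde y_{k-1}$ with $\widetilde y_0:=0$. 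The normalization $t_1+t_{2n}=2\pi$ then fixes $t_1=\pi-\widetilde y_{2n-1}/2\in(0,\pi)$, yielding a unique $\mathbf{w}=(w_1,\ldots,w_{2n})$ with $0<w_1<\ldots<w_{2n}<2\pi$ and $w_1+w_{2n}=2\pi$.

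It remains to establish $w_k=2\pi-w_{2n+1-k}$. Let $R:t\mapsto 2\pi-t$ be the reflection on $\TT$; since $K$ is even, $R$ preserves $\sup_x F$. Because all $w_k$ lie strictly in $(0,2\pi)$, the reflected configuration, re-sorted in increasing order, is $(2\pi-w_{2n},2\pi-w_{2n-1},\ldots,2\pi-w_1)$, and the multiplicity at the $k$-th new position equals $\rho_{2n+1-k}=\rho_k$ by palindrome. Hence $R\mathbf{w}$ is again a sorted configuration with the same multiplicity pattern $\rho$, with the same (optimal) sup norm, and satisfying $(2\pi-w_{2n})+(2\pi-w_1)=2\pi$, i.e.\ the normalization. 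The uniqueness from the previous paragraph then forces $R\mathbf{w}=\mathbf{w}$, that is $w_k=2\pi-w_{2n+1-k}$ for $k=1,\ldots,n$.

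The main technical care needed is the clean interaction between reflection and sorting, which requires that no node of $\mathbf{w}$ lies at the boundary point $0$; this is automatic from $w_1=\pi-\widetilde y_{2n-1}/2>0$ and $w_{2n}=\pi+\widetilde y_{2n-1}/2<2\pi$, itself a consequence of the strict inequalities in $\widetilde{\mathbf{y}}\in S_{\mathrm{id}}$ guaranteed by Theorem \ref{thm:mainspecialcase3}.
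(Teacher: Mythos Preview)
Your proof is correct and follows essentially the same route as the paper. The paper derives this theorem from the more general symmetry result Theorem~\ref{thm:symm_minmax_prob}, whose proof does exactly what you do: take the logarithmic kernel $K(x)=\log|\sin(x/2)|$, invoke Theorem~\ref{thm:mainspecialcase3} for existence and uniqueness in the identity simplex, rotate to meet the normalization $w_1+w_{2n}=2\pi$, then reflect and use the palindrome $\rho_k=\rho_{2n+1-k}$ together with uniqueness to conclude $w_k=2\pi-w_{2n+1-k}$. Your handling of the normalization via translation invariance and your verification that reflection preserves both the multiplicity pattern and the constraint are precisely the points the paper addresses as well.
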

As a consequence the minimization in \eqref{prob:doubledtrigmin0} has the same (unique) solution as
\begin{equation}
{\lila \inf_{\mathbf{t}} }
\sup_{t\in [0,2\pi]}\:\:\Biggl| \Bigl|\sin\frac{t-t_{1}}{2}\Bigr|^{r_{n}}\ldots\Bigl|\sin\frac{t-t_{n}}{2}\Bigr|^{r_{1}}\Bigl|\sin\frac{t-t_{n+1}}{2}\Bigr|^{r_{1}}\ldots\Bigl|\sin\frac{t-t_{2n}}{2}\Bigr|^{r_{n}}\Biggr|, \label{prob:doubledtrigmin}
\end{equation}
where the infimum is taken for 
{\lila $\mathbf{t}=(t_1,\ldots,t_{2n})$ and $0\leq t_{1}\le \ldots\le t_{n}\le \pi$} 
satisfying  $ t_j=2\pi-t_{2n+1-j}$, for all $j=1,\ldots,n$.

The previous theorem follows from the next, more general, symmetry theorem.
\begin{thm}
\label{thm:symm_minmax_prob}Let $K_{1},\ldots,K_{n}$ be strictly
concave kernels such that $K_{j}$ is even for all $j=1,\ldots,n$.
Assume that the kernels are either all in $\Ce^1(0,2\pi)$ or {all satisfy \eqref{eq:kernsing'}.}
Take the simplex $S:=\{ 0\leq y_{1}<y_{2}<\ldots<y_{2n}<2\pi\} $.
Define the symmetric
sum of translates function
\begin{multline}
F_{\symm}(\yy,t):=K_{1}(t-y_{1})+\ldots+K_{n-1}(t-y_{n-1})+K_{n}(t-y_{n})\\
+K_{n}(t-y_{n+1})+K_{n-1}(t-y_{n+2})+\ldots+K_{1}(t-y_{2n})\label{def:symm_pot}
\end{multline}
and consider the
``doubled'' problem
\begin{equation}
M_{\symm}:={\lila \inf_{\yy\in \overline{S} }} \sup_{t\in[0,2\pi)}F_{\symm}(\yy,t).\label{def:symm_M}
\end{equation}
Then there is a unique minimum point $\wS=(w_{1},w_{2},\ldots,w_{2n})\in S$
with $w_{1}+(w_{2n}-2\pi)=0$. Furthermore, $\wS$
is symmetric: $w_{k}=2\pi-w_{2n+1-k}$ ($k=1,2,\ldots,n$) and there
are exactly $2n$ points: $0=z_{1}<z_{2}<\ldots<z_{n+1}=\pi<\ldots<z_{2n}$
where $F_{\symm}(\wS,\cdot)$ attains its supremum.
Moreover, $z_{j}$'s and $w_{j}$'s interlace and $z_{j}$'s are symmetric
too: $z_{k}=2\pi-z_{2n+1-k}$ ($k=1,2,\ldots,n$).\end{thm}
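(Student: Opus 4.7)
The plan is to combine the equioscillation theory of Sections~\ref{sec:distrlocmins}--\ref{sec:equioscillation} with the reflection symmetry of the doubled configuration (evenness of the $K_j$ and the palindromic identity between positions $j$ and $2n+1-j$ in the kernel sequence). Write $\MM_\symm(\yy):=\sup_{t\in\TT}F_\symm(\yy,t)$.

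\textbf{Existence and translation normalization.} By Corollary~\ref{cor:mmMMcont}, $\MM_\symm$ is continuous on the compact set $\overline S$, so a minimum $\wS\in\overline S$ exists and $M_\symm=\MM_\symm(\wS)$ is finite. The identity $F_\symm(\yy+\alpha\vc{1},t+\alpha)=F_\symm(\yy,t)$ makes the problem invariant under joint translation of all nodes; within each translation orbit inside $\overline S$ there is a unique representative with $w_1+w_{2n}=2\pi$, and I normalize $\wS$ this way.

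\textbf{Reflection yields another minimizer.} Define $\wS^r:=(2\pi-w_{2n},2\pi-w_{2n-1},\ldots,2\pi-w_1)$. Using $2\pi$-periodicity, evenness $K_j(t)=K_j(-t)$, and the palindromic identity, a reindexing gives
\[
F_\symm(\wS^r,t)\;=\;\sum_{k=1}^{2n}K_{a(k)}(-t-w_k)\;=\;F_\symm(\wS,-t),
\]
hence $\MM_\symm(\wS^r)=M_\symm$. Moreover $\wS^r\in\overline S$ with $w_1^r+w_{2n}^r=2\pi$, so $\wS^r$ lies in the same normalized family.

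\textbf{Uniqueness of the normalized minimizer and symmetry.} Translating so that $w_1=0$ reduces the problem to a Theorem~\ref{thm:mainspecialcase}-type minimization on $\TT^{2n-1}$ restricted to the identity simplex $\overline{S_{\id}}$, with the $2n$ kernels $K_1,K_2,\ldots,K_n,K_n,\ldots,K_1$. I apply Proposition~\ref{prop:Mmextremal} to $S_{\id}$: it suffices to exhibit a local minimum of $\MM$ in the interior of $S_{\id}$. A minimum of $\MM$ over $\overline{S_{\id}}$ exists by Proposition~\ref{prop:exists}; if it were on $\partial S_{\id}$, some coincidence $\tilde w_j=\tilde w_{j+1}\in(0,2\pi)$ would occur, and Corollary~\ref{cor:neighborjump} would force the degenerate-arc maximum strictly below $M_\symm$, so only weak equioscillation (Corollary~\ref{cor:SS'better}(b)) could hold. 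Then, using Lemma~\ref{lem:szethuzas} with $a=b=1$ for the case of equal coincident kernels (notably the central pair $K_n,K_n$, exactly as in the proof of Theorem~\ref{thm:mainspecialcase2}), together with the Perturbation Lemma~\ref{lem:pertnodes00}(c) subject to the ordering-preserving constraint $a_j=a_{j+1}$ for the remaining coincidences, one produces a small admissible perturbation of $\wS$ that strictly decreases $\MM_\symm$, contradicting minimality. Therefore $\wS$ lies in the interior of $S_{\id}$; Proposition~\ref{prop:Mmextremal} then makes $\wS$ the unique equioscillation point in $\overline{S_{\id}}$ with $M(S_{\id})=m(S_{\id})$ and the Sandwich Property. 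Translating back, $\wS$ is the unique minimizer in the normalized family $\{w_1+w_{2n}=2\pi\}$, and so $\wS^r=\wS$, yielding $w_k=2\pi-w_{2n+1-k}$ for $k=1,\ldots,n$.

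\textbf{The points $z_j$ and the main obstacle.} Equioscillation combined with strict concavity of $F_\symm(\wS,\cdot)$ on each arc gives $2n$ unique maximum points $z_1<\cdots<z_{2n}$ interlacing with the $w_j$'s. The identity $F_\symm(\wS,t)=F_\symm(\wS,-t)$, which follows from the symmetry of $\wS$, makes the set $\{z_j\}$ invariant under $t\mapsto 2\pi-t$. The two arcs preserved by this reflection are the wrap-around arc (with midpoint $0$, since $w_1+w_{2n}=2\pi$) and the central arc $(w_n,w_{n+1})$ (with midpoint $\pi$, since $w_n+w_{n+1}=2\pi$); uniqueness of the maximum on each of these arcs then forces $z_1=0$ and $z_{n+1}=\pi$, and the remaining $z_j$'s come in mirror pairs. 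The main technical obstacle is the interior argument: when the two coincident kernels differ, Lemma~\ref{lem:szethuzas} does not apply directly, so one must invoke the Perturbation Lemma with the additional ordering-preserving constraint and carry out a careful dimensional count, balancing the number of equioscillating arcs against the imposed linear constraints, to ensure that a nontrivial admissible perturbation direction remains.
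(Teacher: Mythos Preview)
Your overall strategy—normalize by translation so that $w_1+w_{2n}=2\pi$, reflect to get a second minimizer $\wS^r$, and then use uniqueness of the minimizer in the open simplex to conclude $\wS^r=\wS$—is exactly the paper's. The reflection computation and the derivation of the $z_j$ properties are also the same.

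The substantive difference is how uniqueness in the open simplex is obtained. The paper does this in one line by invoking Theorem~\ref{thm:mainspecialcase3}: after fixing $w_1=0$, that theorem furnishes a unique minimizer in $S$, which is an equioscillation point, and the pull-apart Lemma~\ref{lem:szethuzas} inside its proof is what rules out boundary minima. You instead try to argue interiority directly via Proposition~\ref{prop:Mmextremal} together with a constrained version of the Perturbation Lemma, and this is where your sketch does not close. With $2n-1$ free perturbation coordinates, weak equioscillation (Corollary~\ref{cor:SS'better}(b)) forces $m_j=M$ on all $2n-c$ non-degenerate arcs, while preserving the $c$ coincidences imposes $c$ linear constraints $a_j=a_{j+1}$; Lemma~\ref{lem:pertnodes00} lets you decrease $k$ maxima subject to only $(2n-1)-k$ constraints, so with $k=2n-c$ you may impose at most $c-1$ constraints, one short. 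Lemma~\ref{lem:szethuzas} does not rescue this for a coincident pair carrying genuinely different kernels, so the ``careful dimensional count'' you promise cannot succeed as stated.

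Your identification of this obstacle is legitimate: in the full generality of Theorem~\ref{thm:symm_minmax_prob} (arbitrary even $K_1,\dots,K_n$), Theorem~\ref{thm:mainspecialcase3} as stated requires $K_j=r_jK$ for a single $K$, and the paper does not explicitly bridge this gap either. For the paper's intended application (Theorem~\ref{thm:doubled_trig_prob}), all kernels are positive multiples of $\log|\sin(x/2)|$, and then Theorem~\ref{thm:mainspecialcase3} applies directly; your extra machinery is unnecessary there, and incomplete in the general case.
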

\begin{proof}
{\piros Following the symmetric definition, we let $K_{n+k}(t):=K_{n+1-k}(-t)$
where $k=1,2,\ldots,n$.  By symmetry we have
\begin{equation}
K_{n+k}(t)=K_{n+1-k}(t)\quad \mbox{for }k=-n+1,\ldots,n.\label{eq:kerndoubled}
\end{equation}}
Hence $F_{\symm}(\yy,t)=\sum_{j=1}^{2n}K_{j}(t-y_{j})$.

The existence and uniqueness follow from Theorem \ref{thm:mainspecialcase3}.
That is, there exists a unique $\wS=(w_{1},w_{2},\ldots,w_{2n})\in S$ (unique
with $w_{1}=0$) such that $M(S)=\MM(\wS)$).
Furthermore, $M(S)=m(S)$ and
$F(\wS,\cdot)$ equioscillates, hence
$m(S)=\mm(\wS)$.
Using rotation, we can assume that $w_{1}>0$ is such that $w_{1}+(w_{2n}-2\pi)=0$.

Now, we establish $w_{k}=2\pi-w_{2n+1-k}$ ($k=1,2,\ldots,n$). By
the assumption, it holds for $k=1$, i.e., $w_{1}=2\pi-w_{2n}$.
Reflect the $w_{k}$'s: $v_{k}:=2\pi-w_{2n+1-k}$, $k=1,\ldots,2n$
and write $\vv:=(v_{1},\ldots,v_{2n})$. Then $v_{1}=w_{1}$
and $v_{2n}=w_{2n}$. Furthermore, put $L_{k}(t):=K_{2n+1-k}(-t)$
and consider
\[
\tilde{F}(\vv,t):={\lila \sum_{k=1}^{2n}L_{k}(t-v_{k}) }
\]
the sum of translates function
of the reflected configuration. We obtain, by using \eqref{eq:kerndoubled}
and the {\kek symmetry} of the kernels, that
{\lila
\begin{align*}
L_{k}(t-v_{k})&=K_{2n+1-k}(v_{k}-t)=K_{2n+1-k}(t-v_{k})\\
&=K_{2n+1-k}(t-2\pi+w_{2n+1-k})=K_{2n+1-k}(t-w_{2n+1-k})
\end{align*} }
for all {\lila $k=1,\ldots,2n$}. Hence
\begin{align*}
\tilde{F}(\vv,t)
&={\lila \sum_{k=1}^{2n}L_{k}(t-v_{k})=\sum_{k=1}^{2n}K_{2n+1-k}((2\pi-t)-w_{2n+1-k}) }\\
&=F_{\symm}(\wS,2\pi-t)=F_{\symm}(\wS,-t).
\end{align*}
Obviously $\vv\in S$.
By definition, $m_0{(\wS)}=m_{2n}{(\wS)}=\sup\{F_{\symm}(\wS,t): w_{2n}-2\pi\le t\le w_{1}\}$ and $m_j{(\wS)}=\sup\{F_{\symm}(\wS,t): w_j\le t\le w_{j+1}\}$,
$j=1,\ldots,2n-1$,
and similarly for $\vv$,  $m_j{(\vv)}=\sup\{\tilde{F}(\vv,t): v_j\le t\le v_{j+1}\}$,
$j=1,\ldots,2n-1$ and
\[m_0{(\vv)}=m_{2n}{(\vv)}=\sup\{\tilde{F}(\vv,t): v_{2n}-2\pi\le t\le v_{1}\}.\]
Hence, we also have for $j=1,\ldots,2n-1$
\begin{align*}
m_{j}{(\wS)}
&=\sup\{F_{\symm}(\wS,t): w_j\le t \le w_{j+1}\}\\
&=\sup\{F_{\symm}(\wS,-t): -w_{j+1}\le t \le -w_{j}\}=\sup\{\tilde{F}(\vv,t): -w_{j+1}\le t \le -w_{j}\}\\
&=\sup\{\tilde{F}(\vv,t): 2\pi-w_{j+1}\le t \le 2\pi-w_{j}\}=\sup\{\tilde{F}(\vv,t): v_{2n-j}\le t \le v_{2n+1-j}\}\\
&=m_{2n-j}{(\vv)},
\end{align*}
and obviously $m_0{(\vv)}=m_{2n}{(\vv)}=m_0{(\wS)}=m_{2n}{(\wS)}$.
This implies that {\lila together with $m_j(\wS)$, also $m_j(\vv)$ provides $\MM(\wS)=\MM(\vv)$, whence by uniqueness  $\vv=\wS$.}
{\lila Therefore, 
$w_{k}=2\pi-w_{2n+1-k}$
($k=1,2,\ldots,n$), too.}
The symmetry of the $w_{k}$'s implies the remaining assertions (interlacing
and symmetry of the $z_{j}$'s).
\end{proof}
We connect the ``algebraic'' problem \eqref{prob:algmin} and the ``trigonometric'' problem \eqref{prob:doubledtrigmin}
by using a classical idea of transferring between these situations with $x=\cos t$ (see, e.g., \cite{Szego}).
\begin{lemma}
	Let $L(x):=\frac{b-a}{2}x+\frac{b+a}{2}$. The identities
	\begin{equation}\label{eq:asserted}
	y_{j}=L(\cos t_{n+1-j}),\:\:
	t_{n+1-j}=\arccos L^{-1}(y_j), \:\: t_{n+j}=2\pi-\arccos L^{-1}(y_j)
	\end{equation}
	for $j=1,\dots, n$
	provide a one-to-one correspondence between generalized algebraic polynomials in \eqref{prob:algmin} and generalized trigonometric polynomials in \eqref{prob:doubledtrigmin}.
	Similarly, for the corresponding interlacing points of maxima we have $s_{j}=L(\cos z_{n+1-j})$, $z_{n+1-j}=\arccos L^{-1}(s_j)$ and  $z_{n+j}=2\pi-\arccos L^{-1}(s_j)$ for $j=0,\dots, n$.
\end{lemma}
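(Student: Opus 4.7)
The plan is to reduce everything to the classical half-angle identity
\[
\cos s - \cos t = -2 \sin\tfrac{s-t}{2}\,\sin\tfrac{s+t}{2},
\]
which, after applying the affine map $L$, yields the basic bridge
\[
|L(\cos s)-L(\cos t)| = (b-a)\,\bigl|\sin\tfrac{s-t}{2}\bigr|\,\bigl|\sin\tfrac{s+t}{2}\bigr|.
\]
All of the substance of the lemma follows mechanically from this identity together with the symmetry $t_{n+j}=2\pi-t_{n+1-j}$.

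First I would note that $\Phi := L\circ\cos$ is a continuous, strictly decreasing bijection from $[0,\pi]$ onto $[a,b]$, and verify that the formulas \eqref{eq:asserted} indeed set up a bijection. Given $(y_1,\dots,y_n)$ admissible in \eqref{prob:algmin}, setting $t_{n+1-j}:=\Phi^{-1}(y_j)\in[0,\pi]$ and $t_{n+j}:=2\pi-t_{n+1-j}$ yields a configuration $0\le t_1\le\cdots\le t_n\le \pi\le t_{n+1}\le\cdots\le t_{2n}\le 2\pi$ satisfying the symmetry constraint of \eqref{prob:doubledtrigmin}; and conversely, any such symmetric configuration arises in this way from $y_j:=\Phi(t_{n+1-j})$.

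Next I would substitute $x=L(\cos t)$ with $t\in[0,2\pi]$ in the GAP. Using the bridge formula with $s=t_{n+1-j}$ and observing that
\[
\Bigl|\sin\tfrac{t+t_{n+1-j}}{2}\Bigr| = \Bigl|\sin\tfrac{t-(2\pi-t_{n+1-j})}{2}\Bigr| = \Bigl|\sin\tfrac{t-t_{n+j}}{2}\Bigr|,
\]
I obtain
\[
|x-y_j| = (b-a)\,\Bigl|\sin\tfrac{t-t_{n+1-j}}{2}\Bigr|\,\Bigl|\sin\tfrac{t-t_{n+j}}{2}\Bigr|.
\]
Raising to the power $r_j$ and taking the product over $j=1,\dots,n$ and reindexing ($k=n+1-j$ in the first factor, $k=n+j$ in the second) produces exactly the exponent pattern $r_n,r_{n-1},\dots,r_1,r_1,\dots,r_n$ appearing in \eqref{prob:doubledtrigmin}, up to the overall factor $(b-a)^{r_1+\cdots+r_n}$.

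Finally, since $t\mapsto L(\cos t)$ maps $[0,2\pi]$ continuously \emph{onto} $[a,b]$ (covering it once on $[0,\pi]$ and once on $[\pi,2\pi]$), the sup-norm on $[a,b]$ of the GAP equals $(b-a)^{-(r_1+\cdots+r_n)}$ times the sup-norm on $[0,2\pi]$ of the GTP. Therefore the infima in \eqref{prob:algmin} and \eqref{prob:doubledtrigmin} are attained simultaneously under this correspondence. The second assertion is immediate: if $s\in[a,b]$ is one of the interlacing extremal points of the GAP, writing $s=L(\cos z)$ with $z\in[0,\pi]$ furnishes via \eqref{eq:asserted} the pair $z$ and $2\pi-z$ as extrema of the GTP, which together with the $s_j\leftrightarrow z_j$ correspondence yields the claimed formulas. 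The only genuine point of care is bookkeeping: confirming that reindexing the product puts the right $r_j$ on the right position; otherwise everything is routine.
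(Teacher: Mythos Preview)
Your proposal is correct and follows essentially the same approach as the paper: both rest on the product-to-sum identity $\sin\frac{t-\alpha}{2}\sin\frac{t+\alpha-2\pi}{2}=\tfrac12(\cos t-\cos\alpha)$ (equivalently, your half-angle formula combined with the symmetry $t_{n+j}=2\pi-t_{n+1-j}$), then take the $r_j$-th powers and multiply. The paper normalises to $a=-1$, $b=1$ while you keep the general $(b-a)$ factor, and you are a bit more explicit about the bijection and the sup-norm transfer, but the substance is identical.
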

\begin{proof}
For simplicity, assume that $a=-1$, $b=1$, hence $L(x)=x$.
Recall
\begin{equation}
\sin\frac{t-\alpha}{2}\sin\frac{t+\alpha-2\pi}{2}=\frac{1}{2}(\cos t-\cos \alpha)\label{eq:cos_id}
\end{equation}
hence
\begin{align}
&\Bigl|\sin\frac{t-t_{1}}{2}\Bigr|^{r_{n}}\cdots\Bigl|\sin\frac{t-t_{n}}{2}\Bigr|^{r_{1}}\Bigl|\sin\frac{t+t_{n}-2\pi}{2}\Bigr|^{r_{1}}\cdots\Bigl|\sin\frac{t+t_{1}-2\pi}{2}\Bigr|^{r_{n}}\notag\\
&\qquad=\frac{1}{2^{\sum_{j=1}^{n}r_{j}}}|\cos t-\cos t_{1}|^{r_{n}}\cdots|\cos t-\cos t_{n}|^{r_{1}}\label{eq:cos_mult}.
\end{align}
Therefore, for every GAP $P(x)=|x-y_{1}|^{r_{1}}\ldots|x-y_{n}|^{r_{n}}$
there is a {\lila symmetric GTP $T(t)$ (of the form as in \eqref{prob:doubledtrigmin})} 
such that $P(\cos t)=2^{-\sum_{j=1}^{n}r_{j}}T(t)$.  Also to every GTP $T(t)$ as appearing in \eqref{prob:doubledtrigmin}, there is a corresponding GAP as in \eqref{prob:algmin} (modulo a constant factor), where between the zeros $t_j$, $t_{n+1-j}$ and $y_j$ ($j=1,\dots,n$) the asserted relations \eqref{eq:asserted} hold and $P(\cos t)=2^{-\sum_{j=1}^{n}r_{j}}T(t)$.  The statement about the points of maxima is now obvious.
\end{proof}
From this the following generalization of Bojanov's result can be deduced immediately:
\begin{thm}\label{thm:bojanovgen}
Let $\nu_{1},\ldots,\nu_{n}>0$ be fixed, and let $[a,b]\subset\RR$.
Then, there exists a unique system of points
$a<x_{1}<\ldots<x_{n}<b$ such that
\[
\| |x-x_{1}|^{\nu_{1}}\ldots|x-x_{n}|^{\nu_{n}}\| =\inf_{a\leq y_{1}<\ldots<y_{n}\leq b}\| |x-y_{1}|^{\nu_{1}}\ldots|x-y_{n}|^{\nu_{n}}\|
\]
where $\|\cdot\| $ denotes the sup-norm over $[a,b]$.
The extremal generalized polynomial \[
P^{*}(x):=|x-x_{1}|^{\nu_{1}}\ldots|x-x_{n}|^{\nu_{n}}\]
is uniquely characterized by the existence of $a=s_{0}<s_{1}<\ldots<s_{n-1}<s_{n}=b$ with
 $|P^{*}(s_{j})|=\| P^{*}\| $ for
$j=0,1,\ldots,n$.
\end{thm}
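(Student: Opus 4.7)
The plan is to reduce Theorem \ref{thm:bojanovgen} to the already-established Theorem \ref{thm:doubled_trig_prob} by the classical substitution $x = L(\cos t)$ with $L(x) = \tfrac{b-a}{2}x+\tfrac{b+a}{2}$, exploiting the bijective correspondence between generalized algebraic polynomials on $[a,b]$ and symmetric generalized trigonometric polynomials on the torus that was recorded in the lemma just preceding the theorem.

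First, setting $r_j := \nu_{n+1-j}$ and using the correspondence $y_j = L(\cos t_{n+1-j})$ together with $t_{n+j} = 2\pi - t_{n+1-j}$, every candidate $P(x) = \prod_{j=1}^n |x - y_j|^{\nu_j}$ lifts to a symmetric GTP $T(t)$ of the form in \eqref{prob:doubledtrigmin}, satisfying $P(L(\cos t)) = 2^{-\sum_j \nu_j}\, T(t)$. Since $L\circ\cos:[0,\pi]\to[a,b]$ is a homeomorphism and the covering of $[a,b]$ by $t\mapsto L(\cos t)$ over $[0,2\pi]$ is symmetric about $\pi$, we have $\|P\|_{[a,b]} = 2^{-\sum_j \nu_j}\|T\|_{[0,2\pi]}$; hence the two minimax problems are equivalent.

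Next, Theorem \ref{thm:doubled_trig_prob} (resting on Theorem \ref{thm:symm_minmax_prob} applied with the strictly concave $\Ce^2$-kernels $K_j(x)=r_j\log|\sin(x/2)|$) furnishes a unique symmetric extremal configuration $\wS=(w_1,\ldots,w_{2n})$ in the open simplex $S$, with $w_k = 2\pi - w_{2n+1-k}$, together with interlacing equioscillation points $0 = z_1 < z_2 < \cdots < z_{n+1} = \pi < \cdots < z_{2n}$, likewise symmetric about $\pi$. Transferring back, I set $x_j := L(\cos w_{n+1-j})$ for $j=1,\ldots,n$ and $s_j := L(\cos z_{n+1-j})$ for $j=0,1,\ldots,n$; the symmetries guarantee that each algebraic node arises from exactly one mirror pair of trigonometric nodes, and strict monotonicity of $L\circ\cos$ on $[0,\pi]$, combined with $z_1 = 0$ and $z_{n+1} = \pi$, yields $a = s_0 < s_1 < \cdots < s_n = b$ together with $a < x_1 < \cdots < x_n < b$. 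The equioscillation $T^*(z_k) = \|T^*\|$ then translates directly into $P^*(s_j) = \|P^*\|$ for all $j=0,\ldots,n$.

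Uniqueness of $(x_1,\ldots,x_n)$ propagates through the transfer: any second algebraic minimizer would, by Step 1, produce a second symmetric trigonometric minimizer, contradicting the uniqueness in Theorem \ref{thm:doubled_trig_prob}. The one point requiring care is to ensure the extremal trigonometric nodes avoid the branch points $\{0,\pi\}$ of $t\mapsto\cos t$—otherwise the folding would degenerate or force algebraic nodes to the endpoints—but this is automatic, since $\wS$ lies in the \emph{open} simplex $S$ and the enforced mirror symmetry $w_k = 2\pi - w_{2n+1-k}$ forbids any $w_k\in\{0,\pi\}$ (such a value would pair with an equal companion node, contradicting strict inequality in $S$).
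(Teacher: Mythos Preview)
Your argument is exactly the paper's approach: the paper itself only says Theorem~\ref{thm:bojanovgen} ``can be deduced immediately'' from the transfer lemma and Theorem~\ref{thm:doubled_trig_prob}, and you have written out precisely those details. The one loose end is the converse direction of the characterization---that any GAP $Q$ equioscillating at $n+1$ points including the endpoints must equal $P^*$---which you do not spell out; it closes by lifting such a $Q$ to a symmetric GTP whose node system is then an equioscillation point in the simplex, and since the kernels $r_j\log|\sin(\cdot/2)|$ are in $\Ce^2(0,2\pi)$ with strictly negative second derivative and satisfy \eqref{eq:kernsing}, Corollary~\ref{cor:equiuniqueC2} (or equivalently Corollary~\ref{cor:Meqm} combined with $M(S)=m(S)$ from Theorem~\ref{thm:mainspecialcase3}) forces this node system to coincide with~$\wS$.
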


{\lila\begin{remark}
In retrospect, we see here that considering the
(in general, different) extremal quantities and problems
on each simplex separately provides us a more precise
result than just considering $M$ and $m$ as in \eqref{eq:minmax} and
\eqref{eq:maxmin}.
To obtain Bojanov's theorem for each fixed ordered $n$-tuples
$(\nu_1,\ldots,\nu_n)$ one needs this more precise version.
\end{remark}}

\providecommand{\bysame}{\leavevmode\hbox to3em{\hrulefill}\thinspace}
\providecommand{\MR}{\relax\ifhmode\unskip\space\fi MR }

\providecommand{\MRhref}[2]{
  \href{http://www.ams.org/mathscinet-getitem?mr=#1}{#2}
}
\providecommand{\href}[2]{#2}

\medskip

 B.{} Farkas\\
 School of Mathematics and 
  Natural Sciences,\\ University of Wuppertal\\
   Gau{\ss}stra{\ss}e 20\\
  42119 Wuppertal, Germany\\
\email{farkas@math.uni-wuppertal.de}

\smallskip

B.{} Nagy\\
 MTA-SZTE Analysis and 
 Stochastics Research Group,\\
  Bolyai Institute, University of Szeged\\
  Aradi v\'ertanuk tere 1\\
   6720 Szeged, Hungary\\
\email{nbela@math.u-szeged.hu}

\smallskip

Sz.{} Gy.{} R\'ev\'esz\\
 Institute of Mathematics and Informatics, 
 Faculty of Sciences,\\
University of Pécs\\
 Vasvári Pál utca 4\\
7622 Pécs,  Hungary; and \\[1ex]
 Alfr\'ed R\'enyi Institute of Mathematics\\
 Re\'altanoda utca 13-15\\ 
 1053 Budapest, Hungary \\
 \email{revesz.szilard@renyi.mta.hu}

\end{document}